\documentclass [12pt] {article}
\usepackage{amsfonts}
\usepackage{amsmath}
\usepackage{amsthm}
\usepackage{amssymb}
\let\amslrcorner\lrcorner
\usepackage{hyperref}
\usepackage{bm,upgreek}
\usepackage[square,numbers]{natbib}
\bibliographystyle{plainnat}
\usepackage{fullpage}
\usepackage{authblk}
\usepackage{mathabx}
\usepackage{tikz-cd}
\usepackage{relsize}
%needed so that underline environments wap
\usepackage{ulem}

\DeclareFontFamily{U}{MnSymbolC}{}
\DeclareSymbolFont{MnSyC}{U}{MnSymbolC}{m}{n}
\DeclareFontShape{U}{MnSymbolC}{m}{n}{
    <-6>  MnSymbolC5
   <6-7>  MnSymbolC6
   <7-8>  MnSymbolC7
   <8-9>  MnSymbolC8
   <9-10> MnSymbolC9
  <10-12> MnSymbolC10
  <12->   MnSymbolC12}{}
\DeclareMathSymbol{\intprod}{\mathbin}{MnSyC}{'270}
\let\lrcorner\amslrcorner

% Theorems, Definitions, etc.
\newtheorem{thm}{Theorem}[section]
\newtheorem{cor}[thm]{Corollary}
\newtheorem{lem}[thm]{Lemma}
\newtheorem{prop}[thm]{Proposition}
\newtheorem{defn}[thm]{Definition}
\newtheorem{rem}[thm]{Remark}
\newtheorem{example}[thm]{Example}

\numberwithin{equation}{section}

\newcommand{\ol}[1]{\overline{#1}}

\newcommand{\lj}{\mbox{{\emph{\j}}}}

\newcommand{\cG}{\mathcal{G}}
\newcommand{\cV}{\mathcal{V}}
\newcommand{\cW}{\mathcal{W}}
\newcommand{\cH}{\mathcal{H}}
\newcommand{\cD}{\mathcal{D}}

\newcommand{\om}{\omega}
\newcommand{\cQ}{\mathcal{Q}}
\newcommand{\cF}{\mathcal{F}}
\newcommand{\II}{I\!I}
\newcommand{\tfII}{\mathring{I\!I}}

\renewcommand{\hat}[1]{\widehat{#1}}

\newcommand{\bc}{\boldsymbol{c}}
\newcommand{\cT}{\mathcal{T}}
\newcommand{\ce}{\mathcal{E}}
\newcommand{\cE}{\mathcal{E}}
\newcommand{\cK}{\mathcal{K}}
\newcommand{\cZ}{\mathcal{Z}}
\newcommand{\cN}{\mathcal{N}}
\newcommand{\si}{\sigma}
\newcommand{\Rho}{P}
\newcommand{\V} {\mathbb{V}}
\newcommand{\TD}{\mathbb{D}}

\newcommand{\wh} [1] {\widehat{#1}}

\newcommand{\mbf} [1]{\mathbf{#1}}

\newcommand{\mc} [1]{\mathcal{#1}}

% Groups

\newcommand{\GL} {\mathrm{GL}}

\newcommand{\SO} {\mathrm{SO}}

\newcommand{\End} {\mathrm{End}}

% Lie algebras

% Conformal class
\newcommand{\cc} {\bm{c}}
\newcommand{\bg}{\boldsymbol{g}}
\newcommand{\confmet}{\bg}
\newcommand{\be}{\boldsymbol{\epsilon}}
\newcommand{\tvol}{\mathlarger{\mathlarger{\epsilon}}}

% Special blackboard bold symbols

\newcommand{\R} {\mathbb{R}}

\newcommand{\LL} {\mathbb{L}}

\newcommand{\bX}{\mathbb{X}}

\newcommand{\codim}{\mathrm{codim}}

\newcommand{\gtM}{\ensuremath{{\gamma \hookrightarrow M}}}
\newcommand{\gtG}{\ensuremath{{\gamma\hookrightarrow \Sigma}}}
\newcommand{\GtM}{\ensuremath{{\Sigma \hookrightarrow M}}}

\newcommand{\SN}{\star \hspace{-1pt} N}

%\makeatletter
%\newcommand*\owedge{\mathpalette\@owedge\relax}
%\newcommand*\@owedge[1]{%
  %\mathbin{%
    %\ooalign{%
      %$#1\m@th\bigcirc$\cr
      %\hidewidth$#1\m@th\wedge$\hidewidth\cr
    %}%
  %}%
%}
%\makeatother

%\newcommand{\KN}{\mathbin{\bigcirc\mspace{-15mu}\wedge\mspace{3mu}}}
\newcommand{\KN}{\mathbin{\bigcirc\mspace{-16mu}\wedge\mspace{4mu}}}

% \usepackage{float}
% \restylefloat{table}
\usepackage{tensor}
% \usepackage{ytableau}
% \usepackage{graphics}
% \usepackage{lscape}
%\addtolength{\hoffset}{-0.5cm}
%\addtolength{\textwidth}{1cm}
% \usepackage{tipa}
% \usepackage{tipx}
% \usepackage[polutonikogreek,english]{babel}
% \usepackage{arevmat
\usepackage{easybmat}
\usepackage{color}
\usepackage{etex}
\usepackage[all]{xy}
%% Rod adds

\newcommand{\lpl}{
  \mbox{$
  \begin{picture}(12.7,8)(-.5,-1)
  \put(2,0.2){$+$}
  \put(6.2,2.8){\oval(8,8)[l]}
  \end{picture}$}}

\def\sideremark#1{\ifvmode\leavevmode\fi\vadjust{\vbox to0pt{\vss
 \hbox to 0pt{\hskip\hsize\hskip1em
 \vbox{\hsize2cm\tiny\raggedright\pretolerance10000
 \noindent #1\hfill}\hss}\vbox to8pt{\vfil}\vss}}}
\newcommand{\edz}[1]{\sideremark{#1}}

\newcommand{\Addresses}{{% additional braces for segregating \footnotesize
  \bigskip
  \footnotesize
	
	S.~N.~C., \,\textsc{Department of Mathematics, Oklahoma State University,
     Mathematics, Statistics and Computer Science 401, Stillwater, O 74075, USA}\par\nopagebreak
  \textit{E-mail address},   	S.~N.~C.  \, : \texttt{sean.curry@okstate.edu}

  \medskip

  A.~R.~G.,\, \textsc{Department of Mathematics, The University of Auckland,
    Private Bag 92019, Auckland 1142, New Zealand}\par\nopagebreak
  \textit{E-mail address},   A.~R.~G. \, : \texttt{r.gover@auckland.ac.nz}

  \medskip

  D.~S.,\,  \textsc{Department of Mathematics, The University of Auckland,
    Private Bag 92019, Auckland 1142, New Zealand}\par\nopagebreak
  \textit{E-mail address},   D.~S. \, : \texttt{daniel.snell@auckland.ac.nz}

}}

%%%%%%%%% Added so that \thanks does not add an asterisk/mark to a name %%%%
%%%%%% One instance of \thanks is then used for all authors %%%%%%%%%%%%%%%%
%\usepackage{etoolbox}
%\makeatletter
%\patchcmd{\maketitle}
 %{\def\@makefnmark}
 %{\def\@makefnmark{}\def\useless@macro}
 %{}{}
%\makeatother
\makeatletter
\def\thanks#1{\protected@xdef\@thanks{\@thanks
        \protect\footnotetext{#1}}}
\makeatother

%%%%%%%%%%%%%%%%%%%%%%%%%%%%%%%%%%%%%%%%%%%%%%%%%%%%%%%%%%%%%%%%%%%%%%%

\begin{document}
% \normalem is needed because the ulem package overrides the \emph environment
\normalem

\title{%\vspace{-4.5em} 
Conformal submanifolds, distinguished submanifolds, and  integrability}
  %Distinguished conformal submanifolds and integrability}

%\author{S.~N.~Curry}
%\email{sean.curry@okstate.edu}
%\author{A.~Rod Gover}
%\email{r.gover@auckland.ac.nz}
%\author{Daniel Snell}
%\email{dsne354@aucklanduni.ac.nz}

\author{Sean N.\ Curry, A.~Rod Gover, Daniel Snell\thanks{S.C. is supported in part by the Simons Foundation Grant MPS-TSM-00002876. A.R.G.\ and D.S. gratefully acknowledge support from the Royal
  Society of New Zealand via Marsden Grants 19-UOA-008 and 24-UOA-005.}}

\date{}

\maketitle

%\keywords{geodesics, conformal circles, first integrals,
%integrability, conformal geometry, projective differential geometry, conserved quantities}
\vspace{-1.5em}

\begin{abstract}  %%implicit theory
For conformal geometries of Riemannian signature, we provide a
comprehensive and explicit treatment of the core local theory for
embedded submanifolds of arbitrary dimension. This is based in the
conformal tractor calculus and includes a conformally invariant
Gauss formula leading to conformal versions of the Gauss, Codazzi,
and Ricci equations. It provides the tools for proliferating
submanifold conformal invariants, as well as for extending to conformally singular Riemannian manifolds the notions of mean
curvature and of minimal and CMC
submanifolds.

A notion of distinguished submanifold is defined by asking the tractor
second fundamental form to vanish.  We show that for the case of
curves this exactly characterises conformal geodesics, also called
conformal circles, while for hypersurfaces it is the totally umbilic
condition. For other codimensions, this unifying notion interpolates
between these extremes, and we prove that in all dimensions this
coincides with the submanifold being weakly conformally circular,
meaning that ambient conformal circles remain in the submanifold. We
prove that submanifolds are conformally circular, meaning submanifold
conformal circles coincide with ambient conformal circles, if and only
also a second conformal invariant also vanishes. We give a number of
examples to show that both situations occur commonly in familiar
conformal structures.

%As an application we observe that conformally product structures are
%locally characterised by the presence of a complementary pair of
%orthogonal foliations by distinguished submanifolds.

We then provide a very general theory and construction of quantities
that are necessarily conserved along distinguished submanifolds. This
first integral theory thus vastly generalises the results available
for conformal circles in \cite{GST}. We prove that any normal solution
to an equation from the class of first BGG equations yields such a
conserved quantity, and we show that it is easy to provide explicit
formulae for these.  

Finally we prove that the property of being distinguished is also
captured by a type of moving incidence relation and apply this to show that the distinguished submanifold condition is forced for zero loci associated with solutions of certain natural geometric partial differential equations.  
  \end{abstract}

\vspace{1pt}
%%\noindent   {\small \emph{2010 Mathematics Subject Classification.} Primary: 53A30, 53B10, 53C22; Secondary: 53A20, 37K10, 53A60 \edz{check/fix these}}
\noindent {\small \emph{2010 Mathematics Subject Classification.}
  Primary: 53C18, 53B25, 53A55; Secondary: 37K10, 53B15, 53B21

\vspace{2pt}
\noindent {\small \emph{Key words and phrases.} Differential geometry,
  conformal geometry, submanifolds, conformal geodesics, first integrals, BGG operators, symmetries, invariants.}

%\pagebreak

\tableofcontents

\pagebreak

\section{Introduction}

Submanifolds are one of the fundamental objects of study in any class of differential geometric structures. They play a crucial role in geometric analysis and a variety of other areas including several complex variables, and the study of functional analytic inequalities. 
Their local theory is essential for the study of many global questions in differential geometry. In the special case of Riemannian
geometry, submanifold theory is a classical area and the basic local
theory is well understood and is founded in the celebrated equations of
Gauss, Codazzi, and Ricci, see, e.g.,
\cite{Kobayashi-Nomizu,ONeillRiemannianGeometry}.

Conformal manifolds $(M,\cc)$ are structures where a smooth $n$
manifold $M$ is equipped not with a metric, but rather with just an
equivalence class of smooth metrics $\cc$, where $g,\hat{g}\in \cc$
means that $\hat{g}=e^{2\om}g$ for some smooth function $\om$. There
is currently a growing interest in the study of conformal
submanifolds, and conformally distinguished curves, including the
relationships between these objects
\cite{CGK,Chang-McKeown,Friedrich2017,GrahamReichert,HerfrayFine,Juhl,JO,LuebbeKroon2013,MinucciKroon2021}.
Some of these developments have been inspired and driven by the links
to physics \cite{Friedrich1995,GrahamWitten}, the discovery that there
are higher dimensional analogues of the classical Willmore energy and
invariant \cite{Guven,GWWillmore,YuriThesis,GrRV,GWRenormVol,GrahamReichert,YZ},
and the development of a holographic approach to submanifolds (see
\cite{GWBoundaryCalc,GWRenormVol,GWConfHypYamabe}) that is an analogue
of Fefferman and Graham's holographic approach to intrinsic conformal
geometry as in \cite{FG-ast,FG-book}.

In the conformal setting there is no distinguished connection on the
tangent bundle, so even the local theory of submanifolds provides a
challenge. Toward resolving this, a logical step is to use the
conformal Cartan/tractor connection of
\cite{cartan1923espaces,ThomasOnConformal,BEG,CS-book}, and for the special
case of {\em hypersurfaces}, meaning embedded submanifolds of
codimension one, an effective approach was initiated in
\cite{BEG}. With a view to various applications, this hypersurface
theory was extended in the works
\cite{BransonGover01,GrantMSc,StaffordMSc,GRiemSig,YuriThesis} and
this approach has proved to be central in a number of further
extensions and applications
\cite{AGW,BGW,GWBoundaryCalc,GWWillmore,GWRenormVol,GWcalc,GWConfHypYamabe,Juhl}. Rather
separately from the general consideration of submanifolds, the {\em
  distinguished curves} in conformal manifolds known as {\em conformal
  circles} or {\em conformal geodesics} have been studied classically
(see, e.g., \cite{Ferrand,Muto1939,Ogiue1967,Yano1938,Yano1957}) and
from various modern perspectives recently
\cite{FriedSchmidt,Bailey1990a,BEG,Tod,SilhanVojtech,GST,dunajski2019conformal,GregZala,Kry,CDT}.

In the first part of our work here we develop a comprehensive basic
local theory for conformal submanifolds of all proper
codimensions. This is based in the conformal tractor calculus, and by
construction is conformally invariant. It builds on the mentioned
approach to hypersurfaces from
\cite{BEG,BransonGover01,GrantMSc,YuriThesis,Curry-G-conformal} and
its extension into higher codimension by the first and third named
authors in \cite{CurryThesis} and \cite{SnellPhD}. There are also
links to the somewhat more abstract theory developed in the preprint
\cite{CalderbankBurstall} (cf.\ Remark \ref{BCremark} below and also
the discussion in \cite{CurryThesis}). The result is a theory and
collection of explicit calculational tools that treats curves and
higher dimensional submanifolds, embedded in conformal manifolds, by a
single uniform approach. These tools may be used to directly
proliferate submanifold invariants, including for curves. Thus they
provide a basic machinery that may solve the conformal submanifold
analogue of Fefferman's parabolic invariant theory programme
\cite{FeffermanPIT,FG-ast,BaiEG}, and we touch on this in Section
\ref{sinvts}.  Since the first draft of this article, an application
of these tools to a simple direct construction of higher Willmore
energies has been provided in \cite{AG}. We
also use these tools to effectively capture, via tractors, the mean
curvature, and with it the notions of minimal, constant mean
curvature, and parallel mean curvature submanifolds. This leads
immediately to a generalisation of these notions that is applicable
for the study of conformally singular geometries (such as
Poincar\'e-Einstein geometries, and more generally conformally compact
structures). See Section \ref{sec_minimal_scales}.

The point of view that we develop here leads us to introduce the
notion of a {\em distinguished submanifold} of arbitrary codimension.
By definition this means that an object called the tractor second
fundamental form vanishes (a list of equivalent conditions is given in
Theorem \ref{key1}).  In the case of curves this coincides precisely
with the unparametrised conformal circle equation, see Theorem
\ref{conf_circ_2ff_zero}, whereas for hypersurfaces it recovers the
usual condition of total umbilicity \cite{BEG}. Thus the notion
interpolates between these. Interestingly, in all codimensions greater
than one the condition is stronger than total umbilicity;
distinguished submanifolds must be umbilic, but there are umbilic
submanifolds that are not distinguished (though, when the geometry of
the ambient manifold is special, it often turns out that umbilic
submanifolds are forced to be distinguished; see Section
\ref{sec-examples} for a list of detailed examples).  In the
conformally flat setting the notions of umbilic and distinguished
submanifolds coincide, and part of our motivation in introducing the
notion of distinguished submanifold is that (unlike the notion of
umbilic submanifold in general codimension) it allows us to
simultaneously generalise certain key results from the codimension one
case of umbilic submanifolds and the dimension one case of conformal
circles. These applications form the second main objective of this
work.

It has long been known that Killing tensors and Killing-Yano tensors
may be used to provide first integrals for geodesics
\cite{Carter1968,Floyd1973,Penrose1973,KillingConstantsMotion,WalkerPenrose1970},
and this is used for a host of applications
\cite{Andersson-Blue,Carter1968b,Chandrasekhar-Dirac,Frolov2017,GHKW,KKMbook,IntegrabilityKillingEqn}.
Conformal circles are governed by a higher order equation than
geodesics, so an analogous theory has been lacking aside from certain
specific examples \cite{Tod}.  However, in \cite{GST} this was solved
and a very general theory of first integrals was developed by
understanding a characterisation of conformal circles as a parallel
condition on a fundamental tractor 3-form that one can associate to
any non-null curve.  Using this, it was established that essentially
any normal solution of a class of equations known as first BGG
equations (see \cite{CGH-Duke,CSS2000}, or Section \ref{BGGsec}, for
the meaning of these terms) can provide, or contribute to, such
conserved quantities; in fact in many cases more general solutions
produce first integrals. See \cite{GregZala} for some applications of this perspective. The conformal Killing equations on tensors
and the conformal Killing-Yano equations are all first BGG equations.
But in fact the class of first BGG equations is vastly wider than this
suggests.  In Section \ref{proofkey1} we show that, just as for
curves, higher dimensional distinguished submanifolds can be
characterised by a parallel condition on a tractor form. Then, as an
application, we obtain a theory of first integrals for distinguished
submanifolds of all codimensions in a form that includes the case of
conformal circles as special case.  See Theorem
\ref{thm_submanifold_gst} and Corollary \ref{fi-cor}.

In another direction, an important question for submanifolds of
dimension $2$ or greater is characterising the conformal analogue of
the notion of being totally geodesic; that is, to capture some sense
of ``total conformal circularity''. This was touched on in
\cite{BEG} for hypersurfaces, and treated for submanifolds in general
by Belgun \cite{BelgunFlorin2015}. We show in Section \ref{sec4} that
our tools give efficient  new proofs these results, and show that Belgun's
results have an elegant interpretation in this tractor picture. See
Theorem \ref{weakly} and Theorem
\ref{thm_totally_conformally_circ_param} below. We also introduce a new notion called \emph{conformal circularity},  and show that property holds in many situations where the strongest notion conformal circularity fails.

Finally, in Section \ref{corbit-sec} we show that distinguished
submanifolds can also be characterised by a very simple moving
incidence relation, see Theorem \ref{thm_submanifold_gst-main}, or its
paraphrasing in Theorem \ref{thm_submanifold_gst}.  As an application
we prove that the zero locus of suitable overdetermined PDE solutions
are necessarily distinguished submanifolds; see Theorem
\ref{thm_submanifold_zero_locus}. This shows how distinguished
submanifolds fit into the {\em curved orbit} theory of
\cite{CGH-jlms,CGH-Duke} and, along with Proposition \ref{progress},
is a first step toward understanding how to generalise the holography
approach of
\cite{AGW,BGW,GWWillmore,GWRenormVol,GWbdy,GWcalc,GWConfHypYamabe} to
higher codimensions.

\subsection{Main results and a technical overview}

Now we give the approach and results with more technical detail. We use lower case abstract indices $a,b,c,\ldots$ for ambient tensors and indices $i,j,k, \ldots$ from later in the alphabet for submanifold tensors; we also use the corresponding capital letters as abstract indices for ambient and submanifold tractors, respectively. The dimension of our ambient manifold $M$ will always be denoted $n$, and the submanifold dimension will be denoted $m$. In the theorems below $n\geq 3$, but the results continue to hold when $n=2$ provided $M$ is endowed with a M\"obius structure (in the sense of \cite{CalderbankMobiusStructures}). The first step in developing a calculus for hypersurfaces in a Riemannian manifold is the observation that any oriented hypersurface is equipped with a
canonical unit conormal field $n_a$.  Similarly it was established in
\cite{BEG} that each hypersurface $\Sigma$ in a Riemannian signature
conformal manifold determines canonically a basic conformally
invariant tractor field $N_A$ that plays an analogous role at a
tractor level.
For example its failure to be parallel along
the hypersurface is captured by a tractor second fundamental form $\mathbb{L}$.
In particular one obtains, in a simple explicit way, conformal
analogues of the Gauss-Codazzi-Ricci theory, see \cite{Curry-G-conformal} and
references therein.
Moreover the normal tractor has a remarkable link
to other objects in, for example, Poincar\'e-Einstein manifolds and
related structures, that has led to some deep results (e.g., that
link the so-called conformal volume anomaly to higher Willmore
invariants \cite{GrRV,AGW,GWRenormVol}). 
 
Central to our approach to higher codimension submanifold theory is the fact that one higher codimension analogue of the normal
tractor is a conformally invariant alternating tractor form
$N_{A_1\cdots A_d}$, where $d=n-m$ is the codimension of the
submanifold.
An equivalent object to $N_{A_1\cdots A_d}$ is
its tractor Hodge-star, that we denote $\star \hspace{-0.5pt} N^{A_1 A_2 \cdots A_{m+2}}$,
see \eqref{tstar}.
For a submanifold $\Sigma$ (of any non-trivial codimension),
the intrinsic tractor bundle $\cT \Sigma$ can be identified with the
annihilator in $\cT M$ of $N_{A_1 A_2 \cdots A_d}$, see Section
\ref{sub-tr}. (This is still true when $\Sigma$ has dimension $1$ or
$2$, though in this case it is less obvious if one starts with the jet
bundle construction of $\cT \Sigma$; see the discussion in Section
\ref{lowdim} which treats, from our point of view here, the natural M\"{o}bius structures induced on low dimensional submanifolds, cf.\ \cite{CalderbankMobiusStructures}).  Thus one has an orthogonal decomposition of the
ambient tractor bundle $\cT M$,
\[
  \cT M|_{\Sigma} = \cT \Sigma \oplus \cN,
\]
which also defines the \emph{normal tractor bundle} $\cN$.
Denote by $\mathrm{N}^A _B$ the projection $\cT M|_{\Sigma} \to \cN$ (using abstract index notation).
There is also a projector $\cT M|_{\Sigma} \to \cT \Sigma$, and thus (provided $\dim M\geq 3$ so that the conformal structure on $M$ determines a canonical Cartan/tractor connection) one has a tractor Gau{\ss} formula which defines a tractor second fundamental form, $\LL_{i J} {}^C$; see \eqref{tractor_gauss} and its refinement \eqref{TracGaussFormula} (cf.\  \cite{CalderbankBurstall,CurryThesis}). By definition, $\LL$ measures the failure of $\cN$ (or, equivalently, of $\cT\Sigma \subseteq \cT M|_{\Sigma}$) to be parallel.

Using these tools we develop and present explicitly the fundamental
conformal submanifold calculus in Section \ref{geq3}, and, in
particular, the conformal Gauss-Codazzi-Ricci equations
\eqref{tractor_gauss_equation}, \eqref{tractor_codazzi},
\eqref{tractor_ricci}. Also, the normal forms, and their equivalents,
may be combined with standard conformal tractor calculus, and ideas
for using this to construct conformal manifold invariants (as in,
e.g., \cite{G-advances}), to manufacture submanifold invariants. This
is the subject of Section \ref{sinvts}. This is a powerful application,
as without such objects the construction of submanifold invariants is
complicated. Based on the ideas in \cite{G-advances} it seems likely that most, if not all, conformal invariants will arise using the tools developed here.

Although the tractor approach is conformal, scale-dependent quantities such as the mean curvature can be
nicely described by introducing an object called the {\em scale
tractor}, $I$, see Section \ref{scalet}. In particular minimal submanifolds
are seen to be exactly those submanifolds whose tractor
normal form is orthogonal to the scale tractor, see Corollary \ref{prop_minimal_normal_charac}, and
constant mean curvature notions are similarly captured, see
Proposition \ref{cmc-prop}. This means that these concepts generalise
to Poincar\'e-Einstein, and more generally conformally compact
manifolds, with the submanifold extending to the conformal infinity, as discussed in Section \ref{sec_minimal_scales}.

The definition here of a submanifold being {\em distinguished} is that $\LL_{i
  J} {}^C =0$, i.e.\ the vanishing of the tractor second fundamental
form.  A key result is that this may be alternatively captured as in
the following theorem.

\begin{thm}\label{key1}
  Let $(M, \bm{c})$ be a conformal manifold and $\Sigma \hookrightarrow M$ a conformal submanifold of codimension $d$.
  Then the following are equivalent:
  \begin{enumerate}
    \item $\LL_{i J} {}^{C} = 0$;
    \item $\nabla_i \mathrm{N}^{A_1} _{A_2} = 0$;
    \item $\nabla_i N_{A_1 A_2 \cdots A_{d-1} A_d} = 0$;
    \item $\nabla_i \star \hspace{-2.5pt} N^{A_1 A_2 \cdots A_{m+2}} = 0$,
		%\item $\mathring{II}=0$ and $\mu=0$,  
  \end{enumerate}
  where $\nabla_i$ indicates the pullback to $\Sigma$ of the ambient tractor connection.
\end{thm}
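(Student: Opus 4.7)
The plan is to show that all four conditions are different algebraic encodings of a single geometric statement, namely that the normal tractor bundle $\cN$ is a parallel subbundle of $\cT M|_\Sigma$ along $\Sigma$. I would establish the cycle (1)$\Leftrightarrow$(2)$\Leftrightarrow$(3)$\Leftrightarrow$(4), with most of the substantive content appearing in (2)$\Leftrightarrow$(3).

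First, for (1)$\Leftrightarrow$(2): the tractor Gauss formula decomposes the pulled-back ambient connection $\nabla_i$ on $\cT M|_\Sigma = \cT\Sigma \oplus \cN$ into an intrinsic connection on $\cT\Sigma$, the induced normal connection on $\cN$, and off-diagonal blocks given precisely by $\LL_{iJ}{}^C$ and its transpose (obtained via the tractor metric). The orthogonal projector $\mathrm{N}^A{}_B$ onto $\cN$ is parallel if and only if both $\cN$ and $\cT\Sigma$ are parallel subbundles of $\cT M|_\Sigma$, which is exactly the vanishing of those off-diagonal blocks. Hence (1)$\Leftrightarrow$(2).

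Next, for (2)$\Leftrightarrow$(3): the normal tractor form $N_{A_1\cdots A_d}$ is a canonically normalized section of the line bundle $\Lambda^d \cN \subseteq \Lambda^d \cT M|_\Sigma$ with constant tractor norm. If (2) holds then $\cN$ is parallel, so $\Lambda^d \cN$ is a parallel rank-one subbundle; compatibility of the ambient tractor connection with the tractor metric then forces any constant-norm section of this line subbundle to be parallel, giving (3). Conversely, assume (3). Using the tractor metric to raise indices one has the pointwise characterization $V^A \in \cN \iff V^{[A}N^{A_1\cdots A_d]} = 0$. For a section $V$ of $\cN$ along $\Sigma$, differentiating this identity and using $\nabla_i N = 0$ shows that $\nabla_i V$ again satisfies the same identity, i.e.\ lies in $\cN$. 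Thus $\cN$ is parallel along $\Sigma$, and by the metric compatibility of $\nabla_i$ so is $\cT\Sigma = \cN^\perp$; the projector $\mathrm{N}^A{}_B$ is then parallel, giving (2).

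Finally, (3)$\Leftrightarrow$(4) is immediate: the tractor Hodge star is obtained from $N$ by contraction against the ambient tractor volume form and the tractor metric, both of which are parallel under the ambient tractor connection (and so remain parallel under its pullback $\nabla_i$). Hence $\nabla_i$ commutes with $\star$, and the two conditions are interchangeable. The main obstacle in the overall argument is the verification in (2)$\Rightarrow$(3) that the canonical normalization of $N$ is preserved by the induced connection on $\Lambda^d \cN$: this is essentially a bookkeeping exercise in metric compatibility, but it is the one place where one has to carefully reconcile the rank-$d$ subbundle description of $\cN$ with its top-form description.
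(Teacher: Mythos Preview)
Your argument is correct. For (1)$\Leftrightarrow$(2) and (3)$\Leftrightarrow$(4) you reproduce essentially the paper's reasoning (the paper packages (1)$\Leftrightarrow$(2) as the identity $\nabla_i \mathrm{N}^C{}_B = -\overline{\LL}_i{}^C{}_B - \overline{\LL}_{iB}{}^C$, which is exactly your ``off-diagonal blocks'' remark).

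The genuine difference is in (2)$\Leftrightarrow$(3). You argue abstractly: if $\cN$ is parallel then $\Lambda^d\cN$ is a parallel line subbundle with positive-definite induced metric, so its constant-norm section $N$ is forced to be parallel; conversely, the pointwise criterion $V\in\cN \iff V\wedge N=0$ propagates under differentiation when $\nabla_i N=0$. The paper instead derives the explicit identities
\[
\nabla_i N_{A_1\cdots A_d} = -d\,\LL_{i[A_d}{}^C N_{A_1\cdots A_{d-1}]C}
\quad\text{and}\quad
N^{CA_2\cdots A_d}\nabla_i N_{BA_2\cdots A_d} = -(d-1)!\,\LL_{iB}{}^C,
\]
from which (1)$\Leftrightarrow$(3) is read off directly. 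Your route is cleaner for the bare equivalence; the paper's route costs more computation but delivers these formulae as dividends, and they are used later (e.g.\ in the proof of Theorem~\ref{thm_weakly_conformally_circular} and in Theorem~\ref{thm_submanifold_gst-main}).
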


A hypersurface has $\LL_{i J} {}^C = 0$ if and only if it is totally
umbilic, meaning the trace-free second fundamental form vanishes, but
for higher codimension it means a certain conditional invariant must
also vanish.  (When the trace-free second fundamental form vanishes
this conditional invariant becomes Belgun's $\mu$ invariant
\cite{BelgunFlorin2015}, and the relationship between $\mu$ and $\LL$
is discussed in Section \ref{subsec_weak_conf_circ}). Thus for
codimensions greater than one, a distinguished submanifold is
necessarily totally umbilic, but the converse is not true in
general. Moreover, in the case of $1$-dimensional submanifolds (where
the totally umbilic condition becomes vacuous) the vanishing of $\LL$
precisely characterises  conformal circles. That is, in our current
terminology, 1-dimensional conformally distinguished submanifolds are
exactly unparametrised conformal circles:

\begin{thm}\label{conf_circ_2ff_zero}
  Let $(M, \bc)$ be a conformal manifold and $\gamma \hookrightarrow
  M$ a curve.  Then $\gamma$ is an unparametrised conformal circle if,
  and only if, $\LL = 0$, or equivalently any one of the
  conditions in Theorem~\ref{key1} holds. In particular, $\LL = 0$ if and only if $\mu=0$, where $\mu$ is the conformal curvature.
\end{thm}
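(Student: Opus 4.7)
The plan is to combine the equivalences of Theorem \ref{key1} with the classical tractor characterization of unparametrised conformal circles. For a curve the submanifold tractor bundle $\cT \gamma$ has rank $1 + 2 = 3$, so the tractor Hodge dual $\star N^{A_1 A_2 A_3}$ is a (weighted) tractor 3-form along $\gamma$ spanning $\Lambda^3 \cT\gamma$. I would identify this, slot-by-slot in an adapted splitting, with the fundamental tractor 3-form associated to a non-null curve in \cite{GST}. Condition (4) of Theorem \ref{key1}, namely $\nabla_i \star N = 0$, then becomes exactly the parallel condition used there to characterise unparametrised conformal circles, yielding the equivalence with $\LL = 0$.

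To carry this out concretely I would fix a metric $g \in \bc$ and a parametrization of $\gamma$, write $\star N$ as a wedge of three tractors built from the tangent $\dot\gamma$, its conformal acceleration, and the canonical vertical tractor $X$, then push the ambient tractor connection through the wedge to express $\nabla_{\dot\gamma} \star N$ in terms of acceleration and jerk. The resulting conformally invariant ODE is the unparametrised conformal circle equation as recorded, e.g., in \cite{BEG,GST}. An alternative and arguably cleaner route is to use condition (2) and the original formulation in \cite{BEG}: the normal tractor subbundle, which has rank $n-1$ above a curve, is parallel along $\gamma$ in $\cT M|_\gamma$ precisely when $\gamma$ is an unparametrised conformal circle. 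Either way, Theorem \ref{key1} then closes the loop.

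For the final assertion $\LL = 0 \iff \mu = 0$: in dimension one the symmetric trace-free second fundamental form of the underlying curve is vacuous, so the only tensorial content of $\LL_{iJ}{}^C$ is a single conformal invariant. Via the identification discussed in Section \ref{subsec_weak_conf_circ} (where the relationship between Belgun's invariant and $\LL$ is spelled out in general codimension), this invariant is precisely the conformal curvature $\mu$ of the curve, and the equivalence $\LL = 0 \iff \mu = 0$ is then immediate from the slot-structure of $\LL$ in the curve case. The main obstacle throughout is bookkeeping: one must be meticulous about conformal weights, scales, and normalizations so that the identifications with the \cite{GST} 3-form and with the scalar $\mu$ are unambiguous and independent of the chosen metric and parametrization. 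Once those are secured, the theorem follows from Theorem \ref{key1} essentially without further computation.
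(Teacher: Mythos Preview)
Your proposal is correct and follows essentially the same route as the paper: identify $\star N^{A_1A_2A_3}$ with the fundamental 3-tractor $\Phi$ of \cite{GST}, invoke the result there that $\gamma$ is an unparametrised conformal circle iff $\Phi$ is parallel, and then use the equivalence of conditions 1 and 4 in Theorem~\ref{key1}. The final claim $\LL=0\iff\mu=0$ is handled exactly as you say, via the slot decomposition of $\LL$ (Theorem~\ref{tractor2ff_XZ}) together with the triviality of $\mathring{\II}$ for curves, which is the content of Proposition~\ref{lem_mu_zero_2ff_zero} specialised to $m=1$.
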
 
\begin{proof}
Proposition 4.13 from~\cite{GST} asserts that the unparametrised
conformal circle equation is equivalent to a certain $3$-tractor being
parallel along the curve, and equation \eqref{Phi-is-star-N} of
Section \ref{sec_circles} below asserts that this $3$-tractor is
precisely $\star \hspace{-0.2pt} N^{A_1A_2A_3}$. The theorem therefore
follows from the equivalence of items 1 and 4 in Theorem~\ref{key1}.
\end{proof}

For the convenience of the reader we discuss Proposition 4.13 from~\cite{GST} in Section \ref{sec_circles} on conformal circles below.

Next we observe that combining Theorem~\ref{key1} and Theorem~\ref{conf_circ_2ff_zero} leads naturally to a generalisation of Theorem~\ref{conf_circ_2ff_zero} that characterises distinguished submanifolds in terms of their relation to the ambient conformal circles:

\begin{thm}\label{weakly}
  A submanifold $\Sigma$ is distinguished ($\,\LL = 0$) if, and only if, it is weakly conformally circular (equivalently, $\mathring{\II}=0$ and $\mu=0$).
\end{thm}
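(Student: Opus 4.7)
The plan is to split the statement into two equivalences: (i) the algebraic one $\LL = 0 \Leftrightarrow (\mathring{\II}=0$ and $\mu=0)$, and (ii) the geometric one $\LL = 0 \Leftrightarrow$ weakly conformally circular. For (i) I would decompose $\LL_{iJ}{}^{C}$ into its irreducible components with respect to the standard projecting filtration of the ambient tractor bundle. In codimension one only $\mathring{\II}$ appears, recovering the result of \cite{BEG}; in higher codimension a conditional invariant also shows up, which on $\{\mathring{\II}=0\}$ reduces to Belgun's $\mu$. The explicit calculation is carried out in Section \ref{subsec_weak_conf_circ}, and the equivalence then follows directly.

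For (ii) the main tool is the iterated tractor Gau{\ss} formula for the flag $\gamma \hookrightarrow \Sigma \hookrightarrow M$ where $\gamma$ is a curve in $\Sigma$. Together with the orthogonal splitting $\cN^{\gtM} = \cN^{\gtG} \oplus \cN^{\GtM}|_{\gamma}$ of the tractor normal bundle of $\gamma$ in $M$, this yields a decomposition of the form
\[
  \LL^{\gtM} \;=\; \LL^{\gtG} \;+\; \bigl(\text{piece built from }\LL^{\GtM}\text{ evaluated along }\gamma\bigr),
\]
whose two summands take values in the corresponding pieces of the above splitting. Granting this, the implication distinguished $\Rightarrow$ weakly conformally circular is immediate: fix $p \in \Sigma$ and a $2$-jet $(v,a)$ tangent to $\Sigma$, let $\tilde\gamma$ be the intrinsic conformal circle of $\Sigma$ with this initial data, so that $\LL^{\tilde\gamma \hookrightarrow \Sigma}=0$ by Theorem \ref{conf_circ_2ff_zero} applied intrinsically. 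Combined with $\LL^{\GtM} = 0$, the display gives $\LL^{\tilde\gamma \hookrightarrow M} = 0$; Theorem \ref{conf_circ_2ff_zero} applied ambiently, together with the uniqueness of conformal circles from a $2$-jet, then identifies $\tilde\gamma$ with the ambient conformal circle carrying $(v,a)$. Since $\tilde\gamma \subset \Sigma$ by construction, weak conformal circularity follows.

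For the converse, assume $\Sigma$ is weakly conformally circular. Given $p \in \Sigma$ and a $2$-jet $(v,a)$ tangent to $\Sigma$, the ambient conformal circle $\gamma$ with this data lies in $\Sigma$ by hypothesis and satisfies $\LL^{\gtM} = 0$ by Theorem \ref{conf_circ_2ff_zero}. Since the two summands in the Gau{\ss} decomposition sit in complementary subbundles, each must vanish separately; in particular, the normal summand encodes $\LL^{\GtM}$ contracted against $v$, so that this contraction vanishes at $p$. Allowing $v$ to range over $T_p\Sigma$ (by choosing the $2$-jet freely) and then varying $p$ over $\Sigma$ yields $\LL^{\GtM} = 0$ identically. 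The principal technical obstacle is setting up the iterated tractor Gau{\ss} formula and verifying the orthogonal splitting of normal tractor bundles in exactly the form needed to cleanly separate the two summands; once that bookkeeping is in place the argument runs as sketched.
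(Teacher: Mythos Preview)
Your plan matches the paper's: part~(i) is Proposition~\ref{lem_mu_zero_2ff_zero}, and for part~(ii) the paper also argues via the flag $\gamma\hookrightarrow\Sigma\hookrightarrow M$ (packaged through $N^{\gtM}=N^{\gtG}\wedge N^{\GtM}$, which is equivalent to your Gau{\ss} splitting via Theorem~\ref{thm_grad_normal_form}). But your forward implication has a genuine gap.

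In the iterated Gau{\ss} formula the $\cN^{\gtG}$-component of $\LL^{\gtM}$ is the second fundamental form of $\gamma$ in $\Sigma$ computed with the \emph{induced} connection $\check\nabla$ on $\cT\Sigma$, not the intrinsic tractor connection $D$; these differ by $\mathsf{S}$. And $\mathsf{S}$ does have a non-trivial $\cT\gamma\to\cN^{\gtG}$ piece: in a minimal scale for $\gamma$ one computes $u^i\mathsf{S}_i{}^J{}_K Y^K=u^i\cF_i{}^{j} Z^J_j$, whose projection normal to $\gamma$ in $\Sigma$ is non-zero unless $u^i\cF_{ij}\propto u_j$. So taking $\tilde\gamma$ to be an intrinsic $\Sigma$-conformal circle and invoking Theorem~\ref{conf_circ_2ff_zero} gives only the $D$-version of $\LL^{\tilde\gamma\hookrightarrow\Sigma}$ vanishing, whereas your display needs the $\check\nabla$-version to vanish. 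Since $\LL^{\GtM}=0$ places no constraint on $\cF$ (Theorems~\ref{thm_totally_conformally_circ_param} and~\ref{thm_totally_conformally_circ_unparam} show they are independent), you cannot conclude $\LL^{\tilde\gamma\hookrightarrow M}=0$, and the identification with the ambient conformal circle fails. The paper's remedy is to replace intrinsic circles by the ``adapted'' curves satisfying $u^i\check\nabla_i N^{\gtG}=0$ together with $A^BA_B=0$ (see~\eqref{checkitout} and Remark~\ref{rem_weak_conf_circ}); these are still $2$-jet determined, and for them the $\cN^{\gtG}$-piece vanishes by construction, so your argument then goes through. Your converse direction is correct in outline; just note that what you actually obtain is $u^i\LL^{\GtM}_{iJ}{}^C$ restricted to $J\in\cT\gamma$, and one must contract separately against $U^J$ and against the $Y$-component of $\cT\gamma$ (as the paper does) to extract both slots of $\LL^{\GtM}$ from Theorem~\ref{tractor2ff_XZ} before polarising in $u$.
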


Here, adapting terminology from~\cite{BelgunFlorin2015}, \emph{weakly
  conformally circular} means that an ambient conformal circle with
tangential initial conditions remains in the submanifold for some
time, cf.~\cite{BEG} for the case of hypersurfaces.
%%\edz{look for  other mentions of this}
Since $\LL=0$ can be seen to be equivalent to $\mathring{\II}=0$ and $\mu=0$ by direct calculation, this recovers Belgun's~\cite[Theorem 5.4(2)]{BelgunFlorin2015}. %The proof here is via tractors.

There is an alternative natural notion of conformal circularity for
submanifolds, namely that any submanifold conformal circle is also an
ambient conformal circle. This is stronger than the previous notion
and we refer to such a submanifold as \emph{conformally circular}. It
turns out that for this property, in addition to any of the
requirements of Theorem~\ref{key1}, one also requires that the
trace-free part of the Fialkow tensor vanishes; see
Theorem~\ref{thm_totally_conformally_circ_unparam}. Examples of conformally circular submanifolds include a factor trivially included in an Einstein product or any umbilic submanifold in a complex projective space (see Section \ref{sec-examples}).  Taking into
account parametrisations gives rise to yet another notion of conformal
circularity, which we refer to as being \emph{strongly conformally
  circular}. This was considered in \cite{BelgunFlorin2015} and our Theorem~\ref{thm_totally_conformally_circ_unparam}
should be contrasted with the analogous result Theorem~\ref{thm_totally_conformally_circ_param} (which recovers Belgun's \cite[Theorem
  5.4(3)]{BelgunFlorin2015}) for projectively parametrised conformal circles. A number of examples are considered in Section~\ref{sec-examples} and we find that strongly conformally circular submanifolds arise a lot less commonly than conformally circular ones, though they include the standard umbilic submanifolds in the conformal sphere and any factor trivially included in a special Einstein product. 
	
In Section \ref{sec-examples} we clarify by way of a number of examples that, outside of edge cases in the codimension (i.e.\ the cases of curves and of hypersurfaces), the notions of umbilic, distinguished, conformally circular and strongly conformally circular submanifolds are distinct. On the other hand, we show that when the geometry of $(M,\bc)$ is special then often umbilic submanifolds are automatically distinguished. For example, any umbilc submanifold in real, complex or quaternionic space form is forced to be distinguished. In support of our contention that the distinguished submanifold condition is a more natural conformal analog of the totally geodesic condition in Riemannian geometry than the totally umbilic condition, we also observe (in Theorems \ref{thm-twisted-prod} and \ref{thm-conf-prod}) that we have the following conformal analog of the classical de Rham-Wu theorem in Riemannian geometry: A conformal manifold $(M,\bc)$ is locally the conformal structure of a product manifold if and only if $(M,\bc)$ possesses a pair of complementary orthogonal foliations by distinguished submanifolds.
%A key point here is that
%the tractor point of view developed in this paper gives a natural
%conceptual framework for characterising these three different notions
%of conformal circularity.  

Another advantage that distinguished submanifolds posses over (merely) umbilic ones is that points 2.--4.\ of Theorem~\ref{key1}  characterise distinguished submanifolds in a way that immediately allows the proliferation of conserved quantities.
As mentioned above, Killing tensors, Killing-Yano tensors and their conformal analogues are well-established as tools for providing first integrals for geodesics.
These are each examples of solutions to first BGG equations, a large class of overdetermined natural equations~\cite{CSS2000,CD}.
For such equations, there is a class of solutions called \emph{normal} solutions that are in one-to-one correspondence with parallel sections of the corresponding tractor bundle~\cite{CGH-Duke}.
In particular, on conformally flat manifolds, all solutions to first BGG equations are normal.
Let us state our result rather informally as follows.

\begin{cor}\label{fi-cor}
Suppose a conformal manifold admits a BGG normal solution corresponding to a parallel tractor $S$, and $\Sigma$ is a distinguished submanifold.
Let  
\[
  \langle \otimes^\ell S, \otimes^k N \rangle
\]
denote a scalar quantity constructed from linear combinations of tensor powers of $S$ and linear combinations of tensor powers of the normal tractor form $N$ and with contractions using the conformal tractor metric and possibly the tractor volume form.
Then $\langle \otimes^\ell S, \otimes^k N \rangle$ is a first integral for the distinguished submanifold.
\end{cor}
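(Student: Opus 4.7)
The plan is to reduce the claim to a one-line Leibniz-rule computation, once one has assembled the parallelism properties of each ingredient of $\langle \otimes^\ell S, \otimes^k N \rangle$. Here a \emph{first integral of a distinguished submanifold} means a scalar function on $\Sigma$ whose pullback tractor derivative $\nabla_i$ vanishes, equivalently one that is locally constant on $\Sigma$; in the curve case this recovers the classical notion used in \cite{GST}.

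First I would observe that, by the defining property of a normal BGG solution (see \cite{CGH-Duke}), the tractor $S$ is parallel with respect to the ambient tractor connection, so $\nabla_a S = 0$ on $M$; pulling back to $\Sigma$ gives $\nabla_i S = 0$ along $\Sigma$. Next, since $\Sigma$ is distinguished, condition 3 of Theorem \ref{key1} gives $\nabla_i N_{A_1 \cdots A_d} = 0$, and the equivalent condition 4 gives the same for the tractor Hodge dual $\star N$. Moreover the tractor metric $h_{AB}$, its inverse $h^{AB}$, and the tractor volume form are parallel for the ambient tractor connection and hence for its pullback. Thus every building block of the scalar in question is annihilated by $\nabla_i$.

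The Leibniz rule then yields $\nabla_i \langle \otimes^\ell S, \otimes^k N \rangle = 0$ along $\Sigma$, since each tensor factor and each contraction tensor is parallel, and linearity extends the conclusion from a single monomial contraction to the arbitrary linear combinations permitted in the statement. For a scalar, $\nabla_i$ reduces to the ordinary directional derivative along $T\Sigma$, so the scalar is locally constant on $\Sigma$, i.e.\ a first integral.

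The main subtlety, and the point I would be most careful to spell out, is the role of \emph{normality} of the BGG solution: a general BGG solution lifts to a tractor that is parallel only up to curvature-dependent corrections, and these corrections would spoil the simple Leibniz argument. On conformally flat backgrounds all first BGG solutions are normal, so there the conclusion is unconditional; in general one must either restrict to normal solutions or track carefully which curvature terms survive upon restriction to $\Sigma$. A final sanity check would be that specialising to $\dim \Sigma = 1$ recovers the first integrals of \cite{GST}, using the identification of $\star N$ with the fundamental $3$-tractor of an unparametrised conformal circle noted in the proof of Theorem \ref{conf_circ_2ff_zero}.
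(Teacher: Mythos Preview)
Your proof is correct and takes essentially the same approach as the paper: both argue that $S$ is parallel by normality, $N$ is parallel along $\Sigma$ by Theorem~\ref{key1}, the tractor metric and volume form are parallel, and hence any full contraction is annihilated by $\nabla_i$ via the Leibniz rule. The paper states this even more briefly, referring the formal argument to the curve case in \cite{GST}; your version spells out the steps explicitly and adds the useful remark about why normality is needed.
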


\noindent This result generalises the large family of conformal circle
first integrals constructed in \cite{GST} to the case of distinguished
submanifolds of arbitrary dimension. We should say that rather than
using the tractor normal form in the Corollary \ref{fi-cor} above one
may equally alternatively (or additionally use) $\star N$, or
$N^A_B$. Precise statements can be found in Section~\ref{fi-thm-sec},
where we also show that it is easy to compute explicit examples.

Towards another key application, we show that there is yet another
characterisation of distinguished conformal submanifolds that takes
the form of a moving incidence relation.  For this we need the first
elements of conformal tractor calculus.  On any smooth manifold, one
has the bundle of conformal 1-densities that we call $\cE[1]$, which
is a root of the squared canonical bundle, see Section
\ref{conv-c-geom}. Its 2-jet bundle $J^2 \cE[1]$ admits the exact
sequence at 2-jets,
\begin{equation}\label{exact-2-jets}
  0 \to S^2 T^*M [1] \to J^2 \cE[1] \to J^1 \cE[1] \to 0,
\end{equation}
where $\cV[w] := \cV \otimes \cE[w]$ for any vector bundle $\cV$ and
any $w\in\mathbb{R}$.

The introduction of a conformal structure determines a canonical
splitting of $S^2 T^*M [1]$ as  $S^2 _0 T^*M [1] \oplus \bg\cdot \cE[-1]$, where $\bg\in\Gamma(S^2 T^*M [2])$ is the conformal metric.  The
standard conformal cotractor bundle $\cT^*$ (or $\cT^* M$) is the quotient of $J^2
\cE[1]$ by the image of $S^2 _0 T^*M[1]$ and so has a filtration as
given by the exact sequence
\begin{equation}\label{exact}
  0 \to \cE[-1] \stackrel{X}{\to} \cT^* \to J^1 \cE[1] \to 0.
\end{equation}
(In the case that $M$ is of dimension one then $S^2 _0 T^*M[1]$ is
trivial and $\cT^*=J^2\ce[1]$.)
There is canonically a conformally invariant  metric $h$ on the bundle
$\cT^*$, and hence $\cT^*$ is identified with its dual $\cT$, which we
call the \emph{tractor bundle}.  The bundle injection $X$ which maps
$\cE[-1] \to \cT^*$ is typically viewed as a section $X \in
\Gamma(\cT^*[1])$, and called the \emph{canonical tractor}.  This
invariantly encodes information about position on the manifold and
plays a very important role in our developments here.

A well known feature of $\cT^*$ is that, in the case where $\dim M
\geq 3$, it is naturally equipped with the canonical conformally
invariant tractor connection~\cite{BEG}, which is equivalent (see \cite{CG-tams}) to the
normal Cartan connection as in \cite{cartan1923espaces}.  This preserves the tractor metric.  Using this object and language we have the following result.

\begin{thm}\label{thm_submanifold_gst}
   Let $\Sigma$ be an embedded submanifold of codimension $d$ in a conformal manifold $(M, \bm{c})$. 
  Then $\Sigma$ is distinguished if, and only if, either (equivalently both) of the following holds
  \begin{itemize}
    \item there exists a nontrivial $\Psi \in \Gamma(\Lambda^d \cT^*)$ such that $X \intprod \Psi = 0 $ and $\nabla_i \Psi = 0$ along $\Sigma$, or
    \item there exists a nontrivial $\star \hspace{-0.5pt} \Psi \in \Gamma(\Lambda^{n+2-d} \cT^*)$ such that $X \wedge \star \hspace{-0.5pt} \Psi = 0$ and $\nabla_i \star \hspace{-2.7pt} \Psi = 0$ along $\Sigma$.
  \end{itemize}
\end{thm}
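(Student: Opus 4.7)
The two bullets are equivalent via the tractor Hodge star $\star\colon \Lambda^d\cT^*\to\Lambda^{n+2-d}\cT^*$, which commutes with the tractor connection and (up to sign) interchanges interior product with $X$ and wedge by $X$; so it suffices to treat the first bullet.

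For the forward implication I would take $\Psi=N$. The parallelism $\nabla_i N=0$ along $\Sigma$ is condition~(3) of Theorem~\ref{key1}, and $X\intprod N=0$ is immediate from the observation that the ambient canonical tractor, restricted to $\Sigma$, coincides with the canonical tractor of $\Sigma$ and hence lies in $\cT\Sigma$, which is orthogonal to the $\cN$ in which $N$ takes values.

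For the converse, suppose a nontrivial $\Psi$ as described exists; by Theorem~\ref{key1} it suffices to prove $\nabla_iN=0$ along $\Sigma$. My plan is to iteratively differentiate the algebraic identity $X^A\Psi_{AB_2\cdots B_d}=0$ along $\Sigma$, at each step using $\nabla_i\Psi=0$ together with the standard tractor identities $\nabla_aX^A=Z^A_a$, $\nabla_aZ^A_b=-g_{ab}Y^A-P_{ab}X^A$, $\nabla_aY^A=P_a{}^bZ^A_b$ and the Gauss--Weingarten formulas for the embedding. The first differentiation yields $Z^A_i\Psi_A=0$ for $i\in T\Sigma$, which together with the original constraint pins $\Psi|_\Sigma$ down pointwise to the form $fN+X\wedge\omega$ with $f$ a weighted scalar and $\omega\in\Gamma(\Lambda^{d-1}\cN)$. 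The second differentiation yields the tensorial identity $g_{ij}(Y\intprod\Psi)=\II_{ij}^\alpha(Z_\alpha\intprod\Psi)$, whose trace-free part forces $\tfII=0$ (so $\Sigma$ is umbilic) and whose trace fixes $\omega=(f/m)H^\alpha N_\alpha$, where $H$ is the mean curvature vector and $N_\alpha:=Z_\alpha\intprod N$. A length argument using $h(X,X)=0$ and $\cN\perp X$ then yields $|\Psi|^2=f^2|N|^2$, so parallelism of $\Psi$ and constancy of $|N|^2$ force $f$ to be a nonzero constant. A third differentiation, after incorporating $\nabla Y$ and $\nabla Z_\alpha$ for normal $\alpha$ via the Weingarten formula, collapses on the umbilic reduction to $\bigl[(1/m)(\nabla^\perp_k H)^\alpha -g^{\alpha\beta}P_{k\beta}\bigr](Z_\alpha\intprod\Psi)=0$; since the $Z_\alpha\intprod\Psi$ project to the linearly independent family $\{N_\alpha\}$, the bracketed quantity must vanish identically. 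This is the conformally invariant Belgun/Fialkow-type identity which, combined with umbilicity, is equivalent to $\LL=0$ in view of the structure of the tractor second fundamental form developed in Section~\ref{geq3}; a final appeal to Theorem~\ref{key1} completes the argument.

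The main obstacle is the third-differentiation step: the computation is routine but the bookkeeping (particularly the use of the normal connection, the Weingarten formula, and the mixed tangential--normal components of the Schouten tensor) is delicate, and one needs a clean identification of the resulting conformally invariant tensor with the conditional invariant whose vanishing, together with umbilicity, characterises $\LL=0$ in codimension greater than one.
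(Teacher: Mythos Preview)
Your forward direction and the Hodge-star reduction match the paper exactly, and your first differentiation step (yielding $Z^A_i\Psi_{A\ldots}=0$ for tangential $i$, hence $\nu\in\Lambda^d N^*\Sigma[d]$ and $\rho\in\Lambda^{d-1}N^*\Sigma[d-2]$) is also the same. The length argument giving $f$ locally constant and nonzero is likewise shared.

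Where you diverge is in how you finish. You continue to differentiate the incidence relation, extracting first $\mathring{\II}=0$, then a $\mu$-type identity, and finally appealing to the equivalence $\LL=0\Leftrightarrow(\mathring{\II}=0$ and $\mu=0)$. The paper instead, having established $\nu=fN_{a_1\cdots a_d}$ with $f$ a nonzero constant, simply expands $\nabla_i\Psi=0$ in the $\mathbb{Z}$/$\mathbb{X}$-projector slots. The $\mathbb{Z}$-slot equation reads $f\nabla_iN_{a_1\cdots a_d}+\bg_{i[a_1}\rho_{a_2\cdots a_d]}=0$; contracting with $\bg^{ia_1}$ and using $\nabla_iN_{a_1\cdots a_d}=-d\,\II_{i[a_d}{}^cN_{a_1\cdots a_{d-1}]c}$ yields $\rho_{a_2\cdots a_d}=fd\,H^bN_{ba_2\cdots a_d}$ directly. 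This shows $\Psi=fN_{A_1\cdots A_d}$ (the tractor normal form) on the nose, and parallelism of $N$ is immediate. No second or third differentiation of the incidence relation is needed, and no separate appeal to the $(\mathring{\II},\mu)$ characterisation.

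Your route is workable in principle, but two concrete issues appear. First, the formula $\omega=(f/m)H^\alpha N_\alpha$ is not right: once $\rho$ is determined you should find $\Psi=fN_{A_1\cdots A_d}$ exactly, so in your decomposition (with $N$ the tractor normal form) the correct answer is $\omega=0$; the nonzero $\omega$ you write down suggests a conflation of the tractor and Riemannian normal forms. Second, the bracket in your third-differentiation identity should be $(\nabla^\perp_kH)^\alpha - P_k{}^\alpha$, without the $1/m$; this is exactly $-\mathrm{N}^c_a(P_k{}^a-\nabla_kH^a)$, the second slot of $\LL$. These are repairable, but they underscore that the paper's approach---reading off $\rho$ from one slot of $\nabla_i\Psi=0$ rather than extracting $\mathring{\II}$ and $\mu$ through successive differentiations---is both shorter and less error-prone.
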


%\begin{rem}
%This result also holds when $\dim M =2$, provided $M$ is endowed with a M\"obius structure (in the sense of \cite{CalderbankMobiusStructures}), in which case it characterises the conformal circles in $M$.
%\end{rem}

Again this generalises a result for non-null conformal circles from~\cite{GST}.
If either of the above conditions hold, then $\Psi$ is necessarily (up to locally constant factor) the tractor normal form of the submanifold. 
This will be proved in Section~\ref{corbit-sec}. Note that in the model case of the conformal sphere, viewed as the projectivised null cone of a Minkowski space $\mathbb{M}$ of two higher dimensions, the distinguished submanifolds all arise from cutting the null cone with a subspace (and projectivising); in this case the tractor $\star \Psi$ corresponding to a distinguished submanifold $\Sigma$ is constant and simple (it is the wedge product of vectors that span the corresponding subspace of the Minkowski space) and, identifying the canonical tractor $X$ with the position vector in $\mathbb{M}$, the condition $X\wedge\star\hspace{-0.5pt} \Psi=0$ is precisely the incidence relation saying that $X$ is a point in the subspace corresponding to $\star\hspace{-0.5pt} \Psi$.

Theorem~\ref{thm_submanifold_gst} is a useful result in that it allows us to immediately
conclude that certain zero loci of normal solutions of appropriate BGG
equations are distinguished submanifolds. Recall that on a Riemannian
manifold, an alternating tensor $k$ of degree $d$ is a
conformal Killing form if the trace-free part of $\nabla k$ is completely alternating.  For a suitable
conformal weight, this condition is conformally invariant, see Section
\ref{ex-sect}.  Combining the curved orbit theory of \cite{CGH-Duke} with 
Theorem~\ref{thm_submanifold_gst} we obtain the following, where the operator $L$ is explained in
\eqref{eqn_conf_killing_splitting_op} (and Theorem \ref{normp}).
\begin{thm}\label{zero-locus}
Suppose $k$ is a normal solution of the conformal
Killing form equation on $(M, \bm{c})$ of degree $d-1$ such that the parallel tractor
$\mathbb{K} =L(k) \in \Gamma(\Lambda^d \cT^*)$ is simple. 
Then the zero locus of $k$ is either empty, an isolated point, or a distinguished submanifold of codimension $d$.
\end{thm}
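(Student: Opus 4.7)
The plan is to combine Theorem~\ref{thm_submanifold_gst} with the curved orbit theory of \cite{CGH-Duke}. Recall that, because $k$ is a normal solution, the tractor $\mathbb{K} = L(k) \in \Gamma(\Lambda^d\cT^*)$ is parallel for the conformal tractor connection, and the bottom-slot BGG projection recovers $k$ from $\mathbb{K}$: at each point, $k$ vanishes if and only if $X \intprod \mathbb{K} = 0$. The easy half of the theorem is then immediate. Suppose at some point $p$ of the zero locus $Z(k)$ a neighbourhood of $p$ in $Z(k)$ is a smooth embedded submanifold $\Sigma$ of codimension $d$. Apply Theorem~\ref{thm_submanifold_gst} with $\Psi := \mathbb{K}|_\Sigma$: the incidence condition $X \intprod \Psi = 0$ holds on $\Sigma$ by the very definition of $Z(k)$, and $\nabla_i \Psi = 0$ follows because the pullback of a parallel ambient tractor is parallel for the pullback connection. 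Hence such a $\Sigma$ is automatically distinguished, and $\Psi$ is (up to locally constant factor) the tractor normal form of $\Sigma$.

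The main work is therefore to show that $Z(k)$ admits only the three possible local types asserted: empty, an isolated point, or a codimension-$d$ submanifold. For this I would appeal to the curved orbit decomposition: the parallel tractor $\mathbb{K}$ reduces the normal Cartan bundle of $(M,\cc)$ to a principal subbundle for the stabiliser $H \subseteq G$ of a fixed value $\mathbb{K}_0 \in \Lambda^d\mathbb{V}$, where $\mathbb{V} = \R^{n+1,1}$ carries the standard tractor representation. By the curved orbit theorem of \cite{CGH-Duke}, the stratification of $M$ by the values of tractorial quantities built from $\mathbb{K}$ and $X$ pulls back, via the Cartan development, from the orbit decomposition of the model $\mathcal{N}_+/\R_{>0} \cong S^n$ under $H$. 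In particular, the subset $Z(X \intprod \mathbb{K}) = Z(k)$ corresponds precisely to $\mathbb{P}(V \cap \mathcal{N}_+)$, where $V := \{v \in \mathbb{V} : v \intprod \mathbb{K}_0 = 0\}$ is the annihilator of $\mathbb{K}_0$. Because $\mathbb{K}_0$ is simple of degree $d$, $V$ is a genuine linear subspace of codimension exactly $d$ in $\mathbb{V}$; this is the single place where the simplicity hypothesis is used.

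The trichotomy then comes from the signature classification of $V$ inside the Lorentzian space $\mathbb{V}$. If the restriction of the tractor metric to $V$ is positive definite, then $V \cap \mathcal{N}_+ = \emptyset$ and $Z(k)$ is empty. If it is positive semi-definite with one-dimensional null radical, then $V \cap \mathcal{N}_+$ is a single null ray and the corresponding curved orbit is zero-dimensional, giving an isolated point of $Z(k)$. Otherwise the restriction is of Lorentzian type and $V \cap \mathcal{N}_+$ is the open future null cone of $V$, whose projectivisation is smooth of dimension $m = n-d$; the corresponding curved orbit is then an embedded submanifold of $M$ of codimension $d$. Combining this trichotomy with the first paragraph completes the proof. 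The main technical obstacle lies in the curved orbit step: one must verify that simplicity of $\mathbb{K}$ really does force the annihilator to be a linear subspace of full codimension $d$ (so that the curved orbit has the expected dimension), and that the three model orbit types enumerated exhaust all possibilities arising from a simple parallel $d$-tractor form in Riemannian conformal signature.
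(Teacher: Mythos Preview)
Your overall strategy matches the paper's: invoke curved orbit theory from \cite{CGH-Duke} for the trichotomy and Theorem~\ref{thm_submanifold_gst} for the distinguished property; your signature classification of $h|_V$ is exactly dual to the paper's timelike/null/spacelike classification of $\mathbb{K}$.

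There is, however, a genuine gap at the outset. Your claim that ``$k$ vanishes if and only if $X\intprod\mathbb{K}=0$'' is not justified by the BGG projection and is in fact false without further argument. From the splitting \eqref{eqn_conf_killing_splitting_op} (or the composition series \eqref{eqn_tractor_form_comp_series}) one sees that $X\intprod\mathbb{K}$ involves both the $\mathbb{Y}$-slot $k$ and the $\mathbb{W}$-slot $\nu=\tfrac{d-1}{n-d+2}\nabla^c k_{c\,a_3\cdots a_d}$, so that $X\intprod\mathbb{K}=0$ is equivalent to the vanishing of \emph{both}; the condition $k=0$ alone only gives $X\intprod\mathbb{K}=0\bmod X$. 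The paper establishes $\cZ(k)=\cZ(X\intprod\mathbb{K})$ case by case: at a zero of $k$ one computes $\mathbb{K}^{A_1\cdots A_d}\mathbb{K}_{A_1\cdots A_d}=\tfrac{1}{d^2}|\phi|^2+|\nu|^2\geq 0$, which dispatches the timelike case (no zeros) and forces $\nu=0$ in the null case; in the spacelike case one must invoke simplicity a \emph{second} time, writing $\mathbb{K}=I^1\wedge\cdots\wedge I^d$ with orthogonal spacelike $I^i$ and using that $X$ is null to see that $X\intprod\mathbb{K}=0\bmod X$ already forces $X\intprod\mathbb{K}=0$. Consequently your assertion that the annihilator codimension is ``the single place where the simplicity hypothesis is used'' is incorrect, and without this additional step your identification of $\cZ(k)$ with the projectivised intersection of $V$ with the forward null cone does not hold.
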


\begin{rem}\label{rem-zero-locus-thm}

\emph{(i)} The hypothesis that the parallel tractor $\mathbb{K}$ be simple is natural in that only in this case can $\mathbb{K}$ be a multiple of the tractor normal form of a codimension $d$ submanifold. Without the simplicity condition, higher than codimension $d$ zero loci are possible.
\emph{(ii)} If $d=1$, then $\sigma=k$ is a weight $1$ conformal density and the theorem still holds if we regard the hypothesis on $\sigma=k$ as saying that $\sigma$ is an almost-Einstein scale in the sense of \cite{GRiemSig} and interpret $\mathbb{K}=L(k)\in \Gamma(\cT^*)$ as the corresponding parallel scale tractor $I$. In this case $X \intprod \mathbb{K}=\sigma$, and the conclusion is a known result about the zero locus of an almost-Einstein scale \cite{GRiemSig,Curry-G-conformal}. \emph{(iii)} When $d=1,2$, $k$ does not need to be normal; see Section \ref{subsec-curved-orbits}.
\end{rem}
  
\noindent In fact finer information about the zero locus is available, see Theorem
\ref{thm_submanifold_zero_locus}.  This is an analogue for normal
Killing solutions of the results for almost Einstein scales found in
\cite{gover2004almost,GRiemSig,Curry-G-conformal}. Those results for
Einstein scales (and their generalisations to so-called ASC scales in
\cite{GRiemSig}) were key in the previously mentioned development of a holographic approach
to hypersurfaces via a singular Yamabe problem in
\cite{AGW,BGW,GWWillmore,GWRenormVol,GWConfHypYamabe},
as well as a
boundary calculus of asymptotically hyperbolic manifolds
\cite{GWBoundaryCalc}.  We believe the results in Theorem
\ref{thm_submanifold_zero_locus} should provide one of the key
insights for the analogous treatment of submanifolds of higher
codimension (which need not be distinguished).  Indeed, toward this end we provide a simple direct proof
of a similar zero locus result, for fields satisfying weaker (than normal
Killing-Yano) conditions, in Proposition \ref{progress}.

\section{Conventions and conformal geometry} \label{conv-c-geom}

Often we will use the standard abstract index notation of Penrose.  For
example we may write $\ce^a$ for the tangent bundle $TM$ of a manifold
$M$ and $v^a$ for a vector field on $M$. Similarly $\ce_a$ denotes the
cotangent bundle $T^*M$, and $\omega_a\in \Gamma(\ce_a)$ a 1-form
field.  Then we write $v^a\omega_a$ for the canonical pairing between
vector fields and $1$-forms. We denote by the Kronecker delta
$\delta^b{}_a$ the identity section of the bundle $\textrm{End}(TM)$
of endomorphisms of $TM$. Indices enclosed by round (respectively by
square) brackets indicate symmetrisation (respectively
skew-symmetrisation) over the enclosed indices. For example, if $T_{ab}$
is a rank 2 tensor then
$$
T_{(ab)}=\frac{1}{2}(T_{ab}+T_{ba})\qquad
\mbox{and}\qquad T_{[ab]}=\frac{1}{2}(T_{ab}-T_{ba}).
$$
We also use this notation for bundles. 
For example, $\cE_{[a_1 a_2 \cdots a_d]}$ denotes the bundle of $d$-forms.
When tractor bundles
are introduced these will also be adorned with abstract indices when
convenient, with the same convention for symmetrisation and
skew-symmetrisation. 

For simplicity of exposition we assume throughout that the basic manifold $M$ studied is  connected.

\subsection{Conventions for Riemannian geometry}

A Riemannian manifold is a pair $(M^n ,g)$, consisting of a manifold
$M$ and a positive definite metric $g$. We assume that the dimension
$n$ (of $M$) is at least 2.  All structures are assumed smooth,
meaning $C^\infty$. This is to simplify the discussion. For all the
theory a much lower level of regularity is required, but this varies
throughout and at any point is easily calculated by the reader.
We will also typically assume for convenience that $M$ is oriented, with volume form $\epsilon_{a_1 a_2 \cdots a_n}$ normalised by $\epsilon^{a_1 a_2 \cdots a_n} \epsilon_{a_1 a_2  \cdots a_n} = n!$, where indices are raised using the inverse of the
metric $g$.

Writing $\nabla$ for the Levi-Civita connection, the \emph{Riemannian curvature tensor} $R_{ab} {}^c {}_d$ is defined by
\begin{equation}\label{curvature}
  R_{ab} {}^c {}_d v^d = [ \nabla_a, \nabla_b] v^c.\, \qquad v^a\in \Gamma(\ce^a),
\end{equation}
in the abstract index notation.

In dimensions $n\geq 3$, this decomposes into trace-free and a trace part:
\begin{align*}
  R_{abcd} & = W_{abcd} + 2 \, g_{c[a} \Rho_{b]d} - 2 \, g_{d[a} \Rho_{b]c} \, ,
\end{align*}
where $W_{ab}{}^{c}{}_{d}$ is the {\em Weyl tensor} and $\Rho_{ab}$
is the  {\em Schouten tensor}. 
Equivalently, the Schouten tensor is characterised by 
\begin{equation}
  R_{ab} = (n-2) P_{ab} + J g_{ab},
\end{equation}
where $R_{ab}:=R_{ca}{}^c{}_b$ is the Ricci tensor, and $J := g^{ab}
P_{ab}$. The Weyl tensor is totally trace free, and satisfies the
algebraic Bianchi identities. In dimension 3 this implies that the Weyl
tensor is zero.

The \emph{Cotton} tensor (also for $n \geq 3$) is defined by
\begin{equation}
  C_{abc} := 2 \nabla_{[a} P_{b] c}.
\end{equation}

In dimension 2, it is easy to show that the Riemannian curvature is
pure trace:
\begin{equation*}
R_{abcd} = K(g_{ik}g_{jl}-g_{il}g_{jk}),
\end{equation*} 
where $K$ is the Gau{\ss}ian curvature. Hence the Ricci tensor is also pure trace, and the Weyl curvature is zero. 

Later, we will need to consider 1-dimensional submanifolds $\Sigma$
equipped with a Riemannian metric.  On a 1-dimensional manifold
$\Sigma$, a Riemannian metric $g_\Sigma$ takes the form $u \otimes u$,
where $u$ is a non-vanishing 1-form, and requiring $u$ to be the
volume form corresponding to $g_\Sigma$ and the orientation fixes the
sign of $u$.  Thus there is a unique connection preserving this
metric, namely the connection $D$ that preserves $u$.  We
will term this the Levi-Civita connection for $(\Sigma, g_\Sigma)$.
The curvature of any such connection is clearly zero.

\subsection{Conformal geometry}\label{sec_conf_geo}

Two metrics $g, \hat{g}$ are said to be \emph{conformally related} if 
\begin{equation}\label{conf_equivalence}
  \hat{g} = \Omega^2 g,
\end{equation}
where $\Omega \in C^\infty (M)$ is a positive function.  Then $\cc$
denotes an equivalence class of conformally related metrics, i.e.\ if
$g, \hat{g} \in \cc$, then they are related according
to~\eqref{conf_equivalence} for some smooth $\Omega$, and we may write
$\cc = [g]$.  A conformal manifold is then a pair $(M, \cc)$.

Recall that on a manifold $M$, for any $\alpha \in \R$, one has the
bundle of $\alpha$-densities.  This is the associated bundle to the
linear frame bundle of $M$ via the 1-dimensional
$\GL(n)$-representation $A \mapsto |\det(A)|^{-\alpha}$.  Sections of
this bundle are called $\alpha$-densities.  There is a correspondence
between 1-densities and sections of $\Lambda^n T^*M$ when $M$ is
oriented, or in general between the square of these bundles; e.g., if $M$ is oriented then, given a local oriented frame field $(e_1,\ldots, e_n)$, the function representing a (local) section $v$ of $\Lambda^n T^*M$ as a $1$-density is simply $v(e_1,\ldots, e_n)$.

%%, which we
%%outline here.  Let $v_{a_1 a_2 \cdots a_n} \in \Gamma(\cE_{[a_1 a_2
    %%\cdots a_n]})$ be an $n$-form.  Choosing a frame, $v$ is
%%represented by its components $v_{\mathrm{a}_1\mathrm{a}_2 \cdots \mathrm{a}_n}$, where here the change of font indicates that the
%%indices $\mathrm{a}_j$ should be understood as concrete indices.  By
%%contraction and summation with two copies of the Levi-Civita symbol, the given
%%choice of frame then induces  a map from $S^2
%%\Lambda^n T^*M \to \R$, and changing frame by some $A \in \GL(n)$
%%rescales the result of this map by a factor of ${(\det(A))}^2$.  Thus
%%each $v\in \Gamma( \Lambda^n T^*M)$ determines (via $\otimes^2 v$ and
%%then taking a square root) a map $f$ from the frame bundle to
%%$\mathbb{R}$ which is equivariant with respect to the $\GL(n)$-action,
%%explicitly
%%\begin{equation*}
  %%f(A \cdot u) = \left|\det(A)\right| f(u).
%%\end{equation*}
%%Thus by the usual correspondence between sections of associated bundles and equivariant functions, we have constructed from $v$ a 1-density. 

Separately, we have the bundle of conformal densities of weight $w$, which we denote by $\cE[w]$, and which are defined by 
\begin{equation}
  \cE[w] := \cQ \times_{\rho_w} \R, 
\end{equation}
where $\cQ$ is the ray bundle of conformally related metrics and
$\rho_w$ is the 1-dimensional $\R_+$-representation $\rho_w(s)(t) :=
s^{-\frac{w}{2}}t$. The bundles $\ce[w]$ are evidently oriented and we
write $\ce_+[w]$ for the ray subundle of positive elements.  As
detailed in, e.g.,~\cite{CGAmbientMetric}, the conformal densities of
weight $w$ are in bijective correspondence with densities of weight
$\left( -\frac{w}{n} \right)$.  In particular, this means that
1-densities also correspond to conformal densities of weight $-n$, and
so together with the discussion of the previous paragraph we have an
isomorphism $(\Lambda^n T^*M)^2 \stackrel{\cong}{\to} \cE[-2n]$, and
dually $(\Lambda^n TM)^2 \stackrel{\cong}{\to} \cE[2n]$.  If
$\mathcal{B}$ is a vector bundle on $M$ we will write $\mathcal{B}[w]$
as a shorthand for $\mathcal{B}\otimes [w]$.

If $M$ is oriented, as we henceforth assume, we write $\be = \be_{a_1
  a_2 \cdots a_n} \in \Gamma(\cE_{[a_1 a_2 \cdots a_n]} [n])$ for the
canonical map $\Lambda^n TM \to \cE[n]$, given by contraction, and
call $\be_{a_1 a_2 \cdots a_n}$ the \emph{conformal volume form} or
\emph{weighted volume form}.  Since $\cE[w]$ is an associated bundle,
its sections may be thought of as equivariant functions $f : \cQ \to
\R$ such that $f(s^2 g_x) = s^w f(g_x)$.  So we may think of a section
of $\cE[w]$ as an equivalence class of pairs $(g,f)$, where $(g,f)
\sim (\Omega^2 g, \Omega^w f)$. The conformal volume form can
therefore similarly be thought of as the equivalence class of
$(g,\epsilon)$ for any $g\in\bc$, where $\epsilon$ is the Riemannian
volume form of $g$ and $(g,\epsilon)\sim (\Omega^2g,
\Omega^{n}\epsilon)$.

Corresponding to $g\in\cc$ there is evidently a corresponding section $\sigma_g\in \Gamma(\ce[1])$, represented by the pair $(g,1)$. 
It follows that the conformal structure $\cc$ determines a tautological section $\bg \in \Gamma(S^2 T^*M \otimes
\cE_+[2])$ that is given by
\begin{equation} \label{c-met}
  \bg = (\sigma_g)^2 g,
\end{equation}
for any metric $g\in \cc$ (but which is independent of this choice);
equivalently, the tautological section $\bg \in \Gamma(S^2 T^*M
\otimes \cE_+[2])$ may be thought of as the equivalence class of
$(g,g)$ for any metric $g\in \bc$, where $(g,g)\sim (\Omega^2g,
\Omega^2g)$.  This is called the \emph{conformal metric}. We will
henceforth typically use the conformal metric to raise and lower
indices, even when a choice of $g\in \cc$ has been made (thus raising
and lowering indices typically introduces a density bundle
weight). For example the Riemann curvature with indices all down
$R_{abcd}$ will now be considered to have weight 2 as it is
$$
R_{abcd}=\bg_{ec}R_{ab}{}^e{}_d ~ ,
$$
the Weyl-Schouten decomposition of the Riemann curvature becomes 
\begin{align*}
  R_{abcd} & = W_{abcd} + 2 \, \bg_{c[a} \Rho_{b]d} - 2 \, \bg_{d[a} \Rho_{b]c} ~ ,
\end{align*}
for dimensions $n\geq 3$, and $J$ will mean $\bg^{ab}P_{ab}$
From (\ref{c-met}) we see that if we use $\sigma_g$ to trivialise density bundles, then the conformal metric $\bg$ becomes $g$. However, usually we will avoid trivialising density bundles. This becomes significant when we write down conformal rescaling laws, since then there are two different metrics that could be used to trivialize the density bundles (and correspondingly two different flat connections on sections of density bundles) and many formulae are simplified when we work with weighted objects.

\medskip

Each metric $g\in \cc$ determines a corresponding Levi-Civita
connection $\nabla$. This naturally acts on sections of density
bundles and, tautologically from the construction above, preserves
$\sigma_g$. Thus as well as preserving $g$, the Levi-Civita
connection $\nabla$ preserves $\bg$ and
$\be$ (cf.\ \cite{Curry-G-conformal}).  Under a change to
$\widehat{g}=\Omega^2 g\in \cc$ we have
\begin{equation}\label{vector_trans}
  \hat{\nabla}_a u^b = \nabla_a u^b + \Upsilon_a u^b - \Upsilon^b u_a + \Upsilon_c u^c \delta_a ^b,  \qquad \mbox{on} \,\,\, u^b\in \Gamma(\ce^b), 
\end{equation}
\begin{equation}\label{covector_trans}
  \hat{\nabla}_a \omega_b = \nabla_a \omega_b - \Upsilon_a \omega_b - \Upsilon_b \omega_a + \Upsilon^c \omega_c \bg_{ab}, \qquad \mbox{on} \,\,\, \omega_b\in \Gamma(\ce_b),
\end{equation}
and
\begin{equation}\label{dens-trans}
\hat{\nabla}_a \tau = \nabla_a \tau +w \Upsilon_a \tau  \qquad \mbox{on}\,\,\, \tau\in \Gamma(\ce[w]),
\end{equation}
where $\Upsilon_a:=\Omega^{-1} \nabla_a \Omega $.

The Weyl curvature $W_{ab}{}^c{}_d$ is conformally invariant, while the Schouten tensor transforms according to,
\begin{equation}\label{schouten_trans}
  \hat{P}_{ab} = P_{ab} - \nabla_a \Upsilon_b + \Upsilon_a \Upsilon_b - \frac{1}{2} \Upsilon_c \Upsilon^c \bg_{ab} .
\end{equation}
Equations~\eqref{vector_trans} and~\eqref{covector_trans} still hold when $M$ has dimension 1, although the final two terms of both equations cancel.
Equation~\eqref{schouten_trans} only holds when $\dim M \geq 3$, since in lower dimensions the Schouten tensor is not defined.

\subsection{The tractor connection and calculus}\label{tractor_calc}

Recall from the introduction, the tractor bundle is recovered from jets of the conformal density bundle $\cE[1]$.
The inverse of the conformal metric maps $S^2T^*M [1] \to \cE[-1]$, with kernel $S^2 _0 T^*M [1]$ and hence we have a decomposition 
\begin{equation*}
  S^2 T^*M [1] = S^2 _0 T^* M[1] \oplus \bg \cdot \cE[-1].
\end{equation*}

Then the bundle $\cT^*$ (that we also denote $\ce_A$ in the abstract index notation)
is $J^2 \cE[1]$ modulo the image of $S^2 _0 T^* M[1]$ under the map $S^2 T^*M [1] \to J^2 \cE[1]$ of the jet exact sequence at 2-jets~\eqref{exact-2-jets}.
Thus we obtain~\eqref{exact}.
From the jet exact sequence at 1-jets 
\begin{equation}\label{1-jet}
  0 \to T^*M [1] \to J^1 \cE[1] \to \cE[1] \to 0,
\end{equation}
we then see that $\cT^*$ has the composition series 
\begin{equation*}
  \cT^* = \cE[1] \lpl T^*M [1] \lpl \cE[-1].
\end{equation*}
Here the semidirect sum notation $\lpl$ simply encodes the information of the exact sequences (cf.\ \cite{BEG}).
Note that this construction still applies when $M$ has dimension 1 or 2, but in dimension 1, $S^2 _0 T^*M [1]$ is trivial and hence $\cT^*$ is simply $J^2 \cE[1]$.
Recall that we denote by $X_A \in \Gamma(\cE_A [1])$ the canonical tractor which provides the embedding $\cE[-1] \to \cE_A$.
Let us also note that by the definition of the tractor bundle, there is an invariant differential operator $\TD : \Gamma(\cE[1]) \to \Gamma(\cT^*)$, where $\frac{1}{n} \TD$ is the differential operator corresponding to the linear map $J^2 \cE[1] \to \cT^*$.

Let us now fix $n \geq 3$.
Given a choice of metric $g\in\bc$, the formula
\begin{equation}\label{thomas_D}
  \sigma \mapsto \frac{1}{n} {[\TD_A \sigma]}_g := 
  \begin{pmatrix}
    \sigma \\ 
    \nabla_a \sigma \\ 
    - \frac{1}{n} \left( \Delta + J \right) \sigma 
  \end{pmatrix},
\end{equation} 
where $\Delta=\nabla^a\nabla_a$, gives a second-order differential operator on $\cE[1]$ which is a linear map $J^2 \cE[1] \to \cE[1] \oplus \cE_a [1] \oplus \cE[-1]$ that clearly factors through $\cT^*$ and so determines an isomorphism
\begin{equation}\label{T_isom}
  \cT^* \stackrel{\sim}{\longrightarrow} {[\cT^*]}_g = \cE[1] \oplus \cE_a [1] \oplus \cE[-1],
\end{equation}
and hence the sequences~\eqref{exact} and~\eqref{1-jet} split, as
discussed in, e.g., \cite{CG-irred,Curry-G-conformal}. When using a choice of metric $g$ to split the tractor bundle we will typically indicate this by writing $\overset{g}{=}$ rather than applying the bracket notation $[ \,\cdot\, ]_g$ to the object we are breaking up into slots.

In the subsequent discussions, we will use~\eqref{T_isom} to split the tractor bundles without further comment.
Thus, given $g \in \bc$, an element $V_A$ of $\ce_A$ may be represented by a triple
$(\si,\mu_a,\rho)$, or equivalently by
\begin{equation}\label{Vsplit}
  V_A=\si Y_A+\mu_a Z_A ^a+\rho X_A.
\end{equation}
The last display defines the algebraic splitting
operators $Y:\ce[1]\to \cT^*$ and $Z :T^*M[1]\to \cT^*$ (determined by the
choice $g\in \bc$) which may be viewed as sections $Y_A\in
  \Gamma(\ce_A[-1])$ and $Z_A ^a\in \Gamma(\ce_A ^a [-1])$.
  We call these sections $X_A, Y_A$ and $Z_A ^a$ \emph{tractor projectors}.
Note that with this convention,~\eqref{thomas_D} is, tautologically, an explicit formula for the invariant operator $\mathbb{D}$, in terms of the splitting given by the choice of metric $g$.

While $X_{A}$ is conformally invariant, a change of tractor splitting given
by~\eqref{conf_equivalence} determines the transformations
\begin{equation}\label{tractor_proj_transforms}
  \wh{Z}_{A} ^a = Z_{A} ^a + \Upsilon^a X_{A}, \hspace{2em} \wh{Y}_{A} = Y_{A} - \Upsilon_a Z_{A} ^a - \frac{1}{2} \Upsilon^a \Upsilon_a X_{A}
\end{equation}
where, as usual, $\Upsilon_a=\Omega^{-1} \nabla_a \Omega $.
These transformations mean that the tractor triples transform by
\begin{equation}\label{3-trans}
 \begin{pmatrix}
      \hat{\sigma}  \\
      \hat{\mu}_a\\
      \hat{\rho}
    \end{pmatrix} 
		=
 \begin{pmatrix}
      1 & 0 & 0 \\
      \Upsilon^b & \delta^b _a & 0 \\
      -\frac{1}{2} \Upsilon^c \Upsilon_c & - \Upsilon_a & 1
    \end{pmatrix}  
		\begin{pmatrix}
      \sigma \\
      \mu_b \\
      \rho
    \end{pmatrix}.
\end{equation}

One then observes that the symmetric tractor field given by 
\begin{equation}\label{tr-met-form}
  h^{AB} := 2X^{(A}Y^{B)}+\bg^{ab}Z^A _a Z^B _b
\end{equation}
is invariant under~\eqref{tractor_proj_transforms}, and so determines a conformally invariant metric on $\cT^*$.
We will hence use this and its inverse $h_{AB}$ (called the \emph{tractor metric}) to identify $\cT^*$ and its dual, the \emph{standard tractor bundle}, which we denote by simply $\cT$.
Using this we obtain
\begin{equation}\label{projector_contractions}
  X^AY_A=1, \hspace{2em} Z_A ^a Z^A _b=\delta^{a}_b,
\end{equation}
and all other (tractor-index) pairings of the
splitting operators give a zero section. For example $X^A X_A=0$.

The canonical conformally invariant (normal) tractor connection on $\cT$ will also be denoted $\nabla_a$, or sometimes $\nabla_a ^\cT$ for emphasis.
It can be coupled to the Levi-Civita connection of any metric $g \in \bc$, and its
action on the tractor projectors is then given by
\begin{align}\label{ctrids}\edz{SS: Also give standard formula for $\nabla$ in slots?}
  \nabla_a X^{A} & =  Z^{A}_a \, , & \nabla_a Z^{A}_b & = - P_{ab} X^{A} - \bg_{ab} Y^{A} \, , &
  \nabla_a Y^{A} & = P_a{}^{b} Z^{A}_b \, .
\end{align}
In fact, these formulae determine the tractor connection as the
general action on a section of a tractor bundle follows from the
Leibniz rule.  It is easily verified that the tractor connection is
conformally invariant and preserves the tractor metric. The latter
means that the tractor connection agrees with its dual. It extends in
the obvious way to tensor powers of the tractor bundle and these extensions are all referred to as the tractor connection. 
The coupled tractor-Levi Civita connection will always be denoted simply $\nabla$ and will be used, usually without  comment, according to context.

As with any linear connection, $\nabla= \nabla^\cT$ has a
curvature. The \emph{tractor curvature} $\Omega_{ab}{}^{C}{}_{D} $ of
the tractor connection is defined by $\Omega_{ab}{}^{{C}}{}_{{D}}
\Phi^{D} := 2 \, \nabla_{[a} \nabla_{b]} \Phi^{C}$, for any $\Phi^A
\in \Gamma( \mc{T} )$. In the splitting determined by a choice of
metric $g\in\cc$ it is given explicitly by the formula
\begin{align}\label{ctract-curv}
  \Omega_{ab}{}_{C D} & = W_{abcd} Z_{C}{}^c Z_{D}{}^d -2 C_{abc} X^{\phantom{c}}_{[C} Z_{D]}{}^c .
\end{align}
A conformal structure is said to be \emph{(locally) flat} if this
tractor curvature vanishes as this happens if and only if, locally,
there is a metric in the conformal class that is flat.

The tractor objects developed above form the initial objects of a
conformal tractor calculus that can be used, for example, to construct
conformal invariants \cite{G-aspects,G-advances}. We will not discuss this in
detail, but one particularly important object is the {\em Thomas
operator} $\TD$ that extends \eqref{thomas_D} to a conformally invariant operator between weighted tractor bundles,
$$
 \TD_A:\Gamma (\ce^{\Phi}[w]) \to \Gamma (\ce_A\otimes \ce^{\Phi}[w-1]) ,
$$
where $\ce^{\Phi}$ indicates any tensor power of $\ce^A$, or $SO(h)$-invariant part thereof. It is 
 given, with respect to $g\in \cc$, by the formula
\begin{equation}\label{tD-formula}
\Gamma(\ce^\Phi[w])\in V \mapsto \TD_A V \stackrel{g}{=} 
\left(\begin{array}{c}(n+2w-2)w V\\
(n+2w-2)\nabla_a V\\
-(\Delta V + w J V)\end{array}\right) .
\end{equation}
where (as usual) $\nabla$ is the coupled tractor-Levi-Civita connection and $\Delta$ the corresponding Laplacian. 

All of the above has a clear geometric interpretation in the case of the model, the conformal $n$-sphere. This should be thought of as the ray projectivisation of $\mathcal{C}_+$, where $\mathcal{C}_+$ is
the future directed part of the null quadric
$\mathcal{C}:=\{X\in\mathbb{R}^{n+2} \mid h(X,X) =0\}$ in
$\mathbb{R}^{n+2}$ equipped with a fixed symmetric non-degenerate
bilinear form $h$ of signature $(n+1,1)$ and a time-orientation. The resulting resulting manifold $M:=\mathbb{P}_+(\mathcal{C}_+)\cong S^n$ is acted on transitively by
$G: = \SO_0(h)\cong \SO_0(n+1,1)$, where the $0$ here denotes taking
the connected component of the identity, and the stabiliser of a point
is a parabolic subgroup that we denote $P$ (so $M\cong G/P$).  Moreover, it is
straightforward to verify that $h$ induces a Riemannian metric on each
section of the map $\mathcal{C}_+\to M$, and different sections result
in conformally related metrics. Thus $M$ is equipped canonically with
a conformal structure, and clearly the group $G$ acts on $M$ by
conformal isometries, see, e.g., \cite{Curry-G-conformal,GRiemSig} for
a more detailed discussion of this model.

  From this point of view the standard tractor bundle for the model is
  $T\R^{n+2}|_{\mathcal{C}_+}/\sim$, where the equivalence relation is
  $U_p\sim V_q$ if one is mapped to the other by standard $\R^{n+2}$
  parallel transport (i.e., from the affine structure of $\R^{n+2}$)
  along a null ray.  The tractor metric is then induced in an obvious
  way from the ambient Minkowski signature metric $h$, and parallel
  tractor fields are equivalent to vector fields in
  $\Gamma(T\R^{n+2}|_{\mathcal{C}_+})$ that are constant along
  $\mathcal{C}_+$.  Moreover the parallel tensor fields on connected
  regions of $\mathcal{C}_+$ may all be viewed as arising from the
  restriction of tensor fields parallel on $\R^{n+1,1}$, and these
  give the parallel sections of the corresponding tensor powers of the
  tractor bundle. Finally, in this picture, the canonical tractor
  $X^A$ is identified with the Euler vector field of
  $\mathbb{R}^{n+2}$ along $\mathcal{C}_+$.

The Thomas operator $\TD_A$ also has a concrete geometric interpretation in the model. Sections of the weight $w$ conformal density bundle on the model can be identified with functions on $\mathcal{C}_+$ that are homogeneous of degree $w$ with respect to the $\mathbb{R}_+$-action. Weighted tractors on the model can therefore be identified with tensor fields along $\mathcal{C}_+$ of the appropriate homogeneity. The Thomas operator $\TD_A$ on such sections is then given (up to an overall factor) by formally extending such tensor fields off $\mathcal{C}_+$ to be ``harmonic'' with respect to the ambient Minkowski metric $h$ and then taking the directional derivative (at points along $\mathcal{C}_+$) in the flat ambient space $\mathbb{R}^{n+2}$; see, \cite{CGAmbientMetric}.

\subsection{The scale tractor} \label{scalet}

Recall that, from Section \ref{sec_conf_geo}, a metric $g\in \cc$ is
equivalent to a section $\si_g\in \Gamma(\ce_+[1])$ by the relation
$$
g=\si_g^{-2} \bg. 
$$
Given any section $\si\in \Gamma(\ce[1])$ we can form
$$
I_A:= \frac{1}{n}\TD_A \si ,
$$
and we will term this a {\em scale tractor} if $I_A$ is nowhere
zero. In this case $\si$ is clearly non-vanishing on an open dense
subset of $M$, on which it determines a metric $g:=\si^{-2}\bg$ from
the conformal class. So for a scale tractor $I_A$ we will term
$\si=X^AI_A$ a {\em generalised scale} -- or sometimes simply a
\emph{scale}. Following \cite{GRiemSig}, a conformal manifold $(M,\cc)$
equipped with a {\em scale tractor} will said to be an {\em
  almost-Riemannian} manifold (since it has a metric almost
everywhere). Given a Riemannian metric $g=\si_g^{-2} \bg$, we term
$I_A:= \frac{1}{n}\TD_A \si_g$ the scale tractor of $g$.

It follows easily from \eqref{ctrids} that if a tractor $I_A\neq 0$ is
parallel then it is a scale tractor and $g:=\si^{-2}\bg$ is Einstein; see \cite{G-Nurowski,GRiemSig}.  In this case we say $(M,\cc,I)$ is
{\em almost Einstein}. 

An important example of almost-Riemannian manifolds arise in
connection with conformally compact manifolds: A complete Riemannian
manifold $(M,g)$ is \emph{conformally compact} if $M$ is the interior of a
manifold with boundary $\ol{M}$, and on $\ol{M}$ there is a metric
$\ol{g}$ (so a metric that is smooth up to the boundary) such that on $M$
$$
g_{ab}=r^{-2}\ol{g}_{ab}
$$ for some smooth defining function $r$ for the boundary $\partial M$ (meaning that 
$r>0$ on $M$,  $\partial M$ is the zero
locus of $r$, and $dr$ is nowhere zero on $\partial
M $). A conformally compact manifold is said to {\em asymptotically
  hyperbolic} if $|dr|_{\ol{g}}=1$ along $\partial M$ (which is equivalent to requiring that the sectional curvatures of $g$ all tend to $-1$ as one approaches $\partial M$) and {\em
  Poincar\'e-Einstein} if $g$ is Einstein. It is easily verified that
in the latter case the scalar curvature is negative. If the Poincar\'e-Einstein metric is normalised so that
$$
\operatorname{Sc}^g =-n(n-1)
$$
(as is usually assumed) then the manifold is necessarily
asymptotically hyperbolic. These structures have been the subject of sustained interest; see, e.g., \cite{Chang2018,ChangGe-survey,ChangGeQing,GrZ,GurskyHanStolz,LeeNeves2015,Vasy2013} and the many references therein.

It is easily verified that a conformal compactification of a manifold
$(M,g)$ is the same as a conformal manifold with boundary
$(\ol{M},\cc)$, with interior $M$, and equipped with a scale tractor
$I_A$ with the following properties: the zero locus
$Z(\si)=\si^{-1}(0)$ of $\si:=X^AI_A$ is $\partial M$, and along
$\partial M$ the 1-jet $j^1\si$ (of $\si$), is nowhere zero (we say that $\sigma$ is a \emph{defining density} for $\partial M$). Thus the
conformal compactification is almost-Riemannian; in the following we will therefore think of a conformally compact manifold as an almost-Riemannian manifold for which $\sigma$ is a defining density for $\partial M$. Such a manifold is asymptotically
hyperbolic if $I^AI_A=1$ along $\partial M$, and Poincar\'e-Einstein
if $I_A$ is parallel. If $I^AI_A=1$ on $\ol{M}$ then
$\operatorname{Sc}^g =-n(n-1) $. See,
e.g., \cite{GRiemSig,GWConfHypYamabe} for more details.

There are many structures such as certain notions of asymptotically
flat manifolds that can be similarly be understood in terms of
almost-Riemannian structures. So this notion provides a uniform
framework for approaching a range of singular geometries
\cite{Curry-G-conformal}.

\subsection{Form tractors} \label{formT-sec}

We will use the term \emph{form tractor} to describe sections of the exterior powers of the tractor bundle \cite{BransonGover05,G-Sil-ckforms}.
It is useful to introduce some notation for form tractors.
From the composition series for the standard tractor bundle, one sees that for the $k$-th exterior power of the standard tractor bundle, one has the composition series
\begin{equation}\label{eqn_tractor_form_comp_series}
  \cE_{[A_1 A_2 \cdots A_{k-1} A_k]} = \cE_{[a_2 \cdots a_k]} [k] \lpl 
  \begin{matrix} 
    \cE_{[a_1 a_2 \cdots a_{k-1} a_k]} [k] \\
    \oplus \\
    \cE_{[a_3 \cdots a_k]} [k-2]
  \end{matrix}
  \lpl \cE_{[a_2 \cdots a_k]} [k-2].
\end{equation}

The tractor projectors for the standard tractor bundle induce tractor projectors on the bundles of tractor forms. 
Since these will be very important for us, we introduce dedicated notation for these: 
\begin{equation}\label{form-ps}
\begin{matrix}
  \mathbb{Y} _{A_1 A_2 \cdots A_{k-1} A_k} ^{\phantom{A_1} a_2 \cdots a_{k-1} a_k}
  &:= & Y_{[A_1} Z^{a_2} _{A_2} \cdots Z^{a_{k-1}} _{A_{k-1}} Z^{a_k} _{A_k]}
  &\;\in& \cE^{\phantom{A_1} a_2 \cdots a_{k-1} a_k} _{[A_1 A_2 \cdots A_{k-1} A_k]} [-k] \\[0.7em]
  \mathbb{Z}^{a_1 a_2 \cdots a_{k-1} a_k} _{A_1 A_2 \cdots A_{k-1} A_k}
  &:= &Z^{a_1} _{[A_1} Z^{a_2} _{A_2} \cdots Z^{a_{k-1}} _{A_{k-1}} Z^{a_k} _{A_k]}
  &\;\in& \cE^{a_1 a_2 \cdots a_{k-1} a_k} _{[A_1 A_2 \cdots A_{k-1} A_k]} [-k] \\[0.5em]
  \mathbb{W} _{A_1 A_2 A_3 \cdots A_{k-1} A_k} ^{\phantom{A_1 A_2} a_3 \cdots a_{k-1} a_k}
  &:=& X_{[A_1} Y_{A_2} Z^{a_3} _{A_3} \cdots Z^{a_{k-1}} _{A_{k-1}} Z^{a_k} _{A_k]} 
  &\;\in&\cE^{\phantom{A_1 A_2} a_3 \cdots a_{k-1} a_k} _{[A_1 A_2 \cdots A_{k-1} A_k]} [-k+2] \\[0.5em]
  \mathbb{X} _{A_1 A_2 \cdots A_{k-1} A_k} ^{\phantom{A_1} a_2 \cdots a_{k-1} a_k} 
  &:= & X_{[A_1} Z^{a_2} _{A_2} \cdots Z^{a_{k-1}} _{A_{k-1}} Z^{a_k} _{A_k]} 
  &\;\in &\cE^{\phantom{A_1} a_2 \cdots a_{k-1} a_k} _{[A_1 A_2 \cdots A_{k-1} A_k]} [-k+2] \\
\end{matrix}
\end{equation}
For example, $ \mathbb{Y} _{A_1 A_2 \cdots A_{k-1} A_k} ^{\phantom{A_1}
  a_2 \cdots a_{k-1} a_k}$ gives the injection
$$
\mathbb{Y} _{A_1 A_2 \cdots A_{k-1} A_k} ^{\phantom{A_1} a_2 \cdots a_{k-1} a_k} : \cE_{[a_2 \cdots a_k]} [k] \to  \cE_{[A_1 A_2 \cdots A_{k-1} A_k]},
$$
determined by a choice of metric $g\in \cc$.
Similarly $ \mathbb{X} _{A_1 A_2 \cdots A_{k-1} A_k} ^{\phantom{A_1} a_2 \cdots a_{k-1} a_k} $
\begin{equation}\label{Xinj}
 \mathbb{X} _{A_1 A_2 \cdots A_{k-1} A_k} ^{\phantom{A_1} a_2 \cdots a_{k-1} a_k}: \cE_{[a_2 \cdots a_k]} [k-2]   \to  \cE_{[A_1 A_2 \cdots A_{k-1} A_k]},
 \end{equation}
 but in this case the map is not dependent on any choice of metric in $\cc$. For $1 \leq k \leq n+2$ and a choice of scale, one has  
\begin{equation}\label{eqn_tractor_form_projector_derivatives}
  \begin{split}
    \nabla_b \mathbb{Y}_{A_1 A_2 A_3 \cdots A_k} ^{\phantom{A_1} a_2 a_3 \cdots a_k} 
    &= P_{b a_1} \mathbb{Z}^{a_1 a_2 a_3 \cdots a_k} _{A_1 A_2 A_3 \cdots A_k} + (k-1) 
    P_b {}^{a_2} \mathbb{W}^{\phantom{A_1 A_2} a_3 \cdots a_k} _{A_1 A_2 A_3 \cdots A_k} \\[0.5em]
    \nabla_b \mathbb{Z}^{a_1 a_2 \cdots a_k} _{A_1 A_2 \cdots A_k}
    &= -k \cdot P_b {}^{a_1} \mathbb{X}^{\phantom{A_1} a_2 \cdots a_k} _{A_1 A_2 \cdots A_k} - k \cdot \delta_b {}^{a_1} \mathbb{Y}^{\phantom{A_1} a_2 \cdots a_k} _{A_1 A_2 \cdots A_k} \\[0.5em]
    \nabla_b \mathbb{W}^{\phantom{A_1 A_2} a_3 \cdots a_k} _{A_1 A_2 A_3 \cdots A_k}
    &= -\bg_{b a_2} \mathbb{Y}^{\phantom{A_1} a_2 \cdots a_k} _{A_1 A_2 \cdots A_k} + P_{b a_2} \mathbb{X}^{\phantom{A_1} a_2 \cdots a_k} _{A_1 A_2 \cdots A_k} \\[0.5em]
    \nabla_b \mathbb{X}^{\phantom{A_1} a_2 a_3 \cdots a_k} _{A_1 A_2 A_3 \cdots A_k} 
    &= \bg_{b a_1} \mathbb{Z}^{a_1 a_2 a_3 \cdots a_k} _{A_1 A_2 A_3 \cdots A_k} - (k-1) \delta_b {}^{a_2} \mathbb{W}^{\phantom{A_1 A_2} a_3 \cdots a_k} _{A_1 A_2 A_3 \cdots A_k},
  \end{split}
\end{equation} 
where sequentially labeled indices are alternating, and any term involving the alternation of $n+1$ or more tensor (i.e. lower case) indices should be interpreted as zero.

In particular, we observe that $\mathbb{W}^{\phantom{A_1 A_2} a_3 \cdots a_{n+2}} _{A_1 A_2 A_3 \cdots A_{n+2}}$ is parallel in any scale, and hence there is a distinguished parallel section of the top exterior power of the standard tractor bundle, which we term the \emph{tractor volume form} 
\begin{equation} \label{tvf}
  \tvol_{A_1 A_2 A_3 \cdots A_{n+2}} := (n+2)(n+1)\be_{a_3 \cdots a_{n+2}} \mathbb{W}_{A_1 A_2 A_3 \cdots A_{n+2}} ^{\phantom{A_1 A_2} a_3 \cdots a_{n+2}},
\end{equation}
where $\be_{a_3 \cdots a_{n+2}} \in \cE_{[a_3 \cdots a_{n+2}]} [n]$ is
the weighted volume form of Section~\ref{sec_conf_geo}  (note that our normalisation is such that $\tvol^{A_1 A_2 A_3 \cdots A_{n+2}} \tvol_{A_1 A_2 A_3 \cdots A_{n+2}} =-(n+2)!$). That this is
parallel now follows from the fact that $\be_{a_3 \cdots a_{n+2}}$ is
parallel for any Levi-Civita connection in the conformal class.  Of
course, the existence of the tractor volume form also reflects the
fact that the conformal tractor connection is equivalent to an
$\SO(n+1,1)$-Cartan connection. 

Finally in this section we need the tractor Hodge-star.
For a tractor $k$-form $\Psi_{A_1\cdots A_k}$ this is 
\begin{equation}\label{tstar}
  \star \hspace{-0.3pt} \Psi_{B_1\cdots B_{n+2-k}}= \frac{1}{k!}\tvol^{A_1\cdots
  A_k}{}_{B_1\cdots B_{n+2-k}}  \Psi_{A_1\cdots A_k} .
\end{equation}
This satisfies $\star \star=-(-1)^{k(n-k)}$, since the tractor metric
has Lorentzian signature. Note also that this  tractor Hodge-star operation commutes with the tractor covariant derivative:
$$
\nabla_a\star \hspace{-2pt} \Psi_{B_1\cdots B_{n+2-k}}:= \frac{1}{k!}\tvol^{A_1\cdots
  A_k}{}_{B_1\cdots B_{n+2-k}} \nabla_a \Psi_{A_1\cdots A_k} .
$$

\section{Submanifold geometry and submanifold tractors}

Given a smooth $n$-manifold $M$, a \emph{submanifold} will mean a
smooth embedding $\iota : \Sigma \to M$ of a smooth $m$-dimensional
manifold $\Sigma$, where $1 \leq m \leq n-1$, and the image has
\emph{codimension} $d:= n-m$.  Typically we will suppress explicit mention
of the embedding map and identify $\Sigma$ with its image
$\iota(\Sigma) \subset M$.  We refer to $M$ as the \emph{ambient}
manifold.

Regarding abstract indices, we adopt the convention that Latin letters from the start of the alphabet ($a,b,c,\ldots$) will denote ambient tensor indices, while indices from later in the alphabet ($i,j,k,\ldots$) will denote submanifold tensor indices.
So, for example, $\cE^a$ is the usual tangent bundle $TM$, $\cE^i$ is the tangent bundle of the submanifold $T\Sigma$, and $\cE_i{}^a$ denotes the bundle
$T^* \Sigma \otimes TM|_{\Sigma} $.
Note that indices alone will not distinguish sections of $TM$ and $TM|_\Sigma$, so $v^a$ could be a section of either $\cE^a$ or a section of $\cE^a |_\Sigma$, where $\cE^a |_\Sigma \to \Sigma$ is the pullback bundle $\iota^* TM$.

Given a submanifold $\iota : \Sigma \to M$, its derivative $T \iota : T \Sigma \to TM$ will be written $\Pi^a _i$ and viewed as a section of $T^* \Sigma\otimes TM |_\Sigma$.
We frequently identify $T\Sigma$ with its image in $TM |_\Sigma$ under this map.
Note that $\Pi^a _i$ also gives the canonical map $\Pi^a _i : \cE_a |_\Sigma \to \cE_i$, which is dual to $T \iota$.
We will temporarily use the notation $T_{M/\Sigma}$ for the normal bundle 
$TM|_\Sigma / T\iota(T\Sigma)$, and $(T_{M/\Sigma})^* \subset T^*M |_\Sigma$ for the conormal bundle.

\subsection{Basic Riemannian submanifold theory}\label{riem_subs}

We now move to the setting of a submanifold $\Sigma$ in a Riemannian
manifold $(M,g)$ (cf.\ discussions in, e.g., \cite{Kobayashi-Nomizu,ONeillRiemannianGeometry}).
In the Riemannian setting, we only require that $\dim M \geq 2$, and
$\Sigma$ satisfies $1 \leq m= \dim \Sigma \leq n-1$.  The exact sequence
defining the normal bundle $T_{M/\Sigma}$ then splits

\begin{equation}\label{eqn_normal_bundle_ses_final}
  \begin{tikzcd}
    0 \arrow[r] & \cE^i \arrow{r}{\Pi^a _i} & \cE^a |_\Sigma \arrow[l,bend left=45,pos=0.45,"\Pi^i _a"] \arrow{r} & T_{M/\Sigma} \arrow{r} & 0, \\
  \end{tikzcd}
\end{equation}
where $\Pi^i _a$ is the orthogonal projection map  $TM|_{\Sigma}\to T\Sigma$.  We may then identify $T_{M/\Sigma}$ with the kernel $N
\Sigma$ of $\Pi^i _a$, via the splitting, and we denote the orthogonal
projection onto this by $\mathrm{N}^b _a : \cE^a |_\Sigma \to N
\Sigma^b$.
The complementary projection is $\Pi^a_b =\delta^a_b-\mathrm{N}^a _b =\Pi^a_i\Pi^i_b$ which is the orthogonal projection onto $T\Sigma$ viewed as submanifold of
$TM|_{\Sigma}$.

The Riemannian metric $g$ on $M$ induces a Riemannian metric
$g_\Sigma$ on $\Sigma$ by restriction, which we call the \emph{induced
  metric}.  We usually omit the explicit reference to $\Sigma$ when
abstract indices are used.  So the induced metric will be denoted by
$g_{ij}$. Note that
\begin{equation}
  g_{ij} = \Pi^a _i \Pi^b _j g_{ab}.
\end{equation}

Next we observe that~\eqref{eqn_normal_bundle_ses_final} can be used to decompose the ambient Levi-Civita connection.
First and most simply, we have the \emph{normal connection} $\nabla^\perp$ which is a connection on the bundle $N \Sigma \to \Sigma$ defined by 

\begin{equation}\label{eqn_normal_connection}
  \nabla^\perp _i \nu^a := \mathrm{N}^a _b \nabla_i \nu^b,
\end{equation}
where $\nabla_i$ denotes the pullback connection of the ambient
Levi-Civita connection (meaning, in this context, its restriction to differentiating
along vectors tangent to $\Sigma$).  Complementary to this, we also have induced a
connection $D_i$ on $T \Sigma \to \Sigma$ defined by

\begin{equation}\label{eqn_intrinsic_levi_civita}
  D_i V^j := \Pi ^j _b \nabla_i \left( \Pi ^b _k V^k \right).
\end{equation}

It is elementary to verify that both~\eqref{eqn_normal_connection}
and~\eqref{eqn_intrinsic_levi_civita} define connections.  Indeed, it
is also straightforward to verify that $D$ is torsion-free and
preserves the induced metric, and so is in fact the Levi-Civita
connection of $(\Sigma, g_\Sigma)$.  The fundamental ingredient of
submanifold calculus, in this setting, is the \emph{Gau{\ss} formula}
which, for a section $V \in \Gamma(T\Sigma|_\Sigma)$, provides the
decomposition of $\nabla_i V^c =\nabla_i (\Pi^c_jV^j)$ into its
tangential and normal parts:

\begin{equation}\label{eqn_tensor_gauss_formula}
  \nabla_i V^c = \Pi^c _j D_i V^j + \II_{ij} {}^c V^j,
\end{equation}
and this defines $\II_{ij} {}^c \in \Gamma(S^2 T^* \Sigma \otimes N \Sigma)$, which is the \emph{second fundamental form} of $\Sigma$ in $(M,g)$.
We also define the \emph{mean curvature}
\begin{equation}
  H^c := \frac{1}{m} g^{ij} \II_{ij} {}^c
\end{equation}
and set $\mathring{\II}_{ij} {}^c := \II_{{(ij)}_0} {}^c$, the trace-free part of the second fundamental form.
Thus one has 
\begin{equation}
  \II_{ij} {}^c = \mathring{\II}_{ij} {}^c + g_{ij} H^c.
\end{equation}

Using~\eqref{eqn_tensor_gauss_formula}, one can derive
\begin{equation} \label{eqn_gauss}
  R_{ijkl} = R^\Sigma _{ijkl} + 2 g_{cd} \II_{l[i} {}^c \II_{j]k} {}^d,
\end{equation}
\begin{equation} \label{eqn_codazzi}
	R_{ij} {}^c {}_k \mathrm{N}^d _c = 2 D_{[i} \II_{j]k} {}^d,
\end{equation}
and
\begin{equation} \label{eqn_ricci}
	R_{ij} {}^a {}_b \mathrm{N}^c _a \mathrm{N}^b _d = R^\perp {}_{ij} {}^c {}_d - 2 g^{kl} \II_{k[i} {}^c \II_{j]ld},
\end{equation}
where $R_{i j k l} := \Pi^a _i \Pi^b _j \Pi^c _k \Pi^d _l R_{abcd}$ is
the curvature of the ambient Levi-Civita connection restricted to
$\Sigma$, $R^\Sigma _{ijkl}$ is the intrinsic Riemann curvature tensor
(i.e. the curvature of the connection $D$), $D$ is the intrinsic
Levi-Civita connection coupled to the normal connection and $R^\perp
{}_{ij} {}^c {}_d$ is the curvature of the normal connection $\nabla_i
^\perp$.  All these formulae are derived by substituting the Gau{\ss}
formula~\eqref{eqn_tensor_gauss_formula} into
equation~\eqref{curvature} which defines the curvature of the pullback
connection $\nabla_i$, as follows.  Using the decomposition $T M|_{\Sigma} =
T\Sigma \oplus N \Sigma$, we may write a section $V^c \in
\Gamma(\cE^c|_{\Sigma})$ as a tuple $(\Pi^c _d V^d, \mathrm{N}^c _d
V^d)$.  Since $\Pi^c _d V^d $ is a tangent vector to $\Sigma$, we
abuse notation slightly and typically write the tuple as $ ( V^k,
\mathrm{N}^c _d V^d)$ where $ V^k=\Pi^k_dV^d$.
Computing the
action of the Riemann curvature $R_{ij} {}^c {}_d$ on such a tuple, we
see that
\begin{align*}
      &\left( \nabla_i \nabla_j - \nabla_j \nabla_i \right)
      \begin{pmatrix}
      V^k \\
      \mathrm{N}^c _d V^d
    \end{pmatrix}\\ 
    &=
    \begin{pmatrix}
      R^\Sigma {}_{ij} {}^k {}_l V^l - 2 D_{[i} \left( \II_{j]} {}^k {}_d V^d \right)- 2 g_{ef} \II^k {}_{[i} {}^f \II_{j]l} {}^e V^l - 2 \II_{[i} {}^k {}_{|e|} \nabla_{j]} ^\perp \left( \mathrm{N}^e _d V^d \right) \\ 
      2 \II_{k [i} {}^c \left( D_{j]} V^k \right) - 2 g^{kl} \II_{k[i} {}^c \II_{j]l d} V^d + 2 \nabla_{[i} ^\perp \left( \II_{j]k} {}^c V^k \right) + R^\perp {}_{ij} {}^c {}_d V^d
    \end{pmatrix} \\
    &= 
    \begin{pmatrix}
      R^\Sigma {}_{ij} {}^k {}_l + 2 g_{cd} \II_{l[i} {}^c \II^k {}_{j]} {}^d & -2 D_{[i} \II_{j]} {}^k {}_d \\ 
      2 D_{[i} \II_{j] l} {}^c & R^\perp _{ij} {}^c {}_d - 2 g^{kl} \II_{k [i} {}^c \II_{j] l d} 
    \end{pmatrix}
    \begin{pmatrix}
      V^l \\
      \mathrm{N}^d _e V^e
    \end{pmatrix},
\end{align*}
and the equations~\eqref{eqn_gauss},~\eqref{eqn_codazzi} and~\eqref{eqn_ricci} all follow from this by simply projecting the appropriate entry of the matrix.

Note that in dimension $m=1$, the trace-free part of the second
fundamental form is zero. Also, in dimension $m=1$
equations~\eqref{eqn_gauss}-\eqref{eqn_ricci} are valid, but trivial
in that in each case both sides are identically zero.

\subsection{Conformal Submanifolds}\label{conf_subm} 

Consider now a submanifold $\Sigma$ satisfying $1 \leq \dim \Sigma \leq n-1$ in a conformal manifold $(M ,\bc)$.
Observe that the conformal structure is sufficient to determine an orthogonal complement of $T \Sigma \subset TM$ and so the splitting of~\eqref{eqn_normal_bundle_ses_final} is in fact conformally invariant.
We carry over to this setting the same notation for the normal projection $\mathrm{N}^a _b$ and the orthogonal projection $\Pi^i _a$.
Since each $g \in \bc$ induces a Riemannian metric on $\Sigma$ by restriction, it follows immediately that $\bc$ induces a conformal structure on $\Sigma$ that we denote $\bc_\Sigma$.
We therefore have intrinsic to $(\Sigma, \bc_\Sigma)$ density bundles $\cE_\Sigma [w]$.
In fact, for any $w \in \R$,
\begin{equation*}
  \cE_\Sigma [w] = \cE[w] |_\Sigma,
\end{equation*}
which can be seen immediately from the interpretation of densities as equivalence classes of a metric and a function, see the discussion of Section~\ref{sec_conf_geo}.
From these observations, it follows immediately that the conformal metric $\bg_\Sigma$ is simply the restriction to $\otimes^2 T \Sigma$ of the ambient conformal metric $\bg$.
Using these, it is straightforward to see that the orthogonal projection may be thought of as a composition $\bg^{-1}_\Sigma \circ \Pi \circ \bg$, meaning
\begin{equation}
  \Pi^i _a = \bg^{ij} \Pi^b _j \bg_{ab},
\end{equation}
where we are raising and lowering indices using the conformal metric (and we omit the subscript \mbox{\scriptsize${\Sigma}$} when the indices imply that we are using $\bg_{\Sigma}$ or its inverse).
Since this involves $\bg$ and $\bg^{-1}$, the resulting section still
has conformal weight zero. When a  metric $g\in \cc$ is chosen we have as usual the
Gau{\ss}, Codazzi and Ricci equations
(\eqref{eqn_gauss},~\eqref{eqn_codazzi} and~\eqref{eqn_ricci}), but it will be convenient to work with weighted versions of these, with the ambient or intrinsic
conformal metrics replacing any instances of their scale-dependent
counterparts. For example the weighted version of the Gau{\ss} equation is
$$
  R_{ijkl} = R^\Sigma _{ijkl} + 2 \bg_{cd} \II_{l[i} {}^c \II_{j]k} {}^d,
$$
where now $ R_{ijkl}$ and $R^\Sigma _{ijkl}$ have weight 2 (and the second fundamental form naturally has conformal weight 0).

Coupling the normal connection~\eqref{eqn_normal_connection} to the
Levi-Civita connection on the bundle $\cE_\Sigma [-1]$ yields a
connection on $N \Sigma [-1]$, which we shall also denote
$\nabla^\perp$.  It is easily verified using~\eqref{vector_trans}
and~\eqref{dens-trans} that this is conformally invariant.  In fact,
more generally, on sections of $N \Sigma [w]$ the transformation law is 
\begin{equation}
	\label{eqn_norm_conn_transform}
	\hat{\nabla}^\perp _i \nu^a = \nabla^\perp _i \nu^a + (w + 1) \Upsilon_i
	\nu^a
\end{equation}
when $\hat{g}=\Omega^2g$ and $\Upsilon_i = \Omega^{-1}\nabla_i\Omega$. The
conformal metric $\bg$ induces a bundle metric on $N \Sigma [-1]$, and
this is preserved by $\nabla^\perp$.

Since the Levi-Civita connection changes under a conformal rescaling, the Gau{\ss} formula is not conformally invariant.
Using~\eqref{vector_trans}, we conclude that under a conformal transformation, 
\begin{equation}\label{2ff_transformation}
  \hat{\II}_{ij} {}^c = \II_{ij} {}^c - \bg_{ij} \mathrm{N}^c _d \Upsilon^d.
\end{equation}

Since this transformation is by pure trace, it follows immediately that $\mathring{\II}_{ij} {}^c$ is conformally invariant:
\begin{equation*}
  \hat{\mathring{\II}}_{ij} {}^c = \mathring{\II}_{ij} {}^c.
\end{equation*}
Thus the transformation~\eqref{2ff_transformation} is entirely due to the transformed mean curvature, whence 
\begin{equation}\label{mean_curvature_transformation}
  \hat{H}^c = H^c - \mathrm{N}^c _d \Upsilon^d.
\end{equation}
Note that the mean curvature is now defined using the conformal metric: $H^c := \frac{1}{m} \bg^{ij} \II_{ij} {}^c$. As a consequence of \eqref{mean_curvature_transformation} we have the following very useful proposition \cite{BransonGover01,GRiemSig,CurryThesis}:
\begin{prop}
Let $\Sigma$ be a submanifold of a conformal manifold $(M,\bc)$. Then any metric $g_{\Sigma}$ in the induced conformal class of metrics on $\Sigma$ can be extended to a metric $g\in \bc$ such that the mean curvature of $\Sigma$ with respect to $g$ vanishes.
\end{prop}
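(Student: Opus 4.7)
The plan is to proceed in two steps: first find some extension $g_0 \in \bc$ of $g_{\Sigma}$, and then conformally rescale it to kill the mean curvature while preserving the restriction to $\Sigma$.

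For the first step, pick any $g_1 \in \bc$. Since $\bc_\Sigma$ is defined as the restriction of $\bc$, we have $g_1|_\Sigma = \Omega_0^2 g_\Sigma$ for some positive smooth function $\Omega_0$ on $\Sigma$. Extending $\Omega_0^{-1}$ to a positive smooth function $\tilde{\Omega}$ on $M$ (using, e.g., a tubular neighborhood and a partition of unity), the metric $g_0 := \tilde{\Omega}^2 g_1 \in \bc$ restricts to $g_\Sigma$ on $\Sigma$.

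The main step is the second. Given $g_0 \in \bc$ with $g_0|_\Sigma = g_\Sigma$, let $H_0^c$ denote its mean curvature along $\Sigma$ (a section of $N\Sigma$). The idea is to choose the conformal factor $\Omega$ so that its $1$-jet along $\Sigma$ is prescribed as
\begin{equation*}
\Omega|_\Sigma = 1 \qquad \text{and} \qquad \mathrm{N}^c_d \nabla^d \Omega \big|_\Sigma = H_0^c.
\end{equation*}
Such an $\Omega$ is easily produced: identifying a tubular neighborhood of $\Sigma$ in $M$ with a neighborhood of the zero section of $N\Sigma$ (via an auxiliary exponential map), one sets $\Omega(x,\nu) := 1 + \bg_{cd} H_0^c(x) \nu^d$ close to $\Sigma$ and extends smoothly to a positive function on all of $M$ using a cut-off. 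By construction, this realises the desired $1$-jet along $\Sigma$.

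Finally, setting $g := \Omega^2 g_0 \in \bc$, we verify the conclusion. Since $\Omega|_\Sigma = 1$ we immediately have $g|_\Sigma = g_0|_\Sigma = g_\Sigma$. Writing $\Upsilon_a = \Omega^{-1}\nabla_a \Omega$, on $\Sigma$ we have $\Upsilon_a|_\Sigma = \nabla_a \Omega|_\Sigma$, whose normal component is $H_0^c$ by construction. The mean curvature transformation law~\eqref{mean_curvature_transformation} then gives
\begin{equation*}
\hat{H}^c = H_0^c - \mathrm{N}^c_d \Upsilon^d = H_0^c - H_0^c = 0
\end{equation*}
along $\Sigma$, as required. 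The only potentially subtle point is the existence of a smooth positive $\Omega$ realising a prescribed $1$-jet along $\Sigma$, but this is completely standard and poses no real obstacle.
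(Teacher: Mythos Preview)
Your proof is correct and follows essentially the same approach as the paper: start from an arbitrary extension in $\bc$, then use the transformation law~\eqref{mean_curvature_transformation} to see that prescribing the normal part of the $1$-jet of the conformal factor along $\Sigma$ kills the mean curvature. You are slightly more explicit than the paper in justifying the existence of an initial extension and in constructing the conformal factor via a tubular neighborhood, whereas the paper simply asserts that such an $\omega$ (with $\omega|_\Sigma=0$ and prescribed normal derivative) ``clearly exists''.
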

\begin{proof}\label{prop_min_scale}
Let $g_{\Sigma}$ be as in the proposition, and let $g\in \bc$ be any extension of $g_{\Sigma}$. We look for a rescaled metric $\hat{g}$ satisfying the requirements of the proposition. Set $\hat{g}=e^{2\omega}g$ with $\omega$ to be determined. Since we require $\hat{g}|_{\Sigma}=g_{\Sigma}$ we set $\omega=0$ along $\Sigma$. Now, by \eqref{mean_curvature_transformation}, $\hat{H}^c=0$ if, and only if, $H^c = N^c_d \nabla^d \omega$ along $\Sigma$. Since the latter merely amounts to specifying the normal derivatives of $\omega$ along $\Sigma$, such an $\omega$ clearly exists (unique modulo functions that vanish and have vanishing differential along $\Sigma$). This proves the proposition. 
\end{proof}
We refer to a metric $g\in \bc$ such that $H^c=0$ as a \emph{minimal scale} for $\Sigma$. The freedom to work in a minimal scale when computing conformally invariant quantities helps to simplify many calculations. 

\begin{rem}\label{rem_minimal_sc_1d}
In the case of a $1$-dimensional submanifold a minimal scale $g$ is easily seen to be one for which the curve is an unparametrised geodesic, since in this case, after parametrising the curve by arc length, the mean curvature vector can be identified with the acceleration of the curve.  Moreover, in the case where the curve $\gamma$ in $(M,\bc)$ is already parametrised one can take $g_{\Sigma}$ to be the ``metric'' on $\gamma$ corresponding to the parametrisation and extend $g_{\Sigma}$ to $g\in \bc$ as in Proposition \ref{prop_min_scale} to obtain a metric $g$ for which $\gamma$, with its original parametrisation, is a parametrised geodesic.
\end{rem}

\subsection{Submanifold tractors} \label{sub-tr}

As a conformal manifold in its own right, $(\Sigma, \bc_\Sigma)$
possesses its own standard tractor bundle, which we will call the
\emph{intrinsic tractor bundle}, and denote by $\cT \Sigma$, or
$\cE^I$ in abstract indices, carrying over our convention that later
Latin letters will be used for sections of submanifold bundles, with
upper case indices for tractor bundles.  We now wish to relate this to
the corresponding ambient tractor bundle, $\cT M$, which will continue to denote by $\cT$.

As we have already seen in Section~\ref{riem_subs}, the Gau{\ss}
formula plays a central role in the setting of Riemannian submanifold
geometry.  Crucially, the Gau{\ss} formula uses that $T \Sigma$ may be
identified with a subbundle of $TM$.  In fact, there is an analogous
notion for the intrinsic and ambient tractor bundles, and this
explains our abstract index notation for submanifolds being similar to
that for ambient tractors.

First, there is a mapping $N\Sigma [-1] \to \cT$ defined by 
\begin{equation}\label{norm_bundle}
  n^a \mapsto N^A= N^A_a n^a \overset{g}{=}
  \begin{pmatrix}
    0 \\ 
    n^a \\ 
    n_c H^c
  \end{pmatrix}.
\end{equation}
This is easily seen to be conformally invariant using the transformation laws for the tractor projectors~\eqref{tractor_proj_transforms} and the mean curvature~\eqref{mean_curvature_transformation}:
\begin{align*}
  \hat{N}^A & = \hat{n}^a \hat{Z}_a ^A + \hat{n}_a \hat{H}^a \hat{X}^A                      \\
            & = n^a (Z_a ^A + \Upsilon_a X^A) + n_a (H^a - \Upsilon^b \mathrm{N}^a _b ) X^A \\
            & = n^a Z_a ^A + n^a \Upsilon_a X^A + n_a H^a X^A - n_b \Upsilon^b X^A          \\
            & = n^a Z_a ^A + n_a H^a X^A                                                    \\
            & = N^A.
\end{align*}
Thus the image of the injective map~\eqref{norm_bundle} defines, along
$\Sigma$, a subbundle of $\cT$ which is canonically isomorphic to $N
\Sigma [-1]$.  We call this the \emph{normal tractor bundle} and
denote this $\cN$, or $\cN^A$ with indices. We summarise, as follows.
\begin{lem}\label{Nlem}
  The map \eqref{norm_bundle} defines a conformally invariant isomorphism
  \begin{equation}\label{nNiso}
N_a^A: N\Sigma [-1] \stackrel{\simeq}{\longrightarrow} \cN \subset \cT|_\Sigma .
\end{equation}
\end{lem}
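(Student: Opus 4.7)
The plan is to verify three properties of the assignment $n^a \mapsto N^A$ from equation \eqref{norm_bundle}: conformal invariance (so that the map is well-defined independent of the choice of $g \in \bc$), injectivity, and constancy of rank (so that the image is a genuine subbundle). Well-definedness at the level of a single scale is manifest from the formula, since $Z_a^A$ and $X^A$ have weight $-1$ and $n^a$ has weight $-1$, so the slots have weights $1$, $0$, $-1$ respectively, giving an unweighted tractor.

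The main step, and really the only nontrivial content, is the conformal invariance. The approach here is a direct calculation: under $\hat{g} = \Omega^2 g$, one uses the transformation law \eqref{tractor_proj_transforms} for the splitting operators $Z_a^A$ and $X^A$ together with the transformation law \eqref{mean_curvature_transformation} for the mean curvature $H^c$, and checks that the resulting expression for $\hat{N}^A$ coincides with $N^A$. The mechanism is that the new terms $\Upsilon^a n_a X^A$ produced by the change of $Z_a^A$ are exactly cancelled by the shift $-n_c N^c{}_d \Upsilon^d X^A$ in the bottom slot coming from $H^c \mapsto H^c - N^c{}_d \Upsilon^d$ (using $n_a N^a_b = n_b$ since $n^a$ is normal). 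This calculation is in fact already displayed immediately before the lemma statement, so in a formal write-up one would merely point to it.

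Injectivity is read off directly: in a chosen scale, the middle slot of $N^A$ is $n^a$, so $N^A = 0$ forces $n^a = 0$. It follows that the rank of the bundle map equals $\mathrm{rank}(N\Sigma[-1]) = d$ at every point of $\Sigma$, which means the image is a smooth rank-$d$ subbundle of $\cT|_\Sigma$ and the bundle map is a fibrewise isomorphism onto that image. Calling this image $\cN$ gives the claimed isomorphism \eqref{nNiso}.

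I do not expect a serious obstacle, since the nontrivial verification has essentially already been performed in the paragraph preceding the lemma; the proof is simply a matter of packaging that computation together with the observation that the middle-slot projection recovers $n^a$, and invoking constancy of rank to conclude that the image is a subbundle.
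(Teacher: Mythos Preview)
Your proposal is correct and matches the paper's approach exactly: the paper presents the conformal invariance calculation in the paragraph immediately preceding the lemma (precisely the $\hat{N}^A = N^A$ computation you describe), observes the map is injective, and then states the lemma as a summary with no separate proof block. Your additional remark about constant rank making the image a genuine subbundle is a reasonable elaboration that the paper leaves implicit.
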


The bundle $\cN$ admits an orthogonal complement in $\cT$,
$\cN^\perp$, and so $\cT|_{\Sigma}$ decomposes as
\begin{equation*}
  \cT|_{\Sigma} = \cN^\perp \oplus \cN.
\end{equation*}
Write $\Pi^A _B : \cE^B \to \cN^{\perp A}$ and $\mathrm{N}^A _B :
\cE^B \to \cN^A$ for the orthogonal projections onto the respective
factors of this decomposition.  So $\delta^A _B = \Pi^A _B +
\mathrm{N}^A _B$.  Note that for any $N^B \in \Gamma(\cN^B)$, one has
$h_{AC} N^C \Pi^A _B = 0$ since $\Pi^A _B$ is valued in $\cN^{\perp
  A}$.  Thus, $h_{AC} \mathrm{N}^A _B \Pi^C _D = 0$.  Substituting
$\Pi^C _D = \delta^C _D - \mathrm{N}^C _D$, it follows that
\begin{equation*}
  \mathrm{N}_{BC} = \mathrm{N}^A _B \mathrm{N}^C _D h_{AC}.
\end{equation*}
So $\mathrm{N}_{AB}$ and $\Pi_{AB}$ are symmetric, where in each case
an index has been lowered with the tractor metric, and $\Pi^A _B$ and
$\mathrm{N}^A _B$ give the orthogonal decomposition of the cotractor
bundle $\cE_A$.

Note that 
\begin{equation}\label{fact-or}
  N^A_a N_B^a= N^A_B \quad \mbox{and} \quad
  N_B^aN^B_b = N^a_b ,
  \end{equation}
where $N_A^a$ is the inverse to \eqref{nNiso}. From the symmetry of
$N_{AB}$, and corresponding observation of symmetry for $N_{ab}$, it
follows that $N_A^a$ is obtained from $N^B_b$ by raising and lowering
indices using the tractor and conformal metrics.

A straightforward direct calculation shows that the isomorphism \eqref{nNiso} intertwines the tractor and normal Levi-Civita connections in
the sense of the following lemma.
\begin{lem}\label{nablaNlem}
  For any section $n^a\in \Gamma(N\Sigma [-1])  $ we have
  $$
  N_b^C \nabla_i^{\perp} n^b= N^C_B\nabla_i (N_b^B  n^b) .
  $$
\end{lem}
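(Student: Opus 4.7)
The plan is to verify the identity directly in a convenient choice of metric, exploiting the fact that both sides are manifestly conformally invariant. The left side is invariant because $N^C_b:N\Sigma[-1]\to\cN$ is invariant by Lemma~\ref{Nlem} and $\nabla^\perp$ on sections of $N\Sigma[-1]$ is invariant by \eqref{eqn_norm_conn_transform}; the right side is invariant because the tractor connection and the projector $N^C_B$ onto $\cN$ are both conformally invariant. It therefore suffices to check the identity in any chosen $g\in\bc$, and I would work in a \emph{minimal scale} for $\Sigma$ (where $H^c=0$ along $\Sigma$), which exists by the proposition following \eqref{mean_curvature_transformation}. In such a scale the bookkeeping is dramatically simplified because the $X^B$-slot in \eqref{norm_bundle} disappears.

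In the minimal scale \eqref{norm_bundle} reduces to $N^B_b n^b = n^b Z^B_b$. Differentiating and applying \eqref{ctrids} (pulled back to $\Sigma$ via $\Pi^a_i$, so that $P_{ib}=\Pi^a_i P_{ab}$ and $\bg_{ib}=\Pi^a_i\bg_{ab}$) gives
$$
\nabla_i\bigl(n^b Z^B_b\bigr) \;=\; (\nabla_i n^b)\,Z^B_b \;-\; n^b P_{ib}\,X^B \;-\; n^b \bg_{ib}\,Y^B.
$$
The last term vanishes because $n^b\in\Gamma(N\Sigma[-1])$ implies $n^b\bg_{ib}=\Pi^a_i n_a=0$. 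Splitting the ambient derivative orthogonally as $\nabla_i n^b = \Pi^b_j\Pi^j_c\nabla_i n^c + \nabla^\perp_i n^b$ isolates the $\cN$-valued part from the $\cT\Sigma$-valued part.

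It remains to project by $N^C_B$. Both of the unwanted pieces, namely $n^b P_{ib}X^B$ and $(\Pi^b_j\Pi^j_c\nabla_i n^c)Z^B_b$, are orthogonal to $\cN$ (which in a minimal scale is spanned by sections of the form $m^b Z^B_b$ with $m^b$ normal): the first because $X_A Z^A_b=0$, and the second because $Z^A_j Z_A^b = \delta^b_j$ reduces the pairing to $m_j$, which vanishes for $j$ tangent and $m$ normal (see \eqref{projector_contractions}). Hence these two terms lie in $\cN^\perp$ and are killed by $N^C_B$. What survives is $(\nabla^\perp_i n^b)Z^C_b = N^C_b\,\nabla^\perp_i n^b$, the final equality again using $N^C_b = Z^C_b$ on normal vectors in the minimal scale.

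The main potential obstacle is purely notational: keeping track of the pullback conventions for ambient tensors restricted to $\Sigma$ and confirming that the $X$- and tangential-$Z$ contributions really do sit in $\cN^\perp$. Both points reduce to the elementary contractions of \eqref{projector_contractions} together with the defining normality of $n^b$. Attempting the same computation in an arbitrary scale would introduce a cluster of additional $H^c$- and $\nabla_i H^c$-terms that would ultimately cancel; the point of the minimal scale is precisely to sidestep this cancellation and make the identity transparent.
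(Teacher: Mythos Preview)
Your proof is correct and follows exactly the approach the paper takes: the paper's proof consists of the single sentence ``This follows immediately from the definitions if we work in a minimal scale,'' and you have carefully filled in those details. Your observation that both sides are conformally invariant, so that it suffices to verify the identity in a minimal scale, together with the computation using \eqref{ctrids} and \eqref{projector_contractions}, is precisely what the authors have in mind.
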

\begin{proof}
This follows immediately from the definitions if we work in a minimal scale.
\end{proof}
We note here that $\cN$ is a rank $d = n-m$ vector bundle, and hence
$\cN^\perp$ has rank $(n+2)-d = m+2$, which coincides with the rank of
$\cT \Sigma$.  This is not a coincidence, and it turns out that there
is an isomorphism of vector bundles $\cT \Sigma \to \cN^\perp$. Let us initially understand this in submanifold dimensions $m\geq 3$.

\begin{thm}\label{thm_norm_perp_intrinsic_isom}
  Let $\Sigma$ be a submanifold of dimension $m\geq 3$ in a conformal manifold $(M, \bc)$. 
	The intrinsic tractor bundle $\cT \Sigma$ is canonically isomorphic to the orthogonal complement $\cN^\perp$ of the normal tractor bundle via a bundle isomorphism which preserves both the metric and the filtration.
	We denote this isomorphism $\Pi^A _I$.
  Explicitly, in a general ambient scale $g\in \bm{c}$, it is given by
	\begin{equation}\label{eqn_norm_perp_intrinsic_isom}
	\cT\Sigma \ni	V^I
		\overset{g_\Sigma}{=}
		\begin{pmatrix}
			\sigma \\ \mu^i \\ \rho
		\end{pmatrix}
		\overset{\Pi^A _I}{\longmapsto}
		\begin{pmatrix}
			\sigma \\ \mu^a - H^a \sigma \\ \rho - \frac{1}{2} H^a H_a \sigma
		\end{pmatrix}\stackrel{g}{=} V^A\in \cN^\perp ,
	\end{equation}
where  $\mu^a = \Pi^a _i \mu^i$.

The map $\Pi: \cT\Sigma\to \cN^\perp$ is a filtration and metric preserving  isomorphism. 
\end{thm}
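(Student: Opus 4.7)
The plan is to verify each claim directly from the explicit formula for $\Pi^A_I$ given in \eqref{eqn_norm_perp_intrinsic_isom}. The only substantive step is well-definedness (conformal invariance); once that is settled, the remaining claims follow from short calculations in a single choice of scale.

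First I would verify well-definedness. Under a rescaling $\hat{g} = \Omega^2 g$ with $\Upsilon_a = \Omega^{-1}\nabla_a\Omega$, three transformations enter: the intrinsic components of $V^I$ transform by \eqref{3-trans} with the tangential one-form $\Upsilon^{\Sigma}_i := \Pi^a_i \Upsilon_a$; the ambient projectors $(Y,Z,X)$ transform by \eqref{tractor_proj_transforms}; and the mean curvature transforms by \eqref{mean_curvature_transformation} as $\hat{H}^a = H^a - \mathrm{N}^a_b \Upsilon^b$. Substituting all of these into the formula, the top slot is trivially preserved, while the middle slot reduces to the identity $\hat{\mu}^a - \hat{H}^a \hat{\sigma} = (\mu^a - H^a \sigma) + \sigma \Upsilon^a$, which follows once one observes that the tangential piece $\sigma\Pi^a_b\Upsilon^b$ picked up from $\hat{\mu}$ reassembles with the normal piece $\sigma \mathrm{N}^a_b\Upsilon^b$ picked up from $-\hat{H}^a\hat{\sigma}$ into the full $\sigma\Upsilon^a$ that the ambient transformation requires. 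The bottom slot is the most delicate and is where I expect the main bookkeeping: one must expand $\hat{H}_a\hat{H}^a$ using $H^a\Upsilon_a = H^a \mathrm{N}^b_a \Upsilon_b$ (since $H^a$ is normal), combine with the intrinsic transformation of $\hat{\rho}$ that involves only the tangential piece of $\Upsilon$, and invoke the decomposition $\Upsilon_a\Upsilon^a = \Pi^a_b\Upsilon_a\Upsilon^b + \mathrm{N}^a_b\Upsilon_a\Upsilon^b$; the normal and tangential contributions then recombine to produce precisely the expected ambient rule $\rho \mapsto \rho - \Upsilon_a(\mu^a - H^a\sigma) - \tfrac{1}{2}\Upsilon_a\Upsilon^a \sigma$.

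With conformal invariance established, the remaining properties follow by direct computation in any single scale. To see $\Pi^A_I V^I \in \cN^\perp$, pair it against an arbitrary $N^A = N^A_a n^a$ using \eqref{norm_bundle} and the tractor metric in the $(Y,Z,X)$-basis: the contribution $\sigma(H^a n_a)$ from pairing the $\rho$-slot of $V$ with the top slot of $N$ exactly cancels $-\sigma(H^a n_a)$ from the middle-slot pairing, leaving $\mu^a n_a$, which vanishes since $\mu^a = \Pi^a_i\mu^i$ is tangential while $n_a$ is normal. For metric preservation, compute $h(\Pi^A_I V_1^I,\Pi^A_I V_2^I)$: the $H^aH_a$ terms coming from the top--bottom pairings cancel against the $H^aH_a$ term from the middle--middle pairing, the cross terms of the form $\sigma H^a\mu_a$ vanish by the same tangential-against-normal argument, and what remains is $\sigma_1 \rho_2 + \sigma_2 \rho_1 + \bg^\Sigma_{ij}\mu_1^i\mu_2^j$, which is the intrinsic tractor metric. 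Filtration preservation is immediate from the formula: the bottom subbundle $X^\Sigma_I\cE[-1] \subset \cT\Sigma$ (characterised by $\sigma=0$, $\mu^i=0$) is sent into $X^A\cE[-1]\subset\cN^\perp$, and modulo this the weight-one top slot $\sigma$ is preserved.

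Finally, $\Pi^A_I$ is injective directly from the formula (vanishing of the image forces $\sigma=0$, whence $\mu^a = \Pi^a_i\mu^i = 0$ and so $\mu^i=0$, and then $\rho=0$), and since $\mathrm{rank}(\cT\Sigma) = m+2 = (n+2)-d = \mathrm{rank}(\cN^\perp)$, the injection is a bundle isomorphism.
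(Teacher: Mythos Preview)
Your proposal is correct and follows essentially the same approach as the paper: both verify conformal invariance of the explicit formula directly (the paper packages the computation as a comparison of two $3\times 3$ matrix products, while you work slot-by-slot, but the underlying algebra---in particular the tangential/normal decomposition of $\Upsilon$ and the expansion of $\hat H^c\hat H_c$---is identical), then check that the image lies in $\cN^\perp$, and note that injectivity, rank, metric preservation, and filtration preservation follow from short direct calculations in a fixed scale. The paper is slightly terser on the latter points, deferring metric preservation to a remark about minimal scales, whereas you spell these out explicitly.
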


\begin{proof}
  Fix a scale $g_\Sigma \in \bm{c}_\Sigma$, and let $g \in \bm{c}$ be
  a scale that satisfies $\iota^* g = g_\Sigma$.  The map is clearly
  injective, and the image is also clearly annihilated by any section
  of $\cN$.  We need to show that the
  map~\eqref{eqn_norm_perp_intrinsic_isom} is unchanged if we replace
  $g$ by some conformally related $\hat{g} = \Omega^2 g$, and
  $g_\Sigma$ by $\hat{g_\Sigma} = \Omega^2 g_\Sigma$. (In the latter
  $\Omega$ is restricted to $\Sigma$ -- this is clear by context and
  so we do introduce additional notation.)  Equivalently, we need to show
  that the following diagram commutes
  \begin{equation}\label{eqn_intrinsic_tractor_comm_diagram}
    \begin{tikzcd}
      {[\cE^I]}_{g_\Sigma} \arrow{r}{\Pi^A _I} \arrow[d] &  {[\cE^A]}_{g} |_\Sigma \arrow[d] \\
      {[\cE^J]}_{\hat{g_\Sigma}} \arrow{r}[swap]{\Pi^B _J} &  {[\cE^B]}_{\hat{g}} |_\Sigma \\
    \end{tikzcd}
  \end{equation}
  where the vertical maps are conformal change of tractor splitting, as given in (\ref{3-trans}), and the horizontal maps are~\eqref{eqn_norm_perp_intrinsic_isom} in the appropriate scale.

  Write $\Upsilon_a = \Omega^{-1} \nabla_a \Omega$, and $\Upsilon_i = \Omega^{-1} D_i \Omega$.
  Note that $\Upsilon_i = \Pi^a _i \Upsilon_a$.
  Applying $\Pi^A _I$ and then rescaling is given by 
  \begin{equation*}
    \begin{pmatrix}
      1 & 0 & 0 \\
      \Upsilon^b & \delta^b _a & 0 \\
      -\frac{1}{2} \Upsilon^c \Upsilon_c & - \Upsilon_a & 1
    \end{pmatrix} 
    \begin{pmatrix}
      1 & 0 & 0 \\
      -H^a & \Pi^a _i & 0 \\
      -\frac{1}{2} H^c H_c & 0 & 1 
    \end{pmatrix}
    =
    \begin{pmatrix}
      1 & 0 & 0 \\
      \Upsilon^b - H^b & \Pi^b _i & 0 \\
      -\frac{1}{2} \Upsilon^c \Upsilon_c + H^a \Upsilon_a - \frac{1}{2} H^c H_c & -\Upsilon_i & 1
    \end{pmatrix},
  \end{equation*}
  while first rescaling and then applying $\Pi^B _J$ corresponds to the matrix 
  \begin{equation*}
    \begin{pmatrix}
      1 & 0 & 0 \\
      -\hat{H}^b & \Pi^b _j & 0 \\
      -\frac{1}{2} \hat{H}^c  \hat{H}_c & 0 & 1 
    \end{pmatrix}
    \begin{pmatrix}
      1 & 0 & 0 \\
      \Upsilon^j & \delta^j _i & 0 \\
      -\frac{1}{2} \Upsilon^k \Upsilon_k & - \Upsilon_i & 1
    \end{pmatrix}
    =
    \begin{pmatrix}
      1 & 0 & 0 \\
      -\hat{H}^b + \Pi^b _j \Upsilon^j & \Pi^b _i & 0 \\
      -\frac{1}{2} \hat{H}^c \hat{H}_c -\frac{1}{2} \Upsilon^k \Upsilon_k & -\Upsilon_i & 1
    \end{pmatrix}.
  \end{equation*}
  Using equation~\eqref{mean_curvature_transformation}, we see that 
  \begin{equation*}
    -\hat{H}^b + \Pi^b _j \Upsilon^j = - H^b + \mathrm{N}^b _c \Upsilon^c + \Pi^b _j \Upsilon^j = - H^b + \Upsilon^b
  \end{equation*}
  and 
  \begin{align*}
    -\frac{1}{2} \hat{H}^c \hat{H}_c -\frac{1}{2} \Upsilon^k \Upsilon_k
    &= -\frac{1}{2} H^c H_c + H^e \Upsilon_e - \frac{1}{2} \left( \Upsilon^k \Upsilon_k + \mathrm{N}^{cd} \Upsilon_c \Upsilon_d \right) \\ 
    &= -\frac{1}{2} H^c H_c + H^e \Upsilon_e - \frac{1}{2} \Upsilon^c \Upsilon_c,
  \end{align*}
  whence the above two matrix products are equal.
  Hence the map is conformally invariant.
 
 It is easily verified that the map $\Pi^A_I$ is metric preserving and
 sends $X^I$ to $X^A$, and so is filtration preserving. An easy calculation shows that it is metric preserving, cf.\ Remark \ref{min-sc} below.
  
\end{proof}

\begin{rem} \label{min-sc} 
Note that the calculations in the above proof and hence the existence of the canonical metric and filtration preserving map  $\Pi: \cT\Sigma\to \cN^\perp$ are greatly simplified if we choose to work only with minimal scales ($g\in \bc$ with $H^c=0$, cf.\ Proposition \ref{prop_min_scale}); in a minimal scale, the map $\Pi^A _I$ simply maps $(\sigma, \mu_i, \rho)
  \mapsto (\sigma, \mu_a, \rho)$ and it is clear that this map
  preserves the metric and the filtration.  
\end{rem}

Motivated by the result in Theorem \ref{thm_norm_perp_intrinsic_isom} above, for the cases of
dimensions $m=1,2$ we (for now) define $\cT\Sigma$ to be
$\cN^\perp$. (Then Theorem \ref{thm_norm_perp_intrinsic_isom} again applies,  and is effectively just
changing the splitting to give triples that transform in the usual
way.) In Section \ref{lowdim} below we will then show that in  dimension
$m=1$ and $m=2$ it is still that case that $\cT\Sigma$ is canonically
$J^2\ce[1]/S^2_0 T^*\Sigma [1]$ (where in the $m=1$ case $S^2_0 T^*\Sigma [1]$ is the ``zero vector bundle''), consistent with the discussion given in the introduction.

For convenience, we will say that sections of $\cN^\perp$ are tractors \emph{tangent} to the submanifold, and similarly, sections of $\cN$ are tractors \emph{normal} to the submanifold.

It will also be convenient to record the relationship between the submanifold and ambient splitting tractors corresponding to the isomorphism in Theorem \ref{thm_norm_perp_intrinsic_isom}, namely:
\begin{equation}\label{subm_vs_amb_XYZ}
X^I =  \Pi^I_A X^A, \quad Z^I_i = \Pi^I_A \Pi^a_i Z^A_a, \quad \text{and} \quad Y^I = \Pi^I_A(Y^A - H^aZ_a^A -\frac{1}{2}H^aH_a X^A)
\end{equation}
along $\Sigma$, where $\Pi^{I}_A$ can be interpreted as the inverse of the map $\Pi^A_I$ given by Theorem \ref{thm_norm_perp_intrinsic_isom} or, better, as the orthogonal projection from $\cT$ to $\mathcal{N}^{\perp}$ followed by the isomorphism $\mathcal{N}^{\perp}\to \cT\Sigma$ (this is completely analogous to our use of the notation $\Pi^i_a$ applied to tangent vectors, see \eqref{eqn_normal_bundle_ses_final}).

We have already mentioned that there is a tractor Gau{\ss} formula,
namely a decomposition of the ambient tractor connection which is
compatible with the decomposition $\cT = \cN^\perp \oplus \cN$.
Define the ``checked'' connection $\check{\nabla}$ on $\cT\Sigma$ by
\begin{equation}\label{checked_connection}
  \check{\nabla}_i V^J := \Pi^J _B \nabla_i \left( \Pi^B _K V^K \right),
\end{equation}
where $\nabla_i$ on the right-hand side is the (pullback of the) ambient tractor connection.
This is essentially the tangential part of the ambient connection.
We may then define the \emph{tractor second fundamental form} $\LL_{i J} {}^C$ analogously to the Riemannian case, namely as the 1-form with values in maps $\cT \Sigma \to \cN$ which characterises the normal part of the ambient connection:
\begin{equation}\label{tractor_gauss}
  \nabla_i V^B = \Pi^B _J \check{\nabla}_i V^J + \LL_{i J} {}^B V^J,
\end{equation}
where $V^B = \Pi^B _J V^J$ is a section of the ambient tractor bundle which is tangent to the submanifold.
The linear operator $\LL_{i J} {}^B$ is well defined by this since both $\iota^* \nabla$ and $\check{\nabla}$ satisfy the Leibniz rule.
We call~\eqref{tractor_gauss} the \emph{tractor Gau{\ss} formula}.

The ambient tractor connection also induces a connection on the normal tractor bundle in the obvious way:
\begin{equation}\label{normal_tractor_connection}
  \nabla^\cN _i N^A := \mathrm{N}^A _B \nabla_i N^B,
\end{equation}
where $N^A$ is a section of $\cN$.
Such $N^A$ are of the form $N^A\stackrel{g}{=} (0, n^a, n_c H^c)$, where $n^a \in \Gamma(N\Sigma^a [-1])$.
As a tractor with zero in the top slot, it follows from~\eqref{tractor_proj_transforms} that the middle slot of $\nabla_i^\cN N^A$ is necessarily conformally invariant. 
But this exactly recovers the invariant connection on $N \Sigma[-1]$ discussed in Section~\ref{conf_subm} (cf.\ Lemma \ref{nablaNlem}). In summary, we have the following.
\begin{prop}\label{norm-tr-conn}
The canonical isomorphism $N\Sigma[-1]\stackrel{\simeq}{\to} \cN$
preserves the invariant parallel transports defined on each bundle.
  \end{prop}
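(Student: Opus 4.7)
The proposition is essentially an immediate corollary of Lemma \ref{nablaNlem}, combined with the elementary fact that any bundle isomorphism which intertwines two linear connections also intertwines the corresponding parallel transports along every curve. My plan is therefore to phrase things in the intertwining language and reduce to the lemma.

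More precisely, I would first unpack the statement: the ``invariant parallel transports'' on the two bundles are those defined by the connections $\nabla^{\perp}$ on $N\Sigma[-1]$ (as in \eqref{eqn_normal_connection}, coupled with the Levi-Civita connection on $\ce_{\Sigma}[-1]$) and $\nabla^{\cN}$ on $\cN$ (as in \eqref{normal_tractor_connection}). Writing $\Phi$ for the isomorphism from Lemma \ref{Nlem}, i.e.\ $\Phi(n^b):=N^B_b n^b$, Lemma \ref{nablaNlem} asserts precisely
\[
 N^C_b\,\nabla^{\perp}_i n^b \;=\; N^C_B\,\nabla_i\bigl(N^B_b n^b\bigr),
\]
which, on recognizing the right-hand side as $\nabla^{\cN}_i\Phi(n^b)$ and the left-hand side as $\Phi(\nabla^{\perp}_i n^b)$, is the intertwining property $\nabla^{\cN}_i\circ\Phi=\Phi\circ\nabla^{\perp}_i$.

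Given this, the preservation of parallel transport is automatic: along any smooth curve $\gamma$ in $\Sigma$, a section $n^b$ satisfies $\nabla^{\perp}_{\dot\gamma}n^b=0$ iff $\Phi(n^b)$ satisfies $\nabla^{\cN}_{\dot\gamma}\Phi(n^b)=0$, and since parallel transport along $\gamma$ is uniquely determined by the parallel section ODE together with an initial value, $\Phi$ maps the $\nabla^{\perp}$-parallel transport isomorphism along $\gamma$ to the $\nabla^{\cN}$-parallel transport isomorphism along $\gamma$, as claimed.

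There is no real obstacle, since the substantive content sits in Lemma \ref{nablaNlem}, whose proof note already observes that it reduces to a one-line verification in a minimal scale ($H^c=0$): in such a scale $\Phi(n^b)=n^b Z^B_b$, and computing $\nabla_i(n^b Z^B_b)$ via \eqref{ctrids} yields $(\nabla_i n^b)Z^B_b - n^b P_{ib}X^B$ (the $Y$-slot contribution $n^b g_{ib}$ vanishes because $n^b$ is normal, so $n_i=0$); applying the normal projector $N^A_B$ kills everything except the normal part of the $Z$-slot, giving $N^a_b(\nabla_i n^b)Z^B_a=\Phi(\nabla^{\perp}_i n^b)$. The conformal invariance of both sides (already noted in Section \ref{sub-tr}) then upgrades the minimal-scale identity to all scales, and the proposition follows.
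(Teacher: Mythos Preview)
Your proposal is correct and follows essentially the same approach as the paper: the paper presents the proposition as a summary of the preceding discussion, which observes that the middle slot of $\nabla^{\cN}_i N^A$ recovers the invariant connection $\nabla^{\perp}$ on $N\Sigma[-1]$ and cites Lemma~\ref{nablaNlem} for this. Your explicit phrasing of Lemma~\ref{nablaNlem} as the intertwining identity $\nabla^{\cN}\circ\Phi=\Phi\circ\nabla^{\perp}$, together with the standard observation that intertwined connections give identical parallel transports, is exactly the intended argument, and your minimal-scale verification of the lemma matches the paper's own proof note.
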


Essential to our direction in this article is that the tractor
fundamental form may be captured in several equivalent ways, the first
of which we give here.

\begin{prop}\label{prop_alt_tractor_2ff}
	The tractor second fundamental form is given by
  \begin{equation}\label{eqn_tractor_2ff_der_proj}
		\LL_{i K} {}^B = \Pi^C _K \mathrm{N}_A ^B \nabla_i \Pi_C ^A,
	\end{equation}
  or equivalently,
  \begin{equation}\label{n_grad_n}
    \LL_{i K} {}^B = - \Pi^C _K \mathrm{N}^B _A \nabla_i \mathrm{N}^A _C.
  \end{equation}
\end{prop}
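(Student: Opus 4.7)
The plan is to derive both identities by writing an arbitrary tangential tractor as $V^A = \Pi^A_C U^C$ for a general ambient tractor $U^C$ and then extracting the normal part of $\nabla_i V^A$ in two different ways: once using the Leibniz rule applied to the projector, and once using the tractor Gau{\ss} formula \eqref{tractor_gauss}. Comparing the two expressions will give the formula for $\LL$.

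Concretely, I would first take $U^C$ to be an arbitrary section of $\cT|_\Sigma$ and set $V^A := \Pi^A_C U^C \in \Gamma(\cN^\perp)$. By the Leibniz rule applied to the ambient (pullback) tractor connection,
\[
\nabla_i V^A = (\nabla_i \Pi^A_C)\, U^C + \Pi^A_C\, \nabla_i U^C.
\]
Projecting this onto $\cN$ with $\mathrm{N}^B_A$ and using the orthogonality identity $\mathrm{N}^B_A \Pi^A_C = 0$, the second term drops out, leaving
\[
\mathrm{N}^B_A \nabla_i V^A = \mathrm{N}^B_A (\nabla_i \Pi^A_C)\, U^C.
\]
On the other hand, identifying $V^A$ with its submanifold tractor counterpart $V^J = \Pi^J_C U^C$ via the isomorphism of Theorem \ref{thm_norm_perp_intrinsic_isom}, the tractor Gau{\ss} formula \eqref{tractor_gauss} yields
\[
\nabla_i V^A = \Pi^A_J \check\nabla_i V^J + \LL_{iJ}{}^A V^J,
\]
and projecting onto $\cN$ kills the first term (again by $\mathrm{N}^B_A\Pi^A_J=0$) and preserves the second (since $\LL_{iJ}{}^A$ is $\cN$-valued), giving
\[
\mathrm{N}^B_A \nabla_i V^A = \LL_{iJ}{}^B\, \Pi^J_C U^C.
\]

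Equating these expressions for all $U^C$ produces the identity $\mathrm{N}^B_A \nabla_i \Pi^A_C = \LL_{iJ}{}^B \Pi^J_C$. Contracting with $\Pi^C_K$ and using that $\Pi^J_C \Pi^C_K = \delta^J_K$ (the composition of the isomorphism $\Pi^J_C : \cN^\perp \to \cT\Sigma$ with its inverse $\Pi^C_K$), we arrive at
\[
\LL_{iK}{}^B = \Pi^C_K\, \mathrm{N}^B_A\, \nabla_i \Pi^A_C,
\]
which is \eqref{eqn_tractor_2ff_der_proj}. The equivalent formula \eqref{n_grad_n} is then immediate from $\Pi^A_C + \mathrm{N}^A_C = \delta^A_C$, which implies $\nabla_i \Pi^A_C = -\nabla_i \mathrm{N}^A_C$.

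There is no real obstacle here: the argument is a direct unpacking of \eqref{tractor_gauss} combined with the orthogonality of the projectors $\Pi$ and $\mathrm{N}$. The only point requiring care is keeping the identification between sections of $\cT\Sigma$ and sections of $\cN^\perp \subset \cT|_\Sigma$ (via Theorem \ref{thm_norm_perp_intrinsic_isom}) consistent throughout, so that $\Pi^J_C\Pi^C_K = \delta^J_K$ can be used to invert the relation.
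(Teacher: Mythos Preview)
Your proof is correct and follows essentially the same strategy as the paper's: isolate the normal component of the tractor Gau{\ss} formula using the orthogonality $\mathrm{N}^B_A \Pi^A_C = 0$, and then identify the resulting expression with $\LL$. The paper arrives at the same endpoint by contracting with an arbitrary normal section $N^B$ and moving the derivative via $\Pi^A_C \nabla_i N_A = -N_A \nabla_i \Pi^A_C$, whereas you project directly with $\mathrm{N}^B_A$ and apply Leibniz to $\Pi^A_C U^C$; your route is marginally more direct but the two arguments are interchangeable.
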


\begin{proof}
	Let $N^A$ be a section of the normal tractor bundle $\cN$.
	Note that $\Pi^A _C N_A = 0$, and hence

	\begin{equation*}
		0 = \nabla_i(\Pi^A _C N_A) = (\nabla_i \Pi^A _C) N_A + \Pi^A _B \nabla_i N_A,
	\end{equation*}
        whence	
  \begin{equation}\label{eqn_tractor_2ff_proj_intermediate}
    \Pi^A _C \nabla_i N_A = - N_A \nabla_i \Pi^A _C.  
	\end{equation}
  As a consequence of the tractor Gau{\ss} formula~\eqref{tractor_gauss},
  \begin{equation*}
    N_B \mathbb{L}_{iK} {}^B V^K = N_B \nabla_i V^B = -V^B \nabla_i N_B = - V^K \Pi ^B _K \nabla_i N_B
  \end{equation*}
  for all $V^K \in \Gamma(\cE^K)$, and therefore
  \begin{equation*}
    N_B \LL _{iK} {}^B = - \Pi^B _K \nabla_i N_B = - \Pi^C _K \Pi^A _C \nabla_i N_A.
  \end{equation*}
Combining this with~\eqref{eqn_tractor_2ff_proj_intermediate}, we have that
	\begin{equation*}
    N_B \LL _{i K} {}^B = -\Pi^C _K \left( - N_A \nabla_i \Pi^A _C \right) = N_B \Pi^C _K \mathrm{N}^B _A \nabla_i \Pi^A _C,
	\end{equation*}
	and this must hold for any section $N^B$ of the normal tractor bundle, whence the result follows.
  Substituting $\Pi^A _C = \delta^A _C - \mathrm{N}^A _C$ into~\eqref{eqn_tractor_2ff_der_proj} and using that $\delta^A _C$ is parallel for the tractor connection then gives the second equality of the proposition.
\end{proof}

\medskip

It will be convenient to have a second (equivalent) object that we also term the \emph{tractor second fundamental form}, which we denote by $\overline{\LL}$ and which is the section of $T^* \Sigma \otimes \cN^{\perp *} \otimes \cN$ defined by 
\begin{equation}\label{LLbar}
  \overline{\LL}_{i A} {}^B = \Pi^J _A \LL_{i J} {}^B.
\end{equation}
Since $\Pi^J _A$ is an isomorphism $\cT^* \Sigma \to \cN^{\perp *}$, this is clearly equivalent to the original tractor second fundamental form (and eventually when there is no possibility of confusion we will simply denote both objects by $\LL$).
The gain of using $\overline{\LL}$ is that both its tractor indices are ambient tractor indices, as we shall see shortly. 

It is useful to observe that $\overline{\LL}$ arises naturally in several different ways.
\begin{lem}
  The $B$ index of $\mathrm{N}_A ^C \nabla_i \mathrm{N}^A _B$ is tangential, i.e., for any $N^B \in \Gamma(\cN^B)$, one has 
  \begin{equation*}
    N^B \mathrm{N}_A ^C \nabla_i \mathrm{N}^A _B = 0.
  \end{equation*}
\end{lem}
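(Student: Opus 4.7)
The plan is to use that $\mathrm{N}^A{}_B$ is the orthogonal projector onto $\cN$, together with complementarity of $\Pi$ and $\mathrm{N}$, and deduce everything from a Leibniz calculation.

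First I would note that for any section $N^B$ of $\cN$ we have the defining identity $\mathrm{N}^A{}_B N^B = N^A$. Differentiating this along $\Sigma$ with the pullback tractor connection gives, by the Leibniz rule,
\begin{equation*}
  \nabla_i N^A \;=\; \nabla_i\bigl(\mathrm{N}^A{}_B N^B\bigr) \;=\; (\nabla_i \mathrm{N}^A{}_B)\, N^B + \mathrm{N}^A{}_B\, \nabla_i N^B,
\end{equation*}
and rearranging yields
\begin{equation*}
  (\nabla_i \mathrm{N}^A{}_B)\, N^B \;=\; \nabla_i N^A - \mathrm{N}^A{}_B\, \nabla_i N^B \;=\; (\delta^A{}_B - \mathrm{N}^A{}_B)\, \nabla_i N^B \;=\; \Pi^A{}_B\, \nabla_i N^B.
\end{equation*}
Thus $(\nabla_i \mathrm{N}^A{}_B) N^B$ is tangential in its free $A$ index.

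Next I would contract with $\mathrm{N}_A{}^C$. Recalling that $\mathrm{N}_{AB}$ is symmetric, the object $\mathrm{N}_A{}^C$ is the same projector and is complementary to $\Pi^A{}_B$, i.e.\ $\mathrm{N}_A{}^C \Pi^A{}_B = 0$. Therefore
\begin{equation*}
  N^B\, \mathrm{N}_A{}^C\, \nabla_i \mathrm{N}^A{}_B \;=\; \mathrm{N}_A{}^C\, \Pi^A{}_B\, \nabla_i N^B \;=\; 0,
\end{equation*}
which is the claim. Since the calculation is short and purely algebraic once the Leibniz step is done, I do not anticipate any real obstacle; the only subtlety to be careful about is the symmetry convention $\mathrm{N}_A{}^C = \mathrm{N}^C{}_A$ used when repositioning the indices via the tractor metric.
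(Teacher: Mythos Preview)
Your proof is correct and is essentially identical to the paper's own argument: both differentiate the identity $\mathrm{N}^A{}_B N^B = N^A$ via Leibniz, solve for $(\nabla_i \mathrm{N}^A{}_B)N^B$, and then contract with $\mathrm{N}_A{}^C$. The only cosmetic difference is that the paper cancels using the idempotency $\mathrm{N}^C{}_A \mathrm{N}^A{}_B = \mathrm{N}^C{}_B$ directly, whereas you phrase the same cancellation via $\mathrm{N}_A{}^C \Pi^A{}_B = 0$.
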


\begin{proof}
Let $N^B \in \Gamma(\cN^B)$. We calculate $\nabla_i (\mathrm{N}^A _B N^B)$ in two different ways.
  On the one hand, one has 
  \begin{equation*}
    \nabla_i ( \mathrm{N}^A _B N^B) = \nabla_i N^A,
  \end{equation*}
  while on the other
  \begin{equation*}
    \nabla_i ( \mathrm{N}^A _B N^B ) = \mathrm{N}^A _B \nabla_i N^B + N^B \nabla_i \mathrm{N}^A _B.
  \end{equation*}
  Hence 
  \begin{equation*}
    N^B \nabla_i \mathrm{N}^A _B = \nabla_i N^A - \mathrm{N}^A _B \nabla_i N^B.
  \end{equation*}
  Thus 
  \begin{equation*}
    N^B \mathrm{N}_A ^C \nabla_i \mathrm{N}^A _B
    = \mathrm{N}^C _A ( \nabla_i N^A - \mathrm{N}^A _B \nabla_i N^B ) 
    = \mathrm{N}^C _A \nabla_i N^A - \mathrm{N}^C _B \nabla_i N^B 
    = 0.
  \end{equation*}
\end{proof}

Thus we see that the $\Pi^C _K$ of equation~\eqref{n_grad_n} is merely identifying the already tangential $C$ index with a submanifold tractor index.
Thus we see the following.

\begin{prop} \label{NgradN=L}
  \begin{equation}\label{l_bar_n_grad_n}
    \overline{\LL}_{i B} {}^C= - \mathrm{N}^C _A \nabla_i \mathrm{N}^A _B. 
  \end{equation}
\end{prop}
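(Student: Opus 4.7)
The plan is to combine the two immediately preceding results---equation \eqref{n_grad_n} from Proposition \ref{prop_alt_tractor_2ff} and the lemma asserting that the $B$-index of $\mathrm{N}_A^C \nabla_i \mathrm{N}^A{}_B$ is tangential---together with one routine identity relating the submanifold and ambient projection operators.

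First I would begin from the definition \eqref{LLbar} and substitute \eqref{n_grad_n}:
\[
\overline{\LL}_{i B}{}^C \;=\; \Pi^J_B \, \LL_{i J}{}^C \;=\; -\,\Pi^J_B\,\Pi^D_J\, \mathrm{N}^C_A \nabla_i \mathrm{N}^A_D.
\]
Then I would collapse the composition $\Pi^J_B \Pi^D_J$ to the ambient orthogonal projector $\Pi^D_B$. This is the content of the diagram following Theorem \ref{thm_norm_perp_intrinsic_isom}: the map $\Pi^D_J \colon \cT \Sigma \to \cN^\perp \subset \cT$ of Theorem \ref{thm_norm_perp_intrinsic_isom} and the map $\Pi^J_B \colon \cT \to \cT\Sigma$ (orthogonal projection followed by the inverse of that isomorphism) compose to the ambient projection $\Pi^D_B \colon \cT \to \cN^\perp$; see the discussion around \eqref{subm_vs_amb_XYZ}.

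Having done so, the identity to prove reduces to
\[
\Pi^D_B\,\mathrm{N}^C_A \nabla_i \mathrm{N}^A_D \;=\; \mathrm{N}^C_A \nabla_i \mathrm{N}^A_B.
\]
Using $\Pi^D_B = \delta^D_B - \mathrm{N}^D_B$, this will hold precisely when $\mathrm{N}^D_B\,\mathrm{N}^C_A \nabla_i \mathrm{N}^A_D = 0$. But that is exactly the preceding lemma rephrased: the lemma states that $N^B\mathrm{N}_A^C\nabla_i\mathrm{N}^A_B=0$ for every $N^B \in \Gamma(\cN^B)$, and since $\mathrm{N}^B_A$ itself is $\cN$-valued in its upper index (and the projectors $\mathrm{N}_{AB}$ are symmetric with respect to the tractor metric), this is equivalent to annihilation by $\mathrm{N}^D_B$ acting on the free index of $\mathrm{N}^C_A \nabla_i \mathrm{N}^A_D$. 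Assembling these two observations delivers $\overline{\LL}_{iB}{}^C = -\,\mathrm{N}^C_A \nabla_i \mathrm{N}^A_B$ immediately.

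There is no real obstacle: both ingredients are already in hand. The only minor care needed is bookkeeping around the two senses in which $\Pi$ appears---as the bundle isomorphism $\cT \Sigma \cong \cN^\perp$ from Theorem \ref{thm_norm_perp_intrinsic_isom} and as the ambient orthogonal projector---which is why the step $\Pi^J_B \Pi^D_J = \Pi^D_B$ deserves to be noted explicitly rather than performed silently.
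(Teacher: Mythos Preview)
Your proof is correct and follows the same approach as the paper: start from \eqref{LLbar}, substitute \eqref{n_grad_n}, collapse $\Pi^J_B\Pi^D_J$ to the ambient projector $\Pi^D_B=\delta^D_B-\mathrm{N}^D_B$, and invoke the preceding lemma to kill the $\mathrm{N}^D_B$ term. The only difference is that you spell out the identification $\Pi^J_B\Pi^D_J=\Pi^D_B$ explicitly, whereas the paper writes it in one line.
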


\begin{proof}
  By equation (\ref{n_grad_n}) we have
\begin{equation*}
  \overline{\LL}_{i B} {}^C
  = - \Pi^K _B \Pi^D _K \mathrm{N}^C _A \nabla_i \mathrm{N}^A _D 
  = - ( \delta^D _B - \mathrm{N}^D _B ) \mathrm{N}^C _A \nabla_i \mathrm{N}^A _D,
\end{equation*}
and so the result follows from the previous lemma.
\end{proof}

\begin{rem}\label{analog}
  Note that \eqref{l_bar_n_grad_n} is equivalent  to
  $
\LL_{i J} {}^C= - \Pi^B_J\mathrm{N}^C _A \nabla_i \mathrm{N}^A _B .
  $
  A similar argument  shows that
  $$\II_{ij} {}^c=-\Pi^b _j
  \mathrm{N}^c _a \nabla_i \mathrm{N}^a _b.
  $$
  \end{rem}

\begin{lem}\label{lem_der_normal_tractor_proj}
  Let $\mathrm{N}^C _B$ be the normal tractor projector. 
  Then 
  \begin{equation}\label{eqn_der_normal_tractor_proj}
    \nabla_i \mathrm{N}^C _B = - \overline{\LL}_i {}^C {}_B - \overline{\LL}_{iB} {}^C,
  \end{equation}
  where 
  \begin{equation*}
    \overline{\LL}_i {}^C {}_B = h^{CD} h_{AB} \overline{\LL}_{i D} {}^A.
  \end{equation*}
\end{lem}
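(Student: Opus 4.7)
The plan is to prove the identity via Leibniz and the idempotency of the normal tractor projector. The starting point is that $\mathrm{N}^C_B$ is a (symmetric) projector, so $\mathrm{N}^C_D \mathrm{N}^D_B = \mathrm{N}^C_B$. Differentiating this along the submanifold gives
\[
  \nabla_i \mathrm{N}^C_B \;=\; (\nabla_i \mathrm{N}^C_D)\,\mathrm{N}^D_B \;+\; \mathrm{N}^C_D\,(\nabla_i \mathrm{N}^D_B),
\]
so the task is to identify the two terms on the right with the two terms appearing on the right-hand side of \eqref{eqn_der_normal_tractor_proj}.

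The second term is immediate: Proposition \ref{NgradN=L} states exactly that $\mathrm{N}^C_D\nabla_i \mathrm{N}^D_B = -\overline{\LL}_{iB}{}^C$.

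For the first term, I would use the symmetry $\mathrm{N}_{AB} = \mathrm{N}_{BA}$ (equivalently $h_{AC}\mathrm{N}^C_B = h_{BC}\mathrm{N}^C_A$), noted in the paragraph containing \eqref{fact-or}. Since the tractor connection preserves $h_{AB}$, this symmetry passes to derivatives: $\nabla_i \mathrm{N}_{AB}$ is symmetric in $A,B$. Using this, together with the definition $\overline{\LL}_i{}^C{}_B = h^{CD}h_{AB}\overline{\LL}_{iD}{}^A$ and Proposition \ref{NgradN=L}, a short bookkeeping computation yields
\[
  -\overline{\LL}_i{}^C{}_B \;=\; h^{CD}h_{AB}\,\mathrm{N}^A_E\,\nabla_i \mathrm{N}^E_D \;=\; \mathrm{N}^D_B\,\nabla_i \mathrm{N}^C_D,
\]
where I have raised/lowered indices inside $\nabla_i$ (permitted since $h$ is parallel) and used the symmetry of $\mathrm{N}$. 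This exactly matches $(\nabla_i \mathrm{N}^C_D)\mathrm{N}^D_B$, completing the proof.

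The only subtle step is the second one: the manipulation with $h^{CD}$, $h_{AB}$, and the symmetry of $\mathrm{N}_{AB}$ is essentially the mechanism by which ``transposing'' the projector converts Proposition \ref{NgradN=L} (which produces $\overline{\LL}_{iB}{}^C$ with the normal projection \emph{on the upper index}) into the complementary statement with the normal projection \emph{on the lower index}, yielding $\overline{\LL}_i{}^C{}_B$. This is pure index bookkeeping and presents no real obstacle, just care. As a sanity check, one may alternatively decompose $\nabla_i \mathrm{N}_{CB}$ under $\cT|_{\Sigma} = \cN^\perp \oplus \cN$ on each of its two indices: the symmetry of $\nabla_i \mathrm{N}_{CB}$ together with the orthogonality $\Pi^C_D \mathrm{N}^D_E = 0$ forces both ``pure'' (tangential-tangential and normal-normal) components to vanish, leaving precisely the two ``off-diagonal'' contributions identified above.
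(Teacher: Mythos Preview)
Your proposal is correct and follows essentially the same approach as the paper: differentiate the idempotent identity $\mathrm{N}^C_B = \mathrm{N}^C_A \mathrm{N}^A_B$, identify one term directly via Proposition~\ref{NgradN=L}, and obtain the other as its transpose using the symmetry of $\mathrm{N}_{AB}$ (together with $h$ being parallel). The paper's proof is the same but terser, simply noting that the first term ``is clearly a transpose'' of the second.
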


\begin{proof}
  Noting that $\mathrm{N}^C _B = \mathrm{N}^C _A \mathrm{N}^A _B$, we have 
  \begin{equation*}
    \nabla_i \mathrm{N}^C _B = \mathrm{N}^A _B \nabla_i \mathrm{N}^C _A + \mathrm{N}^C _A \nabla_i \mathrm{N}^A _B.
  \end{equation*}
  The second term is exactly the negative of equation~\eqref{l_bar_n_grad_n}.
  Using that the normal projector is symmetric, the first term is clearly a transpose of this (so for example on this term, the $C$ index is tangential).
\end{proof}

We now use this result to compute an explicit formula for the tractor second fundamental form. 
We first prove a lemma about the tractor normal projector.
\begin{lem}\label{lem_norm_tractor_proj_formula}
  For a choice of scale, the tractor normal projector is given by 
  \begin{equation}\label{eqn_norm_tractor_proj_formula}
    \mathrm{N}^A _B = \mathrm{N}^a _b Z^A _a Z^b _B + H^a Z^A _a X_B + H_b X^A Z^b _B + (H^d H_d) X^A X_B
  \end{equation}
  where the $H^c$ is the mean curvature vector in the chosen scale.
\end{lem}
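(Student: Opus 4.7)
The plan is to compute $\mathrm{N}^A_B$ by multiplying out the factorisation $\mathrm{N}^A_B = N^A_a N^a_B$ from \eqref{fact-or}, using the explicit form of the inclusion $N\Sigma[-1]\hookrightarrow \cT$ supplied by \eqref{norm_bundle}. The real content is just careful bookkeeping of the tractor splittings.

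First I would read \eqref{norm_bundle} as the statement that the bundle map $N^A_a \colon TM|_\Sigma[-1]\to \cT$, extended from $N\Sigma[-1]$ by pre-composition with the normal projection on the base, is given in the chosen scale $g$ by
\[
N^A_a \;=\; \mathrm{N}^b_a\, Z^A_b \;+\; H_a\, X^A,
\]
where I used that the mean curvature vector is normal, so $\mathrm{N}^c_a H_c = H_a$. By the symmetry statement recorded just after \eqref{fact-or}, the dual $N^a_A$ is then obtained by raising and lowering indices with the conformal and tractor metrics. Using the identities $h_{AB}Z^B_c = \bg_{ca}Z_A^a$ and $h_{AB}X^B = X_A$ (which are immediate from the explicit form \eqref{tr-met-form} of $h^{AB}$ together with the projector pairings \eqref{projector_contractions}) this yields
\[
N^a_A \;=\; \mathrm{N}^a_b\, Z_A^b \;+\; H^a X_A.
\]

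Next I would simply multiply and expand:
\[
\mathrm{N}^A_B \;=\; \bigl(\mathrm{N}^b_a Z^A_b + H_a X^A\bigr)\bigl(\mathrm{N}^a_c Z_B^c + H^a X_B\bigr),
\]
giving four terms. These simplify using (i) the idempotence $\mathrm{N}^b_a \mathrm{N}^a_c = \mathrm{N}^b_c$, and (ii) the normality of $H^a$, i.e.\ $\mathrm{N}^b_a H^a = H^b$ and $H_a\mathrm{N}^a_c = H_c$. The output is exactly \eqref{eqn_norm_tractor_proj_formula}.

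No serious obstacle is expected; the argument is simply a clean conversion between the description of $\cN$ as the image of \eqref{norm_bundle} and its description as the image of the orthogonal projector. As a sanity check one may alternatively verify the formula directly, by showing that the right-hand side of \eqref{eqn_norm_tractor_proj_formula} is symmetric in $A,B$ (hence self-adjoint for $h$), acts as the identity on every tractor of the form \eqref{norm_bundle}, and sends an arbitrary ambient tractor $V^B\stackrel{g}{=}(\sigma,\mu^b,\rho)$ into a tractor again of the form \eqref{norm_bundle}; the cancellations needed in all three checks reduce, again, to $H^a$ being normal.
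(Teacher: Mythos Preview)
Your proposal is correct and matches the paper's own proof essentially verbatim: the paper's ``alternative'' argument is precisely your factorisation via \eqref{fact-or}, writing $N^a_B = \mathrm{N}^a_b Z^b_B + H^a X_B$ and contracting, while the paper's primary argument is your sanity check (verifying directly that the right-hand side is a map into $\cN$ acting as the identity there). Nothing needs to change.
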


\begin{proof}
    The right-hand side of~\eqref{eqn_norm_tractor_proj_formula}
    defines a conformally invariant bundle map $\cE^A \to \cN^B$ which
    moreover acts as the identity on sections of $\cN^A$ as defined in
    following (\ref{norm_bundle}). The latter is easily verified by working in
    a minimal scale.

    Alternatively, \eqref{eqn_norm_tractor_proj_formula} follows immediately from \eqref{fact-or}, as in any scale
    $$
N^a_B=N^a_bZ^b_B+H^aX_B ,
$$
and $\mathrm{N}^A _B= N^A_aN^a_B$.
\end{proof}

\begin{thm}\label{tractor2ff_XZ}
  The tractor second fundamental form is given by 
  \begin{equation}\label{eqn_tractor2ff_XZ}
    \begin{split}
      \LL_{iJ} {}^C 
    &= \mathring{\II}_{ij} {}^c Z^j _J Z^C _c  
    + \mathrm{N}^c _a \left( P_i {}^a - \nabla_i H^a \right) X_J Z^C _c \\
    &\hspace{2em} + H_c \mathring{\II}_{ij} {}^c Z^j _J X^C 
    + H_a\left( P_i {}^a - \nabla_i H^a \right) X_J X^C .
    \end{split}
  \end{equation}
\end{thm}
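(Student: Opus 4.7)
\medskip

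\noindent \textbf{Proof plan.} The key input is Proposition \ref{NgradN=L}, which expresses the tractor second fundamental form as the normal-tangential part of the derivative of the normal tractor projector:
\[
\overline{\LL}_{iB}{}^C = -\mathrm{N}^C_A \nabla_i \mathrm{N}^A_B,
\qquad\text{and hence}\qquad \LL_{iJ}{}^C = \Pi^B_J\, \overline{\LL}_{iB}{}^C.
\]
The plan is to substitute the explicit formula \eqref{eqn_norm_tractor_proj_formula} for $\mathrm{N}^A_B$ and differentiate, then project.

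First I would expand $\nabla_i \mathrm{N}^A_B$ by Leibniz using \eqref{eqn_norm_tractor_proj_formula}, applying the connection formulas \eqref{ctrids} to each of $X^A, X_B, Z^A_a, Z^b_B, Y^A, Y_B$ that appears; the tensor factors $\mathrm{N}^a_b$, $H^a$, and $H^aH_a$ contribute terms involving $\nabla_i \mathrm{N}^a_b$ and $\nabla_i H^a$. A key simplification is the tangent-side analogue of Proposition \ref{NgradN=L} noted in Remark \ref{analog}, together with the lemma-style argument showing $\mathrm{N}^c_a\nabla_i \mathrm{N}^a_b$ is tangential in $b$: this gives
\[
\mathrm{N}^c_a\nabla_i \mathrm{N}^a_b = -\II_{ij}{}^c \Pi^j_b,
\]
which lets me eliminate derivatives of the tensor normal projector in favour of $\II = \mathring{\II} + \bg\, H$.

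Next I would apply the projector $\mathrm{N}^C_A$. Using Lemma \ref{lem_norm_tractor_proj_formula} it is immediate that $\mathrm{N}^C_A Z^A_a = \mathrm{N}^b_a Z^C_b + H_a X^C\cdot(\text{trace adjustment})$ and $\mathrm{N}^C_A X^A = 0$, while $Y^A$ contributes (through the $H^a Z^A_a X_B$ and $H^aH_a X^A X_B$ slots of $\mathrm{N}^A_B$) only after differentiation. In particular every term arising from $\nabla_i Z^A_a = -P_{ia}X^A - \bg_{ia}Y^A$ is controlled: the $X^A$ piece is killed by $\mathrm{N}^C_A$ since $X$ is tangential, while the $Y^A$ piece produces a contribution that, once combined with the analogous piece coming from $\nabla_i H^a$ and from $\nabla_i Z^b_B$, assembles the Schouten and $\nabla H$ combinations $(P_i{}^a-\nabla_i H^a)$ in the asserted formula. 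At this stage I would have an expression of the form
\[
\overline{\LL}_{iB}{}^C = \mathring{\II}_{ij}{}^c Z^C_c (\Pi^j{}_b Z^b_B) + \mathrm{N}^c_a(P_i{}^a-\nabla_i H^a)Z^C_c X_B + (\text{$X^C$ slot terms}),
\]
with the $X^C$ slot terms obtained symmetrically from the $H_b X^A Z^b_B$ and $H^aH_a X^A X_B$ parts of $\mathrm{N}^A_B$.

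Finally, to convert from $\overline{\LL}_{iB}{}^C$ to $\LL_{iJ}{}^C$ I would apply $\Pi^B_J$ and invoke \eqref{subm_vs_amb_XYZ}, which identifies $\Pi^B_J$ acting on the relevant ambient cotractor slots with the submanifold projectors $Z^j_J$ and $X_J$ (the $Y_B$ slot, which would produce $Y_J$, does not appear because the corresponding contributions have already been absorbed via the trace of $\II$ into the mean curvature combinations noted above). The main obstacle is purely organisational: tracking how the mean curvature $H$ enters both through $\mathrm{N}^A_B$ itself and through the conversion $\cN^\perp\leftrightarrow \cT\Sigma$ so that $\II$ cleanly reorganises as $\mathring{\II}$ plus the $(P_i{}^a - \nabla_i H^a)$ terms of the theorem. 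As a sanity shortcut one can first verify the formula in a minimal scale ($H^a=0$ by Proposition \ref{prop_min_scale}), where everything collapses to $\mathring{\II}_{ij}{}^c Z^j_J Z^C_c + \mathrm{N}^c_a P_i{}^a X_J Z^C_c$, and then boost to a general scale using the conformal transformation law \eqref{2ff_transformation} for $\II$ together with \eqref{tractor_proj_transforms}: the conformal invariance of $\LL$ forces the $H$-dependent terms to assemble exactly as stated.
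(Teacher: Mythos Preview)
Your proposal is correct and follows essentially the same route as the paper's proof: both compute $\LL_{iJ}{}^C=-\Pi^B_J\,\mathrm{N}^C_A\nabla_i\mathrm{N}^A_B$ by differentiating the explicit formula \eqref{eqn_norm_tractor_proj_formula} for $\mathrm{N}^A_B$ via \eqref{ctrids}, then successively applying the normal projection $\mathrm{N}^C_A$ (using $\mathrm{N}^C_AX^A=0$ and $\mathrm{N}^C_AZ^A_a=\mathrm{N}^c_aZ^C_c+H_aX^C$) and the tangential projection $\Pi^B_J$ (using \eqref{pi_X_Z}), with Remark~\ref{analog} converting $\Pi^b_j\mathrm{N}^c_a\nabla_i\mathrm{N}^a_b$ into $-\II_{ij}{}^c$. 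Your minimal-scale sanity check and subsequent conformal boost is a pleasant extra verification, not a different argument.
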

\begin{rem} Note that, in particular, $\LL=0$ if and only if $\mathring{\II}=0$ and $\mathrm{N}^c _a \left( P_i {}^a - \nabla_i H^a \right) =0$. For later use when computing examples it is useful to note that also that 
$\mathrm{N}^c _a \left( P_i {}^a - \nabla_i H^a \right) = \frac{1}{n-2}\mathrm{N}^c _a \mathrm{Ric}_i{}^a - \nabla_i^{\perp} H^c$.
\end{rem}

\begin{proof}
  We compute $\mathrm{N}^C _A \nabla_i \mathrm{N}^A _B$ using the formula from Lemma~\ref{lem_norm_tractor_proj_formula}.
  We then apply $\Pi^B _J$, the formula for which is given in Theorem~\ref{thm_norm_perp_intrinsic_isom} to complete the proof.

  First, differentiating~\eqref{eqn_norm_tractor_proj_formula} gives 
  \begin{align*}
    \nabla_i \mathrm{N}^A _B 
    &= (\nabla_i \mathrm{N}^a _b ) Z^A _a Z^b _B + \mathrm{N}^a _b \left( -P_{ia} X^A - \confmet_{ia} Y^A \right) Z^b _B + \mathrm{N}^a _b Z^A _a \left( - P_i ^{b} X_B - \Pi_i ^b Y_B \right) \\
    &\hspace{2em}+ (\nabla_i H^a) Z^A _a X_B + H^a \left( -P_{ia} X^A - \confmet_{ia} Y^A \right) X_B + H^a Z^A _a Z_{B i} \\
    &\hspace{2em}+ (\nabla_i H_b) X^A Z^b _B + H_b Z^A _i Z^b _B + H_b X^A \left( -P_i {}^b X_B - \Pi_i ^b Y_B \right) \\
    &\hspace{2em}+ 2 (H^d \nabla_i H_d) X^A X_B + H^d H_d Z^A _i X_B + H^d H_d X^A Z_{Bi} \\
    &= \left( \nabla_i \mathrm{N}^a _b + H^a \confmet_{ib} + H_b \Pi^a _i \right) Z^A _a Z^b _B \\
    &\hspace{2em}+ \left( -\mathrm{N}^a _b P_{ia} + \nabla_i H_b + H_d H^d \confmet_{ib} \right) X^A Z^b _B \\
    &\hspace{2em}+ \left( - \mathrm{N}^a _b P_i {}^b + \nabla_i H^a + H_d H^d \Pi_i ^a \right) Z^A _a X_B \\
    &\hspace{2em}+ \left( -H^a P_{ia} - H_b P_i {}^b + 2H^d \nabla_i H_d \right) X^A X_B, 
  \end{align*}
  where, recall, $\confmet_{ia}$ means $\Pi^b_i\confmet_{ba}$.
  From~\eqref{eqn_norm_tractor_proj_formula}, it follows that 
  \begin{equation*}
    \mathrm{N}^C _A Z^A _a = \mathrm{N}^c _a Z^C _c + H_a X^C
    \hspace{1em} \textrm{and} \hspace{1em} 
    \mathrm{N}^C _A X^A = 0.
  \end{equation*}
  Hence
  \begin{align*}
    \mathrm{N}^C _A \nabla_i \mathrm{N}^A _B 
    &= \left( \nabla_i \mathrm{N}^a _b + H^a \confmet_{ib} + H_b \Pi^a _i \right) \left( \mathrm{N}^c _a Z^C _c + H_a X^C \right) Z^b _B \\
    &\hspace{2em}+ \left( - \mathrm{N}^a _b P_i {}^b + \nabla_i H^a + H_d H^d \Pi_i ^a \right) \left( \mathrm{N}^c _a Z^C _c + H_a X^C \right) X_B \\
    &= \left( \mathrm{N}^c _a \nabla_i \mathrm{N}^a _b + H^c \confmet_{ib} \right) Z^b _B Z^C _c 
    + \mathrm{N}^c _a \left( \nabla_i H^a - P_i {}^a \right) X_B Z^C _c \\
    &\hspace{2em}+ H_a \left( \nabla_i \mathrm{N}^a _b + H^a \confmet_{ib} \right) Z^b _B X^C 
    + H_a \left( \nabla_i H^a - P_i {}^a \right) X_B X^C.
  \end{align*}
  All that remains is to apply the tangential tractor projector $\Pi^B _J$.
  According to~\eqref{eqn_norm_perp_intrinsic_isom},
  \begin{equation}\label{pi_X_Z}
    \Pi^B _J Z^b _B = \Pi^b _j Z^j _J 
    \hspace{1em} \textrm{and} \hspace{1em} 
    \Pi^B _J X_B = X_J.
  \end{equation}
Therefore 
    \begin{align*}
    \Pi^B _J \mathrm{N}^C _A \nabla_i \mathrm{N}^A _B 
    &= \Pi^b _j \left( \mathrm{N}^c _a \nabla_i \mathrm{N}^a _b + H^c \confmet_{ib} \right) Z^j _J Z^C _c 
    + \mathrm{N}^c _a \left( \nabla_i H^a - P_i {}^a \right) X_J Z^C _c \\
    &\hspace{2em}+ H_a \Pi^b _j \left( \nabla_i \mathrm{N}^a _b + H^a \confmet_{ib} \right) Z^j _J X^C 
    + H_a \left( \nabla_i H^a - P_i {}^a \right) X_J X^C \\
    &= \left( - \II_{ij} {}^c + H^c \confmet_{ij} \right) Z^j _J Z^C _c 
    + \mathrm{N}^c _a \left( \nabla_i H^a - P_i {}^a \right) X_J Z^C _c \\
    &\hspace{2em}+ H_c \left( -\II_{ij} {}^c + H^c \confmet_{ib} \right) Z^j _J X^C 
    + H_a \left( \nabla_i H^a - P_i {}^a \right) X_J X^C \\
    &= - \mathring{\II}_{ij} {}^c Z^j _J Z^C _c 
    + \mathrm{N}^c _a \left( \nabla_i H^a - P_i {}^a \right) X_J Z^C _c \\
    &\hspace{2em}- H_c \mathring{\II}_{ij} {}^c Z^j _J X^C 
    + H_a \left( \nabla_i H^a - P_i {}^a \right) X_J X^C. \\
  \end{align*}
 where we note that $H_a \nabla_i \mathrm{N}^a _b = H_c \mathrm{N}^c
  _a \nabla_i \mathrm{N}^a _b$, and we have used the observation from
  Remark~\ref{analog} to replace $\Pi^b _j
  \mathrm{N}^c _a \nabla_i \mathrm{N}^a _b$ with $-\II_{ij} {}^c$.
  
Finally, applying $\Pi^B_J$ to equation~\eqref{eqn_der_normal_tractor_proj} shows that $\LL_{i
  J} {}^C$ is equal to negative of the above, from which the claim in the theorem follows.
\end{proof}

\subsection{Conformal submanifolds of dimension \texorpdfstring{$m\geq 3$}{m>2}} \label{geq3}

For submanifold dimensions $m\geq 3$ the conformal structure of $\Sigma$ 
determines a tractor connection $D$  and compatible tractor metric $h
_\Sigma$ on  $\cT \Sigma$. We refer to these as the {\em intrinsic} tractor connection and metric for   $\cT \Sigma$. We continue to use abstract indices from the later part of the alphabet to distinguish submanifold objects from their ambient analogues, so, e.g., we write $h
_\Sigma$ as  
$h_{IJ} \in \Gamma(\cE_{(IJ)})$.

Unlike the Riemannian case, the checked connection is not exactly the
intrinsic tractor connection.
\begin{prop}\label{SandF}
  Along a submanifold $\Sigma$ of dimension $m\geq 3$ the checked and intrinsic tractor connections are related by
  \begin{equation}\label{diff_tractor_connection}
  \check{\nabla}_i V^J = D_i V^J + \mathsf{S}_i {}^J {}_K V^K, 
\end{equation}
with $D_i$ the intrinsic submanifold tractor connection and 
\begin{equation}\label{difference_tractor}
  \mathsf{S}_{i J K} := 2 \left( P_{ij} - p_{ij} + H_c \mathring{\II}_{ij} {}^c + \frac{1}{2} H_c H^c \bg_{ij} \right) Z^j _{[J} X_{K]},
\end{equation}
where $\Pi_{ij} := \Pi^a _i \Pi^b _j P_{ab}$ is the restriction of the ambient Schouten tensor to the submanifold and $p_{ij}$ is the intrinsic Schouten tensor.
\end{prop}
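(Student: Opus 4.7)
The plan is to compute $\check{\nabla}_i V^J - D_i V^J$ directly in an arbitrary scale $g \in \bc$ by expanding $V^J = \sigma Y^J + \mu^j Z^J_j + \rho X^J$, pushing it forward through $\Pi^A_I$, differentiating ambiently, and then pulling back through $\Pi^J_B$. First I would establish two structural constraints on the difference tensor. Both connections preserve the intrinsic tractor metric $h_\Sigma$—$D$ by construction, and $\check{\nabla}$ because $\Pi^A_I$ is a conformally invariant isometry onto $\cN^\perp$ (Theorem \ref{thm_norm_perp_intrinsic_isom}) while the ambient tractor connection preserves $h$, so its tangential part descends to a metric connection—whence $\mathsf{S}_{iJK}$ is skew in $J,K$. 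Moreover $\check{\nabla}_i X^J = \Pi^J_B Z^B_i = Z^J_i = D_i X^J$ using the relation between the intrinsic and ambient splitting tractors in \eqref{subm_vs_amb_XYZ}, so $\mathsf{S}_i{}^J{}_K$ annihilates $X^K$. Together these constraints restrict $\mathsf{S}$ to $Z\wedge X$ and $Z\wedge Z$ components, and the explicit calculation below will show only the $Z\wedge X$ piece is present.

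The main computation would expand, via \eqref{eqn_norm_perp_intrinsic_isom},
\begin{equation*}
\Pi^B_K V^K = \sigma Y^B + (\mu^a - \sigma H^a)Z^B_a + \bigl(\rho - \tfrac{1}{2}\sigma H^aH_a\bigr)X^B, \qquad \mu^a = \Pi^a_j\mu^j,
\end{equation*}
and then apply the ambient tractor connection via \eqref{ctrids}. The two identities needed to handle the resulting derivatives are: (i) the Gau\ss{} formula \eqref{eqn_tensor_gauss_formula}, giving $\Pi^j_a\nabla_i\mu^a = D_i\mu^j$; and (ii) its Weingarten-type counterpart obtained by differentiating the orthogonality $H_a V^a = 0$ for tangent $V^a$, which shows that the tangent part of $\nabla_i H^a$ is $-\II_i{}^a{}_c H^c$ while the normal part is $\nabla^\perp_i H^a$. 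Applying $\Pi^J_B$—whose action on a general ambient element $(aY^B, b^aZ^B_a, cX^B)$ is $(aY^J,\, \Pi^j_a b^a\, Z^J_j,\, [c - \tfrac{1}{2}aH^bH_b - H_a b^a]X^J)$—then yields explicit formulas for the $Y^J, Z^J_j, X^J$ coefficients of $\check{\nabla}_i V^J$.

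Comparing with $D_i V^J$ and using that $\nabla_i\sigma = D_i\sigma$ and $\nabla_i\rho = D_i\rho$ (since both are the Levi-Civita connection on $\cE[1]|_\Sigma$) together with the decomposition $\II_{ij}^c = \mathring{\II}_{ij}^c + \bg_{ij}H^c$, the $Y^J$-slot difference vanishes, the $Z^J_j$-slot difference comes out to $\sigma(P_i^j - p_i^j + H_c \mathring{\II}_i{}^j{}^c + \tfrac{1}{2}H^cH_c \delta^j_i)$, and the $X^J$-slot difference is $-(P_{ij} - p_{ij} + H_c\mathring{\II}_{ij}^c + \tfrac{1}{2}H^cH_c\bg_{ij})\mu^j$. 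Both of these are exactly what one gets from $\mathsf{S}_i{}^J{}_K V^K$ with $\mathsf{S}_{iJK}$ as in \eqref{difference_tractor}, completing the proof. The main obstacle is the careful bookkeeping of tangent versus normal parts: in particular, noting that $H_a$ annihilates tangent vectors under $\bg$ (so $H_a \nabla_i H^a = H_c\nabla^\perp_i H^c$, with all tangent contributions cancelling) and verifying that the various $H^cH_c$ terms coming from the $\Pi^B_K V^K$ expansion, the $\rho\nabla_i X^B = \rho\Pi^a_i Z^B_a$ contribution, and the $-\tfrac{1}{2}aH^bH_b - H_a b^a$ correction in $\Pi^J_B$ recombine precisely into the claimed $\tfrac{1}{2}H_cH^c\bg_{ij}$ coefficient.
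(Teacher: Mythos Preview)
Your proposal is correct and follows essentially the same computational route as the paper: expand $\Pi^B_K V^K$ in slots, apply the ambient tractor connection via \eqref{ctrids}, project back through $\Pi^J_B$, and compare with $D_i V^J$ using the Gau\ss{} formula and the Weingarten-type identity $\Pi^j_b\nabla_i H^b = -\II_i{}^j{}_c H^c$. Your preliminary structural observations (that $\mathsf{S}_{iJK}$ is skew in $J,K$ and annihilates $X^K$) are a nice addition not present in the paper's proof, but they serve as motivation and consistency checks rather than shortcuts—the explicit slot-by-slot computation is still required and is the same in both.
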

\begin{proof}Fix metrics $g \in \bc$ and $g_\Sigma \in \bc_\Sigma$ such that $\iota^* g =
g_\Sigma$ to facilitate calculation.
The inverse isomorphism of~\eqref{eqn_norm_perp_intrinsic_isom} is the map
$\cN^\perp \to \cT \Sigma$ given by the matrix
\begin{equation}
	\label{eqn_inverse_intrinsic_norm_perp}
	\begin{pmatrix}
		1 & 0 &
		0       \\ -H^a & \Pi^a _i & 0 \\ -\frac{1}{2} H^c H_c & 0 & 1
	\end{pmatrix}
	.
\end{equation}
Using this we have
\begin{align*}
	\check{\nabla}_i V^J & =
	                         \begin{pmatrix}
		                         1                    & 0    & 0 \\ 0 & \Pi^j _b & 0 \\
		                         -\frac{1}{2} H^c H_c & -H_b & 1
	                         \end{pmatrix}
	\nabla_i \left[
		                \begin{pmatrix}
			                1
			                 & 0 & 0 & \\ -H^b & \Pi^b _k & 0 \\ -\frac{1}{2} H^c H_c & 0 & 1
		                \end{pmatrix}
		\begin{pmatrix}
			\sigma \\ \mu^k \\ \rho
		\end{pmatrix}
	\right]                                                                                                                      \\  & =
	   \begin{pmatrix}
		1 & 0 & 0 \\ 0 & \Pi^j _b & 0 \\ -\frac{1}{2} H^c H_c & -H_b &
		   1
	\end{pmatrix}
	   \nabla_i
	         \begin{pmatrix}
		         \sigma \\ \mu^b - H^b \sigma \\ \rho -
		         \frac{1}{2} H^c H_c \sigma
	         \end{pmatrix}
	\\  & =
	\begin{pmatrix}
		1        & 0 & 0 \\ 0        &
		\Pi^j _b & 0     \\ -\frac{1}{2} H^c H_c & -H_b & 1
	\end{pmatrix}
	   \begin{pmatrix}
		\nabla_i \sigma - \mu_i                                                 \\ \nabla_i \left( \mu^b - H^b \sigma \right) + P_i
		{}^b \sigma + \Pi_i ^b \left( \rho - \frac{1}{2} H^c H_c \sigma \right) \\
		\nabla_i \left( \rho - \frac{1}{2} H^c H_c \sigma \right) - P_{ic} \left( \mu^c
		- H^c \sigma \right)
	\end{pmatrix}
	\\  & =
	\begin{pmatrix}
		1  & 0 & 0 \\ 0  & \Pi^j
		_b & 0     \\ -\frac{1}{2} H^c H_c & -H_b & 1
	\end{pmatrix}
	   \begin{pmatrix}
		\nabla_i \sigma - \mu_i \\ \nabla_i \mu^b - ( \nabla_i H^b ) \sigma - H^b
		\nabla_i \sigma + P_i {}^b \sigma + \Pi^b _i \rho - \frac{1}{2} \Pi^b _i H^c
		H_c \sigma              \\ \nabla_i \rho - (H^c \nabla_i H_c) \sigma - \frac{1}{2} H^c H_c
		\nabla_i \sigma - P_{ic} \mu^c + P_{ic} H^c \sigma
	\end{pmatrix}
	\\  & =
	   \begin{pmatrix}
		\nabla_i \sigma - \mu_i                     \\ \Pi^j _b \left( \nabla_i \mu^b - (
		\nabla_i H^b ) \sigma - H^b \nabla_i \sigma + P_i {}^b \sigma + \Pi^b _i \rho -
		\frac{1}{2} \Pi^b _i H^c H_c \sigma \right) \\ -\frac{1}{2} H^c H_c ( \nabla_i
		\sigma - \mu_i ) - H_b \left( \nabla_i \mu^b - ( \nabla_i H^b ) \sigma - H^b
		\nabla_i \sigma + P_i {}^b \sigma + \Pi^b _i \rho - \frac{1}{2} \Pi^b _i H^c
		H_c \sigma \right)                          \\ +\nabla_i \rho - (H^c \nabla_i H_c) \sigma - \frac{1}{2}
		H^c H_c \nabla_i \sigma - P_{ic} \mu^c + P_{ic} H^c \sigma
	\end{pmatrix}
\end{align*}

Using the agreement of the intrinsic Levi-Civita connection with the pullback of the ambient Levi-Civita connection then gives
\begin{align*}
  	\check{\nabla}_i V^J & =      
	   \begin{pmatrix}
		D_i \sigma - \mu_i \\ D_i \mu^j + P_i {}^j \sigma + \delta_i ^j
		\rho - (\Pi^j _b \nabla_i H^b) \sigma - \frac{1}{2} \delta_i ^j H^c H_c \sigma
		\\ D_i \rho - P_{ic} \mu^c + \frac{1}{2} H^c H_c \mu_i - H_b \nabla_i \mu^b
	\end{pmatrix}
	\\  & =
	\begin{pmatrix}
		D_i \sigma - \mu_i \\ D_i \mu^j + p_i {}^j
		\sigma + \delta_i ^j \rho - (- \mathring{\II}_i {}^j {}_b - \delta_i ^j H_b)
		H^b\sigma - \frac{1}{2} \delta_i ^j H^c H_c \sigma + (P_i {}^j - p_i {}^j)
		\sigma             \\ D_i \rho - p_{ic} \mu^c + \frac{1}{2} H^c H_c \mu_i - H_b
		(\mathring{\II}_{ij} {}^b + \confmet_{ij} H^b) \mu^j + (P_i {}^j - p_i {}^j)
		\mu^j
	\end{pmatrix}
	\\  & =
	\begin{pmatrix}
		D_i \sigma - \mu_i             \\ D_i \mu^j + p_i
		{}^j \sigma + \delta_i ^j \rho \\ D_i \rho - p_{ic} \mu^c
	\end{pmatrix}
	   +
	   \begin{pmatrix}
		0                                             \\ \left(P_i {}^j - p_i {}^j + H_b \mathring{\II}_i {}^{jb} +
		\frac{1}{2}H^c H_c \delta_i ^j \right) \sigma \\ - \left( P_{ij} - p_{ij} + H_b
		\mathring{\II}_{ij} {}^b + \frac{1}{2} H_b H^b \confmet_{ij} \right) \mu^j
	\end{pmatrix}
	\\  & =
	D_i ^{\cT \Sigma}
	\begin{pmatrix}
		\sigma \\ \mu^j \\ \rho
	\end{pmatrix}
	 +
	                                       \begin{pmatrix}
		                                       0                                                         & 0 & 0 \\ \cF_i{}^j
		                                       & 0 & 0 \\ 0                     & - \cF_{ij}
		                                                                               & 0
	                                       \end{pmatrix}
	\begin{pmatrix}
		\sigma \\ \mu^j \\ \rho
	\end{pmatrix}
	.
\end{align*}
%%%%%%%
where
\begin{equation}\label{fialkow}
  \cF_{ij} : = P_{ij} - p_{ij} + H_b \tfII_{ij} {}^b + \frac{1}{2} H_b H^b \bg_{ij}.
\end{equation}
\end{proof}

We call the tensor $\cF_{ij}$  given in (\ref{fialkow})  the \emph{Fialkow tensor} (since this quantity seems to have appeared first in the work of Fialkow \cite{FialkowSubspaces}, cf.\ \cite{Juhl,YuriThesis}).
Since the checked connection, the intrinsic tractor connection and
$Z^j _{[J} X_{K]}$ are all conformally invariant, it follows that the
Fialkow tensor is also conformally invariant. Proposition \ref{SandF} tells us that the Fialkow tensor measures the failure of the induced tractor connection $\check{\nabla}$ to be \emph{normal} (in the sense of corresponding to a normal Cartan connection \cite{CG-tams}).

There is an alternative formula for the Fialkow tensor which is manifestly conformally invariant.
To derive it, one substitutes the Weyl-Schouten decomposition of the ambient and intrinsic Riemann tensors into the Gau{\ss} formula, and then applies the map 
\begin{equation}\label{eq:Schouten-projection}
T_{ijkl} \mapsto \frac{1}{m-2} \left( T_{ikj} {}^k - \frac{T_{kl} {}^{kl}}{2(m-1)} \bg_{ij} \right)
\end{equation}
to both sides (to take the ``submanifold Schouten'' part).
After writing the second fundamental form as $\tfII_{ij} {}^c + \bg_{ij} H^c$ and rearranging, one finds that the Fialkow tensor  \eqref{fialkow}
is equal to 
\begin{equation}\label{fialkow_weyl}
  \cF_{ij} = \frac{1}{m-2} \left( W_{icjd} \mathrm{N}^{cd} + \frac{W_{abcd} \mathrm{N}^{ac} \mathrm{N}^{bd}}{2(m-1)} \bg_{ij} + \mathring{\II}_i {}^{kc} \mathring{\II}_{jkc} -\frac{\mathring{\II}_{klc} \mathring{\II}^{klc}}{2(m-1)} \bg_{ij} \right).
\end{equation}
All objects on the right-hand side are conformally invariant. Since the Fialkow tensor has already been observed to be conformally invariant, it it sufficient to establish the formula \eqref{fialkow_weyl} in a minimal scale. Let $g\in\bc$ be a minimal scale and $g_{\Sigma}=\iota^*g$. Thus $\II=\tfII$. Applying the submanifold and ambient Weyl-Schouten decompositions in the Gauss equation we obtain
\begin{align*}
\nonumber
W_{ijkl} + P_{\vphantom{\hat{a}}ik\:\!}\bg_{\vphantom{g}jl} - P_{\vphantom{\hat{a}}jk\:\!}\bg_{\vphantom{g}il} - P_{\vphantom{\hat{a}}il\:\!}\bg_{\vphantom{g}jk} + P_{\vphantom{\hat{a}}jl\:\!}\bg_{\vphantom{g}ik} 
&=w_{ijkl} + p_{\vphantom{\hat{a}}ik\:\!}\bg_{\vphantom{g}jl} - p_{\vphantom{\hat{a}}jk\:\!}\bg_{\vphantom{g}il} - p_{\vphantom{\hat{a}}il\:\!}\bg_{\vphantom{g}jk} + p_{\vphantom{\hat{a}}jl\:\!}\bg_{\vphantom{g}ik}\\*
&\quad \qquad \;+ \bg_{cd}\II_{li}{^c}\II_{jk}{^d} - \bg_{cd}\II_{lj}{^c}\II_{ik}{^d}
\end{align*}
where $W_{ijkl}$ denotes the full projection $\Pi^a_i\Pi^b_j\Pi^c_k\Pi^d_l W_{abcd}$ of the ambient Weyl curvature, $P_{ij}=\Pi^a_i\Pi^b_j P_{ab}$, and $w_{ijkl}$ denotes the submanifold intrinsic Weyl tensor. Applying the map $T_{ijkl}\mapsto \frac{1}{m-2}\left(T_{ikj}{^k} - \frac{T_{kl}{}^{kl}}{2(m-1)}\bg_{ij}\right) $ on both sides of the above display we get
\begin{equation*}
-\tfrac{1}{m-2}\left( W_{icjd}\mathrm{N}^{cd} + \tfrac{W_{acbd}{}_{\vphantom{g}}\mathrm{N}^{ab}\mathrm{N}^{cd}}{2(m-1)}\bg_{ij} \right) + P_{ij} = p_{ij} + \tfrac{1}{m-2}\left(\tfII_i{}^{kc}\tfII_{jkc} - \tfrac{\tfII\vphantom{I}^{klc}{}_{\vphantom{g}}\tfII_{klc}}{2(m-1)}\bg_{ij} \right),
\end{equation*}
noting that $\bg^{kl}W_{ikjl}=\bg^{kl}\Pi^c_k\Pi^d_l W_{icjd}=-N^{cd}W_{icjd}$ since $W_{abcd}$ is trace free, and similarly $W_{kl}{}^{kl}=W_{acbd}\mathrm{N}^{ab}\mathrm{N}^{cd}$. The result then follows from \eqref{fialkow}.
\begin{rem}
In the $m=2$ case $(\Sigma, \bc_{\Sigma})$ we do not have an (intrinsicaly defined) Schouten tensor for a metric $g_{\Sigma}\in \bc_{\Sigma}$, and the map \eqref{eq:Schouten-projection} does not make sense. Crossing off the $\frac{1}{m-2}$ in \eqref{eq:Schouten-projection} does not help, as when $m=2$ the map $T_{ijkl} \mapsto  T_{ikj} {}^k - \frac{T_{kl} {}^{kl}}{2(m-1)} \bg_{ij}$ is easily seen to be equal to the zero map.  Since in two dimensions the space of algebraic curvature tensors is one dimensional, when $m=2$ the Gauss equation is equivalent to the scalar equation given by its ``double trace.'' If $K$ denotes the Gaussian curvature of $g_{\Sigma}\in \bc_{\Sigma}$ then contracting the Gauss equation twice gives
\begin{equation}\label{K-Gauss}
K = \bg^{ij}P_{ij} + \frac{1}{2}W_{ijkl}\bg^{ik}\bg^{jl} + 2|H|^2 - \frac{1}{2}|\II|^2.
\end{equation}
We'll make use of this in Section \ref{lowdim} below when we show that the conformal embedding of $(\Sigma, \bc_{\Sigma})$ into $(M,\bc)$ determines a canonical (extrinsically defined) Schouten tensor $p_{ij}$ for a metric $g_{\Sigma}\in \bc_{\Sigma}$.
\end{rem}

Recall that along $\Sigma$ we may decompose the ambient standard tractor bundle $\cT=\cT M$ as
$\cT|_{\Sigma} = \cT\Sigma \oplus \cN$.
If $V\in\Gamma(\cT|_{\Sigma})$ is given by $(V^{\top},V^{\perp})$ with respect to this decomposition, then by  \eqref{tractor_gauss}, \eqref{normal_tractor_connection}, \eqref{diff_tractor_connection} and the fact that $\nabla$ preserves the ambient tractor metric we have
\begin{equation}
\nabla_X V =
\left(\begin{array}{cc} 
D_X+\mathsf{S}(X)  &  -\LL(X)^T   \\
 \LL(X)  &  \nabla^{\cN} _X \\ 
\end{array}\right)
\left(\begin{array}{c} 
V^{\top}  \\
 V^{\perp}  \\ 
\end{array}\right)
\end{equation}
for any $X\in\mathfrak{X}(\Sigma)$, where $\LL(X)^T$ is the transpose of $\LL(X)$ with respect to the ambient tractor metric (cf.\ Proposition \ref{prop_alt_tractor_2ff}). We therefore write
\begin{equation}\label{eq:trac-conn-pullback}
\iota^*\nabla=
\left(\begin{array}{cc} 
D+\mathsf{S}  &  -\LL^T   \\
 \LL  &  \nabla^{\cN}  \\ 
\end{array}\right) 
\end{equation}
on $\cT|_{\Sigma} = \cT\Sigma\oplus \cN$. We will most often make use of the above display in the case where $V$ is a section of $\mathcal{T}\Sigma$ ($V^{\perp}=0$). In this case we obtain the following form of the \emph{tractor Gau{\ss} formula}:
\begin{equation}\label{TracGaussFormula}
\nabla_i V^B = \Pi^B_J\left(D_i V^J + \mathsf{S}_i{^J}_K V^K \right) +  \mathbb{L}_{iK}{}^B V^K,
\end{equation}
for any section $V$ of $\mathcal{T}\Sigma$. For a section

By computing the curvature of $\iota^*\Omega$ of $\iota^*\nabla$ using the decomposition \eqref{eq:trac-conn-pullback} 
%  informally, as 
%%\begin{equation}\label{ConfGCRkey}
%%\iota^*\Omega = \left(\begin{array}{cc} 
%%D+\mathsf{S}  &  -\LL^T   \\
 %%\LL  &  \nabla^{\cN}  \\ 
%%\end{array}\right)
%%\wedge
%%\left(\begin{array}{cc} 
%%D+\mathsf{S}  &  -\LL^T   \\
 %%\LL  &  \nabla^{\cN}  \\ 
%%\end{array}\right)
%%\end{equation}
one may easily obtain conformal tractor analogues of the Riemannian Gauss, Codazzi, and Ricci equations (cf.\ the derivation of the Riemannian Gau{\ss}-Codazzi-Ricci equations in Section~\ref{riem_subs}):
\begin{equation}\label{tractor_gauss_equation}
  \Omega_{ijKL} = \Omega^\Sigma {}_{ijKL} + 2 D_{[i} \mathsf{S}_{j]KL} + 2 \mathsf{S}_{[i| K M} \mathsf{S}_{|j]}{}^{M}{}_{L} + 2 \LL_{[i|L}{}^C \LL_{|j] KC}
\end{equation}
\begin{equation}\label{tractor_codazzi}
\Omega_{ij}{^E}^{}_L \mathrm{N}^C _E = 2 D_{[i} \LL_{j] L}{}^C + 2 \LL_{[i| K}{}^{C} \mathsf{S}_{|j]} {}^K {}_{L}
\end{equation}
\begin{equation}\label{tractor_ricci}
  \Omega_{ij} {}^A {}_B \mathrm{N}^C _A \mathrm{N}^B _D = \Omega^\cN {}_{ij} {}^C {}_D - 2 g^{KL}\LL_{[i K} {}^{C} \LL_{j] LD},
\end{equation}
where indices between bars are exempt from antisymmetrisation, $\Omega_{ijKL} = \Omega_{ijCD} \Pi^C _K \Pi^D _L$ for $\Omega_{ijCD}$ the curvature of the pullback connection, and $\Omega^\cN$ is the curvature of the normal tractor connection, characterised by 
\begin{equation}
  \Omega^\cN {}_{ij} {}^C {}_D N^D = \left( \nabla^\cN _i \nabla^\cN _j - \nabla^\cN _j \nabla^\cN _i \right) N^C,
\end{equation}
for any section $N$ of the normal tractor bundle $\mathcal{N}$.

\begin{rem}\label{BCremark}
In \cite{CalderbankBurstall} Burstall and Calderbank define a
`M\"obius reduction' to be a rank $(m+2)$ subbundle $\cV$ of
$\cT|_{\Sigma}$ containing the rank $m+1$ subbundle spanned by the
canonical tractor $X^A$ and its covariant derivatives in submanifold
tangential directions (with respect to the ambient tractor connection
coupled with the Levi-Civita connection of some, equivalently any,
metric $g\in\bc$). One then decomposes the ambient tractor connection
along $\Sigma$ as (using notation similar to the above)
\begin{equation*}
\iota^*\nabla=
\left(\begin{array}{cc} 
\nabla^{\cV}  &  -(\LL^{\cV})^T   \\
 \LL^{\cV}  &  \nabla^{{\cV}^{\perp}}  \\ 
\end{array}\right) 
\end{equation*}
on $\cT|_{\Sigma}=\cV \oplus \cV^{\perp}$. The definition of `M\"obius reduction' implies that $\LL^{\cV}{}_{\:\!iJ}{^C}X^J=0$ and $\LL^{\cV}{}_{\;\!iJ}{^C}X_C=0$, so that there is a well-defined projection $\II^{\cV}{}_{\;\!ij}{^c}:=\LL^{\cV}{}_{\;\!iJ}{^C}Z^J_jZ^C_c$ of $\LL^{\cV}_{\;\!iJ}{^C}$.
Burstall and Calderbank then define the unique `canonical M\"obius reduction' $\cV_{\Sigma}$ by imposing an algebraic normalisation condition on 
\begin{equation*}
\left(\begin{array}{cc} 
0 &  -(\LL^{\cV})^T   \\
 \LL^{\cV}  &  0 \\ 
\end{array}\right)
\end{equation*}
similar to the algebraic normalisation condition imposed on the curvature of the normal Cartan/tractor connection \cite{CS-book,CG-tams}, see Section 9.3 of \cite{CalderbankBurstall}. This algebraic normalisation condition amounts to the requirement that $\bg^{ij}\II^{\cV}{}_{ij}{^c}=0$. Since by Theorem \ref{tractor2ff_XZ} the tractor second fundamental form has invariant projection $\mathring{\II}_{ij}{^c}=\LL_{iJ}{^C}Z^J_jZ^C_c$ the `canonical M\"obius reduction' $\cV_{\Sigma}$ is the same as the orthogonal complement $\cN^{\perp}$ of the normal tractor bundle and hence gives an abstract characterization of this bundle (equivalently of the normal tractor bundle $\mathcal{N}$). Our approach differs in that we explicitly construct $\mathcal{N}$, and then further explicitly identify $\cN^{\perp}$ with the intrinsic tractor bundle $\mathcal{T}\Sigma$. 
\end{rem}

\subsection{Low-dimensional conformal submanifolds}\label{lowdim}

 In this section we treat submanifolds $\Sigma$ such that
 $\operatorname{dim}(\Sigma)$ is $m=1$ or $m=2$. Note that Section
 \ref{sub-tr} has no restriction on the submanifold dimension
 $m$. However in Section \ref{geq3}, just above, we make the
 restriction to $m\geq 3$ to discuss the intrinsic tractor connection
 then available. When $m=1,2$ the conformal structure on $\Sigma$ is
 not sufficient to determine a canonical connection on $\cT \Sigma$.
 The purpose of this section is to observe that in these dimensions the conformal
 embedding does determine distinguished tractor connections on $\cT\Sigma$, and then using this we get analogues of the results from Section \ref{geq3}.

First recall that Equation~\eqref{checked_connection} defines a connection
$\check{\nabla}$ on the bundle $\cT \Sigma$ also when $\dim \Sigma$ is 1 or
2.

Riemannian manifolds of dimension 1 or 2 are not naturally equipped
with an (intrinsically determined) Schouten tensor. However conformal
submanifolds of these dimensions inherit a natural replacement, as
follows.  First recall that for submanifolds of dimension at least 3,
the difference tractor of is equivalent to the Fialkow tensor of the
submanifold according to expressions~\eqref{difference_tractor} and
\eqref{fialkow}.  In dimensions 1 and 2, we will, in essence, turn
this around and use the formula \eqref{fialkow} {\em to determine} a
submanifold Schouten tensor (for a given $g_{\Sigma}\in \bc_{\Sigma}$).  In these dimensions all terms in
\eqref{fialkow} are well defined, as usual, except the submanifold
Schouten $p_{ij}$ and the Fialkow tensor $\cF_{ij}$. Note that fixing
one of these two determines the other via \eqref{fialkow}. Moreover,
from the conformal transformation formulae of the terms in
\eqref{fialkow}, it follows that {\em any} natural conformally
invariant choice of $\cF_{ij}$ determines a submanifold tensor
$p_{ij}\in S^2T^*\Sigma$ that transforms conformally according to
\eqref{schouten_trans}. From a conformal geometry point of view it makes sense to define $\cF_{ij}$ first (in a conformally invariant fashion) and view the definition of the Schouten tensor $p_{ij}$ of a metric $g_{\Sigma}\in \bc_{\Sigma}$ as a consequence of this. Such a Schouten tensor $p_{ij}$ then yields a
conformally invariant tractor connection $D$ on $\cT\Sigma$ (in a
choice of scale) according to the usual formula, namely
\begin{equation}\label{12-tracc}
  D_i \begin{pmatrix}\si \\\mu_j \\\rho \end{pmatrix}=  \begin{pmatrix}D_i \si-\mu_i \\D_i\mu_j +p_{ij}\si+\bg_{ij}\rho \\ D_i \rho-p_{ij}\mu^j \end{pmatrix} ,
\end{equation}
where $D$ on the right-hand side is the intrinsic Levi-Civita
connection.
Equivalently the tractor
connection is determined by formula
\eqref{diff_tractor_connection} with
$$
  \mathsf{S}_{i J K} := 2 \cF_{ij}  Z^j _{[J} X_{K]}. 
$$

  Thus it remains to specify an invariant $\cF_{ij}$, or equivalently
  a $p_{ij}$ that transforms according to \eqref{schouten_trans}.
  Recall that in all dimensions we have the conformally invariant
  tractor connection $ \check{\nabla}$ on $\cT\Sigma$. It is given by 
\begin{equation}  
  	\check{\nabla}_i V^J =      
	   	\begin{pmatrix}
		D_i \sigma - \mu_i             \\ D_i \mu^j + \left(P_i {}^j + H_b \mathring{\II}_i {}^{jb} +
		\frac{1}{2}H^c H_c \delta_i ^j \right)\sigma + \delta_i ^j \rho \\ D_i \rho - \left( P_{ij}  + H_b
		\mathring{\II}_{ij} {}^b + \frac{1}{2} H_b H^b \confmet_{ij} \right) \mu^j
	\end{pmatrix} ,
\end{equation}
where we have computed using  a choice of ambient scale $g\in \cc$,
and $V^J\stackrel{g_\Sigma}{=}(\si,~\mu^j,~\rho )$. Note that although the terms $P_{ij}$ and $H^a$ appearing on the right hand side in the above display depend on the extension $g\in \bc$ of $g_{\Sigma}$, the right hand side itself does not (by the conformal invariance of the $\check{\nabla}$ and the fact that $g_\Sigma$ is sufficient to determine the tractor bundle splitting); in particular, the quantity $P_{ij}  + H_b
		\mathring{\II}_{ij} {}^b + \frac{1}{2} H_b H^b \confmet_{ij}$ is independent of the extension $g$ of $g_{\Sigma}$. Moreover, from the conformal invariance of
$\check{\nabla}$ it can be shown that $P_{ij}  + H_b
		\mathring{\II}_{ij} {}^b + \frac{1}{2} H_b H^b \confmet_{ij}$
transforms in the same way as a Schouten tensor when $g_{\Sigma}$ is rescaled conformally.  In particular, if we
define
\begin{equation}\label{eq:p-in-terms-of-Fialkow}
p_{ij}:= \left( P_{ij}  + H_b
\mathring{\II}_{ij} {}^b + \frac{1}{2} H_b H^b \confmet_{ij} \right) -\cF_{ij} ,
\end{equation}
for any (decreed to be) conformally invariant $\cF_{ij}\in \Gamma(\ce_{(ij)})$, $p_{ij}$ depends only on $g_\Sigma$ (and the conformal embedding of $
\Sigma$ in $(M,\cc)$), \eqref{fialkow} holds, and $p_{ij}$ has conformal transformation
\begin{equation}
\widehat{p}_{ij}=p_{ij}-D_i\Upsilon_j+\Upsilon_i\Upsilon_j - \frac{1}{2} \bg_{ij} \Upsilon^k\Upsilon_k.
\end{equation}
This formula is also easily checked directly. When such a choice of submanifold Schouten tensor has been made we will denote its trace by
\begin{equation}\label{2-J}
  \lj:=\bg^{ij}p_{ij}.
  \end{equation}
Then under conformal change
\begin{equation}\label{J-trans}
\widehat{\lj}= \lj -D_i\Upsilon^i +(1-\frac{m}{2})\Upsilon^i\Upsilon_i .
\end{equation}

In dimension 1 we will simply set $\cF_{ij}:=0$. Thus given any submanifold scale $g_{\Sigma}$ and any extension $g\in
\cc$ one has
\begin{equation}
  p_{ij} := P_{ij} + \frac{1}{2} H_b H^b \bg_{ij},
\end{equation}
  since, for a curve, the trace-free second
  fundamental form is trivially zero. In this case, of course, $p_{ij}={\emph{\j}}g_{ij}$ where ${\emph{\j}}:=
  \bg^{ij}p_{ij}$. Note that the extension  $g\in
\cc$ can be chosen such that $H^b=0$ and then one simply has $ p_{ij} := P_{ij}$.

  In dimension 2 we shall set $\cF_{(ij)_0}=0$. We are not free to set $\bg^{ij}\cF_{ij}$ equal to zero, though, as we require that our corresponding Schouten tensor (for $g_{\Sigma}\in \bc_{\Sigma}$) satisfy
\begin{equation}\label{eq:p-curvature-normalization}
R^{g_\Sigma}_{ijkl} = 2\bg_{k[i}p_{j]l}-2\bg_{l[i}p_{j]k}.
\end{equation}
(This condition on $p_{ij}$ is natural by analogy with higher dimensions, but also from the point of view of Cartan geometry/tractor calculus. Specifically, requiring \eqref{eq:p-curvature-normalization} is  equivalent to requiring that the submanifold tractor connection $D$ be \emph{normal}, in the sense of \cite{CG-tams}; this is ultimately because it amounts to vanishing of the ``middle slot'' of the curvature of the tractor connection $D$, the slot where the Weyl curvature sits in dimensions four and higher, as can be seen by an easy calculation using \eqref{12-tracc}.) Equation \eqref{eq:p-curvature-normalization} normalizes the trace of $p_{ij}$, and hence of $\cF_{ij}$. Indeed, \eqref{eq:p-curvature-normalization} is equivalent to the requirement that the trace $\lj=\bg^{ij}p_{ij}$ equals the Gaussian curvature of $g_{\Sigma}$:
\begin{equation}
\lj=\bg^{ij}p_{ij}:= \frac{1}{2}\bg^{ij}\operatorname{Ric}^{g_\Sigma}_{ij} = K,
\end{equation}
where $K=K^{g_{\Sigma}}$ is viewed as as section of $\cE[-2]|_{\Sigma}$ in the natural way. With the trace of $p_{ij}$ normalized in this way, the trace of $\cF_{ij}$ is therefore given by (tracing \eqref{fialkow} or \eqref{eq:p-in-terms-of-Fialkow}):
\begin{equation}
\bg^{ij}\cF_{ij} = \bg^{ij}P_{ij} - K +|H|^2.
\end{equation}
Using the (twice contracted) Gauss equation \eqref{K-Gauss} for $K$ we have, equivalently, 
\begin{equation}
\bg^{ij}\cF_{ij} = \frac{1}{2}|\II|^2 - |H|^2 - \frac{1}{2}W_{ijkl}\bg^{ik}\bg^{jl} = \frac{1}{2}|\mathring{\II}|^2  - \frac{1}{2}W_{ijkl}\bg^{ik}\bg^{jl}.
\end{equation}
Writing the submanifold total trace $W_{ijkl}\bg^{ik}\bg^{jl}$ of the ambient Weyl curvature as $\mathrm{tr}^2\, \iota^*W$ the Fialkow tensor for $m=2$ is therefore defined to be
\begin{equation}\label{eq:Fialkow-2D}
\cF_{ij} = \frac{1}{4}\left(|\mathring{\II}|^2  - \mathrm{tr}^2\, \iota^*W \right)\bg_{ij}.
\end{equation}
The Schouten tensor $p_{ij}$ of $g_{\Sigma}$ defined by \eqref{eq:p-in-terms-of-Fialkow} then satisfies \eqref{eq:p-curvature-normalization}.

With the conventions we have just established for $m=2$,  the curvature of the tractor connection $D$ is then given by
$$
\Omega^{\Sigma}_{ijKL} = -2 c_{ijk} X_{[K} Z_{L]}{}^k,
$$
where $c_{ijk} := 2D_{[i}p_{j]k}$. This should be compared with \eqref{ctract-curv} in the three dimensional case where the Weyl tensor term vanishes. Note that, while in three and higher dimensions the Cotton tensor is trace free, since we are in two dimensions the tensor $c_{ijk}$ can be written as 
$$
c_{ijk} = \frac{1}{2}\boldsymbol{\epsilon}_{ij}c_k
$$
where $c_k = \boldsymbol{\epsilon}^{ij}c_{ijk}$ and hence $c_{ijk}$ is determined by its trace
$$
c_{ijk}\bg^{jk} = \frac{1}{2}\boldsymbol{\epsilon}_{ij}c^j = D_{i}\lj - D^jp_{ij}.
$$
In the $2$-dimensional case, the choice of $p_{ij}$ for each scale (with the trace part normalized by setting $g^{ij}p_{ij}$ to be the Gau{\ss} curvature) is equivalent to a choice of M\"obius structure on $\Sigma$ in the sense of \cite{CalderbankMobiusStructures}. The invariant $c_{ijk}$, which we refer to as the \emph{Cotton tensor} in this setting, is precisely the curvature of this M\"obius structure and vanishes if and only if $\Sigma$ (with the M\"obius structure just defined) is locally equivalent to the conformal M\"obius sphere (i.e.\ comes from a system of local coordinates on $\Sigma$ related by M\"obius transformations); see \cite{CalderbankMobiusStructures} for more details. Note that the M\"obius structure we have just defined on $2$-dimensional submanifolds $\Sigma$ agrees with the notion of
\emph{induced conformal M\"obius structure} in \cite{CalderbankBurstall}.

  In both dimensions 1 and 2 we then have that, in any scale $g\in
  \cc$, and with $g_\Sigma=g|_{T\Sigma}$, the tractor connection on $[\cT\Sigma]_{g_\Sigma}$ is
  given by \eqref{12-tracc}.
  We note that this formula exactly agrees with the formula
for the usual tractor connection, as defined by~\eqref{ctrids} (but has $p_{ij}$ as defined here).  This
connection in turn defines a Thomas-D operator $\mathbb{D} : \cE[1]
\to \cT \Sigma$ via the usual BGG splitting operator characterisation
(see e.g.\ \cite{CGH-Duke}), namely that, for any $\si\in
\Gamma(\ce[1])$, $D_i\mathbb{D}_I \si$ must be $0$ in the top slot and be
trace-free in the middle slot. 
Thus, in a scale $g_\Sigma \in \bc_\Sigma$, this takes the form
\begin{equation}\label{submanifold_D}
  \frac{1}{m} \mathbb{D}_K \sigma 
  \stackrel{g_\Sigma}{=}
  \begin{pmatrix}
    \sigma \\ 
    D_k \sigma \\
    -\frac{1}{m} \left( \bg^{i\ell}D_iD_\ell + \lj  \right) \sigma 
  \end{pmatrix}, 
\end{equation}
(cf.\ \eqref{thomas_D}). 
It follows at once that (as in higher dimensions $m$) any parallel standard tractor $I$ is necessarily in the image of
$\frac{1}{m} \mathbb{D} $. Moreover we have that, for any  $\si\in \Gamma(\ce[1])$, and  on the set where $\si$ is non-vanishing, the scale tractor 
$$
I:= \frac{1}{m} \mathbb{D} \si 
$$
satisfies that
$$
h^{KL}I_KI_L= -\frac{2}{m} \lj \, \si^2= -\frac{2}{m}\lj^{g_\Sigma}
$$
where $\lj^{g_\Sigma} =\lj \si^2 $ is the (weight zero true) $J$-curvature for the scale $g_\Sigma$, and this is thus constant if $I$
is parallel. (E.g., for surfaces the scale tractor parallel being implies the corresponding metric has constant Gau{\ss} curvature.) So this fits with the situation in higher dimensions.

Finally observe that the Thomas-$D$ formula \eqref{submanifold_D}
evidently provides a conformally invariant isomorphism
$$
J^2 \cE_\Sigma [1]/S^2 _0 T^*\Sigma [1] \stackrel{\cong}{\longrightarrow} \cT \Sigma,
$$
cf. \eqref{T_isom}.
Thus our description of the tractor bundle from the
introduction still applies in dimensions $m=1$ and $m=2$, where we identify $\cT\Sigma $ with $\cN^\perp$.

\begin{rem}\label{low-comments}
  %We emphasise that our choice of $\cF_{ij}$, in dimensions $m=1$ and
  %$m=2$, was indeed a choice. Any natural choice of $\cF_{ij}$ will
  %lead to a corresponding tractor/Cartan connection that is canonical
  %by dint of the naturality of $\cF_{ij}$. (For the meaning of
  %naturality in this context see \cite{GWConfHypYamabe}.) 
%
  %For example $W_{icjd} \mathrm{N}^{cd}$ is natural, symmetric,
  %conformally invariant, and of weight zero -- so this could be
  %used. Similarly $\tfII_i {}^{kc} \tfII_{jkc}$ for surfaces. Both vanish for curves, while  observe that the symmetries and properties of the Weyl curvature imply that for surfaces
  %\begin{equation*}
  %W_{icjd} \mathrm{N}^{cd} = -\frac{W_{abcd} \mathrm{N}^{ac} \mathrm{N}^{bd}}{2} \bg_{ij}, 
%\end{equation*}
  %while, the Cayley-Hamilton theorem implies
  %$$
 %\tfII_i {}^{kc} \tfII_{jkc} = \frac{\tfII^{klc} \tfII_{klc}}{2} \bg_{ij} .
 %$$
 %So it seems likely that naturality will force our choice for
 %curves, and at least that $\cF_{(ij)_0}=0$ for surfaces.

In dimension $m=1$ parallel transport using \eqref{12-tracc}
is equivalent in an obvious way to a third order linear ODE along the
curve. In dimension $m=2$ parallel tractors, with the top slot $\si\in
\Gamma(\ce [1])$ non-vanishing, correspond to solutions of
\begin{equation}\label{ae-2}
D_{(i}D_{j)_0}\si+ p_{(ij)_0}\si=0
  \end{equation}
  that also satisfy that the conformal
  invariant $c_{ij}{}^j = D_i \mbox{\j} - D^jp_{ij}$ is zero (if \eqref{ae-2} alone holds then following the usual prolongation argument one can easily show that $I^K=\frac{1}{2}\TD^K\sigma$ satisfies $D_jI^K = \sigma (D^ip_{ij}-D_j \mbox{\j})X^K$, cf.\ \cite[Section 3.4]{Curry-G-conformal}); it follows from the above discussion that nontrivial parallel tractors only exist when the M\"obius structure is locally flat, in which case they define a metric of constant Gaussian curvature on the open dense set where $\sigma\neq 0$.
  \end{rem}

\subsection{Normal forms}\label{norm_forms}

Since $\dim N^* \Sigma = d$, $\Lambda^d N^* \Sigma$ is a line bundle.
Moreover, $\Sigma$ is oriented and it thus follows that there is a unique section $N_{a_1 a_2 \cdots a_d}$ of $\Lambda^d N^* \Sigma [d]$ which is compatible with the orientations of $\Sigma$ and $M$, and such that 
\begin{equation*}
  N^{a_1 a_2 \cdots a_d} N_{a_1 a_2 \cdots a_d} = d!~;
\end{equation*} 
here by \emph{compatible with the orientations} we mean that
\begin{equation*}
\boldsymbol{\epsilon}^{\Sigma}_{a_1a_2\cdots a_m} \wedge N_{a_{m+1}\cdots a_n} = \boldsymbol{\epsilon}_{a_1a_2\cdots a_n},
\end{equation*}
where $\boldsymbol{\epsilon}^{\Sigma}$ and $\boldsymbol{\epsilon}$ are the weighted volume forms for $\Sigma$ and $M$ respectively.
We call $N_{a_1 a_2 \cdots a_d}$ the \emph{Riemannian normal form} for the submanifold $\Sigma$.
It is not hard to show that this contains the same data as the normal projector $\mathrm{N}^a _b$. Indeed, one can obtain the latter from the former via $N^a_b = \frac{1}{(d-1)!}N^{aa_2\cdots a_d}N_{ba_2\cdots a_d}$, cf.\ Proposition \ref{prop_normal_projector_normal_forms}.  

This object also has a tractor analogue.
Recall that the normal tractor bundle $\cN^*$ is isomorphic to $N^* \Sigma [1]$.
Thus it follows that, for any $k$ such that $1 \leq k \leq \codim \Sigma$, one has $\Lambda^k \cN^* \cong \Lambda^k N^* \Sigma [k]$. 
Explicitly, for $\nu_{a_1 a_2 \cdots a_k} \in \Lambda^k N^* \Sigma [k]$, the isomorphism is given by 
\begin{equation} \label{eqn_exterior_normal_isom}
  \nu_{a_1 a_2 \cdots a_k} \mapsto \nu_{a_1 a_2 \cdots a_k} \mathbb{Z}^{a_1 a_2 \cdots a_k} _{A_1 A_2 \cdots A_k} + k \cdot \nu_{b a_2 \cdots a_k} H^b \mathbb{X}_{A_1 A_2 \cdots A_k} ^{\phantom{A_1} a_2 \cdots a_k},
\end{equation}
by taking the $k$-th exterior power of~\eqref{norm_bundle}. (Note that in the case $k=1$ this
  map is simply the map $N^* \Gamma [1] \to \cN^*$
  of~\eqref{norm_bundle}.)
Invariance of this map may independently be checked via the transformation formulae for the tractor form projectors and the mean curvature:
\begin{align*}
	 & \hat{\nu}_{a_1 a_2 \cdots a_k} \hat{\mathbb{Z}}_{A_1 A_2
		\cdots A_k} ^{a_1 a_2 \cdots a_k} + k \cdot \hat{\nu}_{b a_2 \cdots a_k}
	\hat{H}^b \hat{\mathbb{X}}_{A_1 A_2 \cdots A_k} ^{\phantom{A_1} a_2 \cdots a_k}
	\\  & = \nu_{a_1 a_2 \cdots a_k} \left( \mathbb{Z}_{A_1 A_2 \cdots A_k} ^{a_1 a_2
		   \cdots a_k} + k \cdot \Upsilon^{a_1} \mathbb{X}_{A_1 A_2 \cdots A_k}
	^{\phantom{A_1} a_2 \cdots a_k} \right)                                           \\  & \hspace{1em} + k \cdot \nu_{b a_2
		   \cdots a_k} \left( H^b - \mathrm{N}^b _{a_1} \Upsilon^{a_1} \right)
	\mathbb{X}_{A_1 A_2 \cdots A_d} ^{\phantom{A_1} a_2 \cdots a_k}                   \\  & = \nu_{a_1
	a_2 \cdots a_k} \mathbb{Z}_{A_1 A_2 \cdots A_k} ^{a_1 a_2 \cdots a_k}             \\
	 & \hspace{1em} + k \cdot \left( \nu_{b a_2 \cdots a_k} H^b + \nu_{a_1 a_2 \cdots
		a_k} \Upsilon^{a_1} - \nu_{b a_2 \cdots a_k} \mathrm{N}^b _{a_1} \Upsilon^{a_1}
	\right) \mathbb{X}_{A_1 A_2 \cdots A_k} ^{\phantom{A_1} a_2 \cdots a_k}           \\  & =
	   \nu_{a_1 a_2 \cdots a_k} \mathbb{Z}^{a_1 a_2 \cdots a_k} _{A_1 A_2 \cdots A_k}
	   + k \cdot \nu_{b a_2 \cdots a_k} H^b \mathbb{X}_{A_1 A_2 \cdots A_k}
	   ^{\phantom{A_1} a_2 \cdots a_k}.
	   %%%
\end{align*}

In particular, it follows that there is a distinguished section of the line bundle $\Lambda^d \cN^*$, where $d = \codim \Sigma$, given by the image of the Riemannian normal form under this isomorphism.
We write $N_{A_1 A_2 \cdots A_d}$ for this section and call it the \emph{tractor normal form} for the submanifold $\Sigma$.
From the above isomorphism, one has
\begin{equation}\label{tractor_normal_form}
  N_{A_1 A_2 \cdots A_d} = N_{a_1 a_2 \cdots a_d} \mathbb{Z}_{A_1 A_2 \cdots A_d} ^{a_1 a_2 \cdots a_d} 
  +d\cdot N_{b a_2 \cdots a_d} H^b \mathbb{X}_{A_1 A_2 \cdots A_d} ^{\phantom{A_1} a_2 \cdots a_d},
\end{equation}
and hence 
\begin{align*}
  N_{A_1 A_2 \cdots A_d} N^{A_1 A_2 \cdots A_d}
  &= N_{a_1 a_2 \cdots a_d} N^{b_1 b_2 \cdots b_d} \mathbb{Z}_{A_1 A_2 \cdots A_d} ^{a_1 a_2 \cdots a_d} \mathbb{Z}^{A_1 A_2 \cdots A_d} _{b_1 b_2 \cdots b_d} \\
  &= N_{a_1 a_2 \cdots a_d} N^{a_1 a_2 \cdots a_d} \\
  &= d!,
\end{align*}
since all other contractions of the $\mathbb{X}$ and $\mathbb{Z}$ projectors are zero, and where the indices of $N^{A_1 A_2 \cdots A_d}$ have been raised with the tractor metric.

Given a local orthonormal basis $\{N^1 _A, \ldots , N^d _A \}$ for the normal tractor bundle, which may be constructed from a local orthonormal basis of $N^* \Sigma[1]$, one sees that 
  \begin{equation}\label{eqn_normal_form_wedge}
    d! \cdot N^1 _{[A_1} \cdots N^d _{A_d]} = N^1 _{A_1} \wedge \cdots \wedge N^d _{A_d}
  \end{equation}
is clearly a section of $\Lambda^d \cN^*$ and satisfies the above
normalisation condition.  If the chosen basis is compatible with the
orientation, then~\eqref{eqn_normal_form_wedge} recovers the tractor
normal form $N_{A_1 A_2 \cdots A_d}$. By construction one then has
that
\begin{equation*}
\epsilon^{\Sigma}_{A_1A_2\cdots A_{m+2}}\wedge N_{A_{m+3}\cdots A_{n+2}} = \epsilon_{A_1A_2\cdots A_{n+2}},
\end{equation*}
where $\epsilon^{\Sigma}$ and $\epsilon$ are the tractor volume forms of $\Sigma$ and $M$ respectively.

Our task is now to relate the tractor normal form and its derivative to the other tractor objects introduced, namely, the tractor normal projector and the tractor second fundamental form.
These relationships will lay the foundation for the notion of distinguished submanifold that we will introduce in the following chapter.

First, we express the tractor normal projector in terms of the normal tractor form. Note that the normal tractor can be thought of as a ``volume form'' for the normal bundle. Recall that for the volume form $\boldsymbol{\epsilon}$ on $M$ we have the following identity
\begin{equation}
\boldsymbol{\epsilon}^{a_1\cdots a_kb_{k+1}\cdots b_n} \boldsymbol{\epsilon}_{b_1\cdots b_k b_{k+1}\cdots b_n} = k!(n-k)!\delta^{[a_1}_{b_1}\cdots \delta^{a_k]}_{b_k},
\end{equation}
which can be thought of as an index notation version of the standard identity $\star\star = (-1)^{k(n-k)}$ on $k$-forms (since it is equivalent to $\frac{1}{(n-k)!}\boldsymbol{\epsilon}^{b_{1}\cdots b_{n-k}}{}_{c_1\cdots c_k} \frac{1}{k!}\boldsymbol{\epsilon}^{a_1\cdots a_k}{}_{b_{1}\cdots b_{n-k}}=(-1)^{k(n-k)}\delta^{[a_1}_{c_1}\cdots \delta^{a_k]}_{c_k}$). Since the tractor metric is positive definite when restricted to $\cN$, the same algebraic identity applies to the tractor normal form, giving
\begin{equation}\label{eq-partial-cont-N-forms}
N^{A_1\cdots A_kB_{k+1}\cdots B_d} N_{B_1\cdots B_k B_{k+1}\cdots B_n} = k!(d-k)!\mathrm{N}^{[A_1}_{B_1}\cdots \mathrm{N}^{A_k]}_{B_k},
\end{equation}
for $k\in\{1,\ldots, d\}$. In particular, we have:

\begin{prop}\label{prop_normal_projector_normal_forms}
	The tractor projector $\mathrm{N}^A _B$ is equal to
	\begin{equation}\label{eq-N-proj-N-form}
    \mathrm{N}^{A} _{B} = \frac{1}{(d-1)!} N^{A B_2 \cdots B_d} N_{B B_2 \cdots B_d}.
	\end{equation}
\end{prop}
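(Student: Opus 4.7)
The result is essentially the $k=1$ case of the preceding identity \eqref{eq-partial-cont-N-forms}: setting $k=1$ and dividing by $(d-1)!$ yields the claim directly. So the plan is to justify \eqref{eq-partial-cont-N-forms} itself, which I would do pointwise.

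At each $x \in \Sigma$, the orthogonal decomposition $\cT|_{\Sigma} = \cN^{\perp} \oplus \cN$ restricts the Lorentzian tractor metric $h$ to a positive-definite inner product on $\cN_x$, because the null line spanned by $X$ lies in $\cN^{\perp}$. The defining normalisation $N^{A_1 \cdots A_d} N_{A_1 \cdots A_d} = d!$ together with the orientation-compatibility built into the construction of $N_{A_1\cdots A_d}$ in Section \ref{norm_forms} then identifies $N_{A_1 \cdots A_d}|_x$ (up to a sign) as the Riemannian volume form of the oriented inner product space $(\cN_x, h|_{\cN_x})$.

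Next I would invoke the standard contraction identity for the Riemannian volume form $\epsilon$ on any $d$-dimensional oriented Euclidean space,
\[
  \epsilon^{a_1 \cdots a_k b_{k+1} \cdots b_d}\, \epsilon_{c_1 \cdots c_k b_{k+1} \cdots b_d} \;=\; k!(d-k)!\, \delta^{[a_1}_{c_1} \cdots \delta^{a_k]}_{c_k}.
\]
Applied inside $\cN_x$, this yields the required formula with the identity of $\cN_x$ on the right-hand side. Since $N_{A_1\cdots A_d}$ annihilates any vector in $\cN^{\perp}$, raising its indices with the ambient tractor metric $h$ agrees with raising them using $h|_{\cN}$; the internal identity on $\cN$ therefore becomes the ambient orthogonal projector $\mathrm{N}^{A}_{B}$, giving exactly \eqref{eq-partial-cont-N-forms}. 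Specialising to $k=1$ and dividing through by $(d-1)!$ completes the proof.

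The only step requiring real care is the bookkeeping of which metric is used to raise indices, and verifying that $\cN$-valuedness of $N$ makes the two natural choices (ambient $h$ versus $h|_{\cN}$) agree; this is immediate from the orthogonality observation above. No connection, transformation law, or curvature identity is needed, so the proof is purely algebraic at each point.
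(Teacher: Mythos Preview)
Your argument is correct and is essentially the paper's own: the paper observes that $h|_{\cN}$ is positive definite so that $N_{A_1\cdots A_d}$ serves as the Riemannian volume form of $\cN$, quotes the standard Euclidean contraction identity to obtain \eqref{eq-partial-cont-N-forms}, and then states the proposition as the $k=1$ case. Your extra remark about why raising indices with the ambient $h$ versus $h|_{\cN}$ agree is a nice clarification, but otherwise the approaches coincide.
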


Differentiating the formula obtained in the above display leads to a relationship between the tractor second fundamental form and the derivative of the tractor normal form. An alternative route to this is via the following lemma, which we record for completeness.
\begin{lem}
  The derivative of the tractor normal form expressed in the tractor projector notation is 
  \begin{equation}\label{eqn_tractor_normal_form_derivative_projectors}
    \begin{split}
      \nabla_i N_{A_1 A_2 \cdots A_{d-1} A_d}
      &= \left[ \nabla_i N_{a_1 a_2 \cdots a_{d-1} a_d} + d \cdot N_{b a_2 \cdots a_{d-1} a_d} H^b \confmet_{i a_1} \right] \mathbb{Z}^{a_1 a_2 \cdots a_d} _{A_1 A_2 \cdots A_d} \\
      &\quad + d \cdot \left[ \nabla_i \left( N_{b a_2 \cdots a_{d-1} a_d} H^b \right) - N_{a_1 a_2 \cdots a_{d-1} a_d} P_i {}^{a_1} \right] \mathbb{X}^{\phantom{A_1} a_2 \cdots a_d} _{A_1 A_2 \cdots A_d}. \\ 
    \end{split}
  \end{equation}
\end{lem}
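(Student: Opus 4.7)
The plan is to differentiate the explicit expression \eqref{tractor_normal_form} for $N_{A_1 \cdots A_d}$ using the Leibniz rule together with the formulas \eqref{eqn_tractor_form_projector_derivatives} for the covariant derivatives of the tractor form projectors, and then to observe that the potential $\mathbb{Y}$ and $\mathbb{W}$ terms must vanish identically because $N_{a_1 \cdots a_d}$ takes values in $\Lambda^d N^*\Sigma [d]$.

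First I would apply $\nabla_i$ to both sides of \eqref{tractor_normal_form}, producing four terms: the derivatives of the two coefficient tensors $N_{a_1 \cdots a_d}$ and $d\,N_{b a_2 \cdots a_d} H^b$, contracted respectively with $\mathbb{Z}^{a_1\cdots a_d}$ and $\mathbb{X}^{\phantom{A_1} a_2 \cdots a_d}$, plus the two terms where the derivative falls on the projectors. Using \eqref{eqn_tractor_form_projector_derivatives} with $k=d$ and $b\mapsto i$, the projector derivatives contribute
\begin{align*}
  N_{a_1 \cdots a_d}\,\nabla_i \mathbb{Z}^{a_1 \cdots a_d}
    &= -\,d\,N_{a_1 \cdots a_d}\, P_i{}^{a_1}\,\mathbb{X}^{\phantom{A_1} a_2 \cdots a_d}
       - d\,N_{a_1 \cdots a_d}\,\delta_i{}^{a_1}\,\mathbb{Y}^{\phantom{A_1} a_2 \cdots a_d},\\
  d\,N_{b a_2 \cdots a_d} H^b\,\nabla_i \mathbb{X}^{\phantom{A_1} a_2 \cdots a_d}
    &= d\,N_{b a_2 \cdots a_d} H^b\,\bg_{i a_1}\,\mathbb{Z}^{a_1 a_2 \cdots a_d}
       - d(d-1)\,N_{b a_2 \cdots a_d} H^b\,\delta_i{}^{a_2}\,\mathbb{W}^{\phantom{A_1 A_2} a_3 \cdots a_d}.
\end{align*}

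The key step is to observe that the $\mathbb{Y}$ and $\mathbb{W}$ contributions vanish. Indeed, since $N_{a_1 \cdots a_d}\in \Gamma(\Lambda^d N^*\Sigma[d])$, each of its indices lies in the conormal bundle, so the contraction of any of its indices with a tangential vector (here the submanifold index $i$, which upon the pullback is shorthand for $\Pi^{a}_i$) yields zero; thus $\delta_i{}^{a_1} N_{a_1 a_2 \cdots a_d}=0$ and similarly $\delta_i{}^{a_2} N_{b a_2 \cdots a_d}=0$. Collecting the surviving $\mathbb{Z}$ and $\mathbb{X}$ contributions then directly yields \eqref{eqn_tractor_normal_form_derivative_projectors}.

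This really is just a bookkeeping exercise; the only mildly delicate point is the tangential-contraction vanishing just noted, which is what kills the middle-slot ($\mathbb{Y}$) and $\mathbb{W}$-slot contributions and leaves an expression purely in the top slot ($\mathbb{Z}$) and $\mathbb{X}$-slot. No further identities are needed.
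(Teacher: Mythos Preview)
Your proposal is correct and follows essentially the same approach as the paper's own proof: differentiate the explicit expression \eqref{tractor_normal_form} via Leibniz, insert the projector-derivative formulas \eqref{eqn_tractor_form_projector_derivatives}, and then observe that the $\mathbb{Y}$ and $\mathbb{W}$ contributions vanish because the tangential index $i$ contracts into an index of the (co)normal form. The paper phrases this last step simply as ``any terms where the $i$ index is contracted into the normal form will vanish, since $i$ is a tangential index,'' which is exactly your observation.
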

\begin{proof}
  Recall 
  \begin{equation*}
    N_{A_1 A_2 \cdots A_d} = N_{a_1 a_2 \cdots a_d} \mathbb{Z}^{a_1 a_2 \cdots a_d} _{A_1 A_2 \cdots A_d} + d\cdot N_{b a_2 \cdots a_d} H^b \mathbb{X}^{\phantom{A_1} a_2 \cdots a_d} _{A_1 A_2 \cdots A_d}.
  \end{equation*}
  Differentiating this,
  \begin{align*}
    \nabla_i N_{A_1 A_2 \cdots A_{d-1} A_d}
    &= \left( \nabla_i N_{a_1 a_2 \cdots a_d} \right) \mathbb{Z}^{a_1 a_2 \cdots a_d} _{A_1 A_2 \cdots A_d} \\ 
    &\hspace{2em}+ N_{a_1 a_2 \cdots a_d} \left( - d \cdot P_i {}^{a_1} \mathbb{X}^{\phantom{A_1} a_2 \cdots a_d} _{A_1 A_2 \cdots A_d} - d \cdot \delta_i {}^{a_1} \mathbb{Y}^{\phantom{A_1} a_2 \cdots a_d} _{A_1 A_2 \cdots A_d} \right) \\
    &\hspace{2em}+ d \cdot \nabla_i \left( N_{b a_2 \cdots a_d} H^b \right) \mathbb{X}^{\phantom{A_1} a_2 \cdots a_d} _{A_1 A_2 \cdots A_d} \\
    &\hspace{2em}+ d \cdot N_{b a_2 \cdots a_d} H^b \left( \confmet_{i a_1} \mathbb{Z}^{a_1 a_2 \cdots a_d} _{A_1 A_2 \cdots A_d} - (d-1) \cdot \delta_i {}^{a_2} \mathbb{W}^{\phantom{A_1 A_2} A_3 \cdots A_d} _{a_1 a_2 a_3 \cdots a_d} \right) \\
    &= \left[ \nabla_i N_{a_1 a_2 \cdots a_{d-1} a_d} + d \cdot N_{b a_2 \cdots a_{d-1} a_d} H^b \confmet_{i a_1} \right] \mathbb{Z}^{a_1 a_2 \cdots a_d} _{A_1 A_2 \cdots A_d} \\
    &\hspace{2em} + d \cdot \left[ \nabla_i \left( N_{b a_2 \cdots a_{d-1} a_d} H^b \right) - N_{a_1 a_2 \cdots a_{d-1} a_d} P_i {}^{a_1} \right] \mathbb{X}^{\phantom{A_1} a_2 \cdots a_d} _{A_1 A_2 \cdots A_d},
  \end{align*}
  where we use the fact that any terms where the $i$ index is contracted into the normal form will vanish, since $i$ is a tangential index.
\end{proof}

\subsection{The proof of Theorem~\ref{key1}} \label{proofkey1}

Here we give a proof of the equivalence of conditions 1--4 in Theorem~\ref{key1}. This is fairly straightforward (in the end all four conditions are equivalent to normal bundle $\cN$ being parallel). In establishing the result, however, we will make some calculations which are of independent interest. Henceforth we will only use indices to distinguish $\overline{\LL}$ and
$\LL$ which (as sections of different bundles) represent the same
object, i.e. we will write $\LL_{i A} {}^B = \Pi^J _A \LL_{i J} {}^B$
instead of \eqref{LLbar}.

Toward Theorem~\ref{key1}, first observe that Proposition \ref{NgradN=L} and
Lemma \ref{lem_der_normal_tractor_proj} together give equivalence of 1
and 2 in that theorem. 
The following theorem shows that 1 implies 3.

\begin{thm}\label{thm_grad_normal_form}
  The derivative of the tractor normal form is given in terms of the tractor second fundamental form by
  \begin{equation}\label{eqn_grad_norm}
    \nabla_i N_{A_1 A_2 \cdots A_{d-1} A_d} = -d \cdot \LL_{i [A_d} {}^{C} N_{A_1 A_2 \cdots A_{d-1}] C}.
  \end{equation}
\end{thm}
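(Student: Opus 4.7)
The plan is to exploit the fact that $N_{A_1\cdots A_d}\in\Gamma(\Lambda^d\cN)$, so it satisfies the tautological identity
\begin{equation*}
N_{A_1\cdots A_d}=N_{B_1\cdots B_d}\,\mathrm{N}^{B_1}_{A_1}\cdots\mathrm{N}^{B_d}_{A_d},
\end{equation*}
since the normal projector acts as the identity on sections of $\cN$. Differentiating this identity by Leibniz produces one term of the form $(\nabla_i N_{B_1\cdots B_d})\,\mathrm{N}^{B_1}_{A_1}\cdots\mathrm{N}^{B_d}_{A_d}$, which is precisely the projection of $\nabla_i N$ onto $\Lambda^d\cN$ and hence equals the normal-connection derivative $\nabla^{\cN}_i N$. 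Since $\Lambda^d\cN$ is an oriented line bundle with a global unit section (namely $N/\sqrt{d!}$), and $\nabla^{\cN}$ is both metric- and orientation-preserving, this contribution vanishes.

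I would then evaluate the remaining $d$ terms, in each of which a single projector is differentiated, using Lemma~\ref{lem_der_normal_tractor_proj}, which yields $\nabla_i\mathrm{N}^{B_k}_{A_k}=-\LL_i{}^{B_k}{}_{A_k}-\LL_{iA_k}{}^{B_k}$. The first summand carries $B_k$ as a tangential index (being the transpose of $\LL_{iA}{}^{B}$), so its contraction into the normal-valued $B_k$-slot of $N$ vanishes. Only $-\LL_{iA_k}{}^{B_k}$ survives, and the remaining projectors $\mathrm{N}^{B_j}_{A_j}$ for $j\neq k$ act as the identity on $N$, so the computation collapses to
\begin{equation*}
\nabla_i N_{A_1\cdots A_d}=-\sum_{k=1}^{d}\LL_{iA_k}{}^{C}\,N_{A_1\cdots A_{k-1}\,C\,A_{k+1}\cdots A_d}.
\end{equation*}

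The final step is purely combinatorial: using the antisymmetry of $N$ to move the free index $C$ from position $k$ to the last slot produces a sign $(-1)^{d-k}$, converting the sum into $-\sum_{k=1}^{d}(-1)^{d-k}\LL_{iA_k}{}^{C}\,N_{A_1\cdots\widehat{A_k}\cdots A_d\,C}$. Because $N$ is already alternating in its first $d-1$ slots, each of these $d$ terms consolidates $(d-1)!$ permutations in the full antisymmetrization, and the sum is exactly $d$ times the antisymmetrization over $A_1,\ldots,A_d$ of $\LL_{iA_d}{}^{C}N_{A_1\cdots A_{d-1}C}$, yielding the stated identity. The only real care required is bookkeeping the signs arising from permuting indices of $N$ and verifying that $B_k$ is indeed tangential in $\LL_i{}^{B_k}{}_{A_k}$; both are routine once the projector identity above is in hand.
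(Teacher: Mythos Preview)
Your proof is correct. Both your argument and the paper's hinge on Lemma~\ref{lem_der_normal_tractor_proj} (the formula $\nabla_i\mathrm{N}^{B}_{A}=-\LL_i{}^{B}{}_{A}-\LL_{iA}{}^{B}$), but the starting identities differ. The paper differentiates
\[
\tfrac{1}{d!}\,N^{A_1\cdots A_d}N_{B_1\cdots B_d}=\mathrm{N}^{[A_1}_{B_1}\cdots\mathrm{N}^{A_d]}_{B_d},
\]
obtains two groups of terms on the right (one proportional to $N^{A_1\cdots A_d}$ and one to $N_{B_1\cdots B_d}$, distinguished by which index of $\LL$ is being antisymmetrised), and then contracts with $N_{A_1\cdots A_d}$, using $N^{A_1\cdots A_d}\nabla_iN_{A_1\cdots A_d}=0$ to kill the unwanted term. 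You instead differentiate the simpler tautology $N_{A_1\cdots A_d}=N_{B_1\cdots B_d}\,\mathrm{N}^{B_1}_{A_1}\cdots\mathrm{N}^{B_d}_{A_d}$ and dispose of the $(\nabla_iN)\cdot\mathrm{N}^{\otimes d}$ term up front by recognising it as $\nabla^{\cN}_iN$, which vanishes because $N$ is a unit-length section of the rank-one bundle $\Lambda^d\cN$ with its induced metric connection. This is a slightly cleaner route: it avoids introducing the contravariant normal form and the subsequent contraction, trading that for the (immediate) line-bundle observation. The paper's identity, on the other hand, has the minor advantage that the separation of the two $\LL$-contributions into complementary subspaces is manifest from the valence, so no appeal to the tangentiality of the upper index in $\LL_i{}^{B}{}_{A}$ is needed.
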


\begin{proof}
Recall that by \eqref{eq-N-proj-N-form} we have
\begin{equation}
\nabla_i \mathrm{N}^C_B = - \LL_{iB}{}^C - \LL_{i}{}^C{}_B.
\end{equation}
In order to utilize the above formula we recall also that
\begin{equation}
\frac{1}{d!}N^{A_1\cdots A_d}N_{B_1\cdots B_d} = \mathrm{N}^{[A_1}_{B_1}\cdots \mathrm{N}^{A_d]}_{B_d}.
\end{equation}
Differentiating both sides of the above display we obtain that 
\begin{equation*}
\begin{aligned}
\frac{1}{d!}(\nabla_iN^{A_1\cdots A_d})&N_{B_1\cdots B_d} + \frac{1}{d!}N^{A_1\cdots A_d}\nabla_iN_{B_1\cdots B_d} \\& = - \LL_{iB_1}{}^{[A_1}\mathrm{N}_{B_2}^{A_2}\cdots \mathrm{N}_{B_d}^{A_d]} -\mathrm{N}_{B_1}{}^{[A_1}\LL_{iB_2}{}^{A_2}\cdots \mathrm{N}_{B_d}^{A_d]} - \cdots -  \mathrm{N}_{B_1}{}^{[A_1}\mathrm{N}_{B_2}^{A_2}\cdots \LL_{iB_d}{}^{A_d]}\\
& \phantom{=} \;\;- \LL_{i}{}^{[A_1}{}_{B_1}\mathrm{N}_{B_2}^{A_2}\cdots \mathrm{N}_{B_d}^{A_d]} -\mathrm{N}_{B_1}{}^{[A_1}\LL_{i}{}^{A_2}{}_{B_2}\cdots \mathrm{N}_{B_d}^{A_d]} - \cdots -  \mathrm{N}_{B_1}{}^{[A_1}\mathrm{N}_{B_2}^{A_2}\cdots \LL_{i}{}^{A_d]}{}_{B_d}.
\end{aligned}
\end{equation*}
Note that, since $\LL_{iB}{}^C$ is tangential in the second index and normal in the third, in the right hand side of the above display the top line is proportional to $N^{A_1\cdots A_d}$ and has zero contraction with $N^{B_1\cdots B_d}$, whereas the bottom line is proportional to $N_{B_1\cdots B_d}$ and has zero contraction with $N_{A_1\cdots A_d}$.  Note also that $N^{A_1\cdots A_d}N_{A_1\cdots A_d}=d!$ implies that $N^{A_1\cdots A_d}\nabla_iN_{A_1\cdots A_d}=0$. Contracting the above display with $N_{A_1\cdots A_d}$ therefore gives
\begin{equation*}
\begin{aligned}
\nabla_i N_{B_1\cdots B_d} &= - \LL_{iB_1}{}^{A_1}N_{A_1B_2\cdots B_d} - \LL_{iB_2}{}^{A_2}N_{B_1A_2\cdots B_d}- \cdots -\LL_{iB_d}{}^{A_d}N_{B_1B_2\cdots A_d}\\
& = - d\cdot \LL_{i[B_d}{}^{A_d}N_{B_1B_2\cdots B_{d-1}] A_d},
\end{aligned}
\end{equation*} 
and the result follows by relabeling indices.
\end{proof}

We now invert the relationship between $\nabla N$ and $\LL$ to see that 3
implies 1 in Theorem \ref{key1}.
\begin{thm}\label{prop_norm_grad_norm}
The tractor second fundamental form is given in terms of the
derivative of the tractor normal form by
	\begin{equation}\label{eqn_normal_form_tractor_2ff}
    N^{C A_2 \cdots A_d} \nabla_i N_{B A_2 \cdots A_d} = -
    (d-1)! \cdot \LL_{i B} {}^{C}.
	\end{equation}
\end{thm}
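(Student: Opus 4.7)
The plan is to prove the formula by contracting both sides of Theorem~\ref{thm_grad_normal_form} with $N^{CA_2\cdots A_d}$, and then exploiting the fact that the tractor second fundamental form $\LL_{iA}{}^{C}$ is tangential in its second index and normal in its third, while the tractor normal form is normal in every one of its slots. This mismatch will force all but one of the terms in the antisymmetrisation on the right-hand side of \eqref{eqn_grad_norm} to vanish.

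Concretely, I would first expand the antisymmetrisation. Since $N_{A_1\cdots A_{d-1}C}$ is antisymmetric in $A_1,\ldots,A_{d-1}$, the totally antisymmetric combination can be written (with $A_1=B$) as
\begin{equation*}
\LL_{i[A_d}{}^{E} N_{BA_2\cdots A_{d-1}]E}
=\frac{1}{d}\sum_{j=1}^{d}(-1)^{d-j}\,\LL_{iA_j}{}^{E}\,N_{A_1\cdots \widehat{A_j}\cdots A_d\, E},
\end{equation*}
with $A_1=B$. Contracting with $N^{CA_2\cdots A_d}$, one observes that for each $j\ge 2$ the factor $\LL_{iA_j}{}^{E}$ is tangential in $A_j$ while $N^{CA_2\cdots A_d}$ is purely normal in that same slot; explicitly writing $\LL_{iA_j}{}^{E}=\Pi^{D}{}_{A_j}\LL_{iD}{}^{E}$ and noting that $\Pi^{D}{}_{A_j} N^{CA_2\cdots A_d}=0$ shows those $d-1$ terms are zero.

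Only the $j=1$ term survives, yielding (up to a sign $(-1)^{d-1}$) the expression
\begin{equation*}
\LL_{iB}{}^{E}\, N^{CA_2\cdots A_d} N_{A_2\cdots A_d E}.
\end{equation*}
Reordering the indices of the second normal form by moving $E$ from the last slot to the first introduces another factor of $(-1)^{d-1}$, after which Proposition~\ref{prop_normal_projector_normal_forms} (or equivalently \eqref{eq-partial-cont-N-forms}) gives $N^{CA_2\cdots A_d}N_{EA_2\cdots A_d}=(d-1)!\,\mathrm{N}^{C}{}_{E}$. Finally, since $\LL_{iB}{}^{E}$ is already normal in $E$, one has $\LL_{iB}{}^{E}\,\mathrm{N}^{C}{}_{E}=\LL_{iB}{}^{C}$, and combining the two sign factors $(-1)^{d-1}$ with the overall sign $-d\cdot\tfrac{1}{d}=-1$ from \eqref{eqn_grad_norm} produces exactly $-(d-1)!\,\LL_{iB}{}^{C}$.

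The main obstacle is purely combinatorial: carefully tracking the two sources of sign—the position $j=1$ of the surviving term in the antisymmetrised sum, and the cyclic reordering of $N_{A_2\cdots A_d E}$ to $N_{EA_2\cdots A_d}$—and verifying that they cancel into a single overall minus sign. A sanity check in the case $d=2$, where both sign factors equal $-1$ and the claim reduces to $N^{CA_2}\nabla_i N_{BA_2}=-\LL_{iB}{}^{C}$, confirms the bookkeeping. As an alternative cross-check, one could differentiate the identity $\mathrm{N}^{C}{}_{B}=\tfrac{1}{(d-1)!}N^{CA_2\cdots A_d}N_{BA_2\cdots A_d}$ of Proposition~\ref{prop_normal_projector_normal_forms} and match the tangential-in-$B$, normal-in-$C$ part against $-\LL_{iB}{}^{C}$ using Lemma~\ref{lem_der_normal_tractor_proj}.
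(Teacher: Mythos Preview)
Your proof is correct. Interestingly, the paper explicitly acknowledges your approach in the Remark immediately preceding its proof: contracting \eqref{eqn_grad_norm} with the normal form on $d-1$ indices and invoking \eqref{eq-N-proj-N-form}. However, the paper chooses instead to give a direct argument---precisely the one you sketch as your ``alternative cross-check'': differentiate the identity $\mathrm{N}^{C}{}_{B}=\tfrac{1}{(d-1)!}N^{CA_2\cdots A_d}N_{BA_2\cdots A_d}$, use Lemma~\ref{lem_der_normal_tractor_proj} on the left, then project normally in $C$ (observing that $(\nabla_iN^{CA_2\cdots A_d})N_{BA_2\cdots A_d}$ is tangential in $C$ since $\nabla_i N$ contracted fully with normal tractors vanishes). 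So the two proofs are essentially the same pair of arguments with the roles of ``main proof'' and ``alternative'' swapped. Your route has the advantage of not needing the auxiliary observation that $\nabla_i N$ vanishes when contracted with $d$ normal tractors; the paper's route avoids the combinatorial sign-chasing you had to do.
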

\begin{rem}
The formula \eqref{eqn_normal_form_tractor_2ff} can be obtained from Theorem \ref{thm_grad_normal_form} by contracting both sides of \eqref{eqn_grad_norm} with the tractor normal form on $d-1$ indices and using \eqref{eq-N-proj-N-form}, but it is also easy to obtain directly and so we do this below. One can also obtain \eqref{eqn_grad_norm} from \eqref{eqn_normal_form_tractor_2ff} by taking the appropriate product with a normal form and using \eqref{eq-partial-cont-N-forms} for $k=1$.
\end{rem}

\begin{proof}
Recall from \eqref{eq-N-proj-N-form} that $\mathrm{N}^C_B = \frac{1}{(d-1)!}N^{CA_2\cdots A_d}N_{BA_2\cdots A_d}$. Differentiating both sides of this identity and using \eqref{eqn_der_normal_tractor_proj} on the left hand side we obtain that
\begin{equation}\label{eq-deriv-normal-proj-N-form}
-\mathbb{L}_{iB}{}^C - \mathbb{L}_{i}{}^C{}_B = \frac{1}{(d-1)!}\left( (\nabla_iN^{CA_2\cdots A_d})N_{BA_2\cdots A_d} + N^{CA_2\cdots A_d}\nabla_i N_{BA_2\cdots A_d}\right).
\end{equation}
Note that if we project to the normal tractor bundle in the index $C$ in the above displayed equation, then the left hand side becomes $-\mathbb{L}_{iB}{}^C$ (the $\mathbb{L}_{i}{}^C{}_B$ term projects to zero since $\LL$ is tangential in the second index). We'd like to see that one of the two terms on the right hand side also drops out when we do this. To this end, note that since  $N^{A_1\cdots A_d}\nabla_iN_{A_1\cdots A_d}=0$ it follows that if $N_1^A, \ldots, N_d^A$ are normal tractor fields then $N_1^A \cdots N_d^A\nabla_iN_{A_1\cdots A_d}=0$. In particular, if $N^A$ is a normal tractor field then $N^{C}N^{BA_2\cdots A_d}\nabla_iN_{CA_2\cdots A_d}=0$. In other words, $(\nabla_iN^{CA_2\cdots A_d})N_{BA_2\cdots A_d}$ is tangential in the $C$ index. Thus projecting \eqref{eq-deriv-normal-proj-N-form} to the normal tractor bundle in the index $C$ gives the result.
\end{proof}

Theorem~\ref{key1} follows easily from the above observations:

\begin{proof}[Proof of Theorem~\ref{key1}]
 As remarked above, the equivalence of 1 and 2 follows from Proposition
 \ref{NgradN=L} and Lemma \ref{lem_der_normal_tractor_proj}. The
 equivalence of 1 and 3 follows from Theorems
 \ref{thm_grad_normal_form} and \ref{prop_norm_grad_norm} above. The
 equivalence of 3 and 4 is a trivial consequence of the definition of the Hodge-$\star$ (see \eqref{tstar}),  the fact that the
 volume tractor (see \eqref{tvf}) is parallel, and that $\star\star$ is $\pm$ the identity.
\end{proof}

\begin{rem}\label{remark_results_apply_to_riemannian}
  Many of the results concerning submanifold tractors that were derived in the above sections  used nothing more than
  a local orthonormal basis for the normal bundle and the Gau{\ss}
  formula.  Since the normal tractor bundle is isomorphic to the usual
  normal bundle, and we have a Gau{\ss} formula in both cases, such
  proofs of these results may be repeated mutatis mutandis for the
  Riemannian objects to yield analogous statements and formulae; the one caveat being that one should keep in mind that the connection $\check{\nabla}$ on $\cT\Sigma$ induced from the ambient tractor connection differs in general from the submanifold tractor connection $D$ by \eqref{diff_tractor_connection} when going from the Riemannian to the conformal setting. 
\end{rem}

\subsection{Submanifold invariants}\label{sinvts}

We have seen above that trace-free second fundamental form arises from
using the (ambient) tractor connection acting on $N^{A}_B$. More
generally the tools we have developed can be used to proliferate
submanifold invariants in obvious ways. We sketch some routes.

Let us fix some submanifold $\Sigma$, as usual of dimension $1\leq
m\leq n-1$ and codimension $d$, in a conformal manifold $(M,\cc)$.
Let us write $\ol{\TD}$ for the Thomas operator of the intrinsic
conformal structure $(\Sigma, \cc)$. This is given by the formula
\eqref{tD-formula} except that we couple the tractor connection
$\nabla^{\cT\Sigma}$ to the intrinsic Levi-Civita connection $D_i$
and replace $n$ with $m$. Also in dimensions $m=1,2$ we replace $J$
with $\lj$ as described in Section \ref{lowdim}. In fact it is straightforward to verify this formula
\eqref{tD-formula} provides a conformally invariant operator if we couple
the Levi-Civita connection to {\em any} invariant connection on any vector bundle. (The key point is that verifying its conformal invariance does not involve
commuting any derivatives.)  To exploit this
observation, we will write $\ol{\TD}$ also for the conformally
invariant operator given by the same formula, but where the intrinsic
Levi-Civita connection is coupled to any invariant connection. In
practice here, the latter will be the ambient tractor connection as
well as also the intrinsic tractor connection on $\cT\Sigma$.

For example, along $\Sigma$, $\ol{\TD}_BN^C_D$ is well defined and
conformally invariant, as is $\ol{\TD}_A\ol{\TD}_BN^C_D$. Similarly we
may instead use the normal form $N_{F_1\cdots F_d}$. And this comes to
the main point. The collection
$$
N_{F_1\cdots F_d}, \,\, \ol{\TD}_EN_{F_1\cdots F_d}, \,\,
\ol{\TD}_C\ol{\TD}_EN_{F_1\cdots F_d}, \,\,  \ol{\TD}_B \ol{\TD}_C\ol{\TD}_EN_{F_1\cdots F_d},  \cdots 
$$
embeds the jets of the submanifold into sections of tractor bundles in
a conformally invariant way, up to any desired order. These objects can then be contracted or partially contracted to produce non-linear invariants. For example
$$
(\ol{\TD}^C\ol{\TD}^E N^A_B ) (\ol{\TD}_C\ol{\TD}_E N^B_A) 
$$ is a non-trivial scalar conformal invariant of submanifolds for
most dimensions $m$.
Similarly (for $m\geq 4$) we may form
$$
\ol{W}^{CDEF}(\ol{\TD}_D\ol{\TD}_F N^A_B )(\ol{\TD}_C\ol{\TD}_E N^B_A) ,
$$
where $\ol{W}$ is the $W$-tractor, as defined in \cite{G-advances}, but for the intrinsic geometry of the submanifold $\Sigma$.
In the parlance of invariant theory such obvious
complete contractions are called scalar {\em Weyl invariants}
\cite{BaiEG}. A slightly more subtle  construction uses the
idea of {\em quasi-Weyl invariants}, as in \cite{G-advances}, but this will
  still proceed using the tools developed here. Indeed the results
  from \cite{G-advances} (for conformal invariants) suggest it is likely
  that these techniques would, in a suitable sense, produce almost all
  scalar invariants. 

 The construction of tensor-valued invariants is slightly more
 complicated, and involves ideas as here plus the use of differential
 splitting operators that map (in a conformally invariant way) between
 tensor and tractor bundles (see, e.g., Theorem \ref{normp} and \eqref{eqn_conf_killing_splitting_op} in Section \ref{fi-Sect}). Some applications of these for the construction of hypersurface invariants are given in \cite{BGW}.

\section{Characterising  and generalising mean curvature, and applications}\label{sec_minimal_scales}

Our aim in this section is to show that the tractor formalism leads to natural generalizations of the notions of mean curvature and various conditions on the mean curvature from the Riemannian to the almost-Riemannian setting. The basic idea is that the mean curvature captures (and is captured by) the relation of the scale tractor $I$ of the Riemannian metric to the submanifold tractor bundle, which gives a way of talking about mean curvature that generalizes immediately to the almost-Riemannian setting. In particular, one gets a notion of ``mean curvature tractor'' that is well-defined and smooth up to the conformal infinity. 

To motivate this definition we begin with the Riemannian case.  Note that for a given submanifold $\Sigma$, in a Riemannian manifold
$(M,g)$, its mean curvature vector $H^a=\frac{1}{m}\bg^{ij}\II_{ij}^{a}\in \Gamma(N\Sigma[-2])$ can
equivalently be captured by the {\em mean curvature tractor}
$$
H^A:=\si N_a^AH^a \in \Gamma(\cN)
$$
via the isomorphism of Lemma \ref{Nlem}, where $\si\in \Gamma(\ce[1])$ is the scale giving $g$, meaning $g=\si^{-2}\bg$. With this terminology and notation, we can state the following result. 
\begin{prop}
	\label{mean}
	Let $\Sigma$ be a submanifold in a Riemannian manifold  $(M,g)$.
        Then
        \begin{equation}\label{I.N}
          H^A=  N^A_B I^B.
          \end{equation}
\end{prop}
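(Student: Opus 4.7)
The plan is to verify the identity \eqref{I.N} by a direct computation in the scale $g$ itself, where the scale tractor simplifies dramatically. In the scale $g$, the density $\sigma=\sigma_g$ is represented by the constant function $1$, so $\nabla_a\sigma=0$ and hence $\Delta\sigma=0$. Applying the Thomas operator formula \eqref{thomas_D} then gives
$$
I^A \overset{g}{=} Y^A - \tfrac{J}{n}X^A,
$$
i.e.\ the middle slot of $I^B$ vanishes while the top slot equals $1$.

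Next I would apply the explicit formula for the tractor normal projector from Lemma \ref{lem_norm_tractor_proj_formula},
$$
\mathrm{N}^A_B = \mathrm{N}^a_b \, Z^A_a Z^b_B + H^a Z^A_a X_B + H_b X^A Z^b_B + (H^dH_d)\, X^A X_B,
$$
and contract it with $I^B$. The contraction $Z^b_B I^B$ recovers the middle slot of $I^B$ and so vanishes, while $X_B I^B$ picks out the top slot of $I^B$ and therefore equals $1$ in the scale $g$. Only the second and fourth terms then survive, yielding
$$
\mathrm{N}^A_B I^B \overset{g}{=} H^a Z^A_a + (H^dH_d)\, X^A,
$$
which, in the $g$-splitting of $\cT$, has slots $(0,\, H^a,\, H_c H^c)$.

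To conclude, I would check that the mean curvature tractor $H^A := \sigma N^A_a H^a$ has the same $g$-splitting. Since $H^a\in\Gamma(N\Sigma[-2])$, the product $\sigma H^a$ is a section of $N\Sigma[-1]$, which is the domain of the isomorphism $N^A_a$ of Lemma \ref{Nlem}; the embedding formula \eqref{norm_bundle} then gives
$$
H^A \overset{g}{=} (0,\, \sigma H^a,\, \sigma H_c H^c),
$$
which reduces to $(0,\, H^a,\, H_c H^c)$ once we use that $\sigma$ is represented by $1$ in the scale $g$. Both sides of \eqref{I.N} therefore agree slot-by-slot in the scale $g$, and since the equation is tractor-valued (hence already conformally invariant on both sides) this suffices. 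There is no real obstacle here; the only bookkeeping to watch is the density weight, which is precisely what forces the factor of $\sigma$ on the right-hand side so that $N^A_a$ is applied to an input of the correct weight.
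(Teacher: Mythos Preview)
Your proof is correct and follows essentially the same approach as the paper: compute in the scale $g$ where the middle slot of $I^A$ vanishes, apply the explicit formula for $\mathrm{N}^A_B$ from Lemma~\ref{lem_norm_tractor_proj_formula}, and compare with the image of $\sigma H^a$ under \eqref{norm_bundle}. The only cosmetic difference is that the paper keeps the factor $\sigma$ explicit (writing $I^A=\sigma Y^A+\rho X^A$) rather than trivialising the densities to set $\sigma=1$.
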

\begin{proof}
  In the scale of the metric $g=\si^{-2}\bg$ the scale tractor takes
  to form $I^A=\si Y^A+\rho X^A$, for some weight density $\rho$, or
  weight $-1$.  So from formula \eqref{eqn_norm_tractor_proj_formula} in
  Lemma \ref{lem_norm_tractor_proj_formula}, we see that $N^A_BX^B=0$ and 
  \begin{equation}\label{NIexplicit}
N^A_B I^B= \si H^aZ_a^A+ \si (H^bH_b)X^A .
\end{equation}
But, from \eqref{norm_bundle}, this is exactly $H^A$. 

\end{proof}

Thus {\em minimal submanifolds}, meaning those with $H^a=0$, are nicely
captured by orthogonality of the scale tractor to the normal
tractors, as follows (as was known in the case of
hypersurfaces \cite{Gover-Leitner-class}).
\begin{cor}\label{prop_minimal_normal_charac}  In a Riemannian manifold $(M,g)$, let $I$ denote the scale tractor of $g$.
   A submanifold $\Sigma$, of dimension $m$,  is minimal if and only if,
one of the following  equivalent conditions holds
  \begin{enumerate}
    \item $I^A N_A^B=0$ ;
    \item $I^{A_1} N_{A_1 A_2 \cdots
				A_d} = 0$; 
    \item $I^{[A_0} \star \hspace{-2.5pt} N^{A_1 A_2 \cdots A_{m+2}]} = 0$;
    \item $I\in \Gamma(\cN^\perp)$;
      \item $H^AI_A=0$.
  \end{enumerate}
  \end{cor}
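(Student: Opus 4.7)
The plan is to reduce everything to Proposition~\ref{mean}, which identifies the mean curvature tractor $H^A$ with the orthogonal projection $\mathrm{N}^A_B I^B$ of the scale tractor onto the normal tractor bundle $\cN$. Since the map in Lemma~\ref{Nlem} is an isomorphism, $H^a = 0$ on $\Sigma$ if and only if $H^A = 0$, which is exactly condition~1. Condition~4 is then immediate, since $\mathrm{N}^A_B I^B = 0$ says precisely that $I$ lies in the orthogonal complement $\cN^\perp$.

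For condition~5, I would compute $H^A I_A$ directly in the scale $g = \sigma^{-2}\bg$: using \eqref{NIexplicit} and the standard projector pairings $Y^AX_A=1$, $Z^A_aX_A=0=Z^A_aY_A$, one finds $H^A I_A = \sigma^2 H^a H_a$. More invariantly, $H^A I_A = I_A \mathrm{N}^A_B I^B$ is the squared tractor-norm of the $\cN$-component of $I$, and since the tractor metric restricted to the Riemannian subbundle $\cN$ is positive definite, this quantity vanishes iff the normal projection of $I$ does. For condition~2, contracting $I^{A_1} N_{A_1 A_2 \cdots A_d} = 0$ with $N^{C A_2 \cdots A_d}/(d-1)!$ and using Proposition~\ref{prop_normal_projector_normal_forms} gives $\mathrm{N}^C_A I^A = 0$; conversely, since $N_{A_1\cdots A_d}$ restricts to a non-vanishing top form on the $d$-dimensional $\cN$, the map $\cN \to \Lambda^{d-1}\cT^*$, $v^A \mapsto v^A N_{A A_2\cdots A_d}$, is injective, so condition~1 forces condition~2. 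For condition~3, the volume form identity $\epsilon^{\Sigma}_{A_1 \cdots A_{m+2}} \wedge N_{A_{m+3}\cdots A_{n+2}} = \epsilon_{A_1\cdots A_{n+2}}$ of Section~\ref{norm_forms} shows that $\star \hspace{-2pt} N^{A_1\cdots A_{m+2}}$ is, up to a nonzero scalar, a simple top form on $\cN^\perp$; hence $I \wedge \star \hspace{-2pt} N = 0$ is equivalent to $I \in \cN^\perp$, i.e.\ condition~4.

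There is no serious obstacle: once Proposition~\ref{mean} is in hand, every remaining equivalence reduces to linear algebra on a single fibre via the algebraic identities between $\mathrm{N}^A_B$, $N_{A_1\cdots A_d}$, and $\star \hspace{-2pt} N^{A_1\cdots A_{m+2}}$ already recorded. The only step requiring a moment's care is condition~5, where one must use positive-definiteness of the tractor metric on the Riemannian subbundle $\cN$ to ensure that $H^A I_A$ is a definite rather than indefinite pairing and so cannot vanish on a nonzero normal vector.
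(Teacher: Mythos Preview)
Your proposal is correct. The paper does not give a separate proof of this corollary; it is stated directly after Proposition~\ref{mean} with the sentence ``Thus minimal submanifolds\dots are nicely captured by orthogonality of the scale tractor to the normal tractors,'' so the paper regards all five equivalences as immediate consequences of $H^A=\mathrm{N}^A_BI^B$ together with the standing algebraic identities among $\mathrm{N}^A_B$, $N_{A_1\cdots A_d}$, and $\star N$. Your argument supplies precisely the linear-algebraic details the paper leaves implicit, including the observation that the tractor metric is positive definite on $\cN$ (needed for condition~5) and the use of Proposition~\ref{prop_normal_projector_normal_forms} and the volume-form identity $\epsilon^\Sigma\wedge N=\epsilon$ from Section~\ref{norm_forms} (for conditions~2 and~3).
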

\begin{rem}
\emph{(i)} Corollary \ref{prop_minimal_normal_charac} here generalizes Theorem
2 from~\cite{GoverSnell}, as a minimal 1-dimensional submanifold in a
Riemannian manifold is exactly a geodesic. 
\emph{(ii)} Note also that the corollary shows that for a minimal submanifold
$\Sigma$ the ambient scale tractor $I_A$ can, along $\Sigma$, be identified with
 a section of the intrinsic tractor bundle $\cT\Sigma$ via
\eqref{eqn_norm_perp_intrinsic_isom} of Theorem
\ref{thm_norm_perp_intrinsic_isom}.
\end{rem}

It is natural to say that a Riemannian  submanifold has {\em constant mean curvature} (CMC) if the function
$$
\si^2 H^aH_a \in \Gamma(\ce[0]|_{\Sigma}) 
$$
is constant on $\Sigma$, where $\si$ is the scale of the metric
$g$ used to calculate the mean curvature (the reader is cautioned that this is only standard terminology for the case of hypersurfaces; in higher codimension there are other possibilities for the definition of CMC). We will say that a $\Sigma$ has {\em parallel mean curvature} if
$$
\nabla_i^{\perp}  H^b=0,
$$
or equivalently $\nabla_i^{\perp} (\si H^b)=0$; clearly this is stronger than the CMC condition. These notions are also usefully captured by tractors. 
\begin{prop} \label{cmc-prop}
In a Riemannian manifold $(M,g)$, let $I$ denote the scale tractor of $g$. 
A submanifold $\Sigma$:
\begin{itemize}
\item is CMC if and only if
$$
N_{AB}I^AI^B, \qquad \mbox{or equivalently,}\qquad H^AI_A
$$
is constant along $\Sigma$;
\item has  parallel mean curvature if and only if 
  $$
\nabla_i^{\mathcal{N}} H^B =0,\qquad \mbox{or equivalently,}\qquad N^A_B\nabla_i H^B =0.
  $$
  \end{itemize}
  \end{prop}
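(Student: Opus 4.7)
The plan is to prove both bullet points by reducing them to the key identity $H^A = \mathrm{N}^A{}_B I^B$ from Proposition~\ref{mean}, combined with the fact (Proposition~\ref{norm-tr-conn}) that the canonical isomorphism $N\Sigma[-1]\xrightarrow{\sim}\cN$ preserves the invariant parallel transports on each bundle.

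For the CMC statement, since $\mathrm{N}_{AB}$ is symmetric, Proposition~\ref{mean} immediately gives
\[
\mathrm{N}_{AB}\, I^A I^B \;=\; I_A\, \mathrm{N}^A{}_B\, I^B \;=\; I_A H^A,
\]
which establishes the equivalence of the two quantities on the right. To identify this scalar, I would compute in the scale of $g=\sigma^{-2}\bg$, in which $I_A\stackrel{g}{=}\sigma Y_A - \tfrac{1}{n}J\sigma\, X_A$ (since $\nabla^g\sigma = 0$ and $\Delta\sigma = 0$ in this scale), while by \eqref{NIexplicit} one has $H^A = \sigma H^a Z^A_a + \sigma H^b H_b\, X^A$. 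Using the projector contractions \eqref{projector_contractions} together with $X^A X_A = 0 = Z^A_a X_A$, one obtains $H^A I_A = \sigma^2 H^a H_a$. This is precisely the weight-zero function whose constancy defines the CMC condition, proving the first bullet.

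For the parallel mean curvature statement, the equivalence of the two conditions on the right is just the definition of $\nabla_i^{\cN}$ as the normal-tractor projection of the ambient tractor connection applied to the section $H^A\in \Gamma(\cN)$. The real content is the equivalence with $\nabla_i^{\perp} H^a = 0$. By Proposition~\ref{norm-tr-conn}, the isomorphism $n^a\mapsto N_a^A n^a$ intertwines $\nabla^{\perp}$ on $N\Sigma[-1]$ with $\nabla^{\cN}$ on $\cN$. Under this map, $\sigma H^a\in\Gamma(N\Sigma[-1])$ corresponds to $\sigma N_a^A H^a = H^A$, so
\[
\nabla_i^{\perp}(\sigma H^a) = 0 \;\;\Longleftrightarrow\;\; \nabla_i^{\cN} H^A = 0.
\]
Finally, since $\sigma = \sigma_g$ is parallel for the Levi-Civita connection of $g$ (coupled to densities), the Leibniz rule gives $\nabla_i^{\perp}(\sigma H^a) = \sigma \nabla_i^{\perp} H^a$, which vanishes iff $\nabla_i^{\perp} H^a = 0$, since $\sigma$ is nowhere zero.

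No serious obstacle is expected: the proof is a short bookkeeping exercise converting between the scale-dependent normal-bundle picture and the conformally invariant normal-tractor picture, exploiting that $\sigma_g$ is $\nabla^g$-parallel. The only subtlety is the density weights, namely that one must multiply $H^a\in\Gamma(N\Sigma[-2])$ by $\sigma\in\Gamma(\cE[1])$ to land in $N\Sigma[-1]$, which is the bundle for which Proposition~\ref{norm-tr-conn} asserts the intertwining with $\cN$.
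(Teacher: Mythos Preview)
Your proof is correct and follows essentially the same approach as the paper: for the CMC statement you contract $I_A$ into the explicit formula \eqref{NIexplicit} to obtain $H^AI_A=\sigma^2 H^aH_a$, exactly as the paper does; for the parallel mean curvature statement you invoke the intertwining of $\nabla^{\perp}$ with $\nabla^{\cN}$ under the isomorphism $N\Sigma[-1]\cong\cN$, which is the content of Lemma~\ref{nablaNlem} (the paper cites this lemma directly, whereas you cite the equivalent Proposition~\ref{norm-tr-conn}). Your handling of the density weight, multiplying $H^a\in\Gamma(N\Sigma[-2])$ by $\sigma$ and then using that $\sigma_g$ is $\nabla^g$-parallel, is the correct bookkeeping.
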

\begin{proof}
  Continuing in notation and choices of the Proof of Proposition \ref{mean}, the
  first statement follows by contracting $I_A=\si Y^A+ \rho X^A$ into \eqref{NIexplicit}.  The second is immediate from Lemma \ref{nablaNlem}.
\end{proof}
\begin{rem}
  Note that if a submanifold $\Sigma$ in $(M,g)$ has parallel mean curvature, then it is CMC as, in the scale of the metric $g=\si^{-2}\bg$,
  $$
\nabla_i (\si^2H^aH_a)= 2 \si^2 H^a  \nabla_i^\perp H_a.
$$
The converse does not hold. For example in Euclidean 3-space a
round 2-circle (say in the $x-y$-plane) has is parallel mean curvature
(and so is also CMC). But a regular spiral is CMC (by dint of its
invariance under the obvious group action) but does not have parallel
mean curvature.

Note that the stronger notion of parallel mean curvature
$$
\nabla_i (\si H^a)=0
$$
implies CMC also. Thus $|\si H|=\sqrt{\si^2 H^aH_a}$ is constant
and $\si H^a=|\si H|\hat{n}^a$ for some a unit normal along $\Sigma$
that must be parallel. Such a parallel unit normal means that the
acceleration of any curve in $\Sigma$ is orthogonal to $\hat{n}$, so
the second fundamental form and $H^a$ are orthogonal to $\hat{n}$. But
the latter obviously means $H^a=0$.
  \end{rem}

\newcommand{\ul}[1]{\underline{#1}}
Part of the importance of Proposition \ref{mean}, Corollary
\ref{prop_minimal_normal_charac}, and Proposition \ref{cmc-prop}, is
that in means that these quantities and notions at once extend to the
setting of almost-Riemannian manifolds (as defined in Section
\ref{scalet}). For emphasis we make this a definition.
\begin{defn}
	\label{defn_generalized_minimal}
	\emph{Let $(M, \bm{c}, I)$ be an almost-Riemannian manifold with degeneracy locus $\mathcal{Z}(\sigma)$. We say that an embedded submanifold $\Sigma$ of $M$ is an \emph{almost-Riemannian submanifold} of $(M, \bm{c}, I)$ if $\Sigma \setminus \mathcal{Z}(\sigma)$ is dense in $\Sigma$. We say that such a submanifold $\Sigma$ is, respectively, \emph{CMC} or has  \emph{parallel mean} \emph{curvature}  (in the almost-Riemannian sense) if one of the conditions displayed in Proposition \ref{cmc-prop} holds.
        Similarly we say that it is \emph{minimal} (in the almost-Riemannian sense) if any one of the
        equivalent conditions of Corollary
        \ref{prop_minimal_normal_charac} holds.}
\end{defn}
\noindent For an almost-Riemannian manifold $(M, \cc, I)$, the zero
locus $\mathcal{Z}(\si)$, of $\si:=X^AI_A$, is (closed and) nowhere
dense. Thus, by continuity, the notions in the definition extend those on $M\setminus \mathcal{Z}(\si)$, as in the following proposition.
\begin{prop}
  Let $(M, \bm{c}, I)$ be an almost-Riemannian manifold and $\si:=X^AI_A$.
  Then an almost-Riemannian submanifold $\Sigma$ is minimal, CMC, or mean curvature parallel in the sense of Definition \ref{defn_generalized_minimal} if and only if satisfies the corresponding condition (in the non-generalised sense) on $M \backslash \cZ(\sigma)$ for the metric 
  $g:= \sigma^{-2} \confmet$.
  \end{prop}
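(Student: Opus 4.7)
The plan is to reduce each of the three claimed equivalences to the corresponding statement over the open dense regular locus $M\setminus\cZ(\sigma)$, where we have a genuine Riemannian metric $g=\sigma^{-2}\bg$, and then extend to all of $\Sigma$ by continuity. The first observation is that on $M\setminus\cZ(\sigma)$, the given scale tractor $I$ coincides with the scale tractor of $g$, because $I=\tfrac{1}{n}\TD\sigma$ and $\sigma$ is precisely the weight-one density representing $g$. So Proposition \ref{mean}, Corollary \ref{prop_minimal_normal_charac}, and Proposition \ref{cmc-prop} are directly available on $(M\setminus\cZ(\sigma),g)$.

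Given this, the next step is term-by-term: for the minimal case, the tractor conditions in Corollary \ref{prop_minimal_normal_charac} hold on $\Sigma\setminus \cZ(\sigma)$ if and only if $H^a=0$ there in the Riemannian sense of $g$; for CMC, Proposition \ref{cmc-prop} gives that $N_{AB}I^AI^B=\sigma^2 H^aH_a$ on $\Sigma\setminus \cZ(\sigma)$, so constancy of this tractor scalar along $\Sigma\setminus\cZ(\sigma)$ is the Riemannian CMC condition for $g$; and for parallel mean curvature, the identification $\nabla_i^{\cN}H^B$ with $N^A_b\nabla_i^{\perp}(\sigma H^a)$ via Lemma \ref{nablaNlem} and the isomorphism of Lemma \ref{Nlem} reduces the tractor condition to $\nabla_i^{\perp}H^a=0$ for $g$ on $\Sigma\setminus \cZ(\sigma)$.

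The final step is to promote equivalence on the dense open subset $\Sigma\setminus\cZ(\sigma)$ to equivalence on all of $\Sigma$. All tractor objects involved, namely $I^A$, $N^A_B$, $N_{A_1\cdots A_d}$, $H^A=N^A_BI^B$, and their tractor covariant derivatives along $\Sigma$, are smooth on all of $\Sigma$ (they are built from the smooth scale tractor and the smooth normal tractor data of the conformal embedding, with no $\sigma^{-1}$ entering). Consequently each minimality/CMC/parallel-mean-curvature condition of Definition \ref{defn_generalized_minimal} is the vanishing (respectively the constancy, or the covariant constancy) of a smooth section on $\Sigma$, and hence holds on $\Sigma$ if and only if it holds on the dense subset $\Sigma\setminus\cZ(\sigma)$.

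The only point requiring care, and what I would treat as the main obstacle, is the CMC case: ``constant'' on $\Sigma$ and ``constant on a dense open subset of $\Sigma$'' agree only because the function in question is continuous on all of $\Sigma$ and $\Sigma$ is connected (in fact we only need local constancy); I would explicitly invoke the standing connectedness assumption from Section \ref{conv-c-geom} and the smoothness of $N_{AB}I^AI^B$ to close this gap. Everything else is then a direct application of the three propositions from the preceding subsection combined with the density of $\Sigma\setminus\cZ(\sigma)$ in $\Sigma$.
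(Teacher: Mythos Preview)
Your proposal is correct and follows exactly the route the paper takes: the paper's entire argument is the sentence preceding the proposition, namely that since $\cZ(\sigma)$ is closed and nowhere dense, continuity of the tractor quantities on all of $\Sigma$ promotes each condition from the dense open set $\Sigma\setminus\cZ(\sigma)$ to $\Sigma$. Your write-up simply unpacks this in detail, invoking Proposition~\ref{mean}, Corollary~\ref{prop_minimal_normal_charac}, and Proposition~\ref{cmc-prop} on the regular locus exactly as intended; the only remark is that your concern about connectedness in the CMC case is unnecessary, since a continuous function equal to a single value $c$ on a dense subset equals $c$ everywhere regardless of connectedness.
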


This perspective enables an easy recovery of the following result, which is well-known from other perspectives.

\begin{prop}
	On a conformally compact manifold, any  minimal
        submanifold that extends smoothly to the boundary meets the boundary
        orthogonally.
\end{prop}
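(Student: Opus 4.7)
The plan is to use the tractor characterisation of minimality from Corollary \ref{prop_minimal_normal_charac} and evaluate it at a boundary point using a scale that is smooth up to $\partial M$. Specifically, I would choose a metric $g\in\bc$ (not the conformally compact metric $\sigma^{-2}\bg$) that extends smoothly to $\bar M$, and split tractors in the corresponding scale. In that splitting the scale tractor of the almost-Riemannian structure takes the explicit form $I^A \stackrel{g}{=} (\sigma,\ \nabla^a\sigma,\ -\tfrac{1}{n}(\Delta+J)\sigma)$, where $\sigma=X^AI_A$ is the defining density for $\partial M$. At a point $p\in\Sigma\cap\partial M$ we have $\sigma(p)=0$, and by the hypothesis that $\sigma$ is a defining density the covector $\nabla_a\sigma|_p$ is nonzero and represents the conormal to $\partial M$ at $p$.

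Next, I would apply Corollary \ref{prop_minimal_normal_charac}: since $\Sigma$ is minimal, $I^AN_A=0$ along $\Sigma$ for every $N^A\in\Gamma(\cN)$. In the chosen scale a normal tractor has the form $N^A = n^aZ^A_a + (n_cH^c)X^A$ with $n^a\in\Gamma(N\Sigma[-1])$, and a direct computation of the tractor inner product yields
\begin{equation*}
I^AN_A \;=\; \sigma\,(n_cH^c) \;+\; n^a\nabla_a\sigma.
\end{equation*}
Evaluating at $p$ eliminates the $\sigma$-term, so minimality forces $n^a\nabla_a\sigma|_p=0$ for every $n^a$ in the normal bundle $N\Sigma$ at $p$.

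This last condition says precisely that $\nabla^a\sigma|_p$ is $g$-orthogonal to $N_p\Sigma$, equivalently that $\nabla^a\sigma|_p\in T_p\Sigma$. Since the normal direction to $\partial M$ at $p$ is spanned by $\nabla^a\sigma|_p$, the normal to $\partial M$ lies in $T_p\Sigma$, which is exactly the assertion that $\Sigma$ meets $\partial M$ orthogonally.

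I do not anticipate any real obstruction; the proof is essentially a one-line tractor computation once one commits to expressing everything in a scale that is smooth up to the boundary. The only point that requires a little care is the choice of scale: the almost-Riemannian metric $\sigma^{-2}\bg$ is singular on $\partial M$, so one must pass to a boundary-compatible representative of the conformal class in order to read off the conormal to $\partial M$ from $I^A$. Once that is done, the tractor formulation of minimality translates the geometric orthogonality statement directly into the algebraic identity $n^a\nabla_a\sigma=0$.
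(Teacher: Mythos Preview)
Your proposal is correct and essentially identical to the paper's proof: both split the scale tractor in a boundary-regular scale, observe that at $\partial M$ only the $\nabla^a\sigma$ slot survives, pair with a normal tractor (you use an individual $N^A\in\Gamma(\cN)$, the paper uses the projector $N^A_B$, which is equivalent), and read off $n^a\nabla_a\sigma=0$ from the minimality condition $I^AN_A=0$. The paper's argument is slightly terser, but there is no substantive difference.
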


\begin{proof}
	On a conformally compact manifold $\partial M = \cZ(\sigma)$,
\begin{equation}
	I^A
	|_{\partial M} = (\nabla^a \sigma) Z^A _a - \frac{1}{n} \Delta \sigma X^A,
\end{equation}
	and  $\nabla_a \sigma$ is
	nowhere-zero along the boundary. See Section \ref{scalet}.
        	Thus if $\Sigma$ meets $\partial M$ then we have
        $$
I_A N_B^A= N^a_B \nabla_a \sigma \quad \mbox{along} \,\, \Sigma,
        $$
(using  \eqref{eqn_norm_tractor_proj_formula}) and so $\Sigma$ minimal, meaning $I_A N_B^A=0$, implies $N^a_B \nabla_a \sigma =0$ and hence
$$
N^a_b \nabla_a\si=0.
$$
That is   $\nabla_a
		\sigma$ (the conormal to the boundary $\partial M$) is orthogonal to the normal projector 
	 of $\Sigma$.  
	\end{proof}

Suppose now that $(M,\bm{c}, I)$ is an almost-Einstein manifold.
If $\Sigma$ is minimal then, as observed above, $I_A$ may be
identified with a submanifold tractor.
Since $I_A$ is parallel for the standard tractor connection, and $I_A$ is a
submanifold tractor, $I_A$ is also parallel for the connection
$\check{\nabla}$:
\begin{equation*}
	\check{\nabla}_i I_J = \Pi^A _J \nabla_i
	\left( \Pi^K _A I_K \right) = \Pi^A _J \nabla_i I_A = 0,
\end{equation*}
as defined in \eqref{checked_connection}.
Therefore, from the decomposition
\eqref{diff_tractor_connection}, one sees that $I_J$ is
parallel for the submanifold tractor connection if, and only if, $\mathsf{S}_i
	^{J} {}_K I^K = 0$.

Choosing a background scale to split the tractor bundles, we have that
\begin{align*}
	\mathsf{S}_i {}^J {}_K I^K & = \cF_{ij}
	\left( Z^{Jj} X_K - Z^j _K X_J \right) \left( \sigma Y^K + \nabla_k \sigma
	Z^{Kk} - \frac{1}{n} \left( \Delta + \mathsf{J} \sigma \right) X^K \right)                  \\
	                           & = \cF_{ij} \left( \sigma Z^{Jj} - \nabla^j \sigma X^J \right).
\end{align*}
Recall that on almost-Einstein manifolds the 1-jet $j^1 \sigma$ can only  vanishes at isolated points (see the
discussion of Section~\ref{scalet}, and references therein, for details). 
Therefore away from these points we must have $\cF_{ij} = 0$, and then also at
those points by continuity.
Thus we have  the following result. 
\begin{prop} 
	\label{thm_einstein_fialkow_zero} Let $\Sigma \hookrightarrow M$ be a minimal
	almost-Riemannian submanifold of an almost-Einstein manifold $(M, \bm{c}, I)$. Then $\sigma=X^A I_A$ defines an almost-Einstein scale on $\Sigma$ if, and only if, $\cF_{ij} = 0$.
\end{prop}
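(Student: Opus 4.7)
The plan is to read the statement as an assertion about the submanifold tractor obtained from the ambient scale tractor $I$ via the minimality of $\Sigma$. Specifically, since $\Sigma$ is minimal, by Corollary \ref{prop_minimal_normal_charac} one has $I^A\in \Gamma(\cN^\perp)$ along $\Sigma$, and Theorem \ref{thm_norm_perp_intrinsic_isom} then identifies $I$ with a submanifold tractor $\bar I\in\Gamma(\cT\Sigma)$ whose top slot is $X^A I_A=\sigma|_\Sigma$. With this setup, the claim that $\sigma$ defines an almost-Einstein scale on $\Sigma$ is (by definition, in view of Section \ref{scalet} applied intrinsically to $\Sigma$) the statement that $\bar I$ is parallel for the intrinsic tractor connection $D$. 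So the goal is to show $D\bar I=0$ if and only if $\cF_{ij}=0$.

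First I would use the parallelism of $I$ on $(M,\bc)$: pulling back the ambient connection gives $\nabla_i I^A=0$ along $\Sigma$, whose tangential part (via the tractor Gauss formula \eqref{TracGaussFormula}) is precisely $\check\nabla_i\bar I^J=0$. Then by Proposition \ref{SandF} one has $D=\check\nabla-\mathsf S$, so $D_i\bar I^J=-\mathsf S_i{}^J{}_K\bar I^K$; hence $\bar I$ is $D$-parallel if and only if $\mathsf S_i{}^J{}_K\bar I^K=0$. This reduces the proposition to analysing this single algebraic expression.

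Next I would compute $\mathsf S\bar I$ explicitly in a background scale. Writing $\mathsf S_{iJK}=2\cF_{ij}Z^j_{[J}X_{K]}$ from \eqref{difference_tractor} and $\bar I^K=\sigma Y^K+\nabla^k\sigma Z^K_k-\tfrac{1}{n}(\Delta+J)\sigma X^K$, the pairings $X_KI^K=\sigma$ and $Z^j_K I^K=\nabla^j\sigma$ immediately yield
\[
\mathsf S_i{}^J{}_K\bar I^K=\cF_{ij}\bigl(\sigma Z^{Jj}-\nabla^j\sigma\, X^J\bigr).
\]
The $X^K$-component of $\bar I$ is annihilated since $\mathsf S$ contains only $X_{[J}Z^j_{K]}$. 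Hence $\mathsf S\bar I=0$ is equivalent to the pair of conditions $\cF_{ij}\sigma=0$ and $\cF_{ij}\nabla^j\sigma=0$.

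Finally I would invoke the standard fact (recalled in Section \ref{scalet}) that on an almost-Einstein manifold the $1$-jet $j^1\sigma$ vanishes only at isolated points; away from those, either $\sigma\neq 0$ (forcing $\cF_{ij}=0$ directly from the first equation) or $\nabla\sigma\neq 0$. Thus $\cF_{ij}=0$ on an open dense subset of $\Sigma$, and by smoothness it vanishes identically. The converse is immediate: if $\cF_{ij}=0$ then $\mathsf S\equiv 0$, so $D\bar I=\check\nabla\bar I=0$ and $\bar I$ is $D$-parallel. The main subtlety I anticipate is interpreting ``$\sigma$ defines an almost-Einstein scale on $\Sigma$'' as the parallelism of the induced tractor $\bar I$ (rather than the mere Einstein-ness of the induced metric, which is a genuinely weaker condition when the Einstein constants of $M$ and the induced scale disagree); once this is clarified the argument is essentially a one-line tractor computation.
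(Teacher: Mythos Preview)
Your proposal is correct and follows essentially the same line as the paper's own proof: identify $I$ with a submanifold tractor via minimality, observe $\check\nabla_i\bar I^J=0$ from the ambient parallelism of $I$, reduce via \eqref{diff_tractor_connection} to $\mathsf S_i{}^J{}_K\bar I^K=0$, compute this as $\cF_{ij}(\sigma Z^{Jj}-\nabla^j\sigma X^J)$, and conclude by a density argument. One small point (which the paper's argument shares): in the final step the case $\sigma=0$, $\nabla\sigma\neq 0$ only yields $\cF_{ij}\nabla^j\sigma=0$, not $\cF_{ij}=0$; but this is harmless, since $\{\sigma\neq 0\}\cap\Sigma$ is already open and dense in $\Sigma$ by the almost-Riemannian submanifold hypothesis, so the first case alone suffices for the continuity argument.
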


By \eqref{tractor2ff_XZ} distinguished submanifolds are necessarily totally umbilic. Thus, in a Riemannian manifold, if $\Sigma$ is distinguished and minimal then it is totally geodesic. This has a converse if the Riemannian manifold is Einstein; a totally geodesic submanifold in an Einstein manifold is both minimal and distinguished. Moreover, if we say that an almost-Riemannian submanifold in an almost-Riemannian manifold is \emph{totally geodesic} (in the almost-Riemannian sense) if it is minimal (in the sense that $N_A^BI^A=0$) and totally umbilic, then the proof generalises without change to the almost-Einstein setting:

\begin{prop}
	Let $(M, \bm{c}, I)$ be an almost-Einstein manifold, and $\Sigma$ an almost-Riemannian submanifold. If
        $\Sigma$ is totally geodesic (in the almost-Riemannian sense), then
        $\Sigma$ is a distinguished submanifold.
\end{prop}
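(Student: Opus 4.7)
The plan is to invoke Theorem~\ref{tractor2ff_XZ} together with the remark immediately following it: the vanishing $\LL_{iJ}{}^C = 0$ is equivalent to the simultaneous vanishing of $\mathring{\II}_{ij}{}^c$ and of $\mathrm{N}^c_a(P_i{}^a - \nabla_i H^a)$. The totally umbilic hypothesis gives $\mathring{\II}_{ij}{}^c = 0$, so the task reduces to verifying that the second expression vanishes along $\Sigma$. The natural strategy is to work on the open dense subset $\Sigma \setminus \mathcal{Z}(\sigma)$, where $\sigma := X^A I_A$, in the Einstein scale $g = \sigma^{-2}\bg$, and then extend by continuity to the whole of $\Sigma$.

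Two consequences of the Einstein scale combine to kill $\mathrm{N}^c_a(P_i{}^a - \nabla_i H^a)$ on $\Sigma \setminus \mathcal{Z}(\sigma)$. First, by Proposition~\ref{mean}, the mean curvature tractor satisfies $H^A = \mathrm{N}^A_B I^B$; by the minimality hypothesis this is zero, and then the conformally invariant isomorphism in Lemma~\ref{Nlem}, applied in the Einstein scale (where $\sigma$ is nonvanishing), forces $H^a = 0$ along $\Sigma \setminus \mathcal{Z}(\sigma)$. Because $\nabla_i$ is the pullback connection, this identity on the submanifold also yields $\nabla_i H^a = 0$ there. Second, the Einstein condition gives $P_{ab} = \tfrac{J}{n}\bg_{ab}$, so $P_i{}^a = \tfrac{J}{n}\Pi^a_i$ is purely tangential, and hence $\mathrm{N}^c_a P_i{}^a = \tfrac{J}{n}\mathrm{N}^c_a \Pi^a_i = 0$. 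Thus both contributions to $\mathrm{N}^c_a(P_i{}^a - \nabla_i H^a)$ vanish on $\Sigma \setminus \mathcal{Z}(\sigma)$.

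Combining the two paragraphs, $\LL = 0$ on $\Sigma \setminus \mathcal{Z}(\sigma)$. By the definition of an almost-Riemannian submanifold, this open subset is dense in $\Sigma$, and $\LL$ is a smooth section on the entire submanifold, so continuity forces $\LL = 0$ on all of $\Sigma$, i.e.\ $\Sigma$ is distinguished. There is no deep technical obstacle in this argument; the only subtlety is the bookkeeping involved in converting the scale-free almost-Riemannian minimality condition $\mathrm{N}^A_B I^B = 0$ into the classical statement $H^a = 0$ in the Einstein scale, which is exactly the content of Proposition~\ref{mean} combined with Lemma~\ref{Nlem}.
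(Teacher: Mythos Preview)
Your proof is correct and follows essentially the same strategy as the paper: work in the Einstein scale on the dense open set $\Sigma\setminus\mathcal{Z}(\sigma)$, use minimality to get $H^a=0$ and the Einstein condition to kill the tangential--normal part of the Schouten tensor, then extend by continuity. The only cosmetic difference is that the paper verifies condition~3 of Theorem~\ref{key1} (showing $\nabla_i N_{A_1\cdots A_d}=0$ via the explicit slot formula~\eqref{eqn_tractor_normal_form_derivative_projectors}), whereas you verify condition~1 directly via Theorem~\ref{tractor2ff_XZ} and the remark following it; the underlying computation is the same.
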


\begin{proof}
  We must show any of the equivalent conditions of
  Theorem~\ref{key1}.
	That $\Sigma$ is minimal implies that $H^B = 0$ on $\Sigma$. Since also
$\Sigma$ is totally umbilic it follows that on $\Sigma\setminus
\mathcal{Z}(\si)$ we have that it is totally geodesic, and hence
$\nabla_i N_{a_1 a_2 \cdots a_d} = 0$.
  The almost-Einstein condition implies
  that $N_{b a_2 \cdots a_d} P_i {}^b = 0$ on $M\setminus
  \mathcal{Z}(\si)$, where we calculate in the scale of the metric $g=\si^{-2}\bg$, with $\si:=X^AI_A$. Combining these observations and using formula \eqref{eqn_grad_norm} we have that
 $\nabla_i N_{A_1 A_2 \cdots A_d} = 0$ on $M\setminus
\mathcal{Z}(\si)$. But then by continuity $\nabla_i N_{A_1 A_2 \cdots A_d} = 0$ on $\Sigma$, as $Z(\si)$ is nowhere dense.
\end{proof}

Thus if our ambient space is almost-Einstein, for submanifolds that totally geodesic (in the generalised/almost-Riemannian sense) first
integrals may be proliferated using Corollary \ref{fi-cor} (and the
theory to be developed in Section \ref{fi-Sect}).  These conserved
quantities will extend to/across singularity sets of these geometries
where they exist.

\section{Distinguished submanifolds and conformal circularity} \label{sec4}

We fix some notational conventions for this chapter.  We will denote
by $\gamma$ a smooth curve in a conformal manifold $(M, \bc)$.
% For this chapter, we allow that $(M, \bc)$ have split signature.
% Unless otherwise stated, we will assume that $\gamma$ is a non-null
% curve, namely its tangent vector always has non-zero length.
By this we will here mean a smooth, regular curve $\gamma:I \to M$
(for some interval $I$). We will often identify
$\gamma$ with its image and we typically assume that this an embedded
submanifold.

The symbols $u^b$ and $a^b$ will denote, respectively, the velocity
and acceleration the curve $\gamma$, so $a^b = u^a \nabla_a u^b$. Note that the acceleration $a^b$ depends on a choice of metric $g\in \bc$ and is not conformally invariant; it is easy to check that if $\hat{g}=\Omega^2g$ then $\hat{a}^b = a^b - u_au^a \Upsilon^b + 2u^a\Upsilon_a u^b$, where $\Upsilon_a = \Omega^{-1}\nabla_a\Omega$. We
also define $ u:= \sqrt{\confmet_{ab} u^a u^b} \in
\Gamma(\cE[1]|_{\gamma})$.  For some connection $\nabla$, we will also
use the notation $\frac{d^\nabla}{dt}$, or $\frac{d}{dt}$ when the
meaning is clear by context, to mean $u^a \nabla_a$.
The connection
$\nabla$ may be a Levi-Civita connection or the standard tractor
connection; this should be unambiguous from context.  Sometimes we will prefer to work with \emph{weighted} versions of the
velocity and acceleration vectors.  These will be denoted by
$\mbf{u}^b := u^{-1}u^b \in \Gamma(\cE^b [-1]|_{\gamma})$ and $\mbf{a}^b := \mbf{u}^c \nabla_c \mbf{u}^b \in \Gamma(\cE^b [-2]|_{\gamma})$
respectively.  

\subsection{Background on conformal circles}\label{sec_circles}

A smooth curve $\gamma$ is said to be a
\emph{(projectively parametrised) conformal circle} if, with respect to some (equivalently any) choice of $g\in \bc$, its velocity
and acceleration satisfy \cite{Bailey1990a} 
\begin{equation}\label{eqn_proj_param_conf_circ}
  u^c \nabla_c a^b = u^2  u^c P_c {}^b + 3 u^{-2} \left( u_c a^c \right) a^b - \frac{3}{2} u^{-2} \left( a_c a^c \right) u^b - 2 u^c u^d P_{cd} u^b,
\end{equation}
where $u^2 =u \cdot u = u_a u^a$ here should be understood to be \emph{unweighted}. Equation \eqref{eqn_proj_param_conf_circ} is a third order, conformally invariant analog of the geodesic equation in Riemannian geometry and solutions of \eqref{eqn_proj_param_conf_circ} are sometimes referred to as \emph{conformal geodesics} \cite{FriedSchmidt}. As with the geodesic equation $u^c\nabla_c u^b =0$, equation \eqref{eqn_proj_param_conf_circ} can be broken up into its tangential and normal components along the curve and any curve in $M$ can be parametrized so that the tangential part of \eqref{eqn_proj_param_conf_circ} holds; such a parametrisation is determined up to the action of $\mathrm{PSL}(2,\mathbb{R})$ and a curve with such a parametrisation is said to be \emph{projectively parametrised} \cite{Bailey1990a}. (The existence of such a parametrisation, and likely the notion of conformal circles also, goes back to \'Elie Cartan; see \cite{cartan1923espaces}. For early treatments of conformal circles, see \cite{Ferrand,FriedSchmidt,Muto1939,Ogiue1967,Schouten1954,Yano1938,Yano1957}. In the literature conformal circles are sometimes taken to be parametrised by arclength with respect to chosen metric $g$ rather than projectively parametrised, and in this case they satisfy a slightly different equation; see, e.g. \cite[Chapter VII, $\S 2$]{Yano1957}.) Asking only that the normal (to the curve) part of \eqref{eqn_proj_param_conf_circ} holds gives a notion of conformal circles that does not depend on the parametrisation, and any such curve can be reparametrised so that \eqref{eqn_proj_param_conf_circ} holds.

Note that a geodesic for a metric $g\in \bc$ need not be a conformal circle; indeed, this could not be the case since any curve $\gamma$ in $M$ is locally an affinely parametrised geodesic for some choice of metric $g\in \bc$ (see Remark \ref{rem_minimal_sc_1d}). Following this line of thought, however, one sees as a direct consequence of \eqref{eqn_proj_param_conf_circ} that a curve $\gamma$ is a projectively parametrized conformal circle if and only if there is a metric $g\in \bc$ with respect to which $\gamma$ is an affinely parametrised geodesic and $u^cP_c{}^b=0$ \cite{Bailey1990a}. Note also that in the special case where one has an Einstein metric $g$ in the conformal class it follows from \eqref{eqn_proj_param_conf_circ} that geodesics for $g$ are conformal circles, though the unit speed parametrization is not a projective parametrization  except in the Ricci flat case.

The notion of conformal circles arises naturally from the Cartan geometric description of conformal structures in dimensions $n\geq 3$ (and M\"obius conformal structures in two dimensions) and as such it is natural that they can be simply described using tractor calculus (the corresponding calculus of associated bundles). With this in mind we now introduce some important tractor fields associated to the curve $\gamma$.
Recall that the canonical tractor $X^B$ can be viewed as a section of $\cE^B [1]$.
Hence $u^{-1} X^B$ is an unweighted tractor along the curve and so the tractor covariant derivative of $u^{-1} X^B$ along the curve is well defined (conformally invariant).
Following \cite{BEG}, we define 
\begin{equation}\label{eqn_velocity_tractor}
  U^B := u^a \nabla_a \left( u^{-1} X^B \right) 
\end{equation}
and 
\begin{equation}\label{eqn_accel_tractor}
  A^B := u^a \nabla_a U^B,
\end{equation}
which we call the \emph{velocity} and \emph{acceleration tractors} respectively.
Explicitly, one has 
\begin{equation}\label{eqn_velocity_tractor_slots}
  U^B 
  \overset{g}{=}
  \begin{pmatrix}
    0 \\
    u^{-1} u^b \\ 
    - u^{-3} \left( u_c a^c \right) 
  \end{pmatrix}
\end{equation}
and 
\begin{equation}\label{eqn_accel_tractor_slots}
  A^B 
  \overset{g}{=}
  \begin{pmatrix}
    -u \\
    u^{-1} a^b - 2u^{-3} (u_c a^c) u^b \\ 
    -u^{-3} \left( u_c \frac{d a^c}{dt} \right) - u^{-3} a_c a^c + 3u^{-5} {\left( u_c a^c\right)}^2 - u^{-1} P_{cd} u^c u^d 
  \end{pmatrix}.
\end{equation}

It is easily checked that
\begin{equation}
U^BU_B=1,\qquad U^BA_B =0
\end{equation}
and that 
\begin{equation}
A^BA_B = 3u^{-2}a_ba^b + 2u^{-2}u_b u^c\nabla_c a^b - 6 u^{-4}(u_ca^c)^2 + 2P_{ab}u^au^b.
\end{equation}
Consequently, a curve $\gamma:I \to M$ is projectively parametrised if, and only if, $A^BA_B=0$. It was then shown in \cite{BEG} that a  projectively parametrised curve  $\gamma:I \to M$ is a conformal geodesic if, and only if, 
\begin{equation}
\frac{d^{\nabla}A^B}{dt} = 0.
\end{equation}

More recently, it was shown by the second and third named authors and Taghavi-Chabert \cite{GST} that a curve $\gamma$ is an unparametrised conformal circle if, and only if, $d^{\nabla}A^B/dt$ is zero modulo $U^B$ and $X^B$; given the definitions of the velocity and acceleration tractors this is equivalent to saying that $\gamma$ is an unparametrised conformal circle if, and only if, the $3$-tractor
\begin{equation}
\Phi^{ABC}:=6u^{-1}X^{[A}U^B A^{C]} 
\end{equation}
is covariantly constant along $\gamma$. To see this we note the following: Firstly,
\begin{equation}\label{Phi-in-slots}
\Phi^{ABC} = 6\textbf{u}^c X^{[A}Y^BZ^{C]}_c + 6\textbf{u}^b \textbf{a}^c X^{[A}Z^B_{b}Z^{C]}_c,
\end{equation}
where $\textbf{u}^c = u^{-1}u^c$ (so that $\bg_{ab}\textbf{u}^a\textbf{u}^b=1$) and $\textbf{a}^c = \textbf{u}^b\nabla_b\textbf{u}^c = u^{-2}a^c - u^{-4}(u_ba^b)u^c$. It is then easy to show that
\begin{equation}\label{deriv_trac_3form}
u^d\nabla_c \Phi^{ABC} = 6 \left(\textbf{u}^d\nabla_d \textbf{a}^c - \textbf{u}^dP_d{}^c\right)\textbf{u}^bX^{[A}Z^B_{b}Z^{C]}_c.
\end{equation}
On the other hand, the requirement that the normal (to the curve) part of \eqref{eqn_proj_param_conf_circ} holds can be written in terms of the weighted velocity and acceleration as \cite[Lemma 4.9]{GST}
\begin{equation}\label{unparam_circle}
\left(\textbf{u}^d\nabla_d \textbf{a}^{[b}\right)\textbf{u}^{c]} = \textbf{u}^dP_d{}^{[b}\textbf{u}^{c]},
\end{equation}
from which the claim follows. 

Note that from \eqref{Phi-in-slots} one can easily check that $\Phi^{ABC}\Phi_{ABC}=6$, and that $\Phi_{ABC}N^A=0$ for any section $N^A$ of the normal tractor bundle to $\gamma$ (the easiest way to see the latter is to compute in a minimal scale $g$ for $\gamma$, equivalently, a scale for which $\textbf{a}^c=0$). It follows that \
\begin{equation}\label{Phi-is-star-N}
\Phi_{ABC} = \star N^{\gamma\hookrightarrow M}_{ABC},
\end{equation} the Hodge star of the tractor normal form of $\gamma$ as a submanifold of $M$. 

This observation combined with the result from \cite{GST} described in the preceding paragraph shows that Theorem \ref{conf_circ_2ff_zero} follows from Theorem \ref{key1}.

\subsection{Weak conformal circularity of submanifolds}\label{subsec_weak_conf_circ}

With this background established we begin our discussion of conformal circularity.
\begin{defn}
\emph{  A submanifold $\Sigma$ is \emph{weakly conformally circular} if
  any $M$-conformal circle, whose 2-jet at a point lies in $\Sigma$,
  remains in $\Sigma$.  That is, if $\gamma$ is an $M$-conformal
  circle whose 2-jet at some point $p$ lies in $\Sigma$ (with
  $\gamma(0) = p$), then $\gamma(t) \in \Sigma$ for all $t$.}
  \end{defn}
	
In the model case of the conformal sphere (see Section \ref{tractor_calc})  both conformal circles and
totally umbilic submanifolds arise (by ray projectivisation) from the
intersections of suitable linear subspaces with the forward null cone. 
Thus the conformal circles are
all given by (transverse) intersections of totally umbilic
submanifolds, and a submanifold of the conformal
sphere is weakly conformally circular if and only if it is totally
umbilic. It is natural to ask to what extent these facts generalize to
the curved setting. In this case one quickly sees that the condition
of being totally umbilic must be replaced with the stronger condition
of being a distinguished submanifold (in the conformally flat case for submanifolds of dimension greater than one the
vanishing of $\mathring{\II}$ is equivalent to the vanishing of the
tractor second fundamental form $\LL$, but this is no longer true in
general conformal manifolds; see the examples in Section \ref{sec-examples}). If
two distinguished submanifolds intersect transversally in a
$1$-dimensional submanifold $\gamma$ then, since the wedge product of
the two corresponding normal tractors must be parallel along $\gamma$,
$\gamma$ must be a conformal circle (but, due to the sparsity of
distinguished submanifolds in the curved setting, conformal circles no
longer arise this way in general). An extension of this idea shows that a
submanifold is weakly conformally circular if, and only if, it is
distinguished. That is the content of the following theorem.

\begin{thm}\label{thm_weakly_conformally_circular}
  A submanifold $\Sigma \hookrightarrow M$ is weakly conformally circular if, and only if, $\LL_{i J} {}^C = 0$.
\end{thm}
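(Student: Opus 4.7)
My plan is to prove both directions by working in a minimal scale $g\in\bc$ for $\Sigma$ (as provided by Proposition \ref{prop_min_scale}), so that the mean curvature $H^c$ vanishes identically along $\Sigma$ and hence $\nabla_iH^a\equiv 0$ along $\Sigma$. In such a scale, Theorem \ref{tractor2ff_XZ} reduces $\LL=0$ along $\Sigma$ to the two scale-dependent conditions $\II=0$ along $\Sigma$ (so $\Sigma$ is totally geodesic in this scale) together with $\mathrm{N}^c_{\,a}P_i{}^a=0$ along $\Sigma$. These simplifications drive both directions of the argument.

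For the forward direction, assume $\LL=0$ and let $\gamma$ be an ambient conformal circle whose $2$-jet at $p\in\Sigma$ lies in $\Sigma$. Because $\II\equiv 0$ on $\Sigma$ in the minimal scale, the ambient acceleration of any curve in $\Sigma$ coincides with its intrinsic acceleration, so $2$-jet tangency becomes $u(0),a(0)\in T_p\Sigma$. Inspecting the projectively parametrised conformal circle equation \eqref{eqn_proj_param_conf_circ}, the left-hand side $\nabla_u a$ stays tangent to $\Sigma$ when $u,a$ are tangent (using $\II=0$), and every right-hand side term is tangent too: the only nontrivial piece, $u^2 u^cP_c{}^b$, is tangent because $\mathrm{N}(P_u)=0$. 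Hence the ambient equation descends to a third-order ODE on $\Sigma$; Picard--Lindel\"of produces a unique local solution $\tilde\gamma\subset\Sigma$ with the prescribed $2$-jet, and $\tilde\gamma$ automatically solves the full ambient equation, so uniqueness of third-order ODEs with prescribed $2$-jet forces $\gamma=\tilde\gamma\subset\Sigma$.

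For the reverse direction, fix $p\in\Sigma$ in a minimal scale and pick arbitrary $u,\beta\in T_p\Sigma$. The $2$-jet $(p,u,\beta+\II(u,u)|_p)$ is the ambient $2$-jet at $p$ of a curve in $\Sigma$ with intrinsic acceleration $\beta$, so the corresponding ambient conformal circle $\gamma$ lies in $\Sigma$ by hypothesis. Computing $\nabla_u\mathbf{a}|_p$ along $\gamma\subset\Sigma$ via the Gau{\ss} formula applied to $\mathbf{a}(t)=D_{\mathbf{u}}\mathbf{u}+\II(\mathbf{u},\mathbf{u})$, the normal part of the conformal circle equation at $p$ reduces to
\[
  \II(u,\beta) + \nabla_u^\perp \II(u,u)\big|_p \;=\; u^2\,\mathrm{N}(P_u)\big|_p + 3u^{-2}(u\cdot\beta)\,\II(u,u)\big|_p,
\]
which must hold for every $\beta\in T_p\Sigma$. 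Comparing $\beta$-linear parts and setting $\beta=u$ forces $\II(u,u)|_p=0$, then $\II(u,\beta)|_p=0$ for all $\beta$, and polarisation yields $\II|_p=0$. Varying $p$ gives $\mathring{\II}\equiv 0$ on $\Sigma$, hence $\II\equiv 0$ on $\Sigma$ in the minimal scale, whence $\nabla_u^\perp \II(u,u)|_p=0$; substituting back leaves $u^2\mathrm{N}(P_u)|_p=0$ for every $u$, so $\mathrm{N}^c_{\,a}P_i{}^a=0$ along $\Sigma$. Together with $\mathring{\II}=0$, the conformally invariant conditions of Theorem \ref{tractor2ff_XZ} hold and $\LL=0$.

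The main point requiring care is correctly interpreting the $2$-jet tangency condition in the reverse direction, where the admissible ambient accelerations at $p$ form the \emph{affine} family $\{\beta+\II(u,u)|_p:\beta\in T_p\Sigma\}$ rather than simply $T_p\Sigma$. Cleanly separating the resulting normal conformal circle equation into its $\beta$-linear and $\beta$-free contributions, and extracting $\II|_p=0$ before using it to deduce $\mathrm{N}(P_u)|_p=0$, is the key subtlety. The forward direction is, by contrast, an ODE-uniqueness argument once the minimal-scale simplifications of Theorem \ref{tractor2ff_XZ} have been made.
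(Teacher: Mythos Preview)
Your forward direction ($\LL=0\Rightarrow$ weakly conformally circular) is essentially the argument of the paper's Remark~\ref{rem_weak_conf_circ}, and is correct. Your reverse direction takes a more elementary, purely Riemannian route than the paper's tractor-normal-form argument, which is nice---but there is a computational slip that invalidates the key step.

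The issue is the $\beta$-dependence hidden in the term you write as $\nabla_u^\perp\II(u,u)\big|_p$. Since $\gamma\subset\Sigma$, this is the $\nabla^\perp$-derivative along $\gamma$ of the normal field $t\mapsto \II(u(t),u(t))$, and by the product rule
\[
\nabla_u^\perp\bigl(\II(u,u)\bigr)\big|_p=(D_u\II)(u,u)\big|_p+2\,\II(D_u u,u)\big|_p=(D_u\II)(u,u)\big|_p+2\,\II(\beta,u)\big|_p,
\]
where $(D_u\II)$ denotes the covariant derivative of the tensor $\II$ (using $D$ on the $T^*\Sigma$ legs and $\nabla^\perp$ on the normal leg). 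Thus the normal part of your displayed identity actually reads
\[
3\,\II(u,\beta)\big|_p+(D_u\II)(u,u)\big|_p \;=\; u^2\,\mathrm{N}(P_u)\big|_p+3u^{-2}(u\!\cdot\!\beta)\,\II(u,u)\big|_p.
\]
The $\beta$-linear part therefore says $\II(u,\beta)=u^{-2}(u\!\cdot\!\beta)\,\II(u,u)$, and setting $\beta=u$ gives a tautology, \emph{not} $\II(u,u)=0$ as you claim.

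The approach is easily repaired. For $m\geq 2$ the $\beta$-linear identity (for all $u$) says that $\II(u,\cdot)$ is proportional to $u^\flat$ for every $u$; a short polarisation argument (take orthogonal $u,v$ of equal length and use $\II(u+v,u-v)=0$) shows this forces $\II$ to be pure trace at $p$. In the minimal scale $H=0$, so $\II|_p=0$. Varying $p$ gives $\II\equiv 0$ on $\Sigma$, whence $(D_u\II)(u,u)\equiv 0$, and the $\beta$-independent part then gives $\mathrm{N}^c_aP_i{}^a=0$ as you intended. (For $m=1$, $\mathring{\II}=0$ is automatic and the same $\beta$-independent part gives the remaining condition.) With this correction, your argument goes through and provides a clean Riemannian alternative to the paper's tractor proof.
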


\begin{proof}
	A one dimensional submanifold $\Sigma$ is weakly conformally circular if, and only if, it is a conformal circle. Thus in the one dimensional case the	result follows immediately from Theorem \ref{conf_circ_2ff_zero}, which states that a curve is a conformal circle if, and only if, when viewed as a submanifold its tractor second fundamental form $\LL$ vanishes.
		
		Suppose now that $\Sigma$ has dimension at least 2, and is weakly conformally circular.
  Let $\gamma$ be an $M$-conformal circle whose 2-jet at $p \in \Sigma$ lies in $\Sigma$.
  Then by assumption $\gamma$ remains in $\Sigma$.
  We need to introduce some notation.
  Let
  \begin{itemize}
    \item $N^\GtM _{A_1 A_2 \cdots A_d}$ be the normal form of $\Sigma \hookrightarrow M$,
    \item $N^\gtM _{A_1 A_2 \cdots A_{n-1}}$ be the normal form of $\gamma \hookrightarrow M$, and 
    \item $N^\gtG _{A_1 A_2 \cdots A_{m-1}}$ be the normal form of $\gamma \hookrightarrow \Sigma$, where we are identifying the tractor bundle of $\Sigma$ with $\cN^\perp$, and hence this form is a section of $\Lambda^{m-1} \cT M|_\Sigma$.
  \end{itemize}
  First we note some important relations between these various normal forms.
  First, since the curve $\gamma$ lies in the submanifold $\Sigma$, we have 
  \begin{equation}
    N_\gtG ^{A_1 B_2 \cdots B_{m-1}} N^{\GtM} _{A_1 A_2 \cdots A_d} = 0.
  \end{equation}
  Second, by using the discussion surrounding equation~\eqref{eqn_normal_form_wedge}, one can easily show that
  \begin{equation}
    N^\gtG _{A_1 A_2 \cdots A_{m-1}} \wedge N^\GtM _{A_m \cdots A_{n-1}} = N^\gtM _{A_1 A_2 \cdots A_{n-1}}.
  \end{equation} 
  Finally, since $\gamma$ is an $M$-conformal circle, it follows from Theorem~\ref{conf_circ_2ff_zero} that
  \begin{equation*}
    u^i \nabla_i N^\gtM _{A_1 A_2 \cdots A_{n-1}} = 0.
  \end{equation*}
  Therefore, using the above, 
  \begin{equation*}
    \left( u^i \nabla_i N^{\gtG} _{[A_1 A_2 \cdots A_{m-1}} \right) N^{\GtM} _{A_m \cdots A_{n-1} ]} + N^{\gtG} _{[A_1 A_2 \cdots A_{m-1}} \left( u^i \nabla_i N^{\GtM} _{A_m \cdots A_{n-1}]} \right) = 0, 
  \end{equation*}
  and hence 
  \begin{equation*}
    \left( u^i \nabla_i N^{\gtG} _{[A_1 A_2 \cdots A_{m-1}} \right) N^{\GtM} _{A_m \cdots A_{n-1} ]} + N^{\gtG} _{[A_1 A_2 \cdots A_{m-1}} \left( -d \cdot u^i \LL_{i A_{n-1}} {}^{A_0} N^{\GtM} _{A_{m} \cdots A_{n-2}] A_0} \right) = 0
  \end{equation*}
  by Theorem~\ref{thm_grad_normal_form}, where $\LL$ is the tractor second fundamental form of $\gamma$. Since the downstairs tractor index on $\LL_{iA_{n-1}}{}^{A_0}$ is ``tangential to $\Sigma$'' it is easy to see that the two terms on the left hand side of the above displayed equation lie in complementary subspaces of the bundle of tractor $(n-1)$-forms (the first term is in the ideal generated by $N^{\GtM}$ and the second term is in the orthogonal complement to this ideal) and hence both terms must vanish. Thus, in particular,
\begin{equation*}
N^{\gtG} _{[A_1 A_2 \cdots A_{m-1}} u^i \LL_{i A_{n-1}} {}^{A_0} N^{\GtM} _{A_{m} \cdots A_{n-2}] A_0} = 0.
\end{equation*}
Contracting the above display with $N_{\GtM}^{A_m\cdots A_{n-2}B}$ (cf.\ the proof of Theorem~\ref{prop_norm_grad_norm}) then gives
\begin{equation*}
N^{\gtG} _{[A_1 A_2 \cdots A_{m-1}} u^i \LL_{i A_{n-1}]} {}^{B}=0,
\end{equation*}
which is equivalent to 		
\begin{equation}\label{L-PI}
    u^i \LL_{i A}{}^{B} \Pi^{\gtG} {}^{A} _{C} = 0,
  \end{equation}
  where $\Pi^{\gtG}$ is the projector onto the rank 3  tractor bundle of the 1-manifold $\gamma$, viewed as a subbundle of the ambient tractor bundle along $\gamma$. 
  Therefore it follows that 
  \begin{equation*}
    u^i \LL_{i A} {}^{B} U^{A} = 0,
  \end{equation*}
  where $U^{A}$ is the velocity tractor of the curve (note that $U^{A}$ may be viewed as a section of the intrinsic standard tractor bundle of $\gamma$; one can easily check this by working in a minimal scale for $\gamma$, where $u_ba^b=0$).
  Using Theorem~\ref{tractor2ff_XZ} we now see that, in particular, $\tfII_{ij} {}^c u^i u^j = 0$.
  But the above must hold for any $M$-conformal circle $\gamma$, and hence $\tfII_{ij} {}^c u^i u^j = 0$ for all $u^i \in \Sigma(\cE^i)$, whence $\Sigma$ is totally umbilic by polarization.

Since we have already seen that $\mathring{\II}_{ij} {}^c = 0$, it suffices to show that $\mathrm{N}^c _b \left( P_i {}^b - \nabla_i H^b \right) = 0$.
Returning to \eqref{L-PI}, if we contract this with $Y^{A}$ (or $\Pi_I^AY^I$, cf.\ \eqref{subm_vs_amb_XYZ}) this gives
$$
u^i\mathrm{N}^c _b \left( P_i {}^b - \nabla_i H^b \right)Z^{B}_c +  u^i H_b \left( P_i {}^b - \nabla_i H^b \right) X^{B}=0,
$$ by Theorem~\ref{tractor2ff_XZ}, since the other slots of
$\mathbb{L}$ have already been shown to be zero.  Since, again, this
must hold for all $u^i \in \Gamma(\cE^i)$ we obtain the result.

For the converse, let us consider  a curve
$\gamma$  in $\Sigma$ that satisfies
\begin{equation}\label{checkitout}
u^i\check{\nabla}_i N^\gtG _{A_1 A_2 \cdots A_{m-1}} = 0 \qquad \textrm{and} \qquad A^BA_B=0
\end{equation}
where we have used the connection $\check{\nabla}$.
Then, 
$$
\left( u^i \nabla_i N^{\gtG} _{[A_1 A_2 \cdots A_{m-1}} \right) N^{\GtM} _{A_m \cdots A_{n-1} ]}
=0 .
$$
Suppose now that  $\LL_{i J} {}^C = 0$. Then, 
$$
    \left( u^i \nabla_i N^{\gtG} _{[A_1 A_2 \cdots A_{m-1}} \right) N^{\GtM} _{A_m \cdots A_{n-1} ]} + N^{\gtG} _{[A_1 A_2 \cdots A_{m-1}} \left( -d \cdot u^i \LL_{i A_{n-1}} {}^{A_0} N^{\GtM} _{A_{m} \cdots A_{n-2}] A_0} \right) = 0 ,
$$
    and so
    $$
u^i\nabla_i N^\gtM _{A_1 A_2 \cdots A_{n-1}} = 0.
$$
That is, if $\gamma$ in $\Sigma$ satisfies \eqref{checkitout} and
$\LL_{i J} {}^C = 0$ then $\gamma$ is a conformal circle for
$(M,\cc)$. Now, it is straightforward to check that a curve satisfying \eqref{checkitout} that is further projectively parametrised with respect to the conformal structure on $M$ (meaning its $M$-acceleration tractor satisfies $A^BA_B=0$) is determined by its 2-jet in $\Sigma$ at any point on its path. This follows because, by construction, a projectively parametrised curve satisfying \eqref{checkitout} is characterised by a third order ordinary differential equation  in any local coordinate chart (analogous to how the $(\Sigma,\cc_\Sigma)$-conformal circle equation is equivalent to the requirement that the curve be $\Sigma$-projectively parametrised and satisfy $ u^iD_i N^\gtG _{A_1 A_2 \cdots A_{m-1}} = 0 $, and the $(M,\bc)$-conformal circle equation is equivalent to the requirement that the curve be $M$-projectively parametrised and satisfy $u^a\nabla_a N^\gtM _{A_1 A_2 \cdots A_{n-1}}=0$; cf.\ Remark \ref{rem_weak_conf_circ} below).
Now, suppose $\LL_{iJ}{}^C=0$. Then, given any $2$-jet of a curve in $\Sigma$, the corresponding $M$-projectively parametrised solution of \eqref{checkitout} is also an $M$ conformal circle. Moreover, since a conformal circle in $(M,\cc)$ is determined by its 2-jet at any point on its path, all conformal circles corresponding to two jet initial data lying in $\Sigma$ arise this way; in particular, all such curves lie in $\Sigma$. That is, $\Sigma$ is weakly conformally circular.
\end{proof}

\begin{rem}\label{rem_weak_conf_circ}
Here we give a version of the proof that $\LL_{i J} {}^C = 0$ implies
weak conformal circularity that avoids the use of tractor
calculus. (This is along the lines of a proof of a similar result in
\cite{BelgunFlorin2015}, to an extent the tractor picture provides a conceptual basis for the idea.) For convenience, we work in a minimal scale
$g$. Rather than considering curves $\gamma$ in $\Sigma$ solving
\eqref{checkitout} we will consider the curves $\gamma$ in $\Sigma$
solving the ``adapted conformal circle equation''
  \begin{equation}\label{eqn_proj_param_adapted_conf_geo}
      \frac{d^D a^j}{dt} = u^2 \cdot u^i P_i {}^j + 3 u^{-2} \left( u_k a^k \right) a^j - \frac{3}{2} u^{-2} \left( a_k a^k \right) u^j - 2 u^k u^l P_{kl} u^j,
  \end{equation}
  where as usual $P_i {}^j$ and $P_{kl}$ denote the restriction of the ambient Schouten tensor to the intrinsic tangent and cotangent bundles, and $\frac{d^D}{dt}$ denotes $u^i D_i$ where $D$ is the intrinsic Levi-Civita connection for the pullback $g_{\Sigma}$ of the ambient scale $g$. 
  We say that $\gamma$ is an ``adapted conformal circle'' if it satisfies this equation. (That equation~\eqref{eqn_proj_param_adapted_conf_geo} is equivalent to~\eqref{checkitout} should be clear from what follows, but we do not need this for the argument given in this remark.)
  Note that equation~\eqref{eqn_proj_param_adapted_conf_geo} is a third order ODE on $\Sigma$, and therefore the initial value problem with given $2$-jet  initial data has a unique solution on $\Sigma$ for some interval centered at $0$.
  This solution may also be viewed as a curve in $M$, and one may ask whether it solves a related ODE there.
  Since the $2$-jet of $\gamma$ is initially tangential and $\LL_{i J} {}^C = 0$ implies in particular that $\mathring{\II}_{ij} {}^c = 0$, it follows that in our minimal scale $\Pi^b _j u^j = u^b$, $\Pi^b _j a^j = a^b$ and $\Pi^b _j \frac{d^D a^j}{dt} = \frac{d^\nabla a^b}{dt}$ (the last two identities being consequences of the Gauss formula with $\II=0$). Thus $u_k a^k = u_c a^c$, $a_k a^k = a_c a^c$ and $u^k u^l P_{kl} = u^c u^d P_{cd}$. Moreover, $\LL_{i J} {}^C = 0$ also implies (again for the minimal scale) that $\mathrm{N}^c _b P_i {}^b = 0$, and thus one easily sees that, as a curve in $M$, $\gamma$ satisfies
  \begin{align*}
    \frac{d^\nabla a^b}{dt} &= u^2 \left( \Pi^b _j u^i P_i {}^j \right) + 3 u^{-2} \left( u_c a^c \right) a^b - \frac{3}{2} u^{-2} \left( a_c a^c \right) u^b - 2 u^c u^d P_{cd} u^b\\
		&=u^2 \cdot u^c P_c {}^b + 3 u^{-2} \left( u_c a^c \right) a^b - \frac{3}{2} u^{-2} \left( a_c a^c \right) u^b - 2 u^c u^d P_{cd} u^b,
  \end{align*}
  which is exactly the (projectively parametrized) $M$-conformal circle equation.
  So if $\LL_{i J} {}^C = 0$, and $\gamma$ is an adapted conformal circle, then it is an $M$-conformal circle, and by the uniqueness of solution to an initial value problem, the curve $\gamma$, which lies in $\Sigma$, \emph{is} the unique $M$-conformal circle with the given initial conditions.
  Hence any $M$-conformal circle whose $2$-jet at a point $p \in \Sigma$ is tangential will remain in $\Sigma$, i.e.\ $\Sigma$ is weakly conformally circular.

  \end{rem}

For his approach to submanifold circularity, 
Belgun \cite{BelgunFlorin2015} introduced a conformal invariant  $\mu \in \Gamma(T^* \Sigma \otimes N \Sigma[-2])$ given by 
\begin{equation}
	\label{eqn_mu_tensor}
	\mu_i {}^c := \mathrm{N}^c _b \left( P_i {}^b - \nabla_i H^b + \frac{1}{m-1}
	D^j \mathring{\II}_{ij} {}^b \right),
\end{equation}
when $m\neq 1$ and where
the intrinsic Levi-Civita connection $D$ is coupled to the normal connection
$\nabla^\perp$ (Belgun terms this the \emph{mixed Schouten-Weyl tensor} since the main term is the tangential-normal part of the ambient Schouten tensor, which he calls the Schouten-Weyl tensor).
In the $m=1$ case (i.e.\ when $\Sigma$ is a curve) Belgun defines $\mu$ to be the \emph{conformal curvature} of the curve $\Sigma$, 
\begin{equation}\label{eq:curve-conformal-curvature}
\mu_i {}^c := \mathrm{N}^c _b \left( P_i {}^b - \nabla_i H^b\right) \quad \text{when } m=1.
\end{equation}
When $\dim \Sigma =1$, $\mathring{\II}_{ij} {}^c = 0$, so that $\mathrm{N}^c _b \left( P_i {}^b - \nabla_i H^b\right)$ can be obtained from $\LL$ by an invariant projection and therefore must be conformally invariant. The conformal invariance of $\mu$ in the $m\neq 1$ case can be shown by direct calculation, or by substituting the Weyl-Schouten decomposition of the ambient curvature tensor into the Codazzi equation to obtain \cite{SnellPhD}
\begin{equation}\label{eq:mu-for-m-geq-2}
	\mu_i {}^c = \frac{1}{m-1} W_{ij} {}^{dj}
	\mathrm{N}^c _d.
\end{equation}
From the above displayed formula it is also clear that $\mu=0$ when $\Sigma$ is a hypersurface (since $W_{ij} {}^{dj}\mathrm{N}^c _d = - W_{ia}{}^d{}_b\mathrm{N}^{ab}\mathrm{N}^c _d$ and the normal bundle has rank $1$).
In \cite{BelgunFlorin2015} Belgun
characterises weakly conformal circular submanifolds (termed \emph{weakly conformal geodesic} in \cite{BelgunFlorin2015}) as those for
which $\mathring{\II}=0$ and $\mu=0$. Inspecting \eqref{eqn_tractor2ff_XZ}, in Theorem \ref{prop_alt_tractor_2ff}, one sees immediately that this is equivalent to the vanishing of $\LL$. 
\begin{prop}
	\label{lem_mu_zero_2ff_zero}
	Let $\Sigma \hookrightarrow M$ be a submanifold in a conformal manifold with
	$\LL_{i J} {}^C$ its tractor second fundamental form.
	Then $\LL_{i J} {}^C = 0$ if, and only if, $\mathring{\II}_{ij} {}^c = 0$ and
	$\mu_i {}^c = 0$.
\end{prop}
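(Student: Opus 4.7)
The plan is to deduce the proposition directly from the explicit formula for $\LL_{iJ}{}^C$ established in Theorem~\ref{tractor2ff_XZ}, namely
\begin{equation*}
\LL_{iJ}{}^C = \mathring{\II}_{ij}{}^c Z^j_J Z^C_c + \mathrm{N}^c_a\bigl(P_i{}^a - \nabla_i H^a\bigr) X_J Z^C_c + H_c\mathring{\II}_{ij}{}^c Z^j_J X^C + H_a\bigl(P_i{}^a - \nabla_i H^a\bigr) X_J X^C.
\end{equation*}
Since the four projector combinations $Z^j_J Z^C_c$, $X_J Z^C_c$, $Z^j_J X^C$, $X_J X^C$ are linearly independent (they inject into complementary components of the composition series for $T^*\Sigma \otimes \cT^*\Sigma \otimes \cT$), the vanishing of $\LL$ is equivalent to the simultaneous vanishing of each coefficient. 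Reading off the first slot gives $\mathring{\II}_{ij}{}^c = 0$, and reading off the second gives $\mathrm{N}^c_a(P_i{}^a - \nabla_i H^a) = 0$. The third and fourth slots are then automatic, since once $\mathring{\II}=0$ the third vanishes, and since $H_a = \mathrm{N}^b_a H_b$ is purely normal, the fourth is a consequence of the second.

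For the ``only if'' direction, I first note that when $m\geq 2$ the extra term $\tfrac{1}{m-1}D^j\mathring{\II}_{ij}{}^b$ appearing in the definition~\eqref{eqn_mu_tensor} of $\mu$ vanishes once $\mathring{\II} = 0$, so $\mu_i{}^c$ reduces to $\mathrm{N}^c_b(P_i{}^b - \nabla_i H^b)$; and when $m=1$ this reduction is built into the definition~\eqref{eq:curve-conformal-curvature}. In either case, $\LL = 0$ immediately yields both $\mathring{\II}=0$ and $\mu=0$.

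For the ``if'' direction, suppose $\mathring{\II}=0$ and $\mu=0$. The first and third terms in the formula for $\LL$ vanish outright. Because $\mathring{\II}=0$, the definition of $\mu$ collapses in both dimension regimes to $\mu_i{}^c = \mathrm{N}^c_b(P_i{}^b - \nabla_i H^b)$, so the hypothesis $\mu=0$ kills the second term. For the fourth term, since $H^a \in \Gamma(N\Sigma[-1])$ is a normal vector, we have $H_a = H_b\mathrm{N}^b_a$, and so
\begin{equation*}
H_a\bigl(P_i{}^a - \nabla_i H^a\bigr) = H_b\,\mathrm{N}^b_a\bigl(P_i{}^a - \nabla_i H^a\bigr) = H_b\,\mu_i{}^b = 0,
\end{equation*}
completing the proof. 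There is no real obstacle here: once Theorem~\ref{tractor2ff_XZ} is in hand, the content of the proposition is essentially algebraic linear independence of projectors, together with the observation that $D^j\mathring{\II}_{ij}{}^b$ is invisible whenever $\mathring{\II}$ itself vanishes.
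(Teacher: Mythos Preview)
Your proof is correct and follows essentially the same approach as the paper, which simply states that ``inspecting \eqref{eqn_tractor2ff_XZ} \ldots\ one sees immediately that this is equivalent to the vanishing of $\LL$'' (having already noted in the Remark after Theorem~\ref{tractor2ff_XZ} that $\LL=0$ iff $\mathring{\II}=0$ and $\mathrm{N}^c_a(P_i{}^a - \nabla_i H^a)=0$). You have made explicit the linear-independence argument and the observation that the $\tfrac{1}{m-1}D^j\mathring{\II}_{ij}{}^b$ term in $\mu$ disappears when $\mathring{\II}=0$, which is exactly the content the paper leaves implicit.
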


In fact the invariant $\mu$ arises naturally from $\mathbb{L}$. The
projecting part of $\mathbb{L}$ is necessarily invariant and this is
$\mathring{\II}$.  An obvious question is whether conversely
$\mathbb{L}$ is then image of a natural linear differential operator acting
$\mathring{\II}$, in which case they would be equivalent. (For example
as the tractor curvature is the image of the Weyl curvature in
dimensions at least 4.)  This leads us to the following lemma, which exhibits a conformally invariant differential operator between the relevant bundles:

\begin{lem}
  There is an invariant map
  $\mathbb{M} : S^2 _0 T^* \Sigma \otimes N \Sigma \to T^* \Sigma
		\otimes \cT^* \Sigma \otimes \cN$.
	Written in tractor projectors, this takes the form
	\begin{equation}
		\label{eqn_M_operator}
		\begin{split}
		  \omega_{ij} {}^c \mapsto
                  \mathbb{M} (\omega)_{ij} {}^C
			:= & \omega_{ij} {}^c Z^j _J Z^C _c - \frac{1}{m-1} D^j \omega_{ij} {}^c X_J Z^C _c \\
			& + H_c \omega_{ij} {}^c Z^j _J X^C - \frac{1}{m-1} H_c D^j \omega_{ij} {}^c X_J X^C,
		\end{split}
	\end{equation}
	where again the intrinsic Levi-Civita connection $D$ is coupled to the normal
	Levi-Civita connection $\nabla^\perp$ when acting on $\omega$.
	% so that the $Z^j _J Z^C _c$ slot is the projecting part.
\end{lem}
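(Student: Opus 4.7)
The plan is to verify conformal invariance of the right-hand side of \eqref{eqn_M_operator} directly. Since $\omega_{ij}{}^c$ is, by hypothesis, a section of an intrinsically conformally invariant bundle and so rescales trivially, and since $X_J$ and $X^C$ are invariant, the task reduces to showing that the $\Upsilon$-dependent corrections produced by $\hat{Z}^j_J = Z^j_J + \Upsilon^j X_J$, $\hat{Z}^C_c = Z^C_c + \Upsilon_c X^C$, $\hat{H}^c = H^c - \mathrm{N}^c_d \Upsilon^d$, and the change in the coupled connection $D$ all cancel.

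The crucial calculation is the transformation of the divergence $D^j\omega_{ij}{}^c$. Using the standard rescaling laws \eqref{vector_trans}, \eqref{covector_trans} for the Levi-Civita part, \eqref{eqn_norm_conn_transform} for the normal part (with the $N\Sigma$-valued $\omega_{ij}{}^c$ having weight $0$), and the trace-free identity $\bg^{ij}\omega_{ij}{}^c=0$, a short direct computation yields
$$
\hat{D}^j\omega_{ij}{}^c = D^j\omega_{ij}{}^c + (m-1)\Upsilon^j\omega_{ij}{}^c.
$$
This explains the coefficient $-\tfrac{1}{m-1}$ appearing in \eqref{eqn_M_operator}: the correction $-\Upsilon^j\omega_{ij}{}^c X_J Z^C_c$ produced by applying $-\tfrac{1}{m-1}\hat{D}^j\omega_{ij}{}^c$ exactly cancels the term $\omega_{ij}{}^c\Upsilon^j X_J Z^C_c$ produced by applying $\hat{Z}^j_J$ to the leading-slot piece $\omega_{ij}{}^c Z^j_J Z^C_c$. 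The $H_c\omega_{ij}{}^c Z^j_J X^C$ and $-\tfrac{1}{m-1}H_c D^j\omega_{ij}{}^c X_J X^C$ slots play the analogous role for the $X^C$-slot corrections arising from $\hat{Z}^C_c$ together with the transformation of $\hat{H}^c$.

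The main obstacle is purely bookkeeping: there are four slots and several $\Upsilon$-dependent terms per slot to match. However the structure is entirely parallel to that of formula \eqref{eqn_tractor2ff_XZ} for $\LL$ in Theorem \ref{tractor2ff_XZ}; indeed, \eqref{eqn_M_operator} is obtained from \eqref{eqn_tractor2ff_XZ} by the substitutions $\mathring{\II}_{ij}{}^c \mapsto \omega_{ij}{}^c$ and $\mathrm{N}^c_a(P_i{}^a - \nabla_i H^a) \mapsto -\tfrac{1}{m-1}D^j\omega_{ij}{}^c$, so one can essentially appropriate the verification used there. The verification can be further streamlined by first working in a minimal scale (Proposition \ref{prop_min_scale}), in which the two $H_c$-slot terms of \eqref{eqn_M_operator} vanish identically; the residual rescalings preserving minimality have $\Upsilon_c$ tangential along $\Sigma$, so $\Upsilon_c\omega_{ij}{}^c=0$ (as $\omega$ is $N\Sigma$-valued), making the cancellation between the two remaining slots transparent. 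The extension to arbitrary scales then follows by composition with a rescaling that restores a generic $H^c$, and using \eqref{mean_curvature_transformation} in conjunction with the formula above for $\hat{D}^j\omega_{ij}{}^c$.
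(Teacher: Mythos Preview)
Your proposal is correct and follows essentially the same route as the paper: both derive the key transformation $\hat{D}^j\omega_{ij}{}^c = D^j\omega_{ij}{}^c + (m-1)\Upsilon^j\omega_{ij}{}^c$ from the trace-free condition, and then verify directly that the $\Upsilon$-corrections from $\hat{Z}^j_J$, $\hat{Z}^C_c$, $\hat{H}_c$ and $\hat{D}^j$ cancel in \eqref{eqn_M_operator}. The paper just expands the hatted expression in full and collects terms; your minimal-scale two-stage organisation is equivalent bookkeeping. One small correction: the appeal to ``appropriating the verification'' from Theorem~\ref{tractor2ff_XZ} is misplaced, since that result is obtained by computing $\mathrm{N}^C_A\nabla_i\mathrm{N}^A_B$ directly rather than by checking conformal invariance of a candidate formula, so there is no invariance check there to borrow.
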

\begin{proof} Given $\omega_{ij} {}^c\in \Gamma( S^2 _0 T^* \Sigma \otimes N \Sigma)$, using~\eqref{covector_trans} and~\eqref{eqn_norm_conn_transform}, one computes that
	\begin{equation}
		\hat{D}^j \omega_{ij} {}^c = D^j \omega_{ij}
		{}^c + (m-1) \Upsilon^j \omega_{ij} {}^c - \Upsilon_i \omega_{kl} {}^c
		\confmet^{kl} = D^j \omega_{ij} {}^c + (m-1) \Upsilon^j \omega_{ij} {}^c
	\end{equation}
	since $\omega_{ij} {}^c$ is trace-free over the pair of indices
	$(i,j)$.

	Therefore, using the above together with
	equations~\eqref{mean_curvature_transformation} and~\eqref{tractor_proj_transforms}, 
	and that the $X$ tractor is conformally invariant,
	\begin{align*}
		 & \omega_{ij} {}^c \hat{Z}^j _J \hat{Z}^C _c -
		\frac{1}{m-1} \hat{D}^j \omega_{ij} {}^c \hat{X}_J \hat{Z}^C _c +
		\hat{H}_c \omega_{ij} {}^c \hat{Z}^j _J \hat{X}^C - \frac{1}{m-1}\hat{H}_c
		\hat{D}^j \omega_{ij} {}^c \hat{X}_J \hat{X}^C \\  & =
		   \omega_{ij} {}^c \left( Z^j _J + \Upsilon^j X_J \right) \left( Z^C _c +
		\Upsilon_c X^C \right)                                                   \\  & \hspace{2em} - \frac{1}{m-1} \left( D^j \omega_{ij}
			   {}^c + \left( m-1 \right) \Upsilon^j \omega_{ij} {}^c \right) X_J \left( Z^C _c
		+ \Upsilon_c X^C \right)                                                 \\
		& \hspace{2em} + \left( H_c - \mathrm{N}_c ^d \Upsilon_d \right) \omega_{ij}
		{}^c \left( Z^j _J 
		+ \Upsilon^j X_J \right) X_C \\
		&\hspace{2em} - \frac{1}{m-1} \left( H_c - \mathrm{N}_c ^d \Upsilon_d \right)
		\left( D^j \omega_{ij} {}^c + (m-1) \Upsilon^j \omega_{ij} {}^c \right) X_J
		X^C                        \\
		& = \omega_{ij} {}^c Z^j _J
		   Z^C _c - \frac{1}{m-1} D^j \omega_{ij} {}^c X_J Z^C _c + \left( H_c
		   \omega_{ij} {}^c - \omega_{ij} {}^c \Upsilon_c + \omega_{ij} {}^c
		   \Upsilon_c \right) Z^j _J X^C \\
		   & \hspace{2em} + \left(
		   -\frac{1}{m-1} H_c D^j \omega_{ij} {}^c - H_c \omega_{ij} {}^c \Upsilon^j
		   + \frac{1}{m-1} \Upsilon_c D^j \omega_{ij} {}^c + \omega_{ij} {}^c
		   \Upsilon^j \Upsilon_c - \omega_{ij} {}^c \Upsilon^j \Upsilon_c \right.
		   \\
		   &\hspace{2em} \left. - \frac{1}{m-1}
		   \Upsilon_c D^j \omega_{ij} {}^c + H_c \omega_{ij} {}^c \Upsilon^j -
		   \omega_{ij} {}^c \Upsilon^j \Upsilon_c + \omega_{ij} {}^c
		   \Upsilon^j \Upsilon_c \right) X_J X^C \\ 
		&= \omega_{ij} {}^c Z^j _J Z^C _c - \frac{1}{m-1} D^j \omega_{ij} {}^c X_J
		Z^C _c
			+ H_c \omega_{ij} {}^c Z^j _J X^C - \frac{1}{m-1} H_c D^j \omega_{ij} {}^c
			X_J X^C,
	\end{align*}
	which verifies the claimed conformal invariance of the operator $\mathbb{M}_{c
	J} {}^{j C}$. 
  \end{proof}
Asking whether $\LL$ is the image of $\mathring{\II}$ under $\mathbb{M}$ then immediately leads to the $\mu$ invariant:
\begin{thm}
	The tensor $\mu_i {}^c$ is equal to the projecting part of the tractor
\begin{equation}
	\label{eqn_mu_in_tractor} 
        \LL_{i J} {}^C -
	\mathbb{M}(\mathring{\II})_{iJ} {}^C.
\end{equation}
In particular, $\mu_i {}^c$ is a conformal invariant of the embedding.
\end{thm}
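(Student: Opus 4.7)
The approach is essentially computational: compare the explicit formula for $\LL_{iJ}{}^C$ given in Theorem~\ref{tractor2ff_XZ} with the definition \eqref{eqn_M_operator} of $\mathbb{M}(\mathring{\II})_{iJ}{}^C$, slot by slot in the splitting determined by a choice of $g \in \bc$, and read off what survives in the $X_J Z^C_c$ slot (the projecting part of a tractor with the abstract-index type $\cE_{iJ}{}^C$ after the $Z^j_J Z^C_c$ piece has been cancelled).

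First, I would record the explicit slot decomposition of $\LL$ from~\eqref{eqn_tractor2ff_XZ}:
\begin{equation*}
\LL_{iJ}{}^C = \mathring{\II}_{ij}{}^c\, Z^j_J Z^C_c + \mathrm{N}^c_a\bigl(P_i{}^a - \nabla_i H^a\bigr) X_J Z^C_c + H_c\,\mathring{\II}_{ij}{}^c\, Z^j_J X^C + H_a\bigl(P_i{}^a - \nabla_i H^a\bigr) X_J X^C,
\end{equation*}
and the slot decomposition of $\mathbb{M}(\mathring{\II})$ from~\eqref{eqn_M_operator}. Subtracting, the $Z^j_J Z^C_c$ and $Z^j_J X^C$ slots cancel identically (both give $\mathring{\II}_{ij}{}^c$ times the appropriate projector, with the $H_c$-weighted companion), leaving only contributions in the $X_J Z^C_c$ and $X_J X^C$ slots. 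The coefficient of $X_J Z^C_c$ is
\begin{equation*}
\mathrm{N}^c_a\bigl(P_i{}^a - \nabla_i H^a\bigr) + \tfrac{1}{m-1}\,D^j \mathring{\II}_{ij}{}^c.
\end{equation*}

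Next I would observe that, because $\mathring{\II}_{ij}{}^c$ takes values in $N\Sigma[-1]$ and the operator $D$ appearing in $\mathbb{M}$ is coupled to the normal connection $\nabla^\perp$ which preserves this subbundle, one has $D^j \mathring{\II}_{ij}{}^c = \mathrm{N}^c_b D^j \mathring{\II}_{ij}{}^b$. Therefore the coefficient of $X_J Z^C_c$ rewrites as
\begin{equation*}
\mathrm{N}^c_b\!\left(P_i{}^b - \nabla_i H^b + \tfrac{1}{m-1} D^j \mathring{\II}_{ij}{}^b\right) = \mu_i{}^c
\end{equation*}
by the definition~\eqref{eqn_mu_tensor}, which establishes the first assertion.

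The second assertion (conformal invariance of $\mu_i{}^c$) is then immediate: $\LL_{iJ}{}^C$ is conformally invariant by construction, $\mathbb{M}$ was shown to be an invariant operator in the preceding lemma, hence $\LL - \mathbb{M}(\mathring{\II})$ is conformally invariant; its projecting part in the $X_J Z^C_c$ slot is therefore invariant as well, giving an independent verification that does not require the Weyl-curvature identity \eqref{eq:mu-for-m-geq-2}. The only real content beyond bookkeeping is the observation about $D^j \mathring{\II}$ being normal-bundle-valued, which is routine; no step here is a genuine obstacle, but care is needed to keep straight which slot is the ``projecting part'' of a mixed submanifold-ambient-tractor object of this type (namely, the top slot when the $J$ index is filtered by $X_J \prec Z^j_J$).
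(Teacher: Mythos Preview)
Your proof is correct and follows essentially the same approach as the paper's: compare the slot decompositions of $\LL_{iJ}{}^C$ and $\mathbb{M}(\mathring{\II})_{iJ}{}^C$, observe that the $Z^j_J Z^C_c$ slot cancels, then extract the $X_J Z^C_c$ coefficient (the paper does this via contraction with $Y^J Z_C^c$) and identify it with $\mu_i{}^c$, noting that the $c$ index of $D^j\mathring{\II}_{ij}{}^c$ is already normal. Your additional remark that the $Z^j_J X^C$ slots also cancel is correct and harmless, though the paper does not need it since contracting with $Y^J Z_C^c$ kills that slot automatically.
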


\begin{proof}
	By inspection, one sees that~\eqref{eqn_mu_in_tractor} has zero in the $Z^j _J
		Z^C _c$ slot (since $Z^K _j Z_D ^c \LL_{i K} {}^D = \mathring{\II}_{ij} {}^c$)
	and hence projecting out the $X_J Z^C _c$ slot must yield a conformally
	invariant object.
	Such projection is accomplished by contraction with $Y^J Z_C ^c$, and from
	equations~\eqref{eqn_tractor2ff_XZ}  and~\eqref{eqn_M_operator} one
	sees that this projection is equal to
\begin{equation}
	Y^J
	Z_C ^c \left( \LL_{i J} {}^C - \mathbb{M}( \mathring{\II})_{iJ}
	{}^C \right) = \mathrm{N}^c _b \left( P_i {}^b - \nabla_i H^b \right) +
	\frac{1}{m-1} D^j \mathring{\II}_{ij} {}^c,
\end{equation}
	which is exactly $\mu_i {}^c$ as defined in~\eqref{eqn_mu_tensor} (since the
	$c$ index of $D^j \mathring{\II}_{ij} {}^c$ is already normal).
\end{proof}

In fact, the data $(\tfII_{ij} {}^c, \mu_i {}^c)$ is equivalent to the tractor
second fundamental form. We have seen how to obtain $\mathring{\II}_{ij} {}^c$ and $\mu_i {}^c$ from $\LL_{i J} {}^C$. For the reverse direction, note that $\LL_{i J} {}^C$ may be constructed
from $(\tfII_{ij} {}^c, \mu_i {}^c)$ according to 
\begin{equation}
	\label{eqn_tfII_mu_LL}
	\begin{split}
		(\mathring{\II}_{ij} {}^c , \mu_i {}^c) &\mapsto \mathring{\II}_{ij} {}^c Z_J
		^j Z^C _c + \left( \mu_i {}^c - \frac{1}{m-1} D^j \mathring{\II}_{ij} {}^c
		\right) X_J Z^C _c \\ &\hspace{2em} + H_c \mathring{\II}_{ij} {}^c Z_J ^j X^C +
		H_c \left( \mu_i {}^c - \frac{1}{m-1} D^j \mathring{\II}_{ij} {}^c \right) X_J
		X^C.
	\end{split}
\end{equation}

\subsection{Strong conformal circularity}

Unlike the situation for geodesics in Riemannian geometry, in conformal geometry if a submanifold is weakly conformally circular the submanifold conformal circles need not be ambient conformal circles. This leads to the following two stronger notions of conformal circularity. 

\begin{defn}\label{def_strong_cc}
  \emph{Let $\Sigma$ be a submanifold in a conformal manifold $M$.
  Then $\Sigma$ is \emph{strongly conformally circular} if any projectively-parametrised $\Sigma$-conformal circle is also a projectively-parametrised $M$-conformal circle. For the
  cases of submanifolds of dimensions 1 and 2, recall that the
  intrinsic conformal structure does not determine a conformal circle
  equation, however, the induced M\"{o}bius structures defined in
  Section~\ref{lowdim} do determine a conformal circle equation (the usual conformal circle equation \eqref{eqn_proj_param_conf_circ} with the Schouten tensor being as defined in Section~\ref{lowdim}), and
  that is the notion we are using here.}
  \end{defn}

\begin{defn}
  \emph{Let $\Sigma$ be a submanifold in a conformal manifold $M$.
  Then $\Sigma$ is \emph{conformally circular} if any unparametrised $\Sigma$-conformal circle is also an unparametrised $M$-conformal circle.}
\end{defn}

Definition \ref{def_strong_cc} appears in \cite{BelgunFlorin2015}, where the term \emph{strongly conformally geodesic} is used. In the following two theorems we characterise these two notions of conformal circularity in terms of the basic tractor invariants $\LL$ and $\mathsf{S}$ of the conformal submanifold $\Sigma$. The first theorem below is easily seen to be equivalent to \cite[Theorem 5.4(3)]{BelgunFlorin2015}, but we include proofs of both theorems for completeness.
\begin{thm}\label{thm_totally_conformally_circ_param}
  Let $\Sigma$ be a submanifold in a conformal manifold $M$.
  Then $\Sigma$ is strongly conformally circular if, and only if $\LL_{i J} {}^C = 0$ and $\mathsf{S}_{i J K} = 0$ (i.e.\ $\cF_{ij} = 0$).
\end{thm}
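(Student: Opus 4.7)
The plan is to relate the ambient acceleration tractor $A^B$ of a parametrised curve $\gamma \subset \Sigma$ to its intrinsic counterpart $A^J$ and read off the obstructions to strong conformal circularity. The main computational ingredient is to derive, via the tractor Gau{\ss} formula \eqref{TracGaussFormula}, the decomposition
\[
A^B \;=\; \Pi^B_J A^J \;-\; u^{-1}\cF(u,u)\,X^B \;+\; u^{-1}\mathring{\II}(u,u)^B_{\mathrm{trac}}
\]
valid along any parametrised $\gamma \subset \Sigma$, where $\cF(u,u) := \cF_{ij}u^iu^j$ and $\mathring{\II}(u,u)^B_{\mathrm{trac}}$ denotes the image of $\mathring{\II}_{ij}{}^c u^i u^j$ under the canonical isomorphism \eqref{nNiso}. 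To obtain this I would first verify that $U^B = \Pi^B_J U^J$ holds universally, using the two elementary identities $\mathsf{S}_i{}^J{}_K X^K = 0$ (immediate from \eqref{difference_tractor}, since $X^KX_K=0=Z^j_K X^K$) and $\LL_{iJ}{}^C X^J = 0$ (immediate from \eqref{eqn_tractor2ff_XZ}); then apply the Gau{\ss} formula a second time to the tangent section $\Pi^B_J U^J$ and read off $u^i\mathsf{S}_i{}^K{}_J U^J = -u^{-1}\cF(u,u)X^K$ and $u^i\LL_{iK}{}^B U^K = u^{-1}\mathring{\II}(u,u)^B_{\mathrm{trac}}$ in slots.

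Sufficiency then follows directly. If $\LL = 0$ and $\mathsf{S} = 0$ both correction terms vanish, so $A^B = \Pi^B_J A^J$. Since $\Pi^B_J$ is a metric-preserving isomorphism onto $\cN^{\perp}$ and the Gau{\ss} formula reduces to $u^i\nabla_i (\Pi^B_J V^J) = \Pi^B_J u^i D_i V^J$ on tangent sections, the projective parametrisation condition $A^JA_J = 0$ and the parallel transport condition $u^iD_i A^J = 0$ in $\Sigma$ immediately give $A^BA_B = 0$ and $u^i\nabla_iA^B = 0$ in $M$.

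For necessity I would invoke the companion Theorem~\ref{thm_totally_conformally_circ_unparam} on (unparametrised) conformal circularity, which is implied by strong conformal circularity simply by forgetting the projective parametrisation. That theorem yields $\LL = 0$ and the vanishing of the trace-free part of $\cF$. In particular $\mathring{\II} = 0$, so the key identity reduces to $A^B = \Pi^B_J A^J - u^{-1}\cF(u,u)X^B$. The orthogonal decomposition of the tractor norm then gives
\[
A^B A_B \;=\; A^J A_J \;+\; 2\cF(u,u),
\]
using $X^BX_B = 0$ and $X_J A^J = -u$. For a projectively parametrised $\Sigma$-conformal circle $A^JA_J = 0$, and strong conformal circularity forces $A^BA_B = 0$. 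Hence $\cF(u,u) = 0$ for every $u \in T_p\Sigma$ at every $p \in \Sigma$; polarisation gives $\cF_{ij} = 0$, equivalently $\mathsf{S} = 0$.

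The main obstacle is the necessity direction. A direct approach that avoids Theorem~\ref{thm_totally_conformally_circ_unparam} is blocked by the fact that the pointwise quartic identity $|\mathring{\II}(u,u)|^2 + 2u^2\cF(u,u) = 0$ extracted from $A^BA_B = 0$ alone does not force $\mathring{\II}$ and $\cF$ to vanish individually --- for instance on a two-dimensional submanifold in codimension two one can arrange $\mathring{\II}\neq 0$ with $\cF = -\tfrac12\bg_\Sigma$ consistent with the identity. To rule out such configurations directly one would also need to exploit the normal part of the parallel transport condition $u^i\nabla_iA^B = 0$, which is more cleanly packaged by the unparametrised characterisation.
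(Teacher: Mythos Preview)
Your sufficiency argument is essentially the paper's, and your key decomposition
\[
A^B \;=\; \Pi^B_J A^J \;-\; u^{-1}\cF(u,u)\,X^B \;+\; u^{-1}\mathring{\II}(u,u)^B_{\mathrm{trac}}
\]
is correct (the checks $\mathsf{S}_i{}^J{}_K X^K = 0$, $\LL_{iJ}{}^C X^J = 0$, and the explicit contractions with $U^K$ all go through).

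Your necessity argument, however, takes a genuinely different route from the paper. The paper does \emph{not} invoke Theorem~\ref{thm_totally_conformally_circ_unparam}. Instead it first observes that strong conformal circularity implies weak conformal circularity, so Theorem~\ref{thm_weakly_conformally_circular} gives $\LL=0$ (hence $\mathring{\II}=0$) directly. It then works in a minimal scale, where $\Sigma$ is totally geodesic and the Gau{\ss} formula gives $a^b=\Pi^b_j a^j$ and $\tfrac{d^\nabla a^b}{dt}=\Pi^b_j\tfrac{d^D a^j}{dt}$; projecting the ambient conformal circle equation onto $T\Sigma$ and subtracting the intrinsic one leaves
\[
u^2\,u^kP_k{}^j - 2u^ku^lP_{kl}\,u^j \;=\; u^2\,u^kp_k{}^j - 2u^ku^lp_{kl}\,u^j,
\]
and contracting with $u_j$ yields $(P_{ij}-p_{ij})u^iu^j=0$, i.e.\ $\cF(u,u)=0$ in the minimal scale. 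This is a tensor-level computation that never touches $A^BA_B$ or the unparametrised theorem.

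Your route is also valid: strongly conformally circular does imply conformally circular (reparametrise any $\Sigma$-conformal circle projectively for $\Sigma$, apply the hypothesis, forget the parametrisation), the unparametrised theorem then gives $\LL=0$ and $\mathring{\cF}=0$, and your norm identity $A^BA_B=A^JA_J+2\cF(u,u)$ (using $X_JA^J=-u$) finishes the trace. There is no logical circularity, since the proof of Theorem~\ref{thm_totally_conformally_circ_unparam} only borrows from the present proof the observation that (weak/strong) conformal circularity implies weak conformal circularity, not the full conclusion. But you do create a forward reference, and your claim that ``a direct approach is blocked'' is too pessimistic: the paper's path through weak conformal circularity plus minimal-scale tensor subtraction is exactly such a direct approach, and it is arguably cleaner than routing through the unparametrised result.
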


\begin{thm}\label{thm_totally_conformally_circ_unparam}
  Let $\Sigma$ be a submanifold in a conformal manifold $M$.
  Then $\Sigma$ is conformally circular if, and only if $\LL_{i J} {}^C = 0$ and $\mathsf{S}_{i J K} \,\propto\; \confmet_{ij} Z^j _{[J} X_{K]}$ (i.e.\ $\cF_{ij} \,\propto\; \confmet_{ij}$).
\end{thm}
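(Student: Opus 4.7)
The plan is to compare the unparametrised $\Sigma$-conformal circle equation with the unparametrised $M$-conformal circle equation for a curve $\gamma$ lying in $\Sigma$, using a minimal scale to render the comparison transparent. I expect $\LL_{iJ}{}^C=0$ to come from a uniqueness argument for conformal circles, while the weaker condition $\cF_{ij}\propto \bg_{ij}$ (rather than $\cF_{ij}=0$ as in Theorem~\ref{thm_totally_conformally_circ_param}) should emerge because the unparametrised equation is antisymmetrised over a pair of indices, which automatically kills the trace part of $\cF$.

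First I would establish that conformal circularity forces $\LL=0$. Given any unparametrised $2$-jet in $\Sigma$ at a point, the unique unparametrised $\Sigma$-conformal circle with that $2$-jet is by hypothesis also an unparametrised $M$-conformal circle. Since unparametrised $M$-conformal circles satisfy the second-order equation~\eqref{unparam_circle} in the weighted velocity $\mathbf{u}$ and are therefore determined locally by their $2$-jet, any $M$-conformal circle whose initial $2$-jet is tangent to $\Sigma$ must coincide with this $\Sigma$-conformal circle and hence remain in $\Sigma$. Thus $\Sigma$ is weakly conformally circular, and Theorem~\ref{weakly} gives $\LL=0$.

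Assuming $\LL=0$, I would next work in a minimal scale $g\in \bc$ with $g_{\Sigma}=\iota^* g$ (Proposition~\ref{prop_min_scale}), in which $H^c\equiv 0$ along $\Sigma$. Because $H^c$ vanishes identically along $\Sigma$, its tangential derivative $\nabla_i H^c$ vanishes along $\Sigma$ as well, so Theorem~\ref{tractor2ff_XZ} together with $\LL=0$ forces both $\mathring{\II}_{ij}{}^c=0$ and $\mathrm{N}^c{}_a P_i{}^a=0$ along $\Sigma$. Consequently $\II=0$, the intrinsic Levi-Civita connection agrees with the tangential projection of $\nabla$, and for any curve $\gamma\subset \Sigma$ both the weighted acceleration $\mathbf{a}^c$ and its derivative $u^d\nabla_d\mathbf{a}^c$ remain tangential. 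The unparametrised $M$-conformal circle equation $(u^d\nabla_d\mathbf{a}^{[b})\mathbf{u}^{c]}=\mathbf{u}^d P_d{}^{[b}\mathbf{u}^{c]}$ for $\gamma$ therefore collapses to its tangential projection
\begin{equation*}
(u^j D_j\mathbf{a}^{[k})\mathbf{u}^{l]}=\mathbf{u}^j P_j{}^{[k}\mathbf{u}^{l]},
\end{equation*}
while the unparametrised $\Sigma$-conformal circle equation reads $(u^j D_j\mathbf{a}^{[k})\mathbf{u}^{l]}=\mathbf{u}^j p_j{}^{[k}\mathbf{u}^{l]}$.

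Comparing these two, and noting that in our scale $\cF_{ij}=P_{ij}-p_{ij}$ (the remaining terms of~\eqref{fialkow} vanishing because $\mathring{\II}=0$ and $H^c=0$), the equivalence of the equations for every $\gamma\subset \Sigma$ reduces to the pointwise identity $\mathbf{u}^j\cF_j{}^{[k}\mathbf{u}^{l]}=0$ for every unit tangent vector $\mathbf{u}$. This says that every tangent vector is an eigenvector of the symmetric endomorphism $\cF_i{}^j$; by a standard polarisation (for linearly independent eigenvectors $\mathbf{u}_1,\mathbf{u}_2$ with eigenvalues $\lambda_1,\lambda_2$, the sum $\mathbf{u}_1+\mathbf{u}_2$ being an eigenvector forces $\lambda_1=\lambda_2$), this is equivalent to $\cF_{ij}$ being proportional to $\bg_{ij}$, which by~\eqref{difference_tractor} is exactly $\mathsf{S}_{iJK}\propto \bg_{ij}Z^j_{[J}X_{K]}$. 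For the converse direction, $\cF_{ij}=f\bg_{ij}$ yields $\mathbf{u}^j\cF_j{}^{[k}\mathbf{u}^{l]}=f\mathbf{u}^{[k}\mathbf{u}^{l]}=0$ automatically, so the two equations agree and every $\Sigma$-conformal circle is an $M$-conformal circle. The main obstacle I anticipate is the careful verification that in the chosen minimal scale both sides of the ambient conformal circle equation decouple into purely tangential expressions along $\gamma$; once that decoupling is in hand, the polarisation and the converse are routine.
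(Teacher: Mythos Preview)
Your proposal is correct but takes a more elementary route than the paper. Both proofs establish $\LL=0$ first via weak conformal circularity (the paper simply refers back to the analogous step in the proof of Theorem~\ref{thm_totally_conformally_circ_param}), and both eventually reduce to the condition $\mathbf{u}^l(P_l{}^k-p_l{}^k)\propto \mathbf{u}^k$ for every tangent direction. The difference lies in how that reduction is carried out. You work in a minimal scale, use $\II=0$ and $\mathrm{N}^c{}_aP_i{}^a=0$ to see that both sides of the ambient unparametrised equation~\eqref{unparam_circle} are purely tangential along $\gamma$, and then compare the tangential projection directly with the intrinsic equation. The paper instead stays in a general scale and compares the tangential derivatives of the intrinsic and ambient $3$-tractors $\phi^{IJK}$ and $\Phi^{ABC}$ via the explicit formula~\eqref{deriv_trac_3form}, applying $\Pi^I_A\Pi^J_B\Pi^K_C$ and subtracting. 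For the converse, you again use the minimal-scale equivalence of the two equations, while the paper uses the tractor Gau{\ss} formula to relate $A^B$ to $\Pi^B_JA^J$ and shows that the extra terms picked up (involving the pure-trace $\mathsf{S}$) are annihilated by the skewing in $X^{[A}U^BA^{C]}$.

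Your approach is shorter and avoids the tractor $3$-form machinery entirely, at the price of relying on a special choice of scale (which is harmless for a conformally invariant statement). The paper's approach keeps the tractor formalism in the foreground, which fits its broader programme, and the converse computation there has the virtue of making transparent exactly how the trace part of $\cF$ disappears in the tractor picture (it produces terms proportional to $X^B$ or $u^bZ^B_b$, which die under the wedge with $X^A$ and $U^B$). Both arguments are valid and lead to the same polarisation step at the end.
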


Note that following our conventions for the Fialkow tensor in Section~\ref{lowdim}, these results still hold when $\Sigma$ is a submanifold of dimension 2. The $\Sigma$-conformal circle equation is then the usual conformal circle equation (either the projectively parametrized equation~\eqref{eqn_proj_param_conf_circ} or the parametrization-independent weighted equation~\eqref{unparam_circle}) with the Schouten tensor defined in Section~\ref{lowdim} playing the role of the usual Schouten tensor.
For a 1-dimensional submanifold, weakly conformally circular and strongly conformally circular are equivalent (and the Fialkow tensor is defined to be zero), so in this case Theorems \ref{thm_totally_conformally_circ_param} and \ref{thm_totally_conformally_circ_unparam} reduce to the fact that a conformal circle is characterized by $\LL=0$.

\begin{proof}[Proof of Theorem~\ref{thm_totally_conformally_circ_param}]
  First suppose that $\Sigma$ is strongly conformally circular. Since $\Sigma$ is then also weakly conformally circular, we must have $\LL_{i J} {}^C = 0$ by Theorem~\ref{thm_weakly_conformally_circular}. In particular, $\Sigma$ is totally umbilic ($\mathring{\II}_{ij} {}^c = 0$). Now, suppose that $\gamma$ is a projectively parametrised $\Sigma$-conformal circle with initial data a given $2$-jet at $p \in \Sigma$. Then by assumption $\gamma$ is also an $M$-conformal circle with the same initial data, now viewed as the $2$-jet of a curve in $M$. Let $g\in \bc$ be a minimal scale for $\Sigma$. The curve $\gamma$ must satisfy the intrinsic and ambient versions of the conformal circle equation, namely 
  \begin{equation}\label{eqn_totally_conf_circ_intrinsic_conf_circ_eq}
      \frac{d^D a^j}{dt} = u^2 \cdot u^i p_i {}^j + 3 u^{-2} \left( u_k a^k \right) a^j - \frac{3}{2} u^{-2} \left( a_k a^k \right) u^j - 2 u^k u^l p_{kl} u^j
  \end{equation}
  and 
  \begin{equation}\label{eqn_totally_conf_circ_ambient_conf_circ_eq}
      \frac{d^\nabla a^b}{dt} = u^2 \cdot u^c P_c {}^b + 3 u^{-2} \left( u_c a^c \right) a^b - \frac{3}{2} u^{-2} \left( a_c a^c \right) u^b - 2 u^c u^d P_{cd} u^b
  \end{equation}
  respectively.
  From the Gau{\ss} formula~\eqref{eqn_tensor_gauss_formula} and the fact that $\Sigma$ is totally geodesic with respect to $g$, we have that
  \begin{equation*}
    a^b = u^i \nabla_i u^b =  \Pi^b _j u^i D_i u^j + \II_{ij} {}^b u^i u^j= \Pi^b_j a^j
  \end{equation*}
  and
  \begin{equation*}
    \frac{d^\nabla a^b}{dt} = \Pi^b_j\frac{d^D a^j}{dt} ,
  \end{equation*}
  where $a^j = \frac{d^D u^j}{dt}$, i.e.\ the acceleration of the curve calculated intrinsically. Therefore, applying $\Pi^j _b$ to~\eqref{eqn_totally_conf_circ_ambient_conf_circ_eq} and subtracting~\eqref{eqn_totally_conf_circ_intrinsic_conf_circ_eq} from the result, we see that 
  \begin{equation*}
    u^2 \cdot u^k P_k {}^j  - 2 u^k u^l P_{kl} u^j 
  = u^2 \cdot u^k p_k {}^j - 2 u^k u^l p_{kl} u^j,
  \end{equation*}
  where we have used that $\Pi^j _b u^b = u^j$, $\Pi^j _b a^b = a^j$, $u_c a^c = u_k a^k$ (which follows from the Gau{\ss} formula without the need for $g$ to be a minimal scale since $u^c$ is tangent to the submanifold $\Sigma$), and $a^c a_c = a^k a_k$ (since $\Sigma$ is totally geodesic with respect to $g$).  Contracting the above display with $u_j$ yields
  \begin{equation*}
    u^2 \cdot u_j u^k P_k {}^j - 2 u^k u^l P_{kl} \cdot u^2 = u^2 \cdot u_j u^k p_k {}^j - 2 u^k u^l p_{kl} \cdot u^2,
  \end{equation*}
  and hence 
  \begin{equation}\label{eqn_totally_conf_circ_diff_schoutens}
    \left( P_{ij} - p_{ij} \right) u^i u^j = 0.
  \end{equation}
Since $\II_{ij} {}^c = 0$, the term in parentheses in the above display is exactly the Fialkow tensor $\cF_{ij}$ from~\eqref{fialkow}.
  Now, at any point $p \in \Sigma$, any $u \in T_p \Sigma$ can arise as the velocity of a conformal circle, and hence~\eqref{eqn_totally_conf_circ_diff_schoutens} must hold for all $u^i \in \cE^i$.
  Hence $\cF_{ij} = 0$.
  Together with our earlier observation that strong conformal circularity implies weak conformal circularity, this establishes that $\LL_{i J} {}^C = 0$ and $\mathsf{S}_{i J K} = 0$.
  \medskip

  Conversely, suppose that $\LL_{i J} {}^C = 0$ and $\mathsf{S}_{i J K} = 0$.
  Let $\gamma$ be a projectively parametrized $\Sigma$-conformal circle.
  Then the intrinsic acceleration tractor $A^J$ of $\gamma$ satisfies $\frac{d^D A^J}{dt} = 0$.
  We must also show that $\gamma$ satisfies the ambient (projectively parametrized) conformal circle equation.

  For any parametrized curve in $\Sigma$, writing $U^J$ for its velocity tractor, we see from explicit form of the velocity tractor~\eqref{eqn_velocity_tractor_slots} and the formula~\eqref{eqn_norm_perp_intrinsic_isom} for the isomorphism $\Pi^B _J$ that $U^B = \Pi^B _J U^J$, where again we use the tangentiality of $u^c$ together with the Gau{\ss} formula to conclude that $u_c a^c = u_k a^k$.
  Then applying the tractor Gau{\ss} formula \eqref{TracGaussFormula} shows that the ambient acceleration tractor is given by
  \begin{equation*}
    A^B = \frac{d^\nabla U^B}{dt}
    = \Pi^B _J ( u^i D_i U^J + \mathsf{S}_i {}^J {}_K U^K ) + u^i \LL_{i J} {}^B U^J
    = \Pi^B _J \frac{d^D U^J}{dt}
    = \Pi^B _J A^J,
  \end{equation*}
	since $\LL=0$ and $\mathsf{S}=0$. Similarly, it follows that 
  \begin{equation*}
    \frac{d^\nabla A^B}{dt}
    = \Pi^B _J \frac{d^D A^J}{dt}.
  \end{equation*}
So if moreover $\gamma$ satisfies $d^D A^J/dt = 0$, then also $d^\nabla A^B/dt = 0$.
  Finally, recall that the isomorphism $\Pi^B _J$ is also metric-preserving, and so 
  \begin{equation*}
    A^B A_B = A^J A_J = 0,
  \end{equation*}
  since $A^B=\Pi^B_JA^J$ and we assumed $\gamma$ is $\Sigma$-projectively parametrised.  Thus, by the characterisation from \cite{BEG}, $\gamma$ is a projectively-parametrised $M$-conformal circle, and therefore $\Sigma$ is strongly conformally circular.
\end{proof} 

We next prove the parametrization-independent version of Theorem~\ref{thm_totally_conformally_circ_param}, namely Theorem~\ref{thm_totally_conformally_circ_unparam}.
As this theorem is a statement about unparametrised conformal circles, we use the 3-tractor $\Phi$ (which equals the Hodge-star of the normal tractor form) discussed in Section~\ref{sec_circles}.

\begin{proof}[Proof of Theorem~\ref{thm_totally_conformally_circ_unparam}]
  First, suppose that $\Sigma$ is conformally circular, that is, every unparametrised $\Sigma$-conformal circle is an unparametrised $M$-conformal circle.
  Let $\gamma$ be a $\Sigma$-conformal circle.
  In the previous proof, we observed that strong conformal circularity implies weak conformal circularity.
  Parametrisation was not used at all in this part of the proof and hence we may employ the same argument here.
  Thus $\LL_{i J} {}^C = 0$ by Theorem~\ref{thm_weakly_conformally_circular}.

  As described in Section~\ref{sec_circles}, $\gamma$ determines an intrinsic 3-tractor $\phi^{I J K} \in \cE^{[ I J K ]}$ which satisfies $X^{[I} \phi^{J K L]} = 0$ and $u^i D_i \phi^{I J K} = 0$, where $X^I$ is the intrinsic canonical tractor and $D_i$ is the intrinsic tractor connection.  Explicitly,
  \begin{equation*}
    \phi^{I J K} = 6 u^{-1} X^{[I} U^J A^{K]},
  \end{equation*}
  with $U^J$ and $A^K$ defined as in equations~\eqref{eqn_velocity_tractor} and~\eqref{eqn_accel_tractor} respectively, using the intrinsic position tractor and tractor connection.

  On the other hand, viewing $\gamma$ as an ambient curve also defines a 3-tractor,
  \begin{equation*}
    \Phi^{ABC} = 6 u^{-1} X^{[A} U^B A^{C]},
  \end{equation*}
 where $X^A$ is the ambient position tractor and $U^B$ and $A^C$ are the velocity and acceleration tractors of $\gamma$ as a curve in $(M,\bc)$ (note that while $U^B = \Pi^B_J U^J$, we are abusing notation slightly in that the ambient acceleration tractor $A^C$ need not equal $\Pi^C_KA^K$, as can be seen from the tractor Gau{\ss} formula).
 Since $\Sigma$ is conformally circular, $\gamma$ is an $M$-conformal circle and so $\Phi$ must be parallel along the curve, i.e.\ $u^a \nabla_a \Phi^{ABC} = 0$.

Fix a reference metric $g\in \bc$. By \eqref{deriv_trac_3form} the derivatives of the intrinsic and ambient 3-tractors are 
  \begin{equation*}
    \mbf{u}^i D_i \phi^{IJK} = 6 \left( \mbf{u}^i D_i \mbf{a}^k - \mbf{u}^l p_l {}^k \right) \mbf{u}^j X^{[I} Z^J _j Z^{K]} _k,
  \end{equation*}
  and 
  \begin{equation*}
    \mbf{u}^i \nabla_i \Phi^{ABC} = 6 \left( \mbf{u}^i \nabla_i \mbf{a}^c - \mbf{u}^d P_d {}^c \right) \mbf{u}^b X^{[A} Z^B _b Z^{C]} _c,
  \end{equation*}
  respectively.
  Both of the above displays are zero and hence
  \begin{align*}
    0
    &= \Pi^I _A \Pi^J _B \Pi^K _C \left( \mbf{u}^i \nabla_i \Phi^{ABC} \right) - \mbf{u}^i D_i \phi^{I J K} \\ 
    &= 6 \left[ \left( \mbf{u}^i D_i \mbf{a}^k - \mbf{u}^l P_l {}^k \right) - \left( \mbf{u}^i D_i \mbf{a}^k - \mbf{u}^l p_l {}^k \right) \right] \mbf{u}^j X^{[I} Z^J _j Z^{K]} _k \\ 
    &= - 6 \mbf{u}^l \left( P_l {}^k - p_l {}^k \right) \mbf{u}^j X^{[I} Z^J _j Z^{K]} _k, 
  \end{align*}
  where we have used~\eqref{pi_X_Z} and the Gau{\ss} formula in going from the first to second lines.
 It follows that the antisymmetric part of $\mbf{u}^l \left( P_l {}^k - p_l {}^k \right)\mbf{u}^j$ equals zero. Thus
  \begin{equation*}
    \mbf{u}^l \left( P_l {}^k - p_l {}^k \right) \propto \,\mbf{u}^k.
  \end{equation*}
  Since we have already seen that $\tfII_{ij} {}^c = 0$, this means that
  \begin{equation*}
    \mbf{u}^l \cF_l {}^k = \mbf{u}^l \left( P_l {}^k - p_l {}^k + H_c \tfII_l {}^{kc} + \frac{1}{2} H_c H^c \delta_l ^k \right) \propto\, \mbf{u}^k.
  \end{equation*}
Now, since the (weighted) velocity of a $\Sigma$-conformal circle passing through a point $p\in \Sigma$ can be any unit vector of $T_p\Sigma [-1]$, it follows that $\cF_l {}^k$ must equal $f\delta_j^k$ for some smooth weight $-2$ density on $\Sigma$, and hence $\cF_{ij}=f\bg_{ij}$.

  \medskip
  Conversely, suppose that $\LL_{i J} {}^C = 0$ and  $\cF_{ij}=f\bg_{ij}$ for some  $-2$ density on $\Sigma$, and let $\gamma$ be a $\Sigma$-conformal circle.
  The $\Sigma$-conformal circle $\gamma$ determines an intrinsic 3-tractor $\phi^{I J K} =  6 u^{-1} X^{[I} U^J A^{K]}$ which is parallel along $\gamma$ for the intrinsic tractor connection.  To show that $\gamma$ is an $M$-conformal circle, we need to show that the ambient 3-tractor $\Phi^{ABC} = 6 u^{-1} X^{[A} U^B A^{C]}$ satisfies these same properties.  We show this by using the conditions on the tractor second fundamental form and the difference tractor to relate the ambient $X, U$ and $A$ tractors to their intrinsic counterparts.

  From the isomorphism of Theorem~\ref{thm_norm_perp_intrinsic_isom} (cf.\ the proof of Theorem~\ref{thm_totally_conformally_circ_param}), it follows that $X^A = \Pi^A _I X^I$ and $U^B = \Pi^B _J U^J$, where $X^I$ and $U^J$ are the intrinsic submanifold canonical tractor and velocity tractor of $\gamma$ respectively.
  From the tractor Gau{\ss} formula \eqref{TracGaussFormula} we therefore have
  \begin{align*}
    A^B
    &= u^i \nabla_i U^B = u^i \nabla_i ( \Pi^B _J U^J)\\
    &= \Pi^B _J \left[ u^i D_i U^J + u^i f \bg_{ij} \left( Z^{Jj} X_K - Z^j _K X^J \right) U^K \right] \\ 
    &= \Pi^B _J \left( A^J - u  f X^J \right) \\
		&= \Pi^B _J A^J - u  f X^B ,
  \end{align*}
  and 
  \begin{align}
    u^i \nabla_i A^C 
    &= u^i \nabla_i ( \Pi^C _J A^J) - u^i \nabla_i ( u f X^C) \nonumber \\ 
    &= \Pi^C _J ( u^i D_i A^J + f  u^i ( Z^J _i X_K - Z_{iK} X^J) A^K) \nonumber \\ 
    &\hspace{8em} - u^i \nabla_i ( u  f) X^C - u f  u^c Z^C _c \nonumber \\ 
    &= \Pi^C _J (u^i D_i A^J) - 2 u f u^c Z^C _c + \rho X^C, \label{eqn_totally_accel_int}
  \end{align}
  where we have collected all the terms in the bottom slot into $\rho$ (the exact form of $\rho$ will not be important).  Now, recall that $U^B = u^i \nabla_i (u^{-1} X^B)$ and $A^B = u^i \nabla_i U^B$.
  Hence, using the skew-symmetry,
  \begin{equation*}
    u^i \nabla_i \Phi^{ABC} = u^i \nabla_i \left( 6 u^{-1} X^{[A} U^B A^{C]} \right) = 6 u^{-1} X^{[A} U^B (u^i \nabla_i A^{C]} ).
  \end{equation*}
Substituting~\eqref{eqn_totally_accel_int} for the derivative of the acceleration into the above it is easy to see that the $- 2 u f u^c Z^C _c + \rho X^C$ terms drop out due to the skewing with $X^A$ and with $U^B$ (which is proportional to $u^bZ^B_b$ modulo the canonical tractor) and thus we obtain, 
\begin{equation}\label{eqn_totally_grad_sigma_int}
u^i \nabla_i \Phi^{ABC} =  u^i \Pi^A _I \Pi^B _J \Pi^C _K D_i \phi^{I J K}.
\end{equation}
  Finally, the right-hand side of~\eqref{eqn_totally_grad_sigma_int} is zero, since $\gamma$ is a $\Sigma$-conformal circle.
  Thus $\gamma$ is an $M$-conformal circle.
\end{proof}

\subsection{Examples}\label{sec-examples}

We collect here some elementary examples of distinguished submanifolds in conformal manifolds.

\begin{example}\label{example-model} \emph{\textbf{(The Model Case.)}} \emph{Let $(S^n,\bc)$ be the standard conformal sphere (additionally endowed with its standard M\"obius conformal structure if $n=2$, which is induced by the standard embedding $S^2\subseteq \mathbb{R}^3$). When $m=\dim \Sigma >1$ the invariant $\mu$ is given by \eqref{eq:mu-for-m-geq-2}, and hence is trivial due to the vanishing of the ambient Weyl curvature (when $m=\dim \Sigma =1$ this formula does not apply and the vanishing of $\mu$ characterizes conformal circles). Also, from the vanishing of the ambient Weyl curvature together with \eqref{fialkow_weyl} and \eqref{eq:Fialkow-2D} it follows that $\cF_{ij}=0$ for umbilic submanifolds. Therefore, in the model case, the notions of strongly conformally circular, conformally circular, and weakly conformally circular (i.e.\ distinguished) submanifolds are all equivalent; the distinguished submanifolds of $S^n$ are precisely the umbilic ones when $m>1$ and conformal circles when $m=1$.} 
\end{example}
\begin{rem}
The equivalence of all these conditions to $\LL=0$ in the model case leads to a very simple proof of the classical characterisation of umbilic submanifolds in the sphere: The condition $\LL=0$ on $\Sigma\subseteq S^n$ implies that the tractor bundle of $\Sigma$ is parallel in the (flat) ambient tractor bundle and hence corresponds to a fixed subspace $V$ of $\mathbb{R}^{n+1,1}$; viewing $S^n$ as the ray projectivisation of the forward null cone $\mathcal{C}_+$ in $\mathbb{R}^{n+1,1}$ it follows that $\Sigma$ must be a piece of the subsphere $\mathbb{P}_+(V\cap \mathcal{C}_+) \subset S^n$, which corresponds to the intersection of $S^n\subset \mathbb{R}^{n+1}$ with an affine subspace of dimension $\dim \Sigma + 1$. The simplicity of this, and related arguments, is one of the reasons we believe the distinguished submanifold condition should be seen as fundamental in conformal geometry.
\end{rem}

%Since in the model case all of these conditions are equivalent to $\bL=0$... easy proof of the (classical) classification of umbilic submanifolds in the conformal sphere.

Consideration of the model case leads us to ask whether the various notions of conformal circularity defined above are independent in general. Note that when $m=1$ umbilicity is a trivial condition (satisfied by any curve) and the notions of strong, weak, and conformal circularity are all equivalent to being a conformal circle. When $m=2$ the notions of weak conformal circularity and conformal circularity are equivalent (as we have required $\mathcal{F}_{(ij)_0}$ to be zero when defining the induced M\"obius structure on $\Sigma$). Also, note that when $\Sigma$ is a hypersurface (and $n\geq 3$) then the invariant $\mu$ is trivial, and hence being weakly conformally circular (distinguished) is equivalent to being umbilic. Outside of these cases, however, these notions are distinct, as shown by the following examples:

\begin{example}\label{example-formal} \emph{\textbf{(Extending $(\Sigma,c_{\Sigma})$ with prescribed $\mathring{\II}$, $\mu$, and $\cF$.)}} \emph{Let $(\Sigma,c_{\Sigma})$ be an $m$-dimensional conformal manifold and $M=\Sigma \times \mathbb{R}^d$ with $d\geq 1$ (or, more generally, let $M$ be a rank $d$ vector bundle over $\Sigma$). By considering the freedom in the $2$-jet along $\Sigma$ of an arbitrary extension $g$ of $g_{\Sigma}\in \bc_{\Sigma}$ to $M$ one can show that the geometric quantities $\mathring{\II}$, $\mu$, and $\cF$ can be freely prescribed along $\Sigma$ by an appropriate choice of extension, where the trace free tensor $\mathring{\II}$ is trivially zero if $m=1$, $\mu$ must be zero if $d=1$ (unless $m$ also equals $1$ and $M$ is taken to have a conformal M\"obius tructure) and $\cF$ must be pure trace if $m=2$ and zero if $m=1$; see, \cite[Theorem 4.22]{BelgunFlorin2015}. (Note that when $m>2$ our $\mathcal{F}$ is equivalent to the relative Schouten-Weyl tensor $\rho$ in \cite{BelgunFlorin2015}, but we when $m=2$ or $1$ we do not assume that $\Sigma$ has its own M\"obius/Laplace structure, and hence we define $\mathcal{F}$ differently from $\rho$ in \cite{BelgunFlorin2015}.)}
\end{example}

It follows that when $m>2$ and $d>1$ being strongly conformally circular ($\mathring{\II}=0$, $\mu=0$ and $\cF=0$) is a stronger notion than being conformally circular ($\mathring{\II}=0$, $\mu=0$ and $\mathring{\cF}=0$), which is itself a stronger notion than being weakly conformally circular ($\mathring{\II}=0$ and $\mu=0$, equivalently, $\LL=0$), and this in turn is a stronger notion than being umbilic. 

We can also easily see these inequivalences by considering simple examples.

\begin{example} \emph{\textbf{(A factor in a product of Einstein metrics.)} If $(M_1,g_1)$ and $(M_2,g_2)$ are Einstein manifolds and $\Sigma = M_1\times \{p_2\}$ then $\Sigma$ is totally geodesic in the Riemannian product $(M,g)$ of $M_1$ and $M_1$, hence $\mathring{\II}=0$. Moreover, since the Ricci curvature of the product metric is the sum of the Ricci curvatures of the factors it is readily seen that $N^c_bP_i{}^b = \Pi_i^a N^c_bP_a{}^b =0$ (when computing with respect to $g$), hence $\mu=0$. Similarly, it follows that the Fialkow tensor must be pure trace since (if $m>2$) $\mathcal{F}_{ij} = P_{ij}-p_{ij}$ in the scale $g$ and both $P_{ij}$ and $p_{ij}$ must be proportional to $g_{ij}$ (the metric on $\Sigma$). However, since the formula for $P_{ij}$ involves both the ambient scalar curvature and the ambient dimension, it will typically differ from $p_{ij}$. For example, if $\Sigma = S^m\times \{p\}$ in $S^m\times S^k$ with $m>2$ and $k> 1$ then it is readily checked that $\mathcal{F} = c_{m,k}g_{\Sigma}$ with $c_{m,k}\neq 0$. If $m=2$ we find the same result using the formula \eqref{eq:Fialkow-2D} for $\cF$ instead, since the Weyl curvature is readily computed in terms of Kulkarni-Nomizu products of $g_1$ and $g_2$ from which one can easily check that $\mathrm{tr}^2\,\iota^* W \neq 0$. Thus, in these cases $\Sigma = S^m\times \{p\}$ is conformally circular, but not strongly conformally circular. 
}
\end{example}

It is possible to characterize the Einstein products for which the
submanifolds of the form $M_1\times \{p_2\}$ and $\{p_1\}\times M_2$
are strongly conformally circular. A \emph{special Einstein product}
is the Riemannian product $(M,g)$ of a pair of Einstein manifolds
$(M_1,g_1)$ and $(M_2,g_2)$ with the property that $\mathrm{Ric}^{g_1}
= (n_1-1)\lambda g_1$ and $\mathrm{Ric}^{g_2} = -(n_2-1)\lambda g_2$
for some constant $\lambda$, where $n_i=\dim M_i$, see e.g.\ \cite{Gover-Leitner-class}. (The simplest
example of such a product is $S^m\times H^d$ with the standard product
metric.) Given a general product of $(M_1,g_1)$ and $(M_2,g_2)$ with
$\mathrm{Ric}^{g_1} = (n_1-1)\lambda g_1$, a straightforward
calculation shows that a submanifold of the form $M_1\times \{p_2\}$
has $\cF =0$ if and only if $\mathrm{Ric}^{g_2} = -(n_2-1)\lambda
g_2$. Thus we have:

\begin{thm}
Let $(M,g)$ be a product of Einstein manifolds $(M_1,g_1)$ and $(M_2,g_2)$. The submanifolds of the form $M_1\times \{p_2\}$ (or, equivalently, those of the form $\{p_1\}\times M_2$) are strongly conformally circular if and only if $(M,g)$ is a special Einstein product.
\end{thm}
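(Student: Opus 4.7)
The plan is to apply Theorem \ref{thm_totally_conformally_circ_param}, which characterises strong conformal circularity by the vanishing of both the tractor second fundamental form $\LL$ and the Fialkow tensor $\cF$. I would split the argument into two halves: first showing that $\LL=0$ holds automatically for $\Sigma=M_1\times\{p_2\}$ in any product of Einstein manifolds (so that the nontrivial content is entirely in $\cF$), then computing $\cF_{ij}$ explicitly and observing that its vanishing is equivalent to the special Einstein product condition.

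For $\LL=0$, the first step is to observe that $\Sigma$ is a totally geodesic submanifold of the Riemannian product $(M,g)$, so $\mathring{\II}=0$ and $H=0$. By Proposition \ref{lem_mu_zero_2ff_zero} it then suffices to check that Belgun's invariant $\mu$ vanishes. With $H=0$ and $\mathring{\II}=0$, equation \eqref{eqn_mu_tensor} (respectively \eqref{eq:curve-conformal-curvature} in the curve case) reduces to $\mu_i{}^c = \mathrm{N}^c_b P_i{}^b$. The Ricci tensor of a Riemannian product is block diagonal and the ambient scalar curvature is a global scalar, so the mixed tangent--normal components of $P_{ab} = \tfrac{1}{n-2}(\mathrm{Ric}_{ab} - \tfrac{R^g}{2(n-1)}g_{ab})$ vanish along $\Sigma$. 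Hence $\mathrm{N}^c_b P_i{}^b=0$, giving $\LL=0$ unconditionally.

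Next I would compute $\cF$ in the scale $g$. Since $H=0$ and $\mathring{\II}=0$, formula \eqref{fialkow} (and, in the low-dimensional cases, the definitions of Section \ref{lowdim}) reduces to $\cF_{ij} = P_{ij} - p_{ij}$; in particular $\cF$ is trivially zero when $m=1$. Writing $\mathrm{Ric}^{g_1}=\alpha g_1$ and $\mathrm{Ric}^{g_2}=\beta g_2$, the intrinsic Schouten tensor of the Einstein metric $g_1$ is $p_{ij} = \tfrac{\alpha}{2(n_1-1)}\bg_{ij}$ along $\Sigma$, uniformly for $n_1\geq 2$ (the $n_1=2$ case follows from the normalisation in Section \ref{lowdim} that $\bg^{ij}p_{ij}$ equals the Gau{\ss} curvature $\alpha$). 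Using $R^g = n_1\alpha + n_2\beta$,
\[
P_{ij}\big|_\Sigma = \frac{1}{n-2}\!\left(\alpha - \frac{n_1\alpha+n_2\beta}{2(n-1)}\right)\!\bg_{ij} = \frac{(n_1+2n_2-2)\alpha - n_2\beta}{2(n-1)(n-2)}\,\bg_{ij}.
\]
Equating $P_{ij} = p_{ij}$, clearing denominators and using $n=n_1+n_2$, an elementary expansion collapses to the clean scalar identity $(n_1-1)\beta + (n_2-1)\alpha = 0$. Setting $\lambda := \alpha/(n_1-1)$ this is exactly $\alpha=(n_1-1)\lambda$ and $\beta=-(n_2-1)\lambda$, which is the special Einstein product condition; the converse is immediate since the computation is reversible. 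The condition is symmetric under interchanging the factors (via $\lambda\mapsto -\lambda$), confirming the parenthetical equivalence for $\{p_1\}\times M_2$.

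The main obstacle is bookkeeping rather than anything conceptual: the delicate part is to verify that in the low-dimensional cases $m=1,2$ the formulas for $p_{ij}$ and $\cF_{ij}$ from Section \ref{lowdim} agree with the uniform criterion $(n_1-1)\beta + (n_2-1)\alpha=0$ derived in the generic range $m\geq 3$, since otherwise a separate case analysis would be required.
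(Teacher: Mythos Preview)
Your proposal is correct and follows essentially the same approach as the paper. The paper establishes $\LL=0$ and that $\cF$ is pure trace for factor submanifolds in products of Einstein manifolds in the example immediately preceding the theorem, and then states (without carrying out the details) that a straightforward calculation shows $\cF=0$ if and only if the special Einstein product condition holds; you have simply made that calculation explicit and checked consistency in the low-dimensional cases.
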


On the other hand, a factor in a generic product will typically only be weakly conformally circular:

\begin{example} \emph{\textbf{(A factor in a generic product.)}
Let $(M,g)=(M_1,g_1)\times (M_2,g_2)$ and $\Sigma = M_1\times {p_2}$, where $m=\dim \Sigma >2$ and $(M_1,g_1)$ is not Einstein. Then, $\Sigma$ is again totally geodesic and by the product formula for the Ricci curvature one still has that $N^c_bP_i{}^b = \Pi_i^a N^c_bP_a{}^b =0$, when computing with respect to $g$. Hence we have the conformally invariant conditions $\mathring{\II}=0$ and $\mu=0$. On the other hand, with respect to $g$ we have $\mathcal{F}_{ij} = P_{ij}-p_{ij}$ and this will not typically be proportional to the metric $g_{ij}$ on $\Sigma$. For a concrete example that is easy to compute, take, say, $M_1 = S^2\times S^1$ with the product metric and $M_2 = \mathbb{R}^d$, $d\geq 1$, with the Euclidean metric. Let $h$ denote the pullback of the round metric on $S^2$ to $M_1=S^2\times S^1$ and let $d\theta^2$ denote the pullback of the metric on $S^1$. The Ricci curvature of $g_1=h+d\theta^2$ on $M_1$ is then $h$ and the Ricci curvature of $M_2$ is zero. It follows that $p= \frac{3}{4}h-\frac{1}{4}d\theta^2$, but $\iota^*P = \frac{2d+3}{2(d+1)(d+2)} h - \frac{1}{2(d+1)(d+2)}d \theta^2$ and hence $\cF = \iota^*P - p$ is never proportional to $g_{\Sigma} = h+ d\theta^2$. Thus, $\Sigma$ is weakly conformally circular (distinguished) but not conformally circular.
}
\end{example}
Similar examples can be found by considering factors in warped product metrics, or doubly warped products, since these are conformal to Riemannian products. On the other hand, twisted products are not typically conformal to a Riemannian product and give a new class of examples. In what follows we will abuse notation by writing $g_i$ for the pullback $\pi_i^*g_i$ of a metric on $M_i$ via the projection to the $i$th factor $\pi_i:M_1\times M_2 \to M_i$. Let $(M_1,g_1)$ and $(M_2,g_2)$ be Riemannian manifolds. A \emph{doubly warped product} metric on $M=M_1\times M_2$ is a metric of the form $g=f_2 g_1 + f_1 g_2$, where $f_i:M_i\to \mathbb{R}_+$ is a smooth function for $i=1,2$; such a doubly warped product $g$ is conformal to the product metric $\tilde{g} = \tilde{g}_1+\tilde{g}_2$, where $\tilde{g}_1 = f_1^{-1}g_1$ and $\tilde{g}_2 = f_2^{-1}g_2$. Hence the two natural foliations of a doubly warped product manifold are foliations by distinguished submanifolds (we will call such foliations distinguished). If $(M,g)$ is a warped product ($f_2=1$) then the fact that these foliations are distinguished can also easily be seen by computing in the scale $g$, since the submanifolds of the form $M_1\times \{p_2\}$ are totally geodesic, the submanifolds of the form $\{p_1\}\times M_2$ are umbilic with parallel mean curvature, and the mixed part of the Ricci tensor vanishes. For a general doubly warped product, however, then fact that these foliations are distinguished is less obvious from a Riemannian point of view since computing $\mu$ or $\LL$ in the scale $g$ becomes nontrivial:

\begin{example} \emph{\textbf{(A doubly warped product.)}
Let $M = \mathbb{R}^4 = \mathbb{R}^2\times \mathbb{R}^2$ with the doubly warped product metric 
$$
g = e^{2x_3}(dx_1^2 + dx_2^2) + e^{2x_1}(dx_3^2 + dx_4^2).
$$ 
We know that the two natural foliations of $M$ are distinguished, but we wish to show this from the point of view of the metric $g$. Consider, say, $\Sigma = \mathbb{R}^2\times \{(0,0)\}$. Clearly $\Sigma$ is umbilic. Indeed, in the standard coordinates $(x_1,x_2,x_3,x_4)$ the second fundamental form of $\Sigma$ is given in terms of the Christoffel symbols by $\II_{ij}{}^c = \Gamma^{c}{}_{ij}$ for $i,j\in \{1,2\}$ and $c\in \{3,4\}$, where $\Gamma^4{}_{ij}=0$ and $\Gamma^3{}_{ij} = -e^{2x_3-2x_1}\delta_{ij} = -e^{-2x_1}g_{ij}$ for $i,j\in \{1,2\}$. Thus, $\Sigma$ has mean curvature vector $H = -e^{-2x_1}\partial_3$. Using that $\Gamma^{1}{}_{13}=\Gamma^{3}{}_{13}=1$ and $\Gamma^{2}{}_{13}=\Gamma^{4}{}_{13}=0$ it follows that $\nabla_{\partial_1} H = 4e^{-2x_3}\partial_3 - 2e^{-2x_3}(\partial_1+\partial_3) = 2e^{-2x_3}\partial_3 - 2e^{-2x_3}\partial_1$. A straightforward calculation also shows that $\mathrm{Ric}_{13}=2$. Thus, from the formula for $\mu$ and the definition of the Schouten tensor, we compute that $\mu_1{}^3 = P_{1}{}^3 - \nabla_1 H^3 = \frac{1}{2}\mathrm{Ric}_1{}^3 - \nabla_1 H^3 = e^{-2x_1}-e^{-2x_1} = 0$. The other ``mixed'' components of the Schouten tensor and the other components of $\nabla^{\perp}H$ are all zero. Hence, $\mu=0$ and $\Sigma$ is a distinguished submanifold.
}
\end{example}

As noted above, we can get away from examples that are conformal to products by considering generic twisted products. Again let $(M_1,g_1)$ and $(M_2,g_2)$ be Riemannian manifolds. A \emph{doubly twisted product} metric on $M=M_1\times M_2$ is a metric of the form $g=h_1 g_1 + h_2 g_2$, where $h_i:M_1\times M_2\to \mathbb{R}_+$ is a smooth function for $i=1,2$; such a doubly twisted product metric is clearly conformal to the twisted product metric $g_1 + f g_2$, where $f:M_1\times M_2 \to \mathbb{R}_+$ is given by $h_2/h_1$, but such a metric is not typically conformally equivalent to a product metric on $M_1\times M_2$. The conformal structures arising from twisted products give rise to umbilic but not distinguished foliations:
 
\begin{example} \emph{\textbf{(A twisted product.)}
Let $M = \mathbb{R}^4 = \mathbb{R}^2\times \mathbb{R}^2$ with the twisted product metric 
$$
g = dx_1^2 + dx_2^2 + e^{2x_1x_3}(dx_3^2 + dx_4^2)
$$ 
and let $\Sigma = \mathbb{R}^2\times \{(a,b)\}$. Then $\Sigma$ is totally geodesic for $g$, and hence umbilic. Thus $\mu$ is given by the mixed part of the Schouten tensor. By the diagonal form of the metric, this is precisely half of the mixed part of the Ricci tensor. But by a straightforward calculation we find that $R_{13}=-1\neq 0$ and hence $\mu_1{}^3\neq 0$. Therefore $\Sigma$ is umbilic, but not distinguished.
}
\end{example}
In fact, the distinguished submanifold condition characterises precisely when a doubly twisted product metric is conformal to a product metric. Since doubly twisted product are conformal to twisted products, we need only consider the latter. One of the two natural foliations of a twisted product manifold is distinguished if and only if both are, and this is equivalent to the twisted product metric being a warped product and hence conformal to a product:
\begin{thm}\label{thm-twisted-prod}
Let $(M_1,g_1)$ and $(M_2,g_2)$ be Riemannian manifolds of dimension at least two, $f:M_1\times M_2\to \mathbb{R}_+$ a smooth function and $g=g_1+fg_2$ a twisted product metric on $M=M_1\times M_2$. Then the following are equivalent:
\begin{itemize}
\item[$\mathrm{(i)}$] The umbilic submanifolds $M_1\times\{p_2\}$ are distinguished for every $p_2\in M_2$;
\item[$\mathrm{(ii)}$] The umbilic submanifolds $\{p_1\}\times M_2$ are distinguished for every $p_1\in M_1$;
\item[$\mathrm{(iii)}$] The function $f:M_1\times M_2\to \mathbb{R}_+$ can be written as a product $f=f_1f_2$ of functions $f_i:M_i\to \mathbb{R}_+$, $i=1,2$.
\item[$\mathrm{(iv)}$] $\,g$ is a warped product metric $g_1+f_1\tilde{g}_2$ for some metric $\tilde{g}_2$ conformal to $g_2$.
\item[$\mathrm{(v)}$] $\,g$ is conformal to a product metric $\tilde{g}_1 + \tilde{g}_2$ on $M_1\times M_2$.
\end{itemize}
\end{thm}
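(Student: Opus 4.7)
The plan is to dispatch the easy equivalences $(\mathrm{iii})\Leftrightarrow(\mathrm{iv})\Leftrightarrow(\mathrm{v})$ and the implication $(\mathrm{v})\Rightarrow(\mathrm{i}),(\mathrm{ii})$ first, and then attack the two hard directions, namely $(\mathrm{i})\Rightarrow(\mathrm{iii})$ and $(\mathrm{ii})\Rightarrow(\mathrm{iii})$. For the easy chain: writing $f=f_1f_2$ gives $g_1+fg_2=g_1+f_1(f_2g_2)$, a warped product with $\tilde g_2=f_2g_2$, so $(\mathrm{iii})\Rightarrow(\mathrm{iv})$; any warped product $g_1+f_1\tilde g_2=f_1(f_1^{-1}g_1+\tilde g_2)$ is conformally a product, so $(\mathrm{iv})\Rightarrow(\mathrm{v})$; and conversely, writing $g_1+fg_2=e^{2\omega}(\tilde g_1+\tilde g_2)$ with $\tilde g_i$ pulled back from $M_i$, the orthogonality of the two summands forces $g_1=e^{2\omega}\tilde g_1$ (so $\omega$ depends only on $M_1$) and $fg_2=e^{2\omega}\tilde g_2$ (so $fe^{-2\omega}$ depends only on $M_2$), giving $f=e^{2\omega(x)}\cdot h(y)=f_1f_2$. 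For $(\mathrm{v})\Rightarrow(\mathrm{i}),(\mathrm{ii})$: since the distinguished condition is conformally invariant, it suffices to verify $\LL=0$ in the product scale $\tilde g_1+\tilde g_2$; there both natural foliations are totally geodesic, so $\mathring{\II}=0$ and $H=0$, and the Ricci tensor of a product has no mixed components, so $\mathrm{N}^c_aP_i{}^a=0$, and Theorem \ref{tractor2ff_XZ} gives $\LL=0$.

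For the core implication $(\mathrm{i})\Rightarrow(\mathrm{iii})$ I plan to work in the scale $g=g_1+fg_2$ and use Theorem \ref{tractor2ff_XZ} to reduce $\LL=0$ to a curvature condition, which a direct computation will then identify as the separability of $\log f$. A one-line Koszul computation shows that every leaf $M_1\times\{p_2\}$ is totally geodesic in this scale: indeed $\langle\nabla_XY,U\rangle=-\tfrac12 U(g_1(X,Y))=0$ for $X,Y$ basic on $M_1$ and $U$ basic on $M_2$, so $H^c=0$ and $\mathring{\II}=0$. Theorem \ref{tractor2ff_XZ} then reduces $\LL=0$ to the vanishing of the mixed Schouten components $\mathrm{N}^c_aP_i{}^a=0$ along the leaves, which (since $n\geq 4$ and $\bg$ has no mixed part) is equivalent to $\mathrm{Ric}(X,U)=0$ for all $X\in TM_1,\,U\in TM_2$. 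A further Koszul calculation, pivoting on the identity $\nabla_XU=\tfrac12 X(\log f)U$, yields $R(Y,X)U=0$ for $X,Y$ basic on $M_1$ (so the trace over $M_1$-directions contributes nothing to the mixed Ricci), and then shows that the trace over the $M_2$-directions produces an expression proportional to the mixed Hessian of $\log f$. In local product coordinates the vanishing condition reads $\partial_{x^i}\partial_{y^\alpha}(\log f)=0$ for all $i,\alpha$, which integrates at once to $\log f=h_1(x)+h_2(y)$, yielding $(\mathrm{iii})$.

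For $(\mathrm{ii})\Rightarrow(\mathrm{iii})$ I plan to invoke the conformal rescaling $g\mapsto\tilde g:=f^{-1}g=f^{-1}g_1+g_2$, which is again a twisted product (now with warping $f^{-1}$ on the other factor). A brief check using the transformation law \eqref{mean_curvature_transformation} confirms that the conformal rescaling exactly cancels the mean curvature $H=-\tfrac12\mathrm{grad}^{g_1}(\log f)$ of the leaves $\{p_1\}\times M_2$, so these become totally geodesic in $\tilde g$. Since being distinguished is conformally invariant, condition (ii) in $g$ is equivalent to the analogue of condition (i) for $\tilde g$, and applying the already-established implication $(\mathrm{i})\Rightarrow(\mathrm{iii})$ to $\tilde g$ (viewed as the twisted product $g_2+f^{-1}g_1$ after swapping the factor labels) yields $f^{-1}=\tilde f_1\tilde f_2$, hence $f=f_1f_2$. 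The main technical hurdle in the whole argument is the Koszul computation of the mixed Ricci for a twisted product: while the dependence on $\partial_{x^i}\partial_{y^\alpha}(\log f)$ is forced by the structure, some care is required to verify that the a priori possible quadratic first-derivative terms cancel, so that vanishing of the mixed Ricci is exactly the separability condition on $\log f$ and not some nonlinear perturbation thereof.
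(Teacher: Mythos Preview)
Your proposal is correct and follows essentially the same route as the paper: the easy chain $(\mathrm{iii})\Leftrightarrow(\mathrm{iv})\Leftrightarrow(\mathrm{v})\Rightarrow(\mathrm{i}),(\mathrm{ii})$ is handled identically, and for the hard implications the paper likewise uses that the leaves $M_1\times\{p_2\}$ are totally geodesic in $g$ so that $\LL=0$ reduces to vanishing of the mixed Ricci, then invokes the formula $\mathrm{Ric}(X_1,X_2)=(1-n_2)X_1X_2\log f$ (cited from \cite{FGKU2001} rather than computed directly as you do) to conclude $(\mathrm{iii})$, and for $(\mathrm{ii})\Rightarrow(\mathrm{iii})$ applies the same argument to the conformally related metric $f^{-1}g_1+g_2$. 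Your concern about quadratic first-derivative terms is warranted but your outlined Koszul computation does show they cancel, in agreement with the cited formula.
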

\begin{proof}
We first note that the equivalence of (iii) and (iv) is obvious, and
that these clearly imply (v). To see that (v) implies (iv) we note
that if $g_1+fg_2 = \lambda(\tilde{g}_1 + \tilde{g}_2)$ for some
smooth function $\lambda: M_1\times M_2 \to \mathbb{R}_+$, then (from
$g_1 = \lambda \tilde{g}_1$) it follows that $\lambda$ must be
independent of the $M_2$ factor and hence $g = g_1 + f_1\tilde{g}_2$
where $f_1=\lambda$. From the discussion above we also know that (v)
implies (i) and (ii). To establish the result it is therefore enough
to show that (i) implies (iii) and that (ii) implies (iii).

Suppose (i) holds. Let $n=\dim M$ and $n_i = \dim M_i$. For $i=1,2$, let $X_i$ be a vector field on $M=M_1\times M_2$ tangent to the $i$th factor. Then the mixed-part of the Ricci tensor is given by (see, e.g., \cite{FGKU2001})
$$
\mathrm{Ric}(X_1,X_2) = (1-n_2)X_1 X_2 \log(f).
$$
But the submanifolds of the form $M_1\times \{p_2\}$ are totally geodesic in $(M,g)$ and hence for these submanifolds $\mu$ is given by the mixed-part of the Schouten tensor, which (due to the diagonal form of the metric with respect to the product decomposition $M=M_1\times M_2$) is $\frac{1}{n-2}$ times the mixed-part of the Ricci tensor. Thus, since (i) holds we must have $X_1 X_2 \log(f)=0$ for any pair of vector fields $X_1$, $X_2$, with $X_1$ tangent to the first factor and $X_2$ tangent to the second factor. It follows easily that $\log(f)$ is the sum of a function on $M_1$ and a function on $M_2$, and hence (iii) holds.

Since the distinguished submanifold condition is conformally invariant, the fact that (ii) also implies (iii) follows from the same argument we just used but now applied to the conformally related metric $f^{-1}g_1 + g_2$ (with the roles of $M_1$ and $M_2$ interchanged). This proves the result.
\end{proof}

The classical de Rham-Wu theorem states that a Riemannian manifold
$(M,g)$ possessing a pair of complementary orthogonal totally geodesic
foliations is locally a product Riemannian manifold, and this holds
globally if $(M,g)$ is complete and simply connected. An analogue of
this result for twisted products is given in
\cite{PongeReckziegel1993}: A Riemannian manifold $(M,g)$ possessing a
totally geodesic foliation and a complementary orthogonal totally
umbilic foliation is locally a twisted product manifold. This result
easily implies the following conformally invariant result: \emph{A
  Riemannian manifold $(M,g)$ possessing a pair of complementary
  orthogonal totally umbilic foliations is locally a doubly twisted
  product manifold.} To see this one merely rescales $g$ to make one
of the umbilic foliations totally geodesic (which can always be done
locally), and then applies the result of \cite{PongeReckziegel1993} to
conclude that the rescaled metric is a twisted product. Combining
these observations with Theorem \ref{thm-twisted-prod} we readily
obtain the following conformal  extension of the
(local version) of the de Rham-Wu theorem:

\begin{thm}\label{thm-conf-prod}
A conformal manifold $(M,\bc)$ is locally the conformal structure of a product manifold if and only if $(M,\bc)$ possesses a pair of complementary orthogonal foliations by distinguished submanifolds. 
\end{thm}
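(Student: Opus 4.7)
The plan is to combine Theorem~\ref{thm-twisted-prod} with the Ponge--Reckziegel-type decomposition result recalled in the paragraph immediately preceding the statement.

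For the forward implication, suppose $(M,\bc)$ is locally conformal to a Riemannian product $(M_1\times M_2,\tilde{g}_1+\tilde{g}_2)$. Computing in the scale of this product metric, each of the two natural foliations is totally geodesic, so $\mathring{\II}=0$ and $H^a=0$. The Ricci, and hence Schouten, tensor of the product block-decomposes, so its mixed component vanishes and $\mu_i{}^c = \mathrm{N}^c{}_b(P_i{}^b - \nabla_i H^b)=0$ (equivalently, via \eqref{eq:mu-for-m-geq-2} when $m\geq 2$). By Proposition~\ref{lem_mu_zero_2ff_zero} we conclude $\LL=0$, and since $\LL$ is conformally invariant the two natural foliations remain distinguished when viewed in $(M,\bc)$.

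For the converse, assume $(M,\bc)$ carries complementary orthogonal foliations $\mathcal{F}_1,\mathcal{F}_2$ by distinguished submanifolds. Since the distinguished condition is conformally invariant and implies the totally umbilic condition, each $\mathcal{F}_i$ is totally umbilic with respect to every $g\in\bc$. By the Ponge--Reckziegel result cited before Theorem~\ref{thm-conf-prod}, there exists a local conformal rescaling $\tilde{g}=e^{2\omega}g\in\bc$ making $\mathcal{F}_1$ totally geodesic (this is a solvable first-order condition on $\omega$ in the direction normal to $\mathcal{F}_1$), with respect to which $M\cong M_1\times M_2$ is realised locally as a twisted product $g_1+fg_2$ whose natural foliations are precisely $\mathcal{F}_1$ and $\mathcal{F}_2$. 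The distinguished property being conformally invariant, these natural foliations of the twisted product are still distinguished; Theorem~\ref{thm-twisted-prod} then applies and yields that $\tilde{g}$, hence $g$, is conformal to a product metric.

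The only potentially delicate ingredient is the passage from a pair of complementary orthogonal umbilic foliations to a twisted product. This is supplied by the cited reference and is where the theorem genuinely depends on classical Riemannian work; once that structural reduction is in hand, the new conformal input---namely $\mu=0$ on top of $\mathring{\II}=0$, which together amount to the distinguished condition via Proposition~\ref{lem_mu_zero_2ff_zero}---is exactly what Theorem~\ref{thm-twisted-prod} needs to upgrade a twisted product to one conformal to a genuine product.
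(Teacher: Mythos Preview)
Your proof is correct and follows essentially the same route as the paper: the forward direction is the direct computation in the product scale (as in the paper's discussion of product examples), and the converse uses the local rescaling to make one umbilic foliation totally geodesic, then the Ponge--Reckziegel twisted-product decomposition, then Theorem~\ref{thm-twisted-prod}. The only cosmetic point is that your phrase ``By the Ponge--Reckziegel result \ldots\ there exists a local conformal rescaling'' slightly conflates two steps---the rescaling is a separate (elementary) step, and Ponge--Reckziegel is what applies \emph{after} the rescaling---but your parenthetical makes clear you understand this.
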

\begin{rem}
In fact, as can easily be seen from Theorem \ref{thm-twisted-prod}, it suffices to know that one of the foliations is distinguished and the other is umbilic (as it is then forced to be also distinguished).
\end{rem}

%To give an example of a submanifold that is umbilic but not distinguished, we give an example of a doubly warped product:

The above considerations suggest that the ``generic'' umbilic hypersurface will not be distinguished. On the other hand, there are many situations in which the geometry of the ambient manifold $(M,g)$ forces any umbilic submanifold to be distinguished, as we will see below. Towards this end, it is helpful to introduce Nomizu and Yano's notion of extrinsic spheres: A submanifold $\Sigma$ in a Riemannian manifold $(M,g)$ is called an \emph{extrinsic sphere} if it is umbilic and has parallel mean curvature vector (for the connection on the normal bundle). We then have the following easy observation:
\begin{thm}
An extrinsic sphere in an Einstein manifold is conformally distinguished.
\end{thm}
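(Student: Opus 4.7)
The plan is to verify vanishing of the tractor second fundamental form $\LL_{iJ}{}^C$ directly, using its explicit slot expression from Theorem~\ref{tractor2ff_XZ}. Recall that the formula there reads
\begin{equation*}
\LL_{iJ}{}^C = \mathring{\II}_{ij}{}^c Z^j_J Z^C_c + \mathrm{N}^c_a\bigl(P_i{}^a - \nabla_i H^a\bigr)X_J Z^C_c + H_c\mathring{\II}_{ij}{}^c Z^j_J X^C + H_a\bigl(P_i{}^a - \nabla_i H^a\bigr)X_J X^C,
\end{equation*}
so it suffices to show that, for an extrinsic sphere in an Einstein manifold, one has $\mathring{\II}_{ij}{}^c = 0$ and $\mathrm{N}^c_a(P_i{}^a - \nabla_i H^a) = 0$.

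The first of these is automatic: by the Nomizu--Yano definition of an extrinsic sphere, $\Sigma$ is totally umbilic, so $\mathring{\II}_{ij}{}^c = 0$. For the second, I would separate the two contributions. The ``parallel mean curvature'' half of the extrinsic sphere hypothesis says precisely that $\nabla_i^{\perp} H^a = \mathrm{N}^c_a \nabla_i H^a = 0$ in any chosen ambient scale, so the $\nabla_i H^a$ term contributes nothing after projection onto the normal bundle. It therefore remains only to check that $\mathrm{N}^c_a P_i{}^a = 0$ in an Einstein ambient.

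This is a short linear-algebra computation: for an Einstein metric one has $R_{ab} = \tfrac{R}{n} g_{ab}$, and feeding this into $R_{ab} = (n-2)P_{ab} + J g_{ab}$ yields $P_{ab} = \tfrac{J}{n} g_{ab}$, i.e.\ the Schouten tensor is a multiple of the (conformal) metric. Consequently
\begin{equation*}
\mathrm{N}^c_a P_i{}^a = \mathrm{N}^c_a \Pi^b_i P_b{}^a = \tfrac{J}{n}\,\mathrm{N}^c_a \Pi^a_i = 0,
\end{equation*}
since $\Pi^a_i$ is tangential and $\mathrm{N}^c_a$ is the normal projection. Combining with the previous paragraph, all four slots of $\LL_{iJ}{}^C$ in the formula above vanish, so $\LL = 0$ and $\Sigma$ is distinguished by definition. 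There is no real obstacle here; the only thing one needs to be slightly careful about is matching the weight conventions used for $H^a$ in ``parallel mean curvature'' to those appearing in Theorem~\ref{tractor2ff_XZ}, but since the vanishing of $\mathrm{N}^c_a\nabla_i H^a$ is an unambiguous (unweighted) statement in a fixed Einstein scale, this presents no genuine difficulty.
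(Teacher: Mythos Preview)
Your proof is correct and follows exactly the approach implicit in the paper: the result is stated there as an ``easy observation'' with no proof given, and the intended argument is precisely the one you wrote out, namely checking the two conditions $\mathring{\II}=0$ and $\mathrm{N}^c_a(P_i{}^a-\nabla_i H^a)=0$ from the Remark after Theorem~\ref{tractor2ff_XZ}.
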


\begin{rem}
If $\Sigma$ is an umbilic hypersurface in an Einstein manifold $(M,g)$
then by contracting the Codazzi equation one readily finds that
$\Sigma$ has parallel mean curvature vector (equivalently, constant
mean curvature) \cite{Chen-umbilic,Kowalski1972} and hence is an
extrinsic sphere. Of course, umbilic hypersurfaces are all
distinguished so the above theorem teaches us nothing new in this
case. On the other hand, the fact that umbilic hypersurfaces are all
distinguished, together with the definition of $\mu$, furnishes an
easy proof that an umbilic hypersurface in an Einstein manifold is an
extrinsic sphere: in the Einstein scale one finds that $\mu_i{}^c =
\nabla^{\perp}_i H^c$ (and $\mu$ is zero).
\end{rem}

Returning to Example \ref{example-model}, we note that since (real) Riemannian space forms are conformally flat it follows that any umbilic submanifold of a real space form is a distinguished submanifold. The same holds for complex space forms (the simply connected examples of which are complex projective spaces, complex Euclidean spaces and complex hyperbolic spaces):

\begin{example}\label{example-c-space-form} \emph{\textbf{(Umbilic submanifolds in complex space forms.)} An inspection of the classification of umbilic submanifolds in complex space forms in \cite{ChenOguie1974} shows that all are extrinsic spheres (counting totally geodesic submanifolds as extrinsic spheres). Hence, since complex space forms are Einstein, every umbilic submanifold in a complex space form is conformally distinguished. }
\end{example}
To be more concrete, we discuss the example of complex projective space in more detail and compute the Fialkow tensor of the umbilic submanifolds:
\begin{example}\label{example-cpn} \emph{\textbf{(Conformal circularity of umbilic submanifolds in $\mathbb{CP}^n$.)} Let $(M,g)=(\mathbb{CP}^n, g_{FS})$, where $g_{FS}$ is the Fubini-Study metric. Let $J$ denote the complex structure on $M$ and let $J_{ab}=g_{ac}J^c{}_b$. With our conventions, the curvature tensor of $g$ is $g_{ac}g_{bd}-g_{ad}g_{bc} + J_{ac}J_{bd}-J_{ad}J_{bc}+2J_{ab}J_{cd}$. The (real) Ricci tensor is $\mathrm{Ric} = (2n+2)g$ and the Schouten tensor is $P=\frac{n+1}{2n-1}g$. It follows that the Weyl curvature is given by $W_{abcd} = \frac{-3}{2n-1}(g_{ac}g_{bd}-g_{ad}g_{bc}) + J_{ac}J_{bd}-J_{ad}J_{bc}+2J_{ab}J_{cd}$. 
Following the classification in \cite{ChenOguie1974} the umbilic submanifolds are of three kinds. The first case is when $\Sigma$ is a totally geodesic $\mathbb{CP}^m$ in $\mathbb{CP}^n$ ($m<n$). In this case one readily computes that the Fialkow tensor is given by $\cF=(\frac{n+1}{2n-1}-\frac{m+1}{2m-1})g_{\Sigma}$ when $m>1$ and it turns out that the same formula holds when $m=1$ (since in the latter case the ambient Weyl curvature pulls back to $\Sigma = \mathbb{CP}^1$ as $W_{ijkl} = (\frac{-3}{2n-1}+3)(g_{ik}g_{jl}-g_{il}g_{jk})$ so that $\mathrm{tr}^2 \,\iota^* W = 12\frac{n-1}{2n-1}$ and $\cF=-3\frac{n-1}{2n-1}g_{\Sigma} = (\frac{n+1}{2n-1}-2)g_{\Sigma}$). So, a totally geodesic $\mathbb{CP}^m$ in $\mathbb{CP}^n$ is conformally circular, but not strongly conformally circular. The second case is when $\Sigma$ is a totally real and totally geodesic $\mathbb{RP}^n$ in $\mathbb{CP}^n$ or a totally geodesic $\mathbb{RP}^m$ inside of such an $\mathbb{RP}^n$. The third case is when $\Sigma$ is an umbilic, but not totally geodesic, $\mathbb{RP}^m$ in such a totally real and totally geodesic $\mathbb{RP}^n$ (note that such an embedded $\mathbb{RP}^m$ necessarily has parallel mean curvature in $\mathbb{RP}^n$ and in $\mathbb{CP}^n$); in this case the induced metric has constant sectional curvature $K>1$ and the mean curvature squared is given by $|H|^2 = K-1$ (when $K=1$ we are back in the second case). Interestingly, in either the second or the third case we find that when $\dim \Sigma > 2$ the Fialkow tensor is given by $\cF = \frac{3}{4n-2}g_{\Sigma}$ (the dependence on the intrinsic sectional curvature $K$ of $\Sigma$ drops out). When $\dim \Sigma =2$ in either the second or the third case the ambient Weyl tensor pulls back to $\Sigma$ as $W_{ijkl} = \frac{-3}{2n-1}(g_{ik}g_{jl}-g_{il}g_{jk})$ so that $\mathrm{tr}^2\,\iota^*W = \frac{-6}{2n-1}$ and once again finds that $\cF = \frac{3}{4n-2}g_{\Sigma}$. Hence for a totally real umbilic submanifold $\Sigma\subset \mathbb{CP}^n$ (of dimension at least two) the Fialkow tensor is universally given by $\cF = \frac{3}{4n-2}g_{\Sigma}$. It follows that the umbilic submanifolds of $\mathbb{CP}^n$ are all conformally circular, but none are strongly conformally circular.
%$= 6\frac{n-1}{2n-1}(g_{ik}g_{jl}-g_{il}g_{jk})$)
}
\end{example}

Note that in the above example we have only described the conformally
distinguished submanifolds of $\mathbb{CP}^n$ of dimension at least
two. The $1$-dimensional distinguished submanifolds in complex
projective spaces have been characterised in \cite{AMU-circles-1995}
(strictly speaking, the paper is concerned with ``Riemannian'' circles
in $\mathbb{CP}^n$, but since the Fubini-Study metric $g_{FS}$ is
Einstein these are precisely the conformal circles of
$(\mathbb{CP}^n,[g_{FS}])$ parametrized by arclength with respect to
$g_{FS}$); see also \cite{dunajski2019conformal}, where the results of \cite{AMU-circles-1995} are recovered for $\mathbb{CP}^2$ using the first integrals arising from the conformal Killing-Yano tensors on $\mathbb{CP}^2$ (a construction which we generalise in Section \ref{fi-Sect} below). The notion of

Having discussed the umbilic submanifolds in complex space forms (K\"ahler manifolds with constant holomorphic sectional curvature) it is natural to next consider the quaternionic K\"ahler analogues:

\begin{example}\label{example-q-space-form} \emph{\textbf{(Umbilic submanifolds in quaternionic space forms.)} Umbilic submanifolds in quaternionic space forms are classified in \cite{Chen-quaternion}, and all are extrinsic spheres (again, we are counting totally geodesic submanifolds as extrinsic spheres). Since quaternionic space forms are Einstein, it follows that every umbilic submanifold in a quaternionic space form is conformally distinguished.
}
\end{example}

Summarizing what we have learned from Examples \ref{example-model}, \ref{example-c-space-form} and \ref{example-q-space-form} we have the following theorem:
\begin{thm}
An umbilic submanifold in a real, complex or quarternionic space form is a confomally distinguished submanifold.
\end{thm}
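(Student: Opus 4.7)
The proof is essentially a unification of the three preceding examples, so the plan is to organise it into three cases according to the type of space form, in each case reducing the claim to one of the two general principles that have already been isolated in the preceding discussion.

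First I would handle the real case. Real space forms are conformally flat, so their Weyl and Cotton tensors vanish. Using the manifestly conformally invariant form \eqref{fialkow_weyl} (and its $2$-dimensional analogue \eqref{eq:Fialkow-2D}) together with formula \eqref{eq:mu-for-m-geq-2} for $\mu$ (and \eqref{eq:curve-conformal-curvature} in the curve case, where the ambient tractor curvature vanishes), one sees that for an umbilic $\Sigma$ both $\mathring{\II}=0$ and $\mu=0$. The conclusion $\LL=0$ then follows from Proposition~\ref{lem_mu_zero_2ff_zero}, so $\Sigma$ is distinguished.

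For the complex and quaternionic cases I would invoke the theorem stated just before Example~\ref{example-c-space-form}: an extrinsic sphere in an Einstein manifold is conformally distinguished. To apply this, one cites the classification results of Chen--Ogiue~\cite{ChenOguie1974} and Chen~\cite{Chen-quaternion}, which show that every umbilic submanifold of a complex (respectively quaternionic) space form is in fact an extrinsic sphere, i.e.\ is umbilic with parallel mean curvature vector in the normal bundle. Since complex and quaternionic space forms are Einstein, the hypothesis of the theorem is met, and the conclusion follows.

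The main (and only nontrivial) step is the ``extrinsic sphere in Einstein implies distinguished'' principle, which I would either cite from the preceding discussion or re-derive in one line as follows: working in the Einstein scale $g$ we have $P_{ab}=\lambda\, \bg_{ab}$ for a constant $\lambda$, hence $\mathrm{N}^c_b P_i{}^b = \lambda\, \mathrm{N}^c_b \Pi_i{}^b = 0$, while the extrinsic sphere condition gives $\mathrm{N}^c_b \nabla_i H^b = \nabla_i^{\perp}H^c = 0$, so both defining conditions for $\mu$ (in all dimensions and codimensions, using \eqref{eqn_mu_tensor}, noting that the $D^j\mathring{\II}_{ij}{}^b$ term vanishes by umbilicity) are met and $\mu=0$; combined with $\mathring{\II}=0$ this gives $\LL=0$ by Proposition~\ref{lem_mu_zero_2ff_zero}. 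Thus the only substantive input beyond the machinery developed in this paper is the two classification theorems, which do the heavy lifting by reducing ``umbilic'' to ``extrinsic sphere'' in the complex and quaternionic settings; this is the step that is hardest to avoid, since absent such a classification there is no purely local reason in an Einstein manifold that umbilicity should force parallelism of the mean curvature vector in codimension greater than one.
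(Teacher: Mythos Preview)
Your proposal is correct and follows essentially the same route as the paper, which presents the theorem simply as a summary of Examples~\ref{example-model}, \ref{example-c-space-form}, and \ref{example-q-space-form}: conformal flatness handles the real case via \eqref{eq:mu-for-m-geq-2}, while the Chen--Ogiue and Chen classifications reduce the complex and quaternionic cases to the ``extrinsic sphere in Einstein $\Rightarrow$ distinguished'' principle, exactly as you outline.

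One small caveat: your parenthetical treatment of the curve case in real space forms is not quite right --- vanishing of the ambient tractor curvature does not by itself force $\mu=0$ via \eqref{eq:curve-conformal-curvature}, and indeed not every curve in a space form is distinguished (only conformal circles are). Since umbilicity is vacuous for curves, the theorem is implicitly about $m\geq 2$, so this is harmless; but you should drop that parenthetical rather than leave an incorrect claim in. The mention of the Fialkow formulae \eqref{fialkow_weyl} and \eqref{eq:Fialkow-2D} is also tangential --- you only need $\mu=0$, not $\cF=0$, for the distinguished conclusion.
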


The various kinds of space forms we discussed above were Einstein (and the umbilics were all extrinsic spheres). Sasakian space forms, however, are typically only ``$\eta$-Einstein'' (meaning that eigenvalues of the Ricci tensor in the horizontal and characteristic directions are two, possibly different, constants). Nevertheless, in most cases the umbilic submanifolds are still forced to be distinguished:

\begin{example}\label{example-s-space-form} \emph{\textbf{(Umbilic submanifolds in Sasakian space forms.)}
Let $(M,g)$ be a Sasakian space form of dimension $2n+1$ and $\phi$-sectional curvature $c$. Let $\xi$ denote the characteristic direction of $g$ and $\eta$ the corresponding $1$-form (the underlying contact form of the Sasakian structure). The Ricci curvature of $g$ is given by $\mathrm{Ric} = \frac{1}{2}(n(c+3)+c-1)g - \frac{1}{2}(n+1)(c-1)\eta\otimes\eta$ \cite{Blair}. Given any submanifold $\Sigma$ of $M$ we may decompose the characteristic direction $\xi$ along $\Sigma$ as $\xi = \xi^{\top} + \xi^{\perp}$, where $\xi^{\top}$ is tangent and $\xi^{\perp}$ is normal to $\Sigma$. Writing $\eta^{\top}$ and $\eta^{\perp}$ for the corresponding $1$-forms, the mixed part of the Schouten tensor is $-\frac{n+1}{2(2n-1)}(c-1)\eta^{\top}\otimes\eta^{\perp}$, or $-\frac{n+1}{2(2n-1)}(c-1)\eta^{\top}\otimes\xi^{\perp}$ when viewed as a normal bundle valued $1$-form on $\Sigma$. On the other hand, when $\Sigma$ is umbilic it is shown (by contracting the Codazzi equation) in \cite{BlairVanhecke} that $\nabla^{\perp} H = -\frac{1}{4}(c-1)\eta^{\top}\otimes \xi^{\perp}$. Combining these observations gives that, for an umbilic hypersurface in $(M,g)$, $\mu = -\frac{3(c-1)}{4(2n-1)}\eta^{\top}\otimes \xi^{\perp}$. In the classification of umbilic submanifolds in Sasakian space forms in  \cite{BlairVanhecke} there are four classes of examples. In classes (i)--(iii) $(c-1)\eta^{\top}\otimes \xi^{\perp}=0$; indeed, (i) corresponds precisely to the case when $\eta^{\top}=0$ ($\Sigma$ is everywhere tangent to $\mathrm{ker}\,\eta$), (ii) to the case when $\xi^{\perp}=0$ ($\xi$ is tangent to $\Sigma$) and (iii) to the case when $c=1$ (in which case $(M,g)$ is a real space form). Thus, umbilic submanifolds in classes (i)--(iii) are all distinguished. Class (iv), on the other hand, corresponds precisely to those umbilic submanifolds for which $\mu\neq 0$; it is shown in \cite{BlairVanhecke} that these occur when $c<-3$, equivalently, when $g$ has strictly negative sectional curvatures in the ordinary Riemannian sense.}
\end{example}

In particular, we have:

\begin{thm}
  Let $(M,g)$ be a Sasakian space form of $\phi$-sectional curvature $c\geq -3$, then every umbilic submanifold of $(M,g)$ is conformally distinguished. When $c<-3$ there are umbilic subamnifolds in $(M,g)$ that are not conformally distinguished.
\end{thm}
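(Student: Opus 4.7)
The plan is essentially to assemble the pieces that have already been developed in Example~\ref{example-s-space-form}, combined with the conformal characterisation of distinguished submanifolds via Proposition~\ref{lem_mu_zero_2ff_zero}. The core strategy is to reduce the distinguishedness condition to the vanishing of the conformal invariant $\mu$, and then to read off this vanishing from the Blair--Vanhecke classification of umbilics in Sasakian space forms.

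First, recall from Proposition~\ref{lem_mu_zero_2ff_zero} that a submanifold $\Sigma$ is distinguished if and only if $\mathring{\II}=0$ and $\mu=0$. Since we are given that $\Sigma$ is umbilic, the first condition holds automatically, so the task reduces to showing that $\mu=0$ precisely under the stated curvature hypothesis. The computation in Example~\ref{example-s-space-form} using the Ricci tensor of the Sasakian space form together with the identity $\nabla^{\perp}H = -\tfrac{1}{4}(c-1)\eta^{\top}\otimes\xi^{\perp}$ from \cite{BlairVanhecke} (which follows from contracting the Codazzi equation on an umbilic submanifold) yields the explicit formula
\begin{equation*}
\mu = -\frac{3(c-1)}{4(2n-1)}\,\eta^{\top}\otimes\xi^{\perp}.
\end{equation*}
Thus $\mu=0$ is equivalent to at least one of the three factors $(c-1)$, $\eta^{\top}$, $\xi^{\perp}$ vanishing along $\Sigma$.

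Next I would invoke the Blair--Vanhecke classification \cite{BlairVanhecke}, which partitions the umbilic submanifolds of a Sasakian space form into four classes: (i) those everywhere tangent to $\ker\eta$ (so $\eta^{\top}=0$); (ii) those with $\xi$ tangent to $\Sigma$ (so $\xi^{\perp}=0$); (iii) the case $c=1$; and (iv) an explicit family existing only when $c<-3$. Classes (i)--(iii) each force one of the three factors in the formula for $\mu$ to vanish, so in each of these classes $\mu=0$ and $\Sigma$ is conformally distinguished. For $c\geq -3$, class (iv) is empty, so every umbilic submanifold belongs to (i)--(iii), proving the first assertion.

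For the second assertion, when $c<-3$ the class (iv) examples from \cite{BlairVanhecke} are, by construction, umbilic submanifolds for which $(c-1)\eta^{\top}\otimes\xi^{\perp}\neq 0$, and hence $\mu\neq 0$. By Proposition~\ref{lem_mu_zero_2ff_zero}, such submanifolds cannot be distinguished. The only genuine work is the formula for $\mu$ displayed above and the appeal to the Blair--Vanhecke classification; both are already recorded in Example~\ref{example-s-space-form}, so the proof will consist of little more than quoting these together with Proposition~\ref{lem_mu_zero_2ff_zero}. No step presents a serious obstacle; the mildly delicate point is simply to note that the formula for $\mu$ is derived in the ambient scale $g$ and is conformally invariant, so its vanishing is intrinsic to the conformal class.
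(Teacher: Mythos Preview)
Your proposal is correct and follows essentially the same approach as the paper: the theorem is stated immediately after Example~\ref{example-s-space-form} with ``In particular, we have:'' and no separate proof, since the example already assembles the formula $\mu = -\tfrac{3(c-1)}{4(2n-1)}\eta^{\top}\otimes\xi^{\perp}$ and the Blair--Vanhecke classification exactly as you describe.
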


We conclude with an example that motivated the definition and consideration of the invariants $\LL$ and $\cF$:
\begin{example}\label{ex-conf-infty-dist} \emph{\textbf{(Conformal infinities of Poincar\'e-Einstein manifolds.)}
Let $(M,g)$ be a Poincar\'e-Einstein manifold with conformal infinity $(\Sigma, \bc_{\Sigma})$. Then $\Sigma$ is a strongly conformally circular submanifold in $\overline{M}$. This follows from  \cite[Theorem 4.5]{GRiemSig} where it is shown that the tractor bundle $\cT\Sigma$ is parallel as a subbundle of the standard tractor bundle of $\overline{M}$ (which is equivalent to $\LL=0$) and the ambient tractor connection induces the standard submanifold tractor connection on $\cT\Sigma$ (which is equivalent to $\cF=0$). The key observation behind this result is that the scale tractor $I^A$ corresponding to the Einstein metric $g$ on $M$ extends continuously to $\overline{M}$ to give the normal tractor $N^A$ to $\Sigma$ on $\Sigma$; the vanishing of $\LL$ then immediately follows from the fact that $I^A$ is parallel, and the fact that $\cF=0$ can be seen from the fact that $\nabla_{[a}\nabla_{b]}I^C=0$ (since from this one can deduce that the induced tractor connection $\check{\nabla}$ on $\cT\Sigma$ is normal and therefore must agree with the submanifold one).
}
\end{example}

%While we have given many examples above of distinguished submanifolds and of conformally circular manifolds above, our primary motivation for studying these invariants is not finding such examples but understanding the meaning of them; we are just as interested in cases 

One of our motivations for the consideration of $\LL$ and $\cF$ in the
hypersurface case (in which case $\LL$ is equivalent to
$\mathring{\II}$) is that if $(M,g)$ is a conformally compact manifold
with conformal infinity $(\Sigma, \bc_{\Sigma})$ then the trace-free
second fundamental form $\mathring{\II}$ of $\Sigma$ represents the
first order obstruction to the existence of a (formal)
Poincar\'e-Einstein metric on $M$ in the conformal class of $g$, and
$\cF$ represents the next order obstruction (recall that in the
Poincar\'{e}-Einstein case one obtains that $\mathring{\II}=0$ from
$\nabla_aI^B|_{\Sigma}=0$ and that $\cF=0$ from
$\nabla_{a}\nabla_{b}I^C|_{\Sigma}=0$); see \cite{BGW} for further
development of this idea in the hypersurface case.

While we have given many examples above of distinguished submanifolds
and of conformally circular manifolds above, we are just as much
interested in situations where these conditions fail (as measured by
the nontriviality of $\cF$ and $\LL$). For hypersurfaces, we can
interpret this failure either in terms of the difference between
intrinisic and ambient conformal circles or in terms of the failure to
be (formally) the conformal infinity of a Poincar\'e-Einstein metric to
a given order. The conformal circles interpretation of course holds in
all codimensions, and it seems natural to ask whether these invariants
are also related to the possibility of realising a general codimension
submanifold $\Sigma$ as a zero locus associated to a (formally)
parallel tractor field along $\Sigma$. We will return to this idea in
Section \ref{corbit-sec}, after considering the interaction between
distinguished submanifolds and parallel tractors in Section
\ref{fi-Sect}.

\section{First integrals} \label{fi-Sect}

Here we show that a class of solutions to a very large collection of linear
differential equations provide first integrals for distinguished
submanifolds.  This provides a uniform framework which generalises to
submanifolds (of any proper codimension) the advance for conformal
circles in \cite{GST}.  In \cite{GST} it is explained in detail how
the ideas there extend the usual construction of first integrals for
geodesics, using for example solutions of the Killing equation,
Killing tensors, and Killing-Yano tensors. So we do not repeat that
here.

\subsection{Review of relevant BGG theory}\label{BGGsec}

The class of equations that we interested are the so-called
(conformal) first BGG equations. This is a very large class of
conformally invariant linear overdetermined PDE. It includes the
conformal Killing equation, more generally the conformal Killing
tensors equations of any rank, the conformal Killing-Yano equations.
To understand this infinite class of equations we recall here some
elements of the BGG theory. To put this into context we first recall the homogeneous model for conformal geometry, discussed in Section \ref{tractor_calc}. The model for oriented conformal geometries of Riemannian signature is conformal $n$-sphere $(S^n,\bc)$ viewed as the ray projectivisation of the forward null cone in $(n+2)$-dimensional Minkowski space. The group $G=\mathrm{SO}_0(n+1,1)$ acts on the forward null cone and descends to an action by conformal isometries of $S^n$; the conformal $n$-sphere is therefore naturally viewed as a homogeneous geometry on $G/P\cong S^n$, for $P$ an appropriate (parabolic) subgroup of $G$. Again, see, e.g., \cite{Curry-G-conformal,GRiemSig} for
a more detailed discussion.

Generalising from the model case, it is well known that a  conformal manifold $(M,\cc)$ (of dimension $n\geq 3$) determines a canonical {\em Cartan bundle and connection} (the additional choice of a M\"obius structure is required for this in dimension $n=2$). This consists of  
a $P$-principal bundle $\cG\to M$ equipped with a canonical {\em
  Cartan connection } $\om$ which is a suitably equivariant
$\mathfrak{g}$-valued 1-form that provides a total parallelisation of
$T\cG$. Here $\mathfrak{g}$ denotes the Lie algebra of $G$. In the
case of the model, $\mathcal{G}=G$ and $\om$ is the Maurer-Cartan form.

For any representation $\mathbb{U}$ of $P$, one has a corresponding
associated bundle $\cG \times_P \mathbb{U}$. For example it follows
from the equivariance properties of $\om$ that the tangent bundle $TM$
can be identified with $ \cG \times_P (\mathfrak{g} /\mathfrak{p} )$ where
$\mathfrak{p}$ is the Lie algebra of $P$, and the $P$ action is induced
from its adjoint action on $\mathfrak{g}$.

The tractor bundles are the associated bundles $\cW:=\cG\times_P
\mathbb{W}$ where $\mathbb{W}$ is a linear representation space of $G$
(and hence also of $P$ by restriction). On each of these the
Cartan connection induces a linear connection $\nabla^{\mc{W}}$ and this the tractor connection for the given bundle. In
particular the standard tractor bundle $\cT$ is $\cW:=\cG\times_P
\mathbb{R}^{n+2}$, with $\mathbb{R}^{n+2}$ denoting the defining
representation of $G$. From the latter (for example) the Cartan bundle $\mathcal{G}$ can be recovered as an adapted frame bundle and, on this, the Cartan connection $\om$ can be recovered from the tractor connection, see \cite{CG-tams}.

 Now recall the bundle embedding \eqref{Xinj} (with $k=2$)
$$
  \bX: T^*M \to \Lambda^2\cT \subset \End (\cT),
  $$
  where the tractor metric is used in the obvious way to identify
  elements of $ \Lambda^2\cT$ with skew elements of $\End (\cT)$.
  Sections of $\End (\cT)$ act on tractor bundles in the obvious
  tensorial way and so, via each respective $\mathbb{X}$, we have a
  canonical action of $T^*M$ on any tractor bundle $\cV$ and this
  induces a sequence of invariant bundle maps
\begin{equation}\label{Kostant}
  \partial^* : \Lambda^k T^*M\otimes \cV \to  \Lambda^{k-1} T^*M\otimes \cV,\quad k=1,\cdots,n+1.
\end{equation}
This is the (bundle version of the) Kostant codifferential for
conformal geometry and satisfies
$\partial^*\circ \partial^*=0$; so it determines subquotient bundles
$\cH_k(M,\cV):=\operatorname{ker}(\partial^*)/\operatorname{im}(\partial^*)
$ of the $\cV$-valued tractor bundles $\Lambda^k T^*M\otimes \cV
$.

Now, for each tractor bundle $\cV=\cG\times_P \mathbb{V}$, with $\mathbb{V}$ irreducible for $G$, there is a canonical differential \textit{BGG-sequence} \cite{CSS2000,CD},
$$
  \cH_0 \stackrel{\cD^{\cV}_0}{\rightarrow} \cH_1 \overset{\cD^{\cV}_1}{\rightarrow}\cdots
  \overset{\cD^{\cV}_{n-1}}{\rightarrow} \cH_n \, .
$$ Here $\cH_k= \cH_k(M,\cV)$ and each $\cD^{\cV}_i$ is a linear
  conformally invariant differential operator.

  We are, in particular, interested
  in the operator $\cD^\cV=\cD^\cV_0$, which defines an overdetermined
 differential system.  The parabolic subgroup $P\subset G$ determines a filtration
  on $\mathbb{V}$ by $P$--invariant subspaces. Denoting the largest
  proper filtration component by $\V^0\subset \V$, it is straightforward to show that $\cH_0$ is the
  quotient $\cV/\cV^0$. Here, $\cV^0$ is the corresponding associated
  bundle for $\V^0$, and we write $\pi: \cV \to \cH_0$ for the natural
  projection.
We recall here the construction of the first BGG operators
$\cD^\cV$, as summarised in \cite{CGH}, and also the definition of the special class of so
called normal solutions (cf.\ \cite{Leitner}) for these
operators.

\begin{thm}[\cite{CGH}] \label{normp}
  Let $\mathbb{V}$ be an irreducible $G$-representation and let $\cV: =\cG\times_P \mathbb{V}$.
  There is a unique  invariant differential operator
  $L: \cH_0 \to  \cV$ such that $\pi\circ L$ is the identity map on $\cH_0$ and
  $\nabla \circ L$ lies in $\operatorname{ker}(\partial^*)\subset T^*M\otimes \cV$.
  For $\si\in \Gamma(\cH_0)$, $\cD^\cV \si$ is given by projecting $\nabla (L(\si))$ to $\Gamma (\cH_1)$, i.e. $\cD^\cV \si = \pi(\nabla(L(\si)))$.

  Furthermore  the bundle map
  $\pi$ induces an injection from the space of parallel sections of
  $\mathcal{V}$ to a subspace $\mathfrak{N}(\cD^\cV) $ of $\Gamma(\mathcal{H}_0)$ which is
  contained in the kernel of the first BGG operator
  \begin{equation}\label{fBGG}
    \cD^\cV : \cH_0 \to \cH_1 \, .
  \end{equation}
The operator $L$ restricts to an isomorphism from $\mathfrak{N}(\cD^\cV) $ to the space of parallel tractors in $\Gamma(\cV)$.
\end{thm}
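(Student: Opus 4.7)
The plan is to construct the splitting operator $L$ by iteratively correcting an arbitrary lift using the filtration structure of $\mathcal{V}$, and then to deduce the remaining statements from uniqueness. Recall that, since $\mathbb{V}$ is a $P$-representation, it carries a $P$-invariant filtration $\mathbb{V} \supset \mathbb{V}^0 \supset \mathbb{V}^1 \supset \cdots \supset \mathbb{V}^N \supset 0$ induced by the grading of $\mathfrak{g}$; this in turn gives a bundle filtration $\cV \supset \cV^0 \supset \cV^1 \supset \cdots$, with $\cH_0 = \cV/\cV^0$. Given $\sigma \in \Gamma(\cH_0)$, I would pick any lift $s_0 \in \Gamma(\cV)$ with $\pi(s_0) = \sigma$; any two lifts differ by a section of $\cV^0$, so the search for $L(\sigma)$ amounts to finding a correction $v \in \Gamma(\cV^0)$ making $\partial^*(\nabla(s_0+v)) = 0$.

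The key algebraic ingredient is Kostant's theorem, which guarantees that the algebraic Laplacian $\Box := \partial\partial^* + \partial^*\partial$ acts invertibly on the non-harmonic graded components of $\Lambda^\bullet \mathfrak{p}_+ \otimes \mathbb{V}$. In a choice of Weyl structure the tractor connection $\nabla$ has a principal symbol that agrees with $\partial$ on the associated graded, so one can successively solve the equation $\partial^*\partial v \equiv -\partial^*(\nabla s_0)$ modulo $\cV^{k+1}$ in the graded piece $\cV^k/\cV^{k+1}$. Because $\Box$ is invertible on the relevant non-harmonic summands, the correction at each filtration level exists and is unique. After finitely many steps the procedure terminates and produces an invariantly defined section $L(\sigma)$ with $\partial^*(\nabla L(\sigma)) = 0$, and the construction manifestly commutes with the $P$-action so $L$ is invariant. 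The main obstacle in this step is bookkeeping the invariance and well-definedness through the recursive procedure; this is essentially the content of Čap--Slovák--Souček's construction of the BGG splitting operator.

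Uniqueness of $L$ follows by the same machinery: if $L_1, L_2$ both satisfy $\pi \circ L_i = \mathrm{id}$ and $\nabla L_i \in \Gamma(\ker \partial^*)$, then $w := L_1(\sigma) - L_2(\sigma) \in \Gamma(\cV^0)$ satisfies $\partial^*(\nabla w) = 0$, and running the inductive filtration argument shows $w$ must vanish in every graded piece, hence $w = 0$. The operator $\cD^\cV \sigma := \pi(\nabla L(\sigma))$ is then a well-defined invariant differential operator $\Gamma(\cH_0) \to \Gamma(\cH_1)$, since $\nabla L(\sigma) \in \Gamma(\ker \partial^*)$ and $\cH_1 = \ker \partial^* / \operatorname{im} \partial^*$.

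For the final two assertions, suppose $s \in \Gamma(\cV)$ is parallel. Then $\nabla s = 0$, which lies trivially in $\ker \partial^*$, so by the uniqueness of $L$ we must have $s = L(\pi(s))$; in particular $\cD^\cV(\pi(s)) = \pi(\nabla L(\pi(s))) = \pi(\nabla s) = 0$, so $\pi(s) \in \ker \cD^\cV$. Moreover $\pi$ is injective on parallel sections, for if $\pi(s) = 0$ then $s = L(\pi(s)) = L(0) = 0$. Defining $\mathfrak{N}(\cD^\cV) := \pi(\{s \in \Gamma(\cV) : \nabla s = 0\})$, the identity $L(\pi(s)) = s$ for parallel $s$ shows that $L$ maps $\mathfrak{N}(\cD^\cV)$ bijectively onto the space of parallel sections of $\cV$, with $\pi$ as its two-sided inverse, completing the proof.
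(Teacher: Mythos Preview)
The paper does not actually prove this theorem: it is stated with the attribution \cite{CGH} and no proof is given, since the result is a standard piece of BGG machinery imported from \v{C}ap--Gover--Hammerl (building on \v{C}ap--Slov\'ak--Sou\v{c}ek and Calderbank--Diemer). So there is nothing in the paper to compare your argument against.

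That said, your sketch is a faithful outline of the standard construction. The iterative correction using the filtration of $\mathbb{V}$ together with Kostant's Hodge decomposition is exactly how the splitting operator is built in the literature, and your uniqueness argument and the deduction that $s = L(\pi(s))$ for parallel $s$ (hence $\pi$ is injective on parallel sections and $L$ inverts it on $\mathfrak{N}(\cD^\cV)$) are the right steps. One small caution: the recursive procedure you describe is cleanest in the Calderbank--Diemer formulation, where one works with the operator $\Box^{-1}\partial^*\nabla$ directly rather than solving graded equations step by step; the original \v{C}ap--Slov\'ak--Sou\v{c}ek argument is slightly more involved. But as a high-level summary your proposal is correct.
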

\noindent The differential operator $L: \cH_0 \to \cV$, in the Theorem, is called a {\em BGG splitting operator}. We
sometimes denote this $L^\cV$ to emphasise the particular tractor
bundle involved. Using the notation and setting of the Theorem, we also
use the following terminology:
\begin{defn}
  \emph{Elements of the subspace $\mathfrak{N}(\cD^\cV)\subset \Gamma(\cH_0)$ are called \emph{normal} solutions to the BGG equation $\cD^\cV \si = 0$.}
\end{defn}

By definition normal solutions to \eqref{fBGG} are in 1-1
correspondence with parallel sections of the corresponding tractor
bundle $\cV$. On geometries which are conformally flat all solutions
are normal, and clearly there is
$\operatorname{dim}(\mathbb{V})$-parameter family of such normal
solutions locally.

For the standard tractor bundle the corresponding first BGG equation is the equation
\begin{equation}\label{eq:AE}
\nabla_{(a}\nabla_{b)_0} \si +P_{(ab)_0}\si=0,
\end{equation}
on sections $\si\in \Gamma(\ce[1])$, and all solutions are normal (on any conformal manifold admitting such solutions).
However, this is not typical.  In general, for solutions $\si\in
\Gamma(\cH_0) $ of $\cD^\cV(\si) =0$, $\nabla L(\si)$ is given by
curvature terms acting on $L(\si)$ (see, e.g., \cite{Cap-infin,GLaplacianEinstein,G-Sil-ckforms,HSSS-ex}). Normal
solutions, for which these curvature terms necessarily annihilate
$L(\si)$, often correspond to interesting geometric conditions on the
underlying manifold.

\subsection{The First Integral Theorem}\label{fi-thm-sec}

We work on an arbitrary conformal manifold $(M^n,\bc)$. Let $\Sigma$
be an embedded submanifold of codimension $d$. Recall that $\Sigma$
determines its normal form $N_{A_1\cdots A_d}\in
\Gamma(\Lambda^d\cN)$. This is parallel if (and only if) $\Sigma$ is
distinguished. Thus if the manifold $(M^n,\bc)$ is
equipped with a parallel tractor $S$ that can be contracted
non-trivially into say $m_0$ copies of $N_{A_1\cdots A_d}$ to yield a
function, then this scalar is necessarily constant if $\Sigma $ is
distinguished. Thus we obtain a first integral for such $\Sigma$.  In
general the parallel tractor $S$ would not necessarily itself come
from a $G$-irreducible representation, but rather a tensor product of
such. Thus we have the following result.

As earlier, view
$\mathbb{R}^{n+2}$ as the defining representation for $G:=\SO(h)\cong
\SO(p+1,q+1)$.
Define
$$
  \mathbb{W}(d):= \Lambda^d\mathbb{R}^{n+2} \qquad d=1,\cdots,n-1.
$$
For each $d$, this is also a representation space for $G$. Then we have:
\begin{thm}\label{fi-thm}
  Let $\mathbb{V}_1,\cdots ,\mathbb{V}_k$ be irreducible
  representation spaces of $G$,
  $\cV_i=\cG\times_P \mathbb{V}_i$, and $\cD^{\cV_i}$, $i\in
    \{1,\cdots ,k\}$ the corresponding respective first BGG operators.

  For each $i\in \{1,\cdots ,k\}$, suppose that $\si_i$ is a normal
  solution to the first BGG equation
  \begin{equation}\label{BGGi}
    \cD^{\cV_i} \si_i =0 ,
  \end{equation}
  and $m_i\in\mathbb{Z}_{\geq 0}$.
  Then for each copy of the trivial $G$-representation $\mathbb{R}$ in
  \begin{equation}\label{prod}
    (\odot^{m_0} \mathbb{W}(d) )  \otimes (\odot^{m_1} \mathbb{V}_1)\otimes\cdots \otimes (\odot^{m_k}\mathbb{V}_k)
  \end{equation}
  there is a corresponding distinguished first integral for submanifolds of codimension $d$.
\end{thm}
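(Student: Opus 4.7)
The plan is to combine three ingredients already established in the excerpt: (i) the bijection between normal BGG solutions and parallel tractor fields (Theorem \ref{normp}); (ii) the characterisation of distinguished submanifolds as those along which the tractor normal form $N_{A_1\cdots A_d}$ is parallel (Theorem \ref{key1}, condition 3); and (iii) the standard correspondence from invariant theory between copies of the trivial $G$-representation in a tensor product and $G$-invariant scalars built from tractor-metric and tractor volume-form contractions.

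First, for each $i$, apply the splitting operator $L^{\cV_i}$ to produce parallel tractors $S_i := L^{\cV_i}(\sigma_i) \in \Gamma(\cV_i)$, i.e.\ $\nabla^{\cT} S_i = 0$ on $M$. Next, assuming $\Sigma\hookrightarrow M$ is distinguished, Theorem \ref{key1} gives $\nabla_i N_{A_1\cdots A_d} = 0$ along $\Sigma$. Consequently the tensor product
\[
  \Xi := N^{\otimes m_0} \otimes S_1^{\otimes m_1} \otimes \cdots \otimes S_k^{\otimes m_k},
\]
viewed as a section of the associated bundle $\cG\times_P\bigl((\otimes^{m_0}\mathbb{W}(d))\otimes(\otimes^{m_1}\mathbb{V}_1)\otimes\cdots\otimes (\otimes^{m_k}\mathbb{V}_k)\bigr)$, is parallel along $\Sigma$ (and the symmetric projection involved in \eqref{prod} preserves this property, since it is a fixed algebraic $G$-equivariant idempotent).

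Now I would invoke the standard fact that the multiplicity of the trivial representation $\mathbb{R}$ in a finite-dimensional $G$-module $\mathbb{U}$ equals the dimension of the space of $G$-invariant linear functionals $\mathbb{U}\to\mathbb{R}$, and that every such invariant is a polynomial in the tractor metric $h_{AB}$ and the tractor volume form $\tvol_{A_1\cdots A_{n+2}}$, evaluated by complete contractions. Each such $G$-invariant $\lambda : (\odot^{m_0}\mathbb{W}(d))\otimes\cdots\otimes(\odot^{m_k}\mathbb{V}_k)\to \mathbb{R}$ descends to a bundle map of the corresponding associated bundles, because $h$ and $\tvol$ are globally parallel tractor fields (Section \ref{formT-sec}). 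Applying $\lambda$ to $\Xi$ therefore yields a scalar function
\[
  f_\lambda := \lambda(\Xi) \in C^\infty(\Sigma),
\]
and since $\lambda$ is built from parallel objects and $\Xi$ is parallel along $\Sigma$, $\nabla_i f_\lambda = 0$, i.e.\ $f_\lambda$ is locally constant on $\Sigma$. This is precisely a first integral for the distinguished submanifold, and distinct $\lambda$'s (a basis for the $\mathrm{Hom}_G$ space) give distinct such invariants.

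The main conceptual step, and the mild obstacle, is checking that the passage from an abstract trivial summand in \eqref{prod} to an explicit scalar invariant is compatible with both the symmetrisation used in defining $\odot^{m_j}\mathbb{V}_j$ and the $P$-equivariant (rather than $G$-equivariant) bundle structure away from the flat model; this is handled by the fact that $h$ and $\tvol$ are $P$-invariant tensors that extend to parallel sections of the tractor bundles on any conformal manifold, so every $G$-invariant contraction is automatically a well-defined, conformally invariant scalar. The remaining content is then bookkeeping: counting multiplicities of $\mathbb{R}$ in \eqref{prod} corresponds to counting independent complete contractions, and each produces the asserted first integral.
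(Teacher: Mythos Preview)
Your proposal is correct and follows essentially the same approach as the paper: the paper's own proof is extremely brief, simply noting that the result is ``an easy consequence of the reasoning above'' (i.e., parallel tractors from normal BGG solutions contracted with the parallel normal form $N$ along a distinguished $\Sigma$ yield constants) and referring to \cite{GST} for the formal details in the curve case. Your write-up is in fact more explicit than the paper's, spelling out the invariant-theory step (trivial summands $\leftrightarrow$ $G$-invariant contractions built from $h$ and $\tvol$) that the paper leaves implicit.
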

\begin{proof} The proof is an easy consequence of the reasoning above. Otherwise the formal proof is a trivial adaption of the proof of Theorem 6.1 in \cite{GST}, which treats the case of curves.  
  \end{proof}

\noindent The theorem has used the normal form $N_{A_1\cdots A_d}$ as the basis for
producing first integrals. One can equivalently use its Hodge dual
$\star N_{A_1\cdots A_{m+2}}$, or the normal projector $N^A_B$, or any
combination of these, as by Theorem \ref{key1} any of these are
parallel for distinguished submanifolds. 

 Note that to apply \eqref{prod} of the Theorem for a given
$\Sigma$ we require normal solutions to $k$ first BGG equations. For
case of curves, several examples are given in \cite{GST}, as is also
the explanation of how this is linked to familiar first integrals for
geodesics as obtained from Killing vectors and Killing tensors (which
are solutions of projective BGG equations). Given that resource we
treat just one example here.

\subsection{First integrals from a normal conformal Killing-Yano form}
\label{ex-sect}

We give an example to show how this machinery yields conserved
quantities for distinguished submanifolds. It is easy to follow the
ideas here to produce other examples, see \cite{GST} for the case of $m=1$.

The space
$\cE_{a_1 [ a_2 \cdots a_d]} [w] = \cE_{a_1} \otimes \cE_{[a_2 \cdots
    a_d]} [w]$ is completely reducible under the action of $O(g)$, and has the decomposition
\begin{equation}\label{eqn_killing_yano_decomp}
  \cE_{a_1 [a_2 \cdots a_d] } [w] = \cE_{[a_1 a_2 \cdots a_d]} [w] \oplus \cE_{{\{ a_1 [ a_2 \cdots a_d ] \}}_0} [w] \oplus \cE_{ [ a_3 \cdots a_d ]} [w-2],
\end{equation}
where $\cE_{{\{ a_1 [ a_2 \cdots a_d ] \}}_0} [w]$ consists of tensors
$s_{ a_1 \cdots a_d}\in \cE_{a_1 [ a_2 \cdots a_d] } [w] $ which are,
metric trace-free, completely skew on the indices $ a_2, \ldots a_d$,
and for which $s_{[a_1 a_2 \cdots a_d]} = 0$.  A $(d-1)$-form $k_{a_2 \cdots a_d} \in \Gamma(\cE_{[a_2 \cdots a_d]} [d])$ is
said to be a \emph{conformal Killing-Yano form} or simply
\emph{conformal Killing form} if it satisfies
\begin{equation}\label{eqn_conf_killing_form}
  \nabla_{a_1} k_{ a_2 \cdots a_d } = \phi_{a_1 \cdots a_d} + \bg_{a_1 [a_2} \nu_{a_3 \cdots a_d]},
\end{equation}
where $\phi_{a_1 \cdots a_d} \in \cE_{[a_1 \cdots a_d]}[d]$ and
$\nu_{a_3 \cdots a_d} \in \cE_{[a_3 \cdots a_d]} [d-2]$.
Equivalently,
\begin{equation}\label{eqn_conf_killing_form_alt}
  \nabla_{\{a_1} k_{{a_2 \cdots a_d \}}_0} = 0,
\end{equation}
where the braces and subscript zero denote projection onto the middle
factor of~\eqref{eqn_killing_yano_decomp}.  This equation can be
checked to be conformally invariant, and is moreover a first BGG
equation (which in this context implies conformal invariance).  Thus
solutions to this equation correspond bijectively to a class of
sections of a certain tractor bundle.  To understand this, we proceed
as follows.  For this equation, it is shown in  \cite{G-Sil-ckforms}
that the corresponding tractor bundle is $\Lambda^{d}\cT$, and it follows from the formulae there  that
the BGG splitting operator $L : \cE_{[ a_2 \cdots a_d]} [d] \to
\cE_{[A_1 \cdots A_d]}$ is:
\begin{equation} \label{eqn_conf_killing_splitting_op}
  \begin{split}
  L(k_{a_2 \cdots a_d})
  &= k_{a_2 \cdots a_d} \mathbb{Y}_{ A_1 \cdots A_d} ^{\phantom{A_1} a_2 \cdots a_d}
  + \frac{1}{d} \nabla_{a_1} k_{a_2 \cdots a_d} \,\mathbb{Z}_{A_1  \cdots A_d} ^{a_1 \cdots a_d} + \frac{d-1}{n-d+2} \nabla^c k_{c a_3 \cdots a_d} \mathbb{W}_{A_0 A_1 A_2 \cdots A_d} ^{\phantom{A_0 A_1} a_3 \cdots a_d}  \\
%%  &\qquad + \rho_{a_2 \cdots a_d} \mathbb{X}_{A_1 A_2 \cdots A_d} ^{\phantom{A_1} a_2 \cdots a_d},
  &\quad \;-\left(\frac{1}{n(d-1)}\nabla^b\nabla_{\{b} k_{a_2 \cdots a_d\}_0} - \frac{1}{n-d+2}\nabla_{[a_2|}\nabla^b k_{b |a_3 \cdots a_d]} - P_{[a_2|}{}^bk_{b |a_3 \cdots a_d]} \right) \mathbb{X}_{A_1 A_2 \cdots A_d} ^{\phantom{A_1} a_2 \cdots a_d}.
\end{split}
\end{equation}
The general theory immediately gives us the following.

\begin{prop}
  Let $k_{a_1 \cdots a_{d-1}} \in \cE_{[a_1 \cdots a_{d-1}]} [d]$ be
  a normal solution to the conformal Killing-Yano equation and
  $\Sigma$ a distinguished submanifold of codimension $d$, with
  corresponding tractor normal form $N_{A_1 \cdots A_d}$.  Let
  $\mathbb{K}_{A_1 \cdots A_d} := L(k_{a_1 \cdots a_{d-1}}) \in
  \cE_{[A_1 \cdots A_d]}$ be the image of $k_{a_1 \cdots a_{d-1}}$
  under the BGG splitting operator $L$
  of~\eqref{eqn_conf_killing_splitting_op}.  Then the scalar function
  \begin{equation}\label{cck}
    \mathbb{K}_{A_1 \cdots A_d} N^{A_1 \cdots A_d}
  \end{equation}
  is constant along
  $\Sigma$.
\end{prop}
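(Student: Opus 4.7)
The plan is to observe that this proposition is essentially an immediate instance of Theorem~\ref{fi-thm}, applied with $k=1$, $\mathbb{V}_1=\Lambda^d\mathbb{R}^{n+2}$, $m_1=1$, and $m_0=1$; the scalar \eqref{cck} is precisely the complete contraction that corresponds to the trivial $G$-representation inside $\mathbb{W}(d)\otimes \mathbb{V}_1 = \Lambda^d\mathbb{R}^{n+2}\otimes \Lambda^d\mathbb{R}^{n+2}$ via the tractor metric. Rather than appealing to the general machinery, however, it is cleaner (and in fact informative) to give the two-line direct argument, which I outline below.

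First I would invoke Theorem~\ref{normp}: since $k_{a_2\cdots a_d}$ is by hypothesis a \emph{normal} solution of the conformal Killing--Yano equation, and since (as recalled just before \eqref{eqn_conf_killing_splitting_op}) the first BGG operator in question is exactly the projection of $\nabla\circ L$ from the $d$-form tractor bundle $\Lambda^d\cT$ to $\mathcal{H}_1$, the splitting operator $L$ sends $k$ to a tractor $d$-form $\mathbb{K}_{A_1\cdots A_d}=L(k_{a_2\cdots a_d})$ which is \emph{parallel} on $M$:
\begin{equation*}
\nabla_a \mathbb{K}_{A_1\cdots A_d}=0.
\end{equation*}
In particular the pullback $\nabla_i\mathbb{K}_{A_1\cdots A_d}$ vanishes along $\Sigma$.

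Next I would use the hypothesis that $\Sigma$ is distinguished. By condition~3 of Theorem~\ref{key1}, this is equivalent to $\nabla_i N_{A_1\cdots A_d}=0$ along $\Sigma$. Combining these two parallel-transport facts with the Leibniz rule for the tractor connection and the fact that the connection preserves the tractor metric (so that raising indices commutes with $\nabla$), we compute along $\Sigma$:
\begin{equation*}
\nabla_i\bigl(\mathbb{K}_{A_1\cdots A_d}N^{A_1\cdots A_d}\bigr)
=(\nabla_i\mathbb{K}_{A_1\cdots A_d})N^{A_1\cdots A_d}
+\mathbb{K}_{A_1\cdots A_d}\nabla_i N^{A_1\cdots A_d}=0.
\end{equation*}
Since $\mathbb{K}_{A_1\cdots A_d}N^{A_1\cdots A_d}$ is a scalar (weight zero) function, its pullback derivative to $\Sigma$ being zero is equivalent to it being locally constant on $\Sigma$, proving the claim.

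There is no real obstacle here: once the two parallelness statements are in place, the proof is a one-line Leibniz calculation. The only conceptual point worth highlighting (which I would mention in the proof) is that normality of the BGG solution is exactly what is needed to ensure $\nabla\mathbb{K}=0$ on all of $M$; without normality one would only get the weaker fact that $\nabla\mathbb{K}$ lies in the image of $\partial^*$, which is not sufficient to make the contraction with $N^{A_1\cdots A_d}$ conserved along a general distinguished $\Sigma$.
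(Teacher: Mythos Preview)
Your proof is correct and follows exactly the same approach as the paper: normality of $k$ gives $\nabla_a\mathbb{K}=0$, Theorem~\ref{key1} gives $\nabla_i N^{A_1\cdots A_d}=0$, and the Leibniz rule finishes it. Your framing of the result as the $k=1$, $m_0=m_1=1$ instance of Theorem~\ref{fi-thm} and your closing remark on the role of normality are both apt and consistent with the surrounding discussion in the paper.
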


\begin{rem}
\emph{(i)} Here $d\geq 2$, but the result still holds in the case where $d=1$ of we understand the hypothesis in this case to mean that $\sigma = k \in \cE[1]$ satisfies the almost-Einstein equation \eqref{eq:AE} and take $L$ to be the corresponding BGG splitting operator. \emph{(ii)} Note that one can of course use $\star\hspace{-0.5pt}N$ rather than $N$ to construct conserved quantities, and in this case we obtain a conserved quantity $\mathbb{K}_{A_1 \cdots A_{m+2}} \star\hspace{-0.5pt}N^{A_1A_2 \cdots A_{m+2}}$ when $k_{a_1\cdots a_{m+1}}$ is a normal solution to the conformal Killing-Yano equation (this is the approach used to construct first integrals for conformal circles in \cite[Theorem 6.8]{GST}). However, since the Hodge-$\star$ operator takes conformal Killing forms to conformal Killing forms, we obtain the same first integrals this way.
\end{rem}

\begin{proof}
  Since $k_{a_1 \cdots a_{d-1}}$ is a normal solution, we have that
  $\nabla_i \mathbb{K}_{A_1 \cdots A_d} = 0$.  Moreover, since
  $\Sigma$ is a distinguished submanifold, $\nabla_i N^{A_1 \cdots
    A_d} = 0$ by Theorem \ref{key1}.  Hence the scalar quantity
  $\mathbb{K}_{A_1 \cdots A_d} N^{A_1 \cdots A_d}$ is constant.
\end{proof}

We show the non-triviality of
the first integral quantity \eqref{cck} by calculating it directly.
From the explicit forms of $\mathbb{K}_{A_1 \cdots A_d}$ and $N_{A_1 \cdots A_d}$, we see that 
  \begin{align}
    \mathbb{K}_{A_1 \cdots A_d} N^{A_1 \cdots A_d}
    &= d \cdot k_{a_1 \cdots a_{d-1}} N^{c b_1 \cdots b_{d-1}} H_c \cdot \mathbb{Y}_{A_1 A_2 \cdots A_d} ^{\phantom{A_1} a_2 \cdots a_d} \mathbb{X}^{A_1 A_2 \cdots A_d} _{\phantom{A_0} b_1 \cdots b_{d-1}} \nonumber \\
    &\quad+ \frac{1}{d} \left( \nabla_{a_1} k_{a_2 \cdots a_d} \right) N^{b_1 b_2 \cdots b_d} \cdot \mathbb{Z}_{A_1 A_2 \cdots A_d} ^{a_1 a_2 \cdots a_d} \mathbb{Z}^{A_1 A_2 \cdots A_d} _{b_1 b_2 \cdots b_d} \nonumber \\
    &= k_{a_1 \cdots a_{d-1}} N^{c a_1 \cdots a_{d-1}} H_c + \frac{1}{d} \left( \nabla_{a_1} k_{a_2 \cdots a_d} \right) N^{a_1 a_2 \cdots a_d}, \label{eqn_K_N_explicit}
  \end{align}
  which verifies non-triviality.

  \medskip

  For the case of $d=n-1$, meaning curves, it was seen in \cite{GST}
  that, for many examples, normality of the BGG solution is actually
  not required in order to obtain a first integral. In the general codimension case, however, when the BGG solution is not normal we may or may not obtain a conserved quantity depending on the situation. The condition required for \eqref{cck} to give rise to a conserved quantity is weaker than the normality of $\mathbb{K}$, and depends on the submanifold $\Sigma$, which we see as follows: If $\Sigma$
  is distinguished then $N^{A_1 \cdots A_d}$ is parallel for the
  tractor connection and we have
    \begin{equation*}
    \nabla_i \left( \mathbb{K}_{A_1 \cdots A_d} N^{A_1 \cdots A_d} \right) = \left( \nabla_i \mathbb{K}_{A_1 \cdots A_d} \right) N^{A_1 \cdots A_d},
  \end{equation*}
where $\mathbb{K}_{A_1 \cdots A_d}=L(k)$ for a general rank $(d-1)$
conformal Killing-Yano form $k$.
Theorem 3.9 of~\cite{G-Sil-ckforms}
gives
  \begin{equation*}
    \left( \nabla_c - \Psi_c \right) \mathbb{K}_{A_1 \cdots A_d} = 0,
  \end{equation*}
  where $\nabla_c$ is the standard tractor connection and $\Psi_c : \cE_{[A_1 \cdots A_d]} \to \cE_{c[A_1 \cdots A_d]}$ is defined by 
  \begin{equation}
    \begin{split}
      \Psi_c ( \mathbb{K}_{A_1 A_2 A_3 \cdots A_d})
    &:= - \frac{1}{2} W_{a_1 a_2 c} {}^e k_{e a_3 \cdots a_d} \mathbb{Z}_{A_1 A_2 A_3 \cdots A_d} ^{a_1 a_2 a_3 \cdots a_d} + \phi_{c a_3 \cdots a_d} \mathbb{W}_{A_1 A_2 A_3 \cdots A_d} ^{\phantom{A_1 A_2} a_3 \cdots a_d} \\
    &\hspace{4em}+ \xi_{a_2 \cdots a_d} \mathbb{X}_{A_1 A_2 \cdots A_d} ^{\phantom{A_0} a_2 \cdots a_d},
  \end{split}
  \end{equation}
  where only the explicit form of the $\mathbb{Z}$ slot will be important.
  Therefore one has 
  \begin{align}
\nonumber    \nabla_i \left( \mathbb{K}_{A_1 A_2 \cdots A_d} N^{A_1 A_2 \cdots A_d} \right)
    &= \left( \nabla_i \mathbb{K}_{A_1 A_2 \cdots A_d} \right) N^{A_1 A_2 \cdots A_d} \\
\nonumber    &= \Psi_i (\mathbb{K}_{A_1 A_2 \cdots A_d}) N^{A_1 A_2 \cdots A_d} \\
\nonumber    &= -\frac{1}{2} W_{a_1 a_2 i} {}^e k_{e a_3 \cdots a_d} N^{b_1 b_2 b_3 \cdots b_d} \cdot \mathbb{Z}_{A_1 A_2 A_3 \cdots A_d} ^{a_1 a_2 a_3 \cdots a_d} \mathbb{Z}^{A_1 A_2 A_3 \cdots A_d} _{b_1 b_2 b_3 \cdots b_d} \\
\label{eq:obs-to-cc}    &= -\frac{1}{2} W_{a_1 a_2 i} {}^e k_{e a_3 \cdots a_d} N^{a_1 a_2 a_3 \cdots a_d}, 
  \end{align}
the vanishing of which is a weaker condition than normality. 

To see that \eqref{eq:obs-to-cc} may indeed be zero or not zero when $k$ is not normal, we consider the following simple examples:
\begin{example} \emph{\textbf{(Non-normal BGG solutions and conserved quantities.)} 
Let $M=S^2\times S^2$, equipped with the standard product metric $g$
and let $g_i$, $i=1,2$, denote the pullback of the standard round metric on
$S^2$ by the projection on the $i$th factor (so $g=g_1+g_2$). Then the
Weyl tensor of $g$ is given in terms of the Kulkarni-Nomizu products
of $g_1$ and $g_2$ by $W=\frac{1}{3}(g_1 \KN g_1 - g_1\KN g_2 + g_2\KN
g_2)$. When the codimension $d$ is $2$ (so $d-1=1$) the BGG solutions
$k$ appearing in \eqref{eq:obs-to-cc} are just the conformal Killing
fields of $(M,[g])$, of which there are many (e.g., the trivial lift
of a Killing field of one of the round $S^2$ factors). From the
formula for the Weyl tensor it is easy to see that none of the
conformal Killing fields on $(M,[g])$ are normal (as the map $k^a
\mapsto k^a W_{abcd}$ is injective). In this case, given a
distinguished submanifold $\Sigma$, the quantity in
\eqref{eq:obs-to-cc} becomes simply
$-\frac{1}{2}W_{abic}k^c\mathrm{N}^{ab}$. From the formula for $W$ it
is easy to see that this always gives zero in the case where $\Sigma$
is $S^{2}\times \{p\}$ or $\{p\}\times S^2$ for some $p\in S^2$ (since
then
$W_{abic}\mathrm{N}^{ab}=W_{abec}\Pi^e_i\mathrm{N}^{ab}=0$). Thus, in
these cases $\mathbb{K}_{AB}N^{AB}$ is a conserved quantity (i.e.\ is
constant) along $\Sigma$ for any conformal Killing vector field
$k$. On the other hand, if $\Sigma$ is the diagonal submanifold in
$M=S^2\times S^2$ then only for a subset of the conformal killing
fields do we obtain a conserved quantity in this way. To see this, let
$(p,p)\in \Sigma$, let $\tilde{X}, \tilde{Y}$ normal vectors to
$\Sigma$ at $(p,p)$ with projection on the first factor given by
$X,Y\in T_p S^2$ respectively, let $\tilde{Z}$ be a tangent vector to
$\Sigma$ at $(p,p)$ with projection on the first factor given by
$Z\in T_p S^2$, and let $k$ be a conformal Killing vector field on
$M$ with components in the direction of the first and second factors
at $(p,p)$ given by $k_1,k_2\in T_pS^2$, respectively. Then, noting
that $\tilde{X}$, $\tilde{Y}$ and $\tilde{Z}$ are all determined by
their components in the direction of the first factor, a
straightforward calculation shows that at $(p,p)$,
$W(\tilde{X},\tilde{Y},\tilde{Z},k) = \langle X, Z\rangle (\langle Y,
k_1\rangle + \langle Y, k_2\rangle) - \langle Y, Z\rangle (\langle X,
k_1\rangle + \langle X, k_2\rangle)$, where $\langle\, ,\, \rangle$
denotes the round metric on $S^2$. From this we see that
$\mathbb{K}_{AB}N^{AB}$ is constant along $\Sigma$ when $k$ is orthogonal
to $\Sigma$ (i.e.\ when $k_2=-k_1$ at each point $(p,p)\in \Sigma$)
but not otherwise.  }
\end{example}

\section{Distinguished submanifolds from curved orbits}\label{corbit-sec}

In this section we show that distinguished submanifolds arise naturally as  {\em curved orbits}, in the sense of \cite{CGH-Duke}, in the presence of certain Cartan holonomy reductions of the conformal structure. We then show (without using the curved orbit theory of \cite{CGH-Duke}) that the same continues to hold under weaker hypotheses. Before coming to the curved orbit theory result and its generalisation, however, we prove Theorem \ref{thm_submanifold_gst}, as this will be needed in the discussion that follows. 

\subsection{Distinguished submanifolds via a moving incidence relation}

Recall that a conformally embedded submanifold, of codimension $d$, determines the
fundamental and equivalent objects $N^A_B$, $N_{A_1\cdots A_d}$, and
$\SN^{A_1\cdots A_{m+2}}$ and then we have Theorem
\ref{key1}. For our purposes, however, it is important
to have a characterisation of distinguished submanifolds
that does not use an initial knowledge of these. Theorem \ref{thm_submanifold_gst} gives us such a characterisation, which we state more explicitly here:
\begin{thm}\label{thm_submanifold_gst-main}
  Let $\Sigma \hookrightarrow M$ be a submanifold of
  codimension $d$ in a conformal manifold $(M, \bm{c})$.  Then
  $\Sigma$ is distinguished if, and only if, there exists a
  nowhere-zero $\Psi_{A_1 A_2 \cdots A_d} \in \Gamma(\Lambda^d \cT^* |_{\Sigma})$
  such that $\Psi_{A_1 A_2 \cdots A_d} X^{A_1} = 0$ and $\nabla_i
  \Psi_{A_1 A_2 \cdots A_d} = 0$ along $\Sigma$.
\end{thm}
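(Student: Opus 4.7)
Forward direction: take $\Psi := N$, the tractor normal form of $\Sigma$. By condition~3 of Theorem~\ref{key1}, $\nabla_i N = 0$ along $\Sigma$. The condition $X^{A_1} N_{A_1 \cdots A_d} = 0$ follows immediately from the explicit expansion~\eqref{tractor_normal_form} of $N$ in the splitting operators $\mathbb{Z}$ and $\mathbb{X}$, together with the identities $X^A Z^a_A = 0 = X^A X_A$: every summand of the antisymmetrisation pairs $X^{A_1}$ against some $Z^{a_k}_{A_1}$ or $X_{A_1}$, and so vanishes.

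For the reverse direction, suppose $\Psi$ is nowhere-zero, with $X^{A_1}\Psi_{A_1\cdots A_d} = 0$ and $\nabla_i\Psi = 0$ along $\Sigma$. The strategy is to show that $\Psi$ must be a (locally constant) multiple of the tractor normal form $N$, and then to invoke Theorem~\ref{key1}. I would work in a minimal scale $g\in\bc$ for $\Sigma$ (which exists by Proposition~\ref{prop_min_scale}), so that $H^a \equiv 0$ along $\Sigma$; by conformal invariance of the setup, this entails no loss of generality.

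Differentiating $X^{A_1}\Psi_{A_1\cdots} = 0$ once and using $\nabla_a X^A = Z^A_a$ together with $\nabla_i\Psi = 0$ gives $\Pi^a_i Z^{A_1}_a \Psi_{A_1\cdots} = 0$, so $\Psi$ is annihilated by every tangent tractor. Differentiating a second time, the Gauss identity $\nabla_j \Pi^a_i = \II_{ji}{}^a$ combined with $\nabla_b Z^A_a = -P_{ba} X^A - \bg_{ba} Y^A$ yields
\begin{equation*}
  \II_{ji}{}^a Z^{A_1}_a \Psi_{A_1\cdots} \;-\; P_{ji}\, X^{A_1}\Psi_{A_1\cdots} \;-\; \bg_{ji}\, Y^{A_1}\Psi_{A_1\cdots} \;=\; 0.
\end{equation*}
The middle term vanishes by hypothesis, and tracing with $\bg^{ij}$ kills the first term because $\bg^{ij}\II_{ji}{}^a = m H^a = 0$ in the minimal scale, leaving $Y^{A_1}\Psi_{A_1\cdots A_d} = 0$. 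Thus $\Psi$ is annihilated in its first (and hence, by antisymmetry, in every) slot by $X$, $Y$, and the tangent tractors $\Pi^a_i Z^A_a$, which in the minimal scale span $\cN^\perp = \cT\Sigma$. Consequently $\Psi$ is a section of $\Lambda^d\cN^*|_\Sigma$, a line bundle spanned by $N$, so $\Psi = \lambda N$ for some nowhere-zero smooth function $\lambda$. Since $\Psi$ and the tractor metric are parallel, $\Psi^{A_1\cdots A_d}\Psi_{A_1\cdots A_d} = d!\,\lambda^2$ is locally constant, hence $\lambda$ is locally constant; then $\nabla_i\Psi = 0$ becomes $\lambda\,\nabla_i N = 0$, giving $\nabla_i N_{A_1\cdots A_d} = 0$ and hence $\Sigma$ distinguished by Theorem~\ref{key1}.

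The hardest step is the double differentiation and the observation that the trace over $(i,j)$ in a minimal scale exactly kills the second-fundamental-form contribution, cleanly producing the extra condition $Y^{A_1}\Psi_{A_1\cdots} = 0$; once one simultaneously has $\Psi$ annihilated by $X$, $Y$, and the tangent $Z$'s, the conclusion $\Psi \propto N$ is immediate from the orthogonal decomposition $\cT|_\Sigma = \cT\Sigma\oplus \cN$, and the passage from ``parallel $\Psi$'' to ``parallel $N$'' via constancy of $\Psi^{A_1\cdots A_d}\Psi_{A_1\cdots A_d}$ is routine.
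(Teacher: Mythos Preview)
Your proof is correct. The forward direction matches the paper's exactly. For the reverse direction, however, you take a genuinely different route.

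The paper works in an arbitrary scale: it expands $\Psi$ into its four slots $\sigma,\nu,\phi,\rho$ via~\eqref{eqn_tractor_form_comp_series}, uses $X^{A_1}\Psi_{A_1\cdots}=0$ to kill $\sigma$ and $\phi$, differentiates once to show that $\nu$ and $\rho$ are normal-valued (hence $\nu=fN_{a_1\cdots a_d}$), and then uses $\nabla_i\Psi=0$ slot-by-slot (via the formulae~\eqref{eqn_tractor_form_projector_derivatives} and the Riemannian analogue of~\eqref{eqn_grad_norm}), tracing the $\mathbb{Z}$-slot relation to identify $\rho=f\cdot d\cdot H^bN_{ba_2\cdots a_d}$. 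This recovers $\Psi=fN_{A_1\cdots A_d}$ explicitly in any scale.

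Your approach instead chooses a minimal scale and differentiates twice: the second differentiation plus the trace kills the $\II$-term precisely because $H=0$, leaving $Y^{A_1}\Psi_{A_1\cdots}=0$. Combined with the $X$ and tangent-$Z$ annihilation conditions, this immediately forces $\Psi$ into the line bundle $\Lambda^d\cN^*$ without ever unpacking the slots. This is more geometric and arguably cleaner; the price is that one must first pass to a minimal scale (legitimate, since the conclusion is conformally invariant). The paper's route, by contrast, produces the explicit slot formula for $\Psi$ in any scale along the way. Both proofs finish identically via the norm argument and Theorem~\ref{key1}.
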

\begin{rem}\label{rem-simple}
From the proof we will see that such a tractor field $\Psi$ must be a (locally) constant multiple of the tractor normal form along $\Sigma$, and is therefore simple. This observation will become relevant when we connect certain BGG solutions with distinguished submanifolds in Section \ref{subsec-curved-orbits}.
\end{rem}
\begin{proof}
  If $\Sigma$ is distinguished, then by Theorem~\ref{key1}, the tractor normal form is parallel in tangential directions.
  Moreover, it is clear from the definition of the tractor normal form~\eqref{tractor_normal_form} that $N_{A_1 A_2 \cdots A_d} X^{A_1} = 0$.
  Thus we may take $\Psi_{A_1 \cdots A_d}$ to be the tractor normal form.

  Conversely, suppose that we have $\Psi\in \Gamma(\Lambda^d \cT^*
  |_{\Sigma}) $ which satisfies $\Psi_{A_1 A_2 \cdots A_d} X^{A_1} =
  0$ and $\nabla_i \Psi_{A_1 A_2 \cdots A_d}=0$ along $\Sigma$.
  From~\eqref{eqn_tractor_form_comp_series}, we know that, in a background scale, $\Psi$  can be written
  \begin{align*}
    \Psi_{A_1 A_2 \cdots A_d}
    &= \sigma_{a_2 \cdots a_d} \mathbb{Y}^{\phantom{A_1} a_2 \cdots a_d} _{A_1 A_2 \cdots A_d} + \nu_{a_1 a_2 \cdots a_d} \mathbb{Z}^{a_1 a_2 \cdots a_d} _{A_1 A_2 \cdots A_d} \\
    & \qquad+ \phi_{a_3 \cdots a_d} \mathbb{W}^{\phantom{A_1 A_2} a_3 \cdots a_d} _{A_1 A_2 A_3 \cdots A_d} + \rho_{a_2 \cdots a_d} \mathbb{X}^{\phantom{A_1} a_2 \cdots a_d} _{A_1 A_2 \cdots A_d}. 
  \end{align*}
  But the condition $\Psi_{A_1 \cdots A_d} X^{A_1} = 0$ together
  with~\eqref{projector_contractions} implies that $\sigma_{a_2 \cdots
    a_d} = 0$ and $\phi_{a_3 \cdots a_d} = 0$.

  Moreover, if $u^i \in \Gamma(\cE^i)$, the incidence relation $\Psi_{A_1 A_2 \cdots A_d} X^{A_1}=0$ together with the parallel condition means that
  \begin{equation*}
    0 = u^i \nabla_i \left( X^{A_1} \Psi_{A_1 A_2 \cdots A_d} \right) = u^i Z^{A_1} _i \Psi_{A_1 A_2 \cdots A_d}
  \end{equation*}
  so $u^i Z^{A_1} _i \Psi_{A_1 A_2 \cdots A_d} = 0$ for all $u \in \Gamma(T \Sigma)$.
  Expanding this, again using~\eqref{projector_contractions} and the linear independence of the $X$ and $Z$ projectors, one sees that $\nu_{a_1 a_2 \cdots a_d} u^{a_1} = 0$ and $\rho_{a_2 \cdots a_d} u^{a_2} = 0$.
  Since $u^i$ was an arbitrary submanifold tangent vector, we conclude that $\nu \in (\Lambda^d N^* \Sigma) [d]$ and $\rho \in (\Lambda^{d-1} N^* \Sigma) [d-2]$.
  Thus in particular $\nu_{a_1 a_2 \cdots a_d} = f N_{a_1 a_2 \cdots a_d}$, where $N_{a_1 a_2 \cdots a_d}$ is the Riemannian normal form of $\Sigma$ and $f$ is a function on $\Sigma$.

  Now note that, since $\Psi$ is parallel, $\Psi^{A_1 A_2 \cdots A_d} \Psi_{A_1 A_2 \cdots A_d}$ is constant along $\Sigma$.
  On the other hand, 
  \begin{equation*}
    \Psi^{A_1 A_2 \cdots A_d} \Psi_{A_1 A_2 \cdots A_d}
    = \nu^{a_1 a_2 \cdots a_d} \nu_{a_1 a_2 \cdots a_d}
    = f^2 N^{a_1 a_2 \cdots a_d} N_{a_1 a_2 \cdots a_d}
    = f^2 \cdot d!,
  \end{equation*}
  and therefore the function $f$ is locally constant and nowhere-zero.
  Thus on each connected component of $\Sigma$, $\nu_{a_1 a_2 \cdots a_d}$ is a constant multiple of the Riemannian normal form.

  From equation~\eqref{eqn_tractor_form_projector_derivatives}, we calculate
  \begin{equation} \label{eqn_GST_generalization_intermediate_calc}
    \begin{split}
    0 = \nabla_i \Psi_{A_1 A_2 \cdots A_d}
    &= \left( f \nabla_i N_{a_1 a_2 \cdots a_d} + \rho_{a_2 \cdots a_d} \confmet_{i a_1} \right) \mathbb{Z}^{a_1 a_2 \cdots a_d} _{A_1 A_2 \cdots A_d} \\
    &\hspace{2em} + \left( \nabla_i \rho_{a_2 \cdots a_d} -f \cdot d \cdot  N_{a_1 a_2 \cdots a_d} P_i {}^{a_1} \right) \mathbb{X}^{\phantom{A_1} a_2 \cdots a_d} _{A_1 A_2 \cdots A_d}.
  \end{split}
  \end{equation}
  Now, note that the same argument that yielded
  equation~\eqref{eqn_grad_norm} may be repeated replacing normal
  \emph{tractors} with normal \emph{vectors} (as per
  Remark~\ref{remark_results_apply_to_riemannian}) to give
  \begin{equation}
    \nabla_i N_{a_1 a_2 \cdots a_d} = -d \cdot \II_{i [ a_d} {}^{c} N_{a_1 a_2 \cdots a_{d-1} ] c}.
  \end{equation}
  Substituting this into~\eqref{eqn_GST_generalization_intermediate_calc} gives that in particular
  \begin{equation}\label{rho_wedge_g}
    - f \cdot d \cdot \II_{i [ a_d} {}^{c} N_{a_1 a_2 \cdots a_{d-1} ] c} + \confmet_{i [ a_1} \rho_{a_2 \cdots a_{d-1} a_d ]} = 0.
  \end{equation}

  Contracting the above with $\confmet^{i a_1} = \Pi^i _b \bg^{b a_1}$ allows us to express $\rho_{a_2 \cdots a_d}$ explicitly.
  Since the expression $\II_{i a_{d}} {}^{c} N_{a_1 a_2 \cdots a_{d-1} c}$ is already skew in the indices $a_1 a_2 \cdots a_{d-1}$ we have:
	  \begin{align*}
    \confmet^{i a_1} \II_{i [ a_d} {}^{c} N_{a_1 a_2 \cdots a_{d-1} ] c} 
    &= \frac{1}{d}\confmet^{i a_1} \left( \II_{i a_1} {}^{c} N_{c a_2 \cdots a_{d-1} a_d} +  \II_{i a_2} {}^{c} N_{a_1 c \cdots a_{d-1} a_d} 
		+\cdots + \II_{i a_{d}} {}^{c} N_{a_1 a_2 \cdots a_{d-1} c}  \right) \\
    &=  \frac{m}{d} H^{c} N_{c a_2 \cdots a_{d-1} a_d}.
  \end{align*}
  Similarly, since $\confmet^{ia_1}\rho_{a_1\cdots a_{d-1}} =0$,
	  \begin{align*}
    \confmet^{i a_1} \confmet_{i [a_1} \rho_{a_2 \cdots a_d]} 
    &= \frac{1}{d}\confmet^{i a_1} \left( \confmet_{i a_1}  \rho_{a_2 \cdots  a_d} -  \confmet_{i a_2}  \rho_{a_1a_3 \cdots  a_d} 
		-\cdots - \confmet_{i a_d}  \rho_{a_2 \cdots  a_{d-1}a_1}  \right) \\
    &= \frac{m}{d} \rho_{a_2 \cdots a_d}.
  \end{align*}
  Thus 
  \begin{align*}
    \Psi_{A_1 A_2 \cdots A_d}
    &= f N_{a_1 a_2 \cdots a_d} \mathbb{Z}_{A_1 A_2 \cdots A_d} ^{a_1 a_2 \cdots a_d} + f \left( d \cdot H^b N_{b a_2 \cdots a_d} \right) \mathbb{X}_{A_1 A_2 \cdots A_d} ^{\phantom{A_1} a_2 \cdots a_d} \\
    &= f N_{A_1 A_2 \cdots A_d},
  \end{align*}
  where $N_{A_1 A_2 \cdots A_d}$ is the tractor normal form. 

  Since the function $f$ is locally constant and nowhere-zero, $\nabla_i \Psi_{A_1 A_2 \cdots A_d} = 0$ implies that the tractor normal form is parallel.
  Thus $\Sigma$ satisfies ones of the equivalent conditions of Theorem~\ref{key1}, and is therefore a distinguished submanifold.
\end{proof}

\noindent Note that Theorem \ref{thm_submanifold_gst} follows as the
tractor Hodge-$\star$ operation \eqref{tstar} commutes with the
tractor covariant derivative.

\subsection{Curved orbits and generalisations}\label{subsec-curved-orbits}

The following result shows one way in which distinguished submanifolds
arise as {\em curved orbits}, in the sense of \cite{CGH-Duke}. It generalises \cite[Proposition 7.1]{GST}. Before stating the theorem we introduce some terminology: we shall say that a tractor (or vector) $\mathbb{K}_{A_1 \cdots A_d}$ is {\em timelike} if $\mathbb{K}_{A_1 \cdots A_d}\mathbb{K}^{A_1 \cdots A_d}$ is negative, \emph{spacelike} if this is positive and {\em null} if it is zero. 
\begin{thm}\label{thm_submanifold_zero_locus}
  Suppose $k_{a_1 \cdots a_{d-1}}$ is a normal solution of the
  conformal Killing form equation on $(M, \bm{c}$) such that the
  parallel tractor $\mathbb{K}_{A_1 \cdots A_d}=L(k_{a_1 \cdots a_{d-1}})$ is simple.  Then the
  zero locus of $k$
  %%\begin{equation*}
    %%\cK := (k_{a_1 \cdots a_{d-1}}, \nabla^c k_{c a_2 \cdots a_{d-1}}), \textrm{ for any } g \in \bm{c} \textrm{ with Levi-Civita connection } \nabla,
  %%\end{equation*}
  is either empty, an isolated point, or a distinguished conformal submanifold of codimension $d$.
  Moreover, writing $\cZ(k)$ for this zero locus:
  \begin{itemize}
    \item if $\mathbb{K}$ is timelike, then $\cZ(k)$ is necessarily empty;
    \item if $\mathbb{K}$ is null, $\cZ(k)$ consists only of isolated points; 
    \item if $\mathbb{K}$ is spacelike, then $\cZ(k)$ is either empty or is a distinguished submanifold of codimension $d$.
  \end{itemize}
\end{thm}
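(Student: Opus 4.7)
The plan is to apply Theorem~\ref{thm_submanifold_gst-main} with $\Psi=\mathbb{K}=L(k)$. Normality of $k$ implies that $\mathbb{K}$ is globally parallel for the tractor connection, so $\mathbb{K}^{A_1\cdots A_d}\mathbb{K}_{A_1\cdots A_d}$ is a constant on $M$ whose sign controls the algebraic type of $\mathbb{K}$. Simplicity means that, locally, $\mathbb{K}=V_1\wedge\cdots\wedge V_d$ for a parallel frame of a parallel rank-$d$ subbundle $V_\mathbb{K}\subset\cT$, with parallel orthogonal complement $W_\mathbb{K}$ of rank $m+2$; the signature of $V_\mathbb{K}$ (and correspondingly of $W_\mathbb{K}$) will be governed by the sign of $\mathbb{K}^{A_1\cdots A_d}\mathbb{K}_{A_1\cdots A_d}$.

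The crux will be to establish that
\[
\mathcal{Z}(k)=\{p\in M: X_p\intprod \mathbb{K}_p=0\}=\{p\in M: X_p\in W_\mathbb{K}(p)\}.
\]
The inclusion $\supseteq$ is a direct slot computation using \eqref{eqn_conf_killing_splitting_op}: the top-slot projection of the $(d-1)$-tractor $X\intprod\mathbb{K}$ is a nonzero constant multiple of $k$. For the reverse inclusion I would exploit simplicity of $\mathbb{K}$ together with the null condition on $X$. A direct fibrewise check (passing to a null basis $X,Y$ adapted to the given $X$, expanding $V_1\wedge\cdots\wedge V_d$ in the resulting slots) will show that in the spacelike case $V_\mathbb{K}$ is positive-definite and cannot contain the null line $\mathbb{R}X$; vanishing of the top slot then forces the remaining slots involving $Y$ to vanish as well, which is exactly $X\in V_\mathbb{K}^\perp=W_\mathbb{K}$. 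The cleaner conceptual route, which I would prefer, is to observe that the parallel simple tractor $\mathbb{K}$ provides a Cartan holonomy reduction of $(M,\bc)$ to the stabiliser $H\leq G$ of a reference value of $\mathbb{K}$, and to invoke the curved orbit decomposition of \cite{CGH-Duke} (Theorem 2.6 and Corollary 2.10), which stratifies $M$ into initial submanifolds in bijection with the $H$-orbits on $G/P=S^n$; the model-case calculation identifies the orbit corresponding to $\{k=0\}$, and this identification then propagates to the curved setting.

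Classification by signature is then formal. If $\mathbb{K}$ is timelike then $W_\mathbb{K}$ is positive-definite and contains no null vectors, so $\mathcal{Z}(k)=\emptyset$. If $\mathbb{K}$ is null then $W_\mathbb{K}$ is degenerate with a unique null line, so pointwise the condition $X\in W_\mathbb{K}$ cuts out at most an isolated set of points. If $\mathbb{K}$ is spacelike then $W_\mathbb{K}$ has Lorentzian signature $(m+1,1)$, $\mathbb{P}_+(W_\mathbb{K}\cap\mathcal{C}_+)$ is a codimension-$d$ subsphere in the model, and \cite[Corollary 2.10]{CGH-Duke} delivers the corresponding curved orbit $\mathcal{Z}(k)$ as a smooth embedded initial submanifold of codimension $d$ whenever it is non-empty.

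Finally, in the spacelike case, $\mathbb{K}|_{\mathcal{Z}(k)}$ is nowhere-zero, tangentially parallel (being globally parallel on $M$), and annihilated by $X$ (by the identification), so Theorem~\ref{thm_submanifold_gst-main} will immediately yield that $\mathcal{Z}(k)$ is distinguished; Remark~\ref{rem-simple} confirms that $\mathbb{K}|_{\mathcal{Z}(k)}$ is then a constant multiple of the tractor normal form, consistent with simplicity of $\mathbb{K}$ being genuinely required in the hypothesis. The main obstacle is the model-to-curved identification $\mathcal{Z}(k)=\{X\in W_\mathbb{K}\}$; once this is settled (optimally via \cite{CGH-Duke}, or directly by the simplicity argument sketched above), the signature analysis and the appeal to Theorem~\ref{thm_submanifold_gst-main} are essentially automatic.
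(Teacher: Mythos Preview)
Your proposal is correct and follows essentially the same route as the paper: reduce to the identification $\mathcal{Z}(k)=\{X\intprod\mathbb{K}=0\}$, invoke the curved orbit theory of \cite{CGH-Duke} to transfer the model-case signature analysis (intersection of an $(m+2)$-plane with the null cone) to $(M,\bc)$, and then apply Theorem~\ref{thm_submanifold_gst-main} for the distinguished conclusion. One small imprecision: the projecting slot of $X\intprod\mathbb{K}$ that recovers $k$ is its $\mathbb{Z}$-slot, not a $\mathbb{Y}$-slot (the $\mathbb{Y}$- and $\mathbb{W}$-slots of $X\intprod\mathbb{K}$ vanish identically), and the paper phrases the easy direction as $k=0\iff X\intprod\mathbb{K}=0\bmod X$; the extra $\nu$-slot is what necessitates the simplicity argument (spacelike case) or the observation $\nabla k=0$ at zeros (null case) to upgrade to $X\intprod\mathbb{K}=0$.
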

\begin{rem}\label{rem-thm_submanifold_zero_locus} \emph{(i)} In the proof we will observe that in all cases $\mathcal{Z}(k)=\mathcal{Z}(X\intprod \mathbb{K})$. In particular, fixing any metric $g\in \bc$, when $d>1$ we find that $\mathcal{Z}(k)=\mathcal{Z}(\cK)$  where $\cK := (k_{a_1 \cdots a_{d-1}}, \nabla^c k_{c a_2 \cdots a_{d-1}})$.
\emph{(ii)}
The result still holds in the $d=1$ case if we take $\sigma=k$ to be an almost-Einstein scale and $\mathbb{K}=L(k)$ to be the corresponding scale tractor (cf.\ \cite{GRiemSig,Curry-G-conformal} and Example \ref{ex-conf-infty-dist}). 
\end{rem}

\begin{proof}
  Suppose $k_{a_1 \cdots a_{d-1}} \in \cE_{[a_1 \cdots a_{d-1}]} [d]$
  is a normal solution to~\eqref{eqn_conf_killing_form} such that 
  $\mathbb{K} = L(k)$ is simple. Fix any metric $g\in \bc$. Note that from equation~\eqref{eqn_conf_killing_splitting_op}, at a point where $k=0$ we have 
\begin{equation}
\mathbb{K}_{A_1 \cdots A_d}\mathbb{K}^{A_1 \cdots A_d} =  \frac{1}{d^2}|\!|\phi|\!|^2 + |\!|\nu|\!|^2,
\end{equation}
where $\phi_{a_1a_2\cdots a_d} = \nabla_{[a_1}k_{a_2\cdots a_d]}$ and $\nu_{a_3\cdots a_d} = \frac{d-1}{n-d+2} \nabla^c k_{c a_3 \cdots a_d}$. In particular, if $\mathbb{K}$ is timelike then $k$ clearly cannot have any zero and if $\mathbb{K}$ is null then at any zero of $k$ we must have $\nabla k =0$ (moreover, these results hold without needing the simplicity of $\mathbb{K}$). Now, our goal is to apply the curved orbit theory of \cite{CGH-Duke}, and to this end we note that from \eqref{eqn_conf_killing_splitting_op} one sees that $k=0$ at the point $p\in M$ if, and only if, $X \intprod \mathbb{K} = 0 \!\mod X$ (note that a term of the form $\nu_{a_3\cdots a_d}X^{A_0}\mathbb{W}_{A_0 A_1 A_2 \cdots A_d} ^{\phantom{A_0 A_1} a_3 \cdots a_d} = -\frac{1}{d+1}\nu_{a_3\cdots a_d}\mathbb{X}_{A_1 A_2 \cdots A_d} ^{\phantom{ A_1} a_3 \cdots a_d}$ is zero mod $X$). In particular, the condition $k=0$ determines a {\em $P$-type}, in the language
  of~\cite{CGH-Duke} and so Theorem 2.6 of that article applies. It will be more convenient, however, to consider the simpler condition $X \intprod \mathbb{K}$, which also determines a $P$-type. From \eqref{eqn_conf_killing_splitting_op} one sees that $X \intprod \mathbb{K}=0$ at some point $p \in M$ if, and only if, $(k_{a_1 \cdots a_d}, \nabla^c k_{c a_2
    \cdots a_d}) = 0$ at the point $p$, i.e.\ $k=0$ and $\nu=0$ at the point $p$. Let $K:=X \intprod \mathbb{K}$. From the preceding discussion, if $\mathbb{K}$ is timelike or null then clearly $\mathcal{Z}(K)=\mathcal{Z}(k)$ (without the need for $\mathbb{K}$ to be simple).  On the other hand, in the case when  when $\mathbb{K}$ is spacelike, it is easily checked that $k=0$ implies $K=0$ due to the simplicity of $\mathbb{K}$ (this fails when $\mathbb{K}$ is not simple); to see this one merely writes $\mathbb{K}$ at a point as a wedge product of orthogonal spacelike tractor $1$-forms $I^1\wedge\cdots \wedge I^d$ and examines the resulting expression $K$ to see that $K=0 \!\mod X$ only if $K=0$ (since $X$ is null); in this case the zero locus of $k$ turns out to be $\mathcal{Z}(\sigma_1,\ldots,\sigma_d)$ where $\sigma_i = X\intprod I^i$. Thus, in all cases $\mathcal{Z}(k)=\mathcal{Z}(K)$ and we can consider the zero locus of $K:=X\intprod \mathbb{K}$ instead of $k$. Apart from the distinguished submanifold property in the third bullet point (which will follow afterward from Theorem \ref{thm_submanifold_gst}/Theorem \ref{thm_submanifold_gst-main}) by Theorem 2.6 of~\cite{CGH-Duke} this reduces to an elementary consideration of the model case.

Recall that in the model case parallel tractors correspond to constant tensor fields on $\mathbb{R}^{n+2}$ and   the canonical tractor $X^A$ is identified with the position vector field of $\mathbb{R}^{n+2}$ along $\mathcal{C}_+$.  Hence, in the model case, if $\mathbb{K}_{A_1 \cdots A_d}$ is a
  parallel simple $d$-cotractor and $X \intprod \mathbb{K}$ is zero
  at some point $p$, then $X \intprod \mathbb{K}$ is zero along a
  submanifold $p$, given as follows.  The form $\mathbb{K}$ determines
  in $\R^{n+2}$ a unique $(m+2)$-plane through $p$ (as usual $m=n-d$
  and $(m+2)$-plane means a linear subspace of that dimension)
  consisting of the vectors $X^A$ in $\R^{n+2}$ that are in the
  nullity of $\mathbb{K}_{A_1 \cdots A_d}$.
   The submanifold is
  then the ray projectivisation of the intersection of this hyperplane
  with the null quadric for the Minkowski signature inner product on $\R^{n+2}$.

  We now treat the three cases in the statement of the theorem by
  considering the distinct ways that this hyperplane can intersect the
  null cone.  First, if $\mathbb{K}_{A_1 \cdots A_d}$ is simple and timelike,
  then non-zero vectors in the nullity of
  $\mathbb{K}_{A_1 \cdots A_d}$ are spacelike. No non-zero vector in their span is null or timelike.   Therefore in
  this case the $(m+2)$-plane has no intersection with $\mathcal{C}_+$.  Thus  the zero locus $\cZ(K)$ is empty (of course, we had already seen this).  For the second case, note that if $\mathbb{K}_{A_1 \cdots
    A_d}$ is null, then, using the Minkowski signature, it follows
  that the simple $d$-tractor $\mathbb{K}$ can be obtained as the
  exterior product of covectors that are spacelike except for
  exactly one which is null.  Dually, this implies that there is a
  collection of vectors which span the nullity $(m+2)$-hyperplane that
  consists of a single null vector and $m+1$ spacelike vectors.  Thus the hyperplane is tangent to the null cone, and after
  ray projectivisation the intersection descends to an isolated point.
		Finally, suppose that  $\mathbb{K}_{A_1 \cdots
    A_d}$ is spacelike. Then the hyperplane defined by vectors
  in the nullity of $\mathbb{K}_{A_1 \cdots A_d}$ can be spanned by one timelike vector and $m+1$ spacelike vectors.  Such an
  $(m+2)$-plane meets the null cone $\mathcal{C}_+$ transversely, and hence,
  under ray projectivisation, will descend to a submanifold $\cZ(K)$
  of $S^n$ of codimension $d$.  
  Now by Theorem 2.6 of~\cite{CGH-Duke} it then follows that on $M$
  the zero locus of the simple $d$-tractor $\mathbb{K}$ will take the
  same form as on the model.  Thus the three bullet points follow from the analysis just done, of the corresponding cases on the model, save for the very final statement that if $\cZ(K)$ is nonempty and does
  not just consist of isolated points, then the codimension $d$ submanifold $\cZ(K)$ is distinguished. But this follows from 
    Theorem~\ref{thm_submanifold_gst}.
\end{proof}
\noindent We note that such simple parallel tractors $\mathbb{K}$ have arisen in
the study of holonomy and generalisations of almost Einstein structures
\cite{ArmstrongLeitner,Leitner2010,Leitner2012}. In particular, in the presence of multiple (independent) almost-Einstein scales $\sigma_1,\ldots, \sigma_d$ with scale tractors $I^1\cdots I^d$, such a simple parallel tractor $\mathbb{K}_{A_1\cdots A_d}$ is given by $d! I^1_{[A_1}\cdots I^d_{A_d]}$ and arises from the conformal Killing form $k_{a_2\cdots a_d}=\sum_{s\in S_n} \mathrm{sign}(s) \sigma_{s(1)}\nabla_{a_2}\sigma_{s(2)} \cdots \nabla_{a_d}\sigma_{s(d)}$. In this case, when $\mathbb{K}$ is spacelike one finds that $\mathcal{Z}(K)=\mathcal{Z}(\sigma_1,\ldots, \sigma_d)$, which was our motivation for considering this zero locus.

In the case of the model, meaning $S^n$ with its usual conformal
structure, all solutions to first BGG equations are normal (and all cases arise in all codimensions $d$, which is the idea behind the proof). Moreover,
it is easily seen that, in this setting, the space of solutions to the
conformal Killing-Yano equation \eqref{eqn_conf_killing_form}, of a
given rank, is spanned by solutions $k$ with $L(k)$ satisfying the
conditions of the Theorem above. It is interesting and valuable to
determine the extent to which similar results hold in more general
settings (see, e.g., \cite{BMO2011,Derdzinski11,Derdzinski12,Frances} for some related results in the case of conformal Killing fields).  The first and third bullet point of Theorem \ref{thm_submanifold_zero_locus} generalise quite easily, as we see in the following proposition. 

\begin{prop} \label{progress}
  Let $k_{a_1 \cdots a_{d-1}}\in \Gamma (\ce_{[a_1\cdots a_{d-1}]}[d])$ and  $\mathbb{K}_{A_1 \cdots A_d}=L(k_{a_1 \cdots a_{d-1}})$.
  %%\begin{equation*}
    %%\cK := (k_{a_1 \cdots a_{d-1}}, \nabla^c k_{c a_2 \cdots
      %%a_{d-1}}), \textrm{ for any } g \in \bm{c} \textrm{ with
      %%Levi-Civita connection } \nabla .
  %%\end{equation*}
Then
\begin{enumerate}

  \item If $\mathbb{K}$ is timelike, then
    $\cZ(k)$ is necessarily empty.

\item If $\mathbb{K}$ is spacelike and simple, and $k$ satisfies \eqref{eqn_conf_killing_form}
    along $\cZ(k)$, then $\cZ(k)$ is either empty or is a submanifold of codimension $d$. %In the latter case, if $L(k_{a_1 \cdots  a_{d-1}})$ is parallel along $\cZ(\cK)$ then 

\item If $\mathbb{K}$ is spacelike, simple, and satisfies $\nabla \mathbb{K}=0$ at all points of $\cZ(k)$, then $\cZ(k)$ is either empty or is a distinguished submanifold of codimension $d$.

\end{enumerate}
\end{prop}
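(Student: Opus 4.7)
The plan is to treat the three assertions in order of increasing difficulty. Assertion (i) is essentially a formality: the BGG formula \eqref{eqn_conf_killing_splitting_op} shows that, at any zero of $k$, $\mathbb{K}_{A_1\cdots A_d}\mathbb{K}^{A_1\cdots A_d} = \tfrac{1}{d^2}\|\phi\|^2 + \|\nu\|^2 \ge 0$ where $\phi$ and $\nu$ are the alternating and divergence parts of $\nabla k$; a timelike $\mathbb{K}$ thus forces $\cZ(k)=\varnothing$. I would simply record this computation, noting that it uses neither simplicity of $\mathbb{K}$ nor the conformal Killing equation.

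For (ii) and (iii) the common strategy is local and framed by the simple-spacelike structure of $\mathbb{K}$. Around any $p \in \cZ(k)$ I will fix an orthonormal frame $I_1,\ldots,I_d \in \Gamma(\cT^*)$ of spacelike cotractors with $\mathbb{K} = I_1\wedge\cdots\wedge I_d$, and set $\sigma_j := X \cdot I_j \in \Gamma(\cE[1])$. A short algebraic argument (using simplicity exactly as in Remark \ref{rem-thm_submanifold_zero_locus}(i)) identifies $\cZ(k) = \cZ(\sigma_1,\ldots,\sigma_d)$ locally, reducing the submanifold claim, via the implicit function theorem, to showing that $d\sigma_1|_p,\ldots,d\sigma_d|_p$ are linearly independent at every $p \in \cZ(k)$. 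A direct computation gives $\nabla_a\sigma_j = (\mu_j)_a + X^B\nabla_a(I_j)_B$, where $(\mu_j)_a$ is the middle slot of $I_j$; at such a $p$ the vectors $(\mu_j)^a|_p$ are forced to be a $\bg$-orthonormal set by the frame relations together with $\sigma_j|_p = 0$, so the entire task reduces to controlling the correction term $X^B\nabla_a(I_j)_B|_p$.

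For (iii), I will decompose $\nabla_a(I_j)_B = \sum_k A_{jk}^a (I_k)_B + (B_j^a)_B$ with $B_j^a$ pointwise orthogonal to $\mathrm{span}(I_1,\ldots,I_d)$. Differentiating the orthonormality relations forces $A_{jk}^a = -A_{kj}^a$, so a wedge-product expansion gives $\nabla_a\mathbb{K} = \sum_j I_1\wedge\cdots\wedge B_j^a\wedge\cdots\wedge I_d$, the $A$-contribution collapsing by antisymmetry. Because each $B_j^a$ is orthogonal to the span, these $d$ summands can be disentangled by wedging with the $I_l$, and the hypothesis $\nabla\mathbb{K}|_p = 0$ therefore forces $B_j^a|_p = 0$ for each $j$. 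The correction term then reduces to $\sum_k A_{jk}^a \sigma_k|_p = 0$, yielding $d\sigma_j|_p = (\mu_j)_a|_p$ as required. With $\cZ(k)$ now known to be a codimension $d$ submanifold and $\mathbb{K}|_{\cZ(k)}$ nowhere vanishing, distinguishedness is immediate from Theorem \ref{thm_submanifold_gst-main} applied to $\Psi := \mathbb{K}|_{\cZ(k)}$, whose incidence relation $X\intprod\mathbb{K} = 0$ and tangential parallelism $\nabla_i\mathbb{K} = 0$ along $\cZ(k)$ are both in the hypotheses.

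Assertion (ii) is where I expect the principal obstacle. Lacking $\nabla\mathbb{K}|_p = 0$ I cannot simply set $B_j^a|_p = 0$; instead, the plan is to exploit the BGG characterization of the conformal Killing equation, namely that $\cD^\cV k = 0$ is equivalent to $\nabla\mathbb{K}$ lying pointwise in $\mathrm{im}(\partial^*)\subset T^*M\otimes\Lambda^d\cT^*$, and to combine this algebraic constraint with the vanishing of the top slot of $\mathbb{K}|_p$ (i.e.\ $k|_p = 0$) to force vanishing of just the $X$-components of $B_j^a|_p$ — which is all that is needed. The key step, and in my view the hardest, will be to unpack the Kostant codifferential in this bidegree explicitly enough to translate the cohomological condition into the required pointwise vanishing; once done, the implicit function theorem yields the codimension $d$ submanifold conclusion of (ii).
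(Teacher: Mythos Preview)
Your treatments of items 1 and 3 are correct and essentially match the paper's in spirit, though you prove item 3 directly while the paper derives it from item 2 (noting that $\nabla\mathbb{K}=0$ at $p$ forces the conformal Killing equation to hold at $p$) and then invokes Theorem~\ref{thm_submanifold_gst-main}. Your direct argument for item 3 via the decomposition $\nabla_a I_j = \sum_k A^a_{jk} I_k + B^a_j$ is clean and arguably more transparent.

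For item 2, however, your plan to unpack the Kostant codifferential is unnecessarily heavy. The paper takes a much more direct route. Since $\mathbb{K}$ is simple and spacelike, at a zero of $k$ one has $X\intprod\mathbb{K}=0$, hence also $\nu=0$ (both $k$ and $\nu$ are components of $X\intprod\mathbb{K}$). The conformal Killing equation \eqref{eqn_conf_killing_form} at such a point therefore reads simply $\nabla_a k_{a_2\cdots a_d}=\phi_{aa_2\cdots a_d}$, a totally skew and (because $\|\mathbb{K}\|^2>0$ with $k=\nu=0$) nonvanishing tensor. Now differentiate the identity $k=\sum_j\sigma_j\omega_j$: at $p$ this gives $\nabla_a k=\sum_j(d\sigma_j)_a\,\omega_j$, where the $\omega_j$ are, up to sign, the $(d-1)$-fold wedges of the $\mu_i$ with $\mu_j$ omitted. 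The condition that this sum be totally skew and nonzero, together with the linear independence of the $\omega_j$ (which follows from $\phi\neq 0$), forces the $d\sigma_j$ to be linearly independent --- no cohomological machinery needed. So the step you flagged as hardest is actually handled by a single application of the hypothesis \eqref{eqn_conf_killing_form} to the explicit form of $k$, bypassing the tractor-level correction term $X^B\nabla_a(I_j)_B$ altogether.
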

\begin{rem}\label{rem-progress}
 As in Theorem \ref{thm_submanifold_zero_locus} above, fixing any $g\in\bc$, we find in all of the above cases that $\mathcal{Z}(k)=\mathcal{Z}(\cK)$  where $\cK := (k_{a_1 \cdots a_{d-1}}, \nabla^c k_{c a_2 \cdots a_{d-1}})$.
\end{rem}
\begin{proof}
From (\ref{eqn_tractor_form_comp_series}), \eqref{form-ps},  and (\ref{tr-met-form})  it follows that any tractor $d$-form $\mathbb{K}$
satisfying $X \intprod \mathbb{K} = 0$ at  $p\in M$ has
$\mathbb{K}_{A_1\cdots A_d}\mathbb{K}^{A_1\cdots A_d}\geq 0$ at
$p$. This proves 1.

We now consider 2. For convenience, we fix $g\in \cc$ and use $g$ to trivialise the density bundles. Let $I^1,\ldots, I^d$ be a collection of orthogonal spacelike tractor $1$-forms such that $\mathbb{K}=I^1\wedge\cdots \wedge I^d$ and let $\sigma_i=X\intprod I^i$ (which we think of as functions).
It follows that $k$ takes the form
\begin{equation}\label{kform}
\sigma_1 \omega_1 +\cdots + \sigma_d \omega_d
\end{equation}
where the $\omega_i$, $i=1,\cdots ,d$, are each simple $(d-1)$-forms. Moreover, as argued in the proof of Theorem \ref{thm_submanifold_zero_locus} above, from the simplicity of $\mathbb{K}$ we have that $k=0$ implies $X\intprod \mathbb{K}=0$ (again this is comes from examinining the for $K$ in terms of the spacelike tractors $I^1,\ldots, I^d$ and using that $X$ is null to see that $K=0\!\mod X$ only if $K=0$). It follows that $\mathcal{Z}(k)=\mathcal{Z}(K)=\mathcal{Z}(\sigma_1,\ldots, \sigma_d)$, where the last inequality follows from the expression for $K$ in terms of $I^1,\ldots, I^d$ and their linear independence. Since $\nu=0$ when $k=0$ and $\mathbb{K}$ is spacelike, it follows that $\phi \neq 0$ at any point where $k=0$. From this and the formulae for $\phi$ and $\omega_1,\ldots, \omega_d$ in terms of the components of $I^1,\ldots, I^d$ it follows that the $(d-1)$-forms $\omega_1,\ldots, \omega_d$ are linearly independent at any point where $k=0$ (and hence in a neighborhood of $\mathcal{Z}(k)$). Moreover, from the equation (\ref{eqn_conf_killing_form}) we have that, along $\mathcal{Z}(k)$,
\begin{equation}\label{KY}
\nabla k = \phi.
\end{equation}
The above display puts conditions on the $\sigma_i$ and their relation to the $\omega_j$. In particular one easily concludes that, at each point in
$Z(k)$, the equation \eqref{KY} implies that $\{d\sigma_1,\cdots ,d\sigma_{d}\}$
is a linearly independent set. Thus from the constant rank theorem it follows that $\cZ(K)$ is either empty or is a
submanifold of codimension $d$. This establishes 2. 

Item 3 then follows from 2 together with Theorem~\ref{thm_submanifold_gst}, since if  $\nabla \mathbb{K}=0$ at all points of $\cZ(K)$ then $k_{a_1 \cdots a_{d-1}}$ satisfies \eqref{eqn_conf_killing_form}  along $\cZ(K)$.
   \end{proof}
	
Proposition \ref{progress} shows that in fact no hypothesis on $k$ is needed for the first bullet point of Theorem \ref{thm_submanifold_zero_locus} to hold, and that the third bullet point holds under much weaker hypotheses. In particular, in the $d=2$ case (where $k$ is a conformal Killing vector field) Proposition \ref{progress} shows that normality is not required for the third bullet point to hold, since at points where $k=0$ the prolonged conformal Killing equation $\nabla\mathbb{K}=k\intprod \Omega$ reduces to $\nabla\mathbb{K}=0$. For a simple example of non-normal conformal Killing vector field that gives rise to a codimension $2$ distinguished submanifold, consider $\mathbb{CP}^n$ equipped with the Fubini-Study metric: the 1-parameter family of maps $[z_0,z_1 \cdots, z_n] \mapsto [z_0,e^{it}z_1 \cdots, e^{it}z_n]$ corresponds to a Killing vector field that vanishes on the totally geodesic $\mathbb{CP}^{n-1}$ given by $\{z_0=0\}$. One can also show that when $d=2$ the second bullet point of Theorem \ref{thm_submanifold_zero_locus} continues to hold without the assumption of normality. To see this, note that, fixing any $g\in \bc$, the hypothesis that $\mathbb{K}$ is null implies that the components $\phi$  and $\nu$ of $\mathbb{K}=L(k)$ are zero on $\mathcal{Z}(k)$. Hence, at such points $k$ and $\nabla k$ vanish. But since $\mathbb{K}\neq 0$ we must have $\nabla_a\nabla^bk_b \neq 0$ at such points and it follows that any point of $\mathcal{Z}(k)$ is an essential point of $k$; see, e.g., \cite[Theorem 3.4]{Derdzinski12}. But such essential points are isolated \cite{BMO2011} and hence the result follows. Thus the assumption of normality is not required in Theorem \ref{thm_submanifold_zero_locus} when $d=1$ or $d=2$ (cf.\ Remark \ref{rem-thm_submanifold_zero_locus}(ii) for the $d=1$ case). We leave it as an open question whether one can also drop the normality condition when $d>2$.

Although one of the themes of this article has been that of distinguished submanifolds, we are interested in developing conformal submanifold theory in general. From this point of view a key message of this section is that it is natural for codimension $d$ submanifolds to arise as the zero locus of a weighted $(d-1)$-form $k$, and that we can make geometric conclusions about this submanifold by considering the behavior of the corresponding form tractor $\mathbb{K}$ along the submanifold. The results above are highly suggestive of a holographic approach to higher codimension submanifold theory generalising the approach of \cite{AGW,BGW,GWWillmore,GWRenormVol,GWbdy,GWcalc,GWConfHypYamabe} from the hypersurface case, where the role of the scale $\sigma$ and scale tractor $I$ are played by a weighted form $k$ and the corresponding tractor form $\mathbb{K}$, but we leave this to be taken up elsewhere.

%\pagebreak

\Addresses

\end{document}